\definecolor{darkblue}{rgb}{.8,.15,.15}
\definecolor{darkgreen}{rgb}{0.15,.4,.5}
\renewcommand*{\backref}[1]{}
\renewcommand*{\backrefalt}[4]{%
\ifcase #1 %
 [Not cited]%
\or
 [Cited on page #2]%
\else
 [Cited on pages #2]%
\fi
}
\newtheorem{theorem}{Theorem}[section]
\newtheorem{corollary}[theorem]{Corollary}
\newtheorem{lemma}[theorem]{Lemma}
\newtheorem{proposition}[theorem]{Proposition}
\newtheorem*{question}{Question}
\newtheorem*{theorem*}{Theorem}
\theoremstyle{definition}
\newtheorem{definition}[theorem]{Definition}
\newtheorem{example}[theorem]{Example}
\newtheorem{remark}[theorem]{Remark}
\newtheorem*{rep@theorem}{\rep@title}
\newenvironment{reptheorem}[1]{%
  \def\rep@title{Theorem #1}%
  \begin{rep@theorem}}%
  {\end{rep@theorem}}
\numberwithin{theorem}{section}
\numberwithin{figure}{section}
\numberwithin{equation}{section}
\numberwithin{table}{section}
\newcommand{\va}{{\boldsymbol{a}}}
\newcommand{\vb}{{\boldsymbol{b}}}
\newcommand{\vc}{{\boldsymbol{c}}}
\newcommand{\vd}{{\boldsymbol{d}}}
\newcommand{\ve}{{\boldsymbol{e}}}
\newcommand{\vu}{{\boldsymbol{u}}}
\newcommand{\vl}{{\boldsymbol{l}}}
\newcommand{\vx}{{\boldsymbol{x}}}
\newcommand{\vy}{{\boldsymbol{y}}}
\newcommand{\vn}{{\boldsymbol{n}}}
\newcommand{\vm}{{\boldsymbol{m}}}
\newcommand{\vp}{{\boldsymbol{p}}}
\newcommand{\vq}{{\boldsymbol{q}}}
\newcommand{\vt}{{\boldsymbol{t}}}
\newcommand{\vr}{{\boldsymbol{r}}}
\newcommand{\vchi}{{\boldsymbol{\chi}}}
\newcommand{\vv}{{\boldsymbol{v}}}
\newcommand{\vw}{{\boldsymbol{w}}}
\newcommand{\n}{{\boldsymbol{n}}}
\newcommand{\m}{{\boldsymbol{m}}}
\newcommand{\x}{\boldsymbol{x}}
\renewcommand{\t}{\boldsymbol{t}}
\newcommand{\y}{\boldsymbol{y}}
\newcommand{\e}{\boldsymbol{e}}
\newcommand{\0}{\boldsymbol{0}}
\newcommand{\1}{\boldsymbol{1}}
\newcommand{\kk}{\boldsymbol{k}}
\newcommand{\R}{\mathbb{R}}
\newcommand{\Z}{\mathbb{Z}}
\newcommand{\C}{\mathbb{C}}
\newcommand{\Q}{\mathbb{Q}}
\newcommand{\N}{\mathbb{N}}
\newcommand{\K}{\boldsymbol{K}}
\newcommand{\B}{\mathbf{B}}
\let\pS\S
\renewcommand{\S}{\mathcal{S}}
\newcommand{\T}{\mathcal{T}}
\newcommand{\rootA}{\mathbf A}
\newcommand{\rootB}{\mathbf B}
\newcommand{\rootC}{\mathbf C}
\newcommand{\rootD}{\mathbf D}
\newcommand{\rootF}{{\mathbf F}_4}
\newcommand{\rootG}{{\mathbf G}_2}
\newcommand{\rootE}{{\mathbf E}}
\newcommand{\configuration}{\mathscr A}
\newcommand{\calS}{\mathcal{S}}
\newcommand{\calA}{\mathcal{A}}
\newcommand{\calVC}{\mathcal{VC}}
\newcommand{\calC}{\mathcal{C}}
\newcommand{\ideal}{\mathscr{I}}
\DeclareMathOperator{\lead}{in}
\newcommand{\vomega}{{\boldsymbol{\omega}}}
\newcommand{\vvarepsilon}{{\boldsymbol{\varepsilon}}}
\newcommand{\veta}{{\boldsymbol{\eta}}}
\newcommand{\HS}{\Delta}
\newcommand{\zo}{\ensuremath{0/1}}
\newcommand{\cut}{\mathsf{Cut}}
\newcommand{\dddots}{\reflectbox{$\ddots$}}
\newcommand{\vertices}[1]{\mathcal V(#1)}
\newcommand{\hilb}{\mathcal{H}}
\newcommand{\conv}{\operatorname{conv}}
\newcommand{\aff}{\operatorname{aff}}
\newcommand{\vol}{\operatorname{vol}}
\newcommand{\spec}{\operatorname{Spec}}
\newcommand{\proj}{\operatorname{Proj}}
\newcommand{\pull}{\operatorname{pull}}
\DeclareMathOperator{\cone}{cone}
\DeclareMathOperator{\interior}{int}
\DeclareMathOperator{\id}{id}
\DeclareMathOperator{\Star}{star}
\DeclareMathOperator{\NP}{NP} %
\DeclareMathOperator{\supp}{supp}
\DeclareMathOperator{\std}{std}
\DeclareMathOperator{\lecturehallsimplexoperator}{LHS}
\newcommand{\lecturehallsimplex}[1]{\lecturehallsimplexoperator_{#1}}
\newcommand{\s}{\boldsymbol{s}}
\newcommand{\slecturehall}[2][\s]{\lecturehallsimplex{#2}(#1)}
\newcommand{\stdo}{\std_\vomega}
\newcommand{\polymake}{\texttt{polymake}}
\def\cocoa{{\hbox{\rm C\kern-.13em o\kern-.07em C\kern-.13em o\kern-.15em A}}}
\newcommand\aparhref[2]{\href{http://arxiv.org/abs/#1}{#2}}
\newcommand{\transport}{{\operatorname{T}_{\vr\vc}}}
\title[Unimodular triangulations]{\mbox{}\vspace{-3\baselineskip}
  \\
  Existence of unimodular triangulations \\ --- \\ positive results
}
\author[Haase]{Christian Haase}
\thanks{Work of the first author was supported in part by NSF grant
  DMS-0200740, and by Emmy Noether HA 4383/1 and Heisenberg HA
  4383/4 fellowships of the German Research Council (DFG)}
\address{Christian Haase, Freie Universit\"at Berlin, Germany}
\email{haase@math.fu-berlin.de}
\author[Paffenholz]{Andreas Paffenholz}
\thanks{Work of the second author was supported by a Priority Program (DFG/SPP
1489) of the German Research Council (DFG)}
\address{Andreas Paffenholz \\ Technische Universit\"at Darmstadt \\ Germany} 
\email{paffenholz@mathematik.tu-darmstadt.de}
\author[Piechnik]{Lindsay C. Piechnik}
\address{Lindsay C. Piechnik \\ High Point University, NC \\ USA}
\email{lpiechni@highpoint.edu}
\author[Santos]{Francisco Santos\vspace{-\baselineskip}}
\address{Francisco Santos \\ Universidad de Cantabria \\ Spain}
\email{santosf@unican.es} \thanks{Work of the fourth author has been
  supported in part by the Spanish Ministry of Science through grants
  MTM2011-22792 and MTM2014-54207-P, by  the Alexander von Humboldt Foundation, by the Einstein Foundation, and by the NSF while he was in residence at MSRI Berkeley in the fall of 2017.}
\newcites{todo}{Articles not cited}
\date{\today}
\subjclass[2010]{Primary 52B20, 52B11;\\Secondary 13F20, 52C11}
\begin{document}

\begin{abstract}
  Unimodular triangulations of lattice polytopes 
  arise in algebraic geometry, commutative algebra, integer
  programming and, of course, combinatorics.

   In this article, we review several classes of polytopes 
  that do have unimodular triangulations and constructions that preserve 
  their existence.

  We include, in particular, the first effective proof of the
  classical result by Knudsen-Mumford-Waterman stating that every
  lattice polytope has a dilation that admits a unimodular
  triangulation.
  Our proof yields an explicit (although doubly exponential) bound for
  the dilation factor.
\end{abstract}
\maketitle

{\tableofcontents}

\section{Introduction}

Unimodular triangulations of lattice polytopes arise in algebraic geometry, commutative algebra,
integer programming and, of course, combinatorics.  Admitting a unimodular triangulation is a
property with nice implications, but presumably ``most'' lattice polytopes do not have such a
triangulation (see Question~\ref{question:most_do_not} for a precise statement).
In this paper we review methods for constructing unimodular triangulations and classes of polytopes
which are known to have unimodular triangulations.

After defining the basic objects and fixing notation in Section~\ref{sec:what}, in
Section~\ref{sec:why} we explain why unimodular triangulations are important. We then embed the
property of having a unimodular triangulation into a hierarchy of related properties in
Section~\ref{sec:hierarchy} and preview what does and does not appear in this paper in
Sections~\ref{sec:results} and~\ref{sec:whatnot}, before listing some open questions in
Section~\ref{sec:questions}.

The rest of the paper is divided into three parts. In Section~\ref{sec:methods} we present methods
of constructing unimodular triangulations, as well as how unimodular triangulations of certain
polytopes can be used in constructions of unimodular triangulations for more complicated cases.
This includes the analysis of joins, products, and projections. In
Section~\ref{sec:toric-groebner-bases} we review the relationship between unimodular triangulations
and Gr\"obner bases of toric ideals, a correspondence that can be exploited in both directions.

Section~\ref{sec:Examples} deals with particular classes of polytopes that can be shown to have
unimodular triangulations and classes for which the question of existence has been addressed or is
especially interesting. This includes polytopes related to root systems
(Sections~\ref{sec:cut-by-roots} and~\ref{sec:spanned-by-roots}), polytopes related to graphs
(Section~\ref{sec:graph-polytopes}) and smooth polytopes (Section ~\ref{sec:Smooth}). In particular,
we show that all polytopes with facet normals in the root systems $\rootB_n$ have regular unimodular
triangulations, a result that was not known before. (The same result for $\rootA_n$ easily follows
from total unimodularity.) We also include previously unpublished results about empty smooth polytopes
and smooth reflexive polytopes (e.g.~Thm~\ref{thm:empty-smooth} and Thm~\ref{thm:smoothreflexive}). 

Section~\ref{sec:kmw} addresses dilations of lattice polytopes: when dilations must admit unimodular
triangulations, and the process of constructing them.  In particular, we provide the first effective
version of the celebrated result of Knudsen, Mumford and Waterman saying that every lattice polytope
$P$ has a dilation $cP$ that admits unimodular triangulations. Previous proofs of the theorem, although they can be considered algorithmic, do not
mention an explicit bound for two reasons:
\begin{inparaenum}
\item it is not easy to derive a bound from the algorithm,
\item the bound obtained would contain arbitrarily long towers of exponentials.
\end{inparaenum}
Our bound, instead, is ``only'' doubly exponential:

\begin{reptheorem}{\ref{thm:KMW-effective}}
\label{thm:KMW-effective-intro}
If $P$ is a lattice polytope of dimension $d$ and (lattice) volume $\vol(P)$, then the dilation
\[
(d+1)!^{\vol(P)! {(d+1)^{(d+1)^2\vol(P)}}} P
\]
has a regular unimodular triangulation.

More precisely, if $P$ has a triangulation $\T$ into $N$ $d$-simplices, of volumes $V_1,\dots,V_N$, then the dilation 
\[
(d+1)!^{\sum_{i=1}^N  V_i! \left( (d+1)!^{d+1}\right)^{V_i-1}} \T
\]
has a regular unimodular refinement.
\end{reptheorem}

\subsection{What?} \label{sec:what} 

A \emph{lattice polytope} in $\R^d$ is the convex hull of finitely many points in the lattice
$\Z^d$. We identify two lattice polytopes if they are related by a lattice preserving affine map. Up
to this \emph{lattice equivalence}, we can always assume that our polytope is $d$-dimensional. (For
more on convex polytopes and lattices we refer to~\cite{BarvinokBook}.)

A \emph{unimodular simplex} is a lattice polytope which is lattice equivalent to the standard
simplex $\Delta^d$, the convex hull of the origin $\0$ together with the standard unit vectors
$\ve_i$ ($1 \le i \le d$). Equivalently, unimodular simplices are characterized as the
$d$-dimensional lattice polytopes of minimal possible Euclidean volume, $1/d\textit{!}$~.

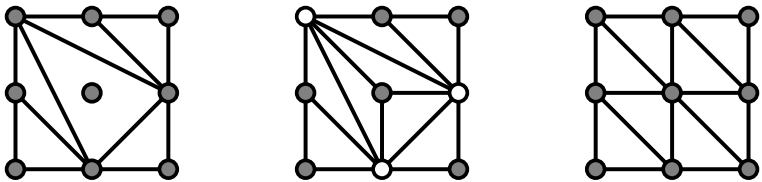
\begin{figure}[tb]
  \centering
\begin{tikzpicture}[scale=.8,y=-1cm]
\draw[line width=1.5pt,black] (11.43,3.81) -- (13.97,3.81) -- (13.97,5.08) -- (13.97,6.35) -- (12.7,6.35) -- (11.43,6.35) -- (11.43,5.08) -- cycle;
\draw[line width=1.5pt,black] (12.7,6.35) -- (11.43,3.81) -- (13.97,5.08) -- (12.7,6.35) -- (11.43,5.08);
\draw[line width=1.5pt,black] (13.97,5.08) -- (12.7,3.81);

\draw[line width=1.4pt,color=black,fill=black!50] (11.43,5.08) circle (4.2pt);
\draw[line width=1.4pt,color=black,fill=black!50] (11.43,3.81) circle (4.2pt);
\draw[line width=1.4pt,color=black,fill=black!50] (12.7,3.81) circle (4.2pt);
\draw[line width=1.4pt,color=black,fill=black!50] (12.7,5.08) circle (4.2pt);
\draw[line width=1.4pt,color=black,fill=black!50] (12.7,6.35) circle (4.2pt);
\draw[line width=1.4pt,color=black,fill=black!50] (13.97,5.08) circle (4.2pt);
\draw[line width=1.4pt,color=black,fill=black!50] (13.97,3.81) circle (4.2pt);
\draw[line width=1.4pt,color=black,fill=black!50] (11.43,6.35) circle (4.2pt);
\draw[line width=1.4pt,color=black,fill=black!50] (13.97,6.35) circle (4.2pt);
\end{tikzpicture}%
\qquad\qquad
\begin{tikzpicture}[scale=.8,y=-1cm]

\draw[line width=1.5pt,black] (11.43,3.81) -- (13.97,3.81) -- (13.97,5.08) -- (13.97,6.35) -- (12.7,6.35) -- (11.43,6.35) -- (11.43,5.08) -- cycle;
\draw[line width=1.5pt,black] (12.7,6.35) -- (11.43,3.81) -- (13.97,5.08) -- (12.7,6.35) -- (11.43,5.08);
\draw[line width=1.5pt,black] (13.97,5.08) -- (12.7,3.81);
\draw[line width=1.5pt,black] (11.43,3.81) -- (12.7,5.08) -- (12.7,6.35);
\draw[line width=1.5pt,black] (12.7,5.08) -- (13.97,5.08);

\draw[line width=1.4pt,color=black,fill=black!50] (11.43,5.08) circle (4.2pt);
\draw[line width=1.4pt,color=black,fill=black!50] (12.7,3.81) circle (4.2pt);
\draw[line width=1.4pt,color=black,fill=black!50] (12.7,5.08) circle (4.2pt);
\draw[line width=1.4pt,color=black,fill=black!50] (13.97,3.81) circle (4.2pt);
\draw[line width=1.4pt,color=black,fill=black!50] (11.43,6.35) circle (4.2pt);
\draw[line width=1.4pt,color=black,fill=black!50] (13.97,6.35) circle (4.2pt);
\path[line width=1.4pt,draw=black,fill=white] (11.43,3.81) circle (4.2pt);
\path[line width=1.4pt,draw=black,fill=white] (12.7,6.35) circle (4.2pt);
\path[line width=1.4pt,draw=black,fill=white] (13.97,5.08) circle (4.2pt);
\end{tikzpicture}%
\qquad\qquad
\begin{tikzpicture}[scale=.8,y=-1cm]
\draw[line width=1.5pt,black] (11.43,3.81) -- (13.97,3.81) -- (13.97,5.08) -- (13.97,6.35) -- (12.7,6.35) -- (11.43,6.35) -- (11.43,5.08) -- cycle;
\draw[line width=1.5pt,black] (13.97,5.08) -- (12.7,3.81);
\draw[line width=1.5pt,black] (11.43,3.81) -- (13.97,6.35);
\draw[line width=1.5pt,black] (12.7,3.81) -- (12.7,6.35) -- (11.43,5.08) -- (13.97,5.08);
\draw[line width=1.4pt,color=black,fill=black!50] (11.43,5.08) circle (4.2pt);
\draw[line width=1.4pt,color=black,fill=black!50] (11.43,3.81) circle (4.2pt);
\draw[line width=1.4pt,color=black,fill=black!50] (12.7,3.81) circle (4.2pt);
\draw[line width=1.4pt,color=black,fill=black!50] (12.7,5.08) circle (4.2pt);
\draw[line width=1.4pt,color=black,fill=black!50] (12.7,6.35) circle (4.2pt);
\draw[line width=1.4pt,color=black,fill=black!50] (13.97,5.08) circle (4.2pt);
\draw[line width=1.4pt,color=black,fill=black!50] (13.97,3.81) circle (4.2pt);
\draw[line width=1.4pt,color=black,fill=black!50] (11.43,6.35) circle (4.2pt);
\draw[line width=1.4pt,color=black,fill=black!50] (13.97,6.35) circle (4.2pt);
\end{tikzpicture}%

   \caption{A non-unimodular triangulation and two unimodular ones. They are all regular, but only
    the last is quadratic.  (See Section~\ref{sec:quadratic}).}
  \label{fig:triangulation}
\end{figure}

For the purposes of this paper, a (lattice) \emph{subdivision} of a $d$-di\-men\-sio\-nal lattice
polytope $P$ is a finite collection of (lattice) polytopes $\S$ such that
\begin{enumerate}
\item every face of a member of $\S$ is in $\S$, \label{subd:faces}
\item any two elements of $\S$ intersect in a common (possibly empty) face,
  and \label{subd:intersection}
\item the union of the polytopes in $\S$ is $P$. \label{subd:union}
\end{enumerate}
The maximal ($d$-dimensional) polytopes in $\S$ are called \emph{cells} of $\S$.

A \emph{triangulation} is a subdivision of a polytope for which each cell of the subdivision is a
simplex. The triangulation is \emph{unimodular} if every cell is.  Figure~\ref{fig:triangulation}
depicts three triangulations of the nine-point square. The first is not unimodular, while the other
two are.

A \emph{full triangulation} is a lattice triangulation which uses all the lattice points in $P$. The
triangulation on the left in Figure~\ref{fig:triangulation} is not full.  Every subdivision has a
refinement to a full triangulation, for example the one resulting from the strong pulling procedure
discussed in Section~\ref{sec:paco}.  Also, every unimodular triangulation is full, and in dimension
at most two the converse is true as well. Depending on one's perspective, this is a consequence of,
or the reason for, Pick's formula, which says that the area a polygon is one less than its number of
interior lattice points plus half the number of lattice points on its boundary~\cite{pick}. The
formula yields the following proposition.
\begin{proposition}
  Every lattice polygon has a unimodular triangulation.
\end{proposition}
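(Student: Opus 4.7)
The plan is to leverage Pick's formula together with the existence of full triangulations, which was already noted in the excerpt. Specifically, I would start by invoking the fact (mentioned earlier, e.g.\ via the strong pulling procedure) that every lattice polygon $P$ admits a full lattice triangulation $\mathcal{T}$, i.e.\ one whose vertex set equals $P \cap \Z^2$.

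Next I would argue that every triangle $T \in \mathcal{T}$ has no lattice points other than its three vertices. Indeed, any lattice point in the relative interior of $T$ lies in $P \cap \Z^2$ and would therefore have to be a vertex of $\mathcal{T}$, contradicting that $T$ is a cell of $\mathcal{T}$. Similarly, any lattice point in the relative interior of an edge of $T$ would be a vertex of $\mathcal{T}$, which is incompatible with $T$ being a $2$-simplex in the triangulation (it would force a subdivision of that edge).

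Finally I would apply Pick's formula to $T$: with $i = 0$ interior lattice points and $b = 3$ boundary lattice points one obtains
\[
\mathrm{Area}(T) \;=\; i + \tfrac{b}{2} - 1 \;=\; 0 + \tfrac{3}{2} - 1 \;=\; \tfrac{1}{2}.
\]
Since a lattice $2$-simplex is unimodular if and only if it has (Euclidean) area $1/2! = 1/2$, every cell of $\mathcal{T}$ is unimodular, and hence $\mathcal{T}$ is a unimodular triangulation of $P$. There is no real obstacle here; the only subtlety is the observation that in dimension two a full triangulation is automatically unimodular, which is precisely what Pick's formula guarantees.
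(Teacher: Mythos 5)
Your argument is correct and is essentially the paper's own proof: the paper likewise observes that every lattice polygon has a full triangulation and that in dimension two fullness implies unimodularity via Pick's formula. The only addition on your part is spelling out the Pick computation for an empty triangle, which the paper leaves implicit.
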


However, there are three dimensional polytopes for which a unimodular triangulation does not exist.
\begin{example}[John Reeve~\cite{Reeve}]
  For $q \in \Z_{>0}$, the tetrahedron in Figure~\ref{fig:reeve}
  \begin{figure}[tb]
    \centering
    \raisebox{5mm}{
    $\displaystyle 
    \mathsf{reeve}(q)\ :=\ \conv \left[ \begin{smallmatrix}
        0&0&0&q\\
        0&0&1&1\\
        0&1&0&1
      \end{smallmatrix}
    \right]$}
    \qquad \qquad
\begin{tikzpicture}[y=-1cm,scale=1]
\draw[join=bevel,black] (3.175,6.6675) -- (7.46125,6.6675);
\draw[join=bevel] (3.175,6.6675) -- (3.175,6.50875);
\draw[join=bevel] (3.175,6.0325) -- (3.175,5.3975) -- (2.54,6.35);
\draw[join=bevel] (8.255,5.3975) -- (7.62,6.35);
\draw[join=bevel] (8.255,5.3975) -- (8.255,6.6675) -- (7.62,7.62);

\draw[line width=6.3bp,join=bevel,color=white] (4.1275,6.985) -- (6.985,5.87375);

\draw[very thick,join=bevel,black] (8.255,5.3975) -- (2.54,6.35);
\draw[very thick,join=bevel,color=black] (2.54,6.35) -- (2.54,7.62);
\draw[very thick,join=bevel,dashed,black] (2.54,7.62) -- (3.175,6.6675);
\draw[very thick,join=bevel,dashed,black] (3.175,6.6675) -- (2.54,6.35);
\draw[very thick,join=bevel,dashed,black] (3.175,6.6675) -- (8.255,5.3975);
\draw[very thick,join=bevel,black] (2.54,7.62) -- (8.255,5.3975);
\draw[join=bevel] (7.62,6.35) -- (7.62,7.62);
\draw[join=bevel] (7.77875,6.6675) -- (8.255,6.6675);

\draw[line width=6.3bp,join=bevel,color=white] (3.81,6.35) -- (6.6675,6.35);

\fill[draw=black] (2.54,6.35) circle (0.127cm);
\fill[draw=black] (8.255,5.3975) circle (0.127cm);
\fill[draw=black] (2.54,7.62) circle (0.127cm);
\fill[draw=black] (3.175,6.6675) circle (0.127cm);
\draw[join=bevel,black] (2.54,7.62) -- (7.62,7.62);
\draw[join=bevel,black] (3.175,5.3975) -- (8.255,5.3975);
\draw[join=bevel,black] (2.54,6.35) -- (7.62,6.35);
\end{tikzpicture}%
     \caption{Reeve's tetrahedra $\mathsf{reeve}(q)$ for integral
    non-negative $q$.}
    \label{fig:reeve}
  \end{figure}
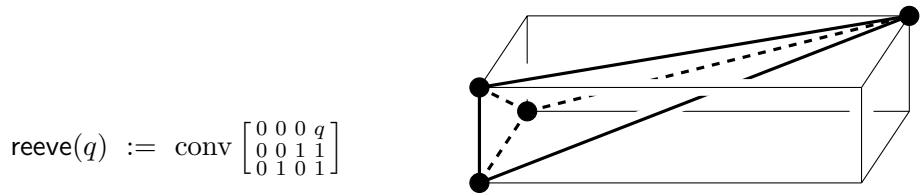
  contains only four lattice points --- the vertices.  Its only lattice triangulation is the trivial
  one.  As the Euclidean volume is equal to $q/6$, this simplex does not have a unimodular
  triangulation for $q>1$.
\end{example}

A subdivision is \emph{regular} if its cells are the domains of linearity of a convex piecewise linear
function. (Compare~\cite[Section~14.3]{LeeHandBook},~\cite{deLoeraRambauSantos}.) Less formally, a
regular subdivision can be thought of as a subdivision that can be realized as a ``convex
folding" of the polytope (Figure~\ref{fig:nonRegular} on the left).  All three triangulations in
Figure~\ref{fig:triangulation} are regular while the triangulation on the right in
Figure~\ref{fig:nonRegular} is not.

More formally, to define a regular subdivision \label{def:regular} of an arbitrary lattice polytope $P$ (or to certify that a given subdivision is regular) we need a set of  \emph{weights} (or \emph{heights}) $\vomega \in \R^\configuration$, where $\configuration = P \cap \Z^d$ is the set of lattice points in $P$. 
A lattice subpolytope $F$ of $P$ is a cell  in the regular subdivision 
$\S_\vomega$ of $P$ corresponding to those weights if there is a $\veta_F \in
\R^\configuration$ and a $\zeta_F \in \R$ such that
\begin{equation}
  \label{eq:face-of-regular}
  \omega_\va \ge \langle \veta_F, \va \rangle + \zeta_F \quad \text{for all $\va \in \configuration$}
\end{equation}
and 
\begin{equation}
  \label{eq:face-of-regular-equal}
  F = \conv \left\{ \va \in \configuration \ : \ \omega_\va = \langle \veta_F, \va \rangle + \zeta_F
  \right\} \,.
\end{equation}
This can also be viewed geometrically. For this, consider the polyhedron $\tilde P = \conv ( \va \times [\omega_\va, \infty) \ : \ \va
\in \configuration )$ in $\R^{d+1}$. The bounded faces of $\tilde P$ (which are also the lower faces if the last coordinate is considered as a height function) project to the faces of
$\S_\vomega$. The latter are the domains of linearity of the function 
$$\vx \mapsto \min \bigl\{ h \ : \ (\vx,h) \in \tilde P \bigr\} = \max \bigl\{ \langle \veta_F, \vx
  \rangle + \zeta_F \ : \ F \in \S_\vomega \bigr\} \,.$$

Conversely, we can of course prove that a given subdivision of a polytope is regular by finding an appropriate set of weights that generates the subdivision. 
A particular method for constructing regular full triangulations for an arbitrary lattice polytope
is given in Lemma~\ref{lemma:strongPulling}.

A \emph{non-face} of a triangulation is a set of points whose convex hull does not form a
face. Particularly important are the \emph{minimal non-faces}, which do not form faces but for which
every proper subset does. The list of minimal non-faces completely characterizes a triangulation, as
does the list of (sets of points that form) cells. A triangulation for which all minimal non-faces
contain only two elements is called \emph{flag}. Putting these properties together, a
\emph{quadratic triangulation} is defined as a regular unimodular flag triangulation.  The
triangulation on the right in Figure~\ref{fig:triangulation} is quadratic. However, the three white
vertices of the triangulation in the middle form a minimal non-face. So that triangulation is not
quadratic.
\begin{figure}[tb]
  \centering
\begin{tikzpicture}[y=-1cm,scale=.3]

\path[draw=black,fill=black!15] (12.7,1.905) -- (16.8275,8.89) -- (22.86,3.81) -- cycle;
\path[draw=black,fill=black!15] (12.7,1.905) -- (5.08,3.81) -- (11.1125,8.255);
\path[draw=black,fill=black!15,dashed] (5.08,16.51) -- (5.08,3.81);
\path[draw=black,fill=black!15,dashed] (11.1125,15.24) -- (11.1125,8.255);
\path[draw=black,fill=black!15,dashed] (22.86,12.7) -- (22.86,3.81);
\path[draw=black,fill=black!15,dashed] (12.7,11.43) -- (12.7,8.255);
\path[draw=black,fill=black!15,dashed] (13.335,13.335) -- (13.335,8.255);
\path[draw=black,fill=black!25] (12.7,1.905) -- (11.1125,8.255) -- (16.8275,8.89) -- cycle;

\path[draw=black,fill=black!15,dashed] (13.335,5.87375) -- (13.335,4.445);
\path[draw=black,fill=black!15,dashed] (14.605,8.255) -- (14.605,3.175);

\path[draw=black,fill=black!50] (15.24,7.62) -- (22.86,3.81) -- (16.8275,8.89);
\path[draw=black,fill=black!50] (11.1125,8.255) -- (5.08,3.81) -- (15.24,7.62);
\path[draw=black,fill=black!60] (15.24,7.62) -- (11.1125,8.255) -- (16.8275,8.89) -- cycle;

\draw[color=black,fill=black!50,line width=1.5pt] (5.08,16.51) circle (8pt);
\draw[color=black,fill=black!50,line width=1.5pt] (12.7,11.43) circle (8pt);
\draw[color=black,fill=black!50,line width=1.5pt] (15.24,17.78) circle (8pt);
\draw[color=black,fill=black!50,line width=1.5pt] (22.86,12.7) circle (8pt);
\draw[color=black,fill=black!50,line width=1.5pt] (11.1125,15.24) circle (8pt);
\draw[color=black,fill=black!50,line width=1.5pt] (16.8275,13.97) circle (8pt);
\draw[color=black,fill=black!50,line width=1.5pt] (13.335,13.335) circle (8pt);
\draw[color=black,fill=black!50,line width=1.5pt] (14.605,15.875) circle (8pt);
\draw[color=black,fill=black!50,line width=1.5pt] (11.1125,8.255) circle (8pt);
\draw[color=black,fill=black!50,line width=1.5pt] (16.8275,8.89) circle (8pt);
\draw[color=black,fill=black!50,line width=1.5pt] (22.86,3.81) circle (8pt);
\draw[color=black,fill=black!50,line width=1.5pt] (12.7,1.905) circle (8pt);
\draw[color=black,fill=black!50,line width=1.5pt] (13.335,4.445) circle (8pt);
\draw[color=black,fill=black!50,line width=1.5pt] (14.605,3.175) circle (8pt);
\draw[color=black,fill=black!50,line width=1.5pt] (5.08,3.81) circle (8pt);
\draw[color=black,fill=black!50,line width=1.5pt] (15.24,7.62) circle (8pt);
\draw[black] (5.08,16.51) -- (15.24,17.78);
\draw[black] (15.24,17.78) -- (22.86,12.7) -- (12.7,11.43) -- (5.08,16.51);
\fill[draw=black,dotted] (5.08,16.51) -- (22.86,12.7);
\draw[dotted,black] (12.7,11.43) -- (11.1125,15.24) -- (15.24,17.78) -- (16.8275,13.97) -- cycle;

\path[draw=black,fill=black!15,dashed] (16.8275,13.97) -- (16.8275,8.89);
\path[draw=black,fill=black!15,dashed] (15.24,17.78) -- (15.24,7.62);
\path[draw=black,fill=black!15,dashed] (14.605,15.875) -- (14.605,8.41375);

\end{tikzpicture}%
   \qquad \qquad
\begin{tikzpicture}[y=-1cm]

\draw[black,line width=1.4pt] (11.43,3.81) -- (11.43,7.62) -- (15.24,7.62) -- (15.24,3.81) -- (11.43,3.81) -- (15.24,7.62);
\draw[black,line width=1.4pt] (11.43,7.62) -- (12.7,6.35) -- (12.7,7.62);
\draw[black,line width=1.4pt] (12.7,6.35) -- (13.97,7.62);
\draw[black,line width=1.4pt] (12.7,6.35) -- (15.24,7.62);
\draw[black,line width=1.4pt] (11.43,5.08) -- (12.7,5.08) -- (11.43,6.35);
\draw[black,line width=1.4pt] (11.43,7.62) -- (12.7,5.08) -- (12.7,6.35) -- (13.97,6.35) -- (15.24,6.35);
\draw[black,line width=1.4pt] (15.24,5.08) -- (13.97,6.35) -- (15.24,3.81) -- (13.97,5.08) -- (13.97,3.81);
\draw[black,line width=1.4pt] (13.97,6.35) -- (13.97,5.08) -- (12.7,3.81);
\draw[black,line width=1.4pt] (12.7,5.08) -- (13.97,5.08) -- (11.43,3.81);

\draw[color=black,fill=black!50,line width=1.5pt] (11.43,5.08) circle (4pt);
\draw[color=black,fill=black!50,line width=1.5pt] (11.43,3.81) circle (4pt);
\draw[color=black,fill=black!50,line width=1.5pt] (12.7,3.81) circle (4pt);
\draw[color=black,fill=black!50,line width=1.5pt] (12.7,5.08) circle (4pt);
\draw[color=black,fill=black!50,line width=1.5pt] (12.7,6.35) circle (4pt);
\draw[color=black,fill=black!50,line width=1.5pt] (13.97,5.08) circle (4pt);
\draw[color=black,fill=black!50,line width=1.5pt] (13.97,3.81) circle (4pt);
\draw[color=black,fill=black!50,line width=1.5pt] (11.43,6.35) circle (4pt);
\draw[color=black,fill=black!50,line width=1.5pt] (13.97,6.35) circle (4pt);
\draw[color=black,fill=black!50,line width=1.5pt] (11.43,7.62) circle (4pt);
\draw[color=black,fill=black!50,line width=1.5pt] (12.7,7.62) circle (4pt);
\draw[color=black,fill=black!50,line width=1.5pt] (13.97,7.62) circle (4pt);
\draw[color=black,fill=black!50,line width=1.5pt] (15.24,7.62) circle (4pt);
\draw[color=black,fill=black!50,line width=1.5pt] (15.24,6.35) circle (4pt);
\draw[color=black,fill=black!50,line width=1.5pt] (15.24,5.08) circle (4pt);
\draw[color=black,fill=black!50,line width=1.5pt] (15.24,3.81) circle (4pt);
\end{tikzpicture}%
   \caption{Regular vs. non-regular subdivisions.}
  \label{fig:nonRegular}
\end{figure}
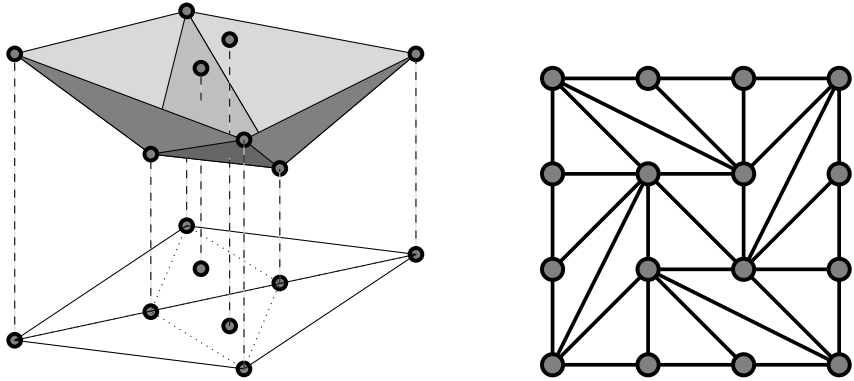

\subsection{Why? Who?} \label{sec:why} 

In this section, we present some applications of unimodular triangulations and closely related
objects (``Why?''), arranged by mathematical discipline (``Who?'').

\subsubsection{Enumerative combinatorics}
\label{sec:enum_combi}

Many counting problems can be phrased as counting lattice points in (dilates) of polytopes or
polyhedral complexes~\cite{deLoeraSurvey,CCD}.  By a fundamental result of Ehrhart, the number of
lattice points in positive dilates $kP$ of $P$ is a polynomial function of degree $d$ in $k \in
\Z_{>0}$~\cite{EhrhartII,CCD}. Consequently, the generating function has the special form:
\begin{align*}
  \sum_{k \ge 0} \#(kP \cap \Z^d) \ t^k \ = \ \frac{h^*(t)}{(1-t)^{d+1}}\,,
\end{align*}
where $h^*(t)$ is a polynomial of degree $\le d$.  If $P$ has a unimodular triangulation, $h^*$
equals the combinatorial $h$-polynomial of that triangulation~\cite{BetkeMcMullen}. This means if
different unimodular triangulations exist they have the same $f$-vector.

For example, the chromatic polynomial of a graph can be interpreted as the Ehrhart polynomial of the
complement of a hyperplane arrangement in the 0/1-cube~\cite{BeckZaslavsky}. This hyperplane
arrangement is compatible with the standard unimodular triangulation of the cube as the order
polytope of an anti-chain (see Section~\ref{sec:type-A-facets}). This can be used to compute the
chromatic polynomial in terms of Steingr\'{\i}msson's coloring complex~\cite{Steingrimsson}. Hersh
and Swartz used this to derive rather strong conditions for a polynomial to be a chromatic
polynomial of a graph \cite{HershSwartzColoring}.  A very similar argument applies for polynomials
counting nowhere-zero flows or nowhere-zero tensions~\cite{BreuerDall} and even for the Tutte
polynomial of a graph~\cite{BreuerSanyal}.

In combinatorial representation theory, Littlewood-Richardson coefficients in type $A$, and more
general Clebsch Gordan coefficients in other types, can be represented as the number of lattice
points in specific polytopes (\cite{BerensteinZelevinskyI,BerensteinZelevinskyII} and
\cite{KnutsonTaoSaturation,BuchSaturation}).  In type $A$, Knutson and Tao prove the Saturation
Theorem that implies Horn's Conjecture by showing that the hives polytope has an integral vertex
whenever it is non-empty.  This result would also follow from De Loera and McAllister's
conjecture~\cite[Conj.~4.5]{deLoeraMcAllisterDilation} that the so called homogenized hive matrix
has a unimodular triangulation, which they have validated up to $A_6$.

Other well known enumerative results involving unimodular triangulations include the proof of the
hook-length formula by Pak~\cite{PakHookLength} and Stanley's observation that Eulerian numbers are
volumes of hypersimplices~\cite{FoataEulerian,StanleyEulerian} (see also Section~\ref{sec:type-A-facets}).

\subsubsection{Integer programming} \label{sec:integer-programming} 

Embedding a polytope at height one as $P \times \{1\} \subset \R^{d+1}$ generates a pointed
polyhedral cone $\sigma_P \subset \R^{d+1}$. The \emph{Hilbert basis}, $\hilb$, of this cone is the
minimal generating set for the semigroup $\sigma_P \cap \Z^{d+1}$.  $\hilb$ clearly contains the set
of lattice points in $P \times \{1\}$. If the converse holds then $P$ is called \emph{integrally
  closed} or to have the \emph{integer decomposition property}.  A sufficient condition for this to happen is that $P$ has a unimodular triangulation or,
more generally, that $P$ can be covered by unimodular simplices.  In fact, there is a hierarchy of
covering properties interpolating between integrally closed polytopes and polytopes with unimodular
triangulations (see Section ~\ref{sec:hierarchy}).

This Hilbert basis hierarchy appears in integer programming in two guises: test sets and TDI
systems.  For the first, suppose we want to solve, for a fixed matrix $A \in \Z^{d \times n}$, the
family of integer programs $\operatorname{IP}(\vb; \vc; \vu)$ given by
$$\min \left\{ \, \langle \vc,\vx \rangle \ : \ \vx \in \Z^n,\
  A\vx=\vb,\ \0 \le \vx \le \vu \, \right\}$$ for varying $\vb \in \Z^d$, $\vu \in \Z^n$, $\vc \in
(\Z^n)^*$. For every sign pattern $\vvarepsilon \in \{\pm1\}^n$ consider the pointed polyhedral cone
\begin{align*}
  \sigma_\vvarepsilon = \{ \, \vx \in \R^n \ : \ A\vx=\0, \ \varepsilon_ix_i \ge \0 \ \text{ for } \
  i=1,\ldots,n \, \}
\end{align*}
together with its Hilbert basis $\hilb_\vvarepsilon$. Then the \emph{Graver basis} $\bigcup
\hilb_\vvarepsilon$ is a test set for our family of IPs in the sense that for every feasible
non-optimal point we find an improving vector in our finite test set
\cite{nonStdIP,SeboeHilbert,HenkWeismantel,MR3024570}.

TDI systems~\cite[\pS22]{Schrijver} are closely related.  A system of linear inequalities $A\vx \le \vb$ with integer coefficients is
called \emph{totally dual integral} (\!\emph{TDI}) if for every $\vc$ such that the dual linear program
(LP)
\begin{align*}
\min \{ \, \langle \vy, \vb \rangle \ : \ \vy A=\vc, \ \vy \ge \0\, \}
\end{align*}
 is bounded, there
exists an integral dual optimal solution.

This property is equivalent to the condition that the active constraints for every face of the
feasible polyhedron
\begin{align*}
P \ =\ \{ \, \vx \ : \ A\vx \le \vb \, \}
\end{align*}
form a Hilbert basis of the normal
cone~\cite[\pS22.3]{Schrijver}.  This is guaranteed, for example, if the normal fan of $P$ can be 
refined to a regular triangulation using no additional rays than the rows of $A$. If $A\vx \le \vb$ is TDI and $\vb$ is integral then $P$ is a lattice polytope.

A particularly nice special class of TDI systems are those with a totally unimodular constraint
matrix $A$. A matrix $A\in\R^{m\times n}$ is \emph{unimodular} if every maximal minor is in $\{0,
\pm1\}$, and it is \emph{totally unimodular} if this holds for all minors. Note that negating
columns or rows and adding unit vectors to the matrix does not change total unimodularity. There are various criteria to check whether a given matrix $A$ is totally unimodular, see e.g.~\cite[\pS19.2]{Schrijver} for a list and references. A system
$A\vx \le \vb$ is TDI for every integral right hand side $\vb$ if and only if $A$ is totally
unimodular. 

Surprisingly, many interesting families of polytopes can be defined by totally unimodular constraint matrices.  In fact, incidence matrices of
bipartite graphs, incidence matrices of directed graphs, and sub-configurations of the root system
$\rootA_{n-1}$ are totally unimodular. For the former see Section~\ref{sec:characteristic-polytopes}, and for the latter class, which includes order polytopes and hypersimplices see Section \ref{sec:type-A-facets}. Furthermore, we consider polytopes spanned by the columns of incidence matrices in Sections~\ref{subsec:anarcpolytope} and \ref{sec:edge-polytopes}.

Suppose we want to minimize a linear functional $\vc \in \R^n$ subject to constraints $A\vx=\vb$,
and $\vx \ge \0$.  We can interpret the columns of the $d \times n$ matrix $A$ as vectors in
$\R^d$. Clearly, the system is (LP-)feasible if and only if $\vb$ belongs to the cone generated by
these columns of $A$.  If we assume that this cone is pointed, then we can consider $c_i$ as a
weight of the $i$-th vector so that $\vc$ induces a regular subdivision of the cone. If this
subdivision happens to be a unimodular triangulation, then the LP-optimum equals the IP-optimum for
any $\vb$. This is because a feasible $\vb$ has a non-negative integral representation in ``its''
cone, and the PL function induced by $\vc$ is convex.

\subsubsection{Commutative algebra}  \label{sec:Commutative algebra}

Many properties of combinatorial objects have direct translations to algebraic objects like
semigroup algebras, monomial ideals, toric varieties, and singularities, via the correspondence
\begin{eqnarray*}
  \text{ lattice point } & & \text{ Laurent monomial } \\
  \va=(a_1,\ldots,a_d) \in \Z^d & \quad\longleftrightarrow\quad &
  \vx^\va:=x_1^{a_1} \cdot\ldots\cdot x_d^{a_d} \in
  \mathbbm{k}[x_1^{\pm1},\ldots,x_d^{\pm1}]
\end{eqnarray*} 

Commutative algebraists are interested in the properties of the graded semigroup ring $R_P =
\mathbbm{k}[\sigma_P \cap \Z^{d+1}]$. For example, $P$ is integrally closed if and only if the
domain $R_P$ is generated in degree one.

Normality is a closely related notion. Consider the subring $\tilde R_P \subseteq R_P$ generated by
the degree one piece. $P$ is called \emph{normal} if $\tilde R_P$ is normal (integrally closed in
its quotient field).  That is, $P$ is normal if $\mathbbm{k}[\sigma_P \cap \Lambda]$ is generated in
degree one, where $\Lambda \subseteq \Z^{d+1}$ is the sublattice generated by $(P \times \{1\}) \cap
\Z^{d+1}$~\cite[Def.~2.59]{bgBook}.

Further, there is a close connection between the Gr\"obner bases of the defining ideal $\ideal_P$ of
$\tilde R_P = \mathbbm{k}[x_1 \ldots x_n]/\ideal_P$ ($n = |P \cap \Z^d|$) and regular triangulations
of $P$.  In particular, if $P$ has a regular unimodular triangulation $\T$, then $R_P=\tilde R_P$
and, moreover, there is a Gr\"obner consisting of binomials and with leading terms given by the
minimal non-faces of $\T$ (this correspondence is demonstrated in
Section~\ref{sec:toric-groebner-bases}).  The degree of each binomial in the Gr\"obner basis is
equal to the size of the corresponding non-face.  So, triangulations provide degree bounds for
Gr\"obner bases.  This makes the search for quadratic triangulations of particular interest, as the
existence of such a triangulation guarantees the existence of a quadratic Gr\"obner basis which in
turn shows that the algebra $\tilde R_P= R_P$ is Koszul (i.e. $\mathbbm{k}$ has a linear free
resolution as an $R_P$-module)~\cite{MR1421076,EisenbudReevesTotaro}.

\subsubsection{Algebraic geometry}
\label{sec:why:algebraic-geometry}

Algebraic geometry associates two objects with~$P$ \cite{Ewald,FultonToric,Oda}.  First there is the
affine Gorenstein toric variety
\begin{equation*}
  U_P = \spec \mathbbm{k}[\sigma_P^\vee \cap \Z^{d+1}] ,
\end{equation*}
where $\sigma_P^\vee$ is the cone dual to $\sigma_P$.  Here a unimodular triangulation of $P$
corresponds to a crepant desingularization of $U_P$, which is projective if and only if the
triangulation is regular. These crepant birational morphisms have been used to reduce canonical
singularities to $\Q$-factorial terminal singularities, and to treat minimal models in high
dimensions. They appear in the high-dimensional McKay correspondence~\cite{IR,ReidMcKay} for
Gorenstein quotient singularities $\C^d/G$, proven by Batyrev~\cite{BatyrevMacKay}. Moreover, a
one-to-one correspondence of McKay-type also holds for triangulation induced resolutions of
$U_P$~\cite{BD}.

One of the earliest, and to this day one of the most striking, results involving unimodular
triangulations is the stable reduction theorem of Kempf, Knudsen, Mumford and
Saint-Donat~\cite{KKMS}. They showed that in characteristic zero, every one-dimensional family can
be resolved so that the exceptional locus is a normal crossing divisor.  They achieved this by
reducing the statement to the case of ``toroidal'' singularities. The combinatorial core of the
argument is the statement that every lattice polytope has some dilation admitting a regular 
unimodular triangulation~\cite{KKMS3}. This is the content of Theorem~\ref{thm:KMW-effective}, 
which we discuss in detail in Section~\ref{sec:kmw}.  

The other algebraic geometric object associated with $P$ is the projectively embedded toric variety
\begin{equation*}
  X_P = \proj \mathbbm{k}[\sigma_P \cap \Z^{d+1}] = \proj R_P .
\end{equation*}
Here the polytope $P$ specifies an ample line bundle $L_P$ on $X_P$. (see
\cite[Section~3.4]{FultonToric}.) If $X_P$ is smooth (i.e., the normal fan of $P$ is unimodular),
then $L_P$ is very ample, and provides an embedding $X_P \hookrightarrow \mathbb{P}^{n-1}$, where $n
= \# (P \cap \Z^d)$. The two following questions about the defining equations of a smooth $X_P
\subset \mathbb{P}^{n-1}$ have been around for quite a while, but the origins are hard to track.

First, if $P$ is a lattice polytope whose corresponding projective toric variety is smooth, is the
defining ideal $\ideal_P$ generated by quadrics?  And secondly, is the embedding $X_P
\hookrightarrow \mathbb{P}^{n-1}$ of a smooth $X_P$ \emph{projectively normal} (i.e. is $R_P$ generated
in degree one)?

Both questions have a positive answer for polytopes with a quadratic triangulation (see Section
~\ref{sec:quadratic}), both questions have a negative answer without the smoothness assumption.
(See Knutson's section in~\cite{cottonwood} for a historical discussion and partial results.)

\subsubsection{A hierarchy of covering properties} \label{sec:hierarchy}

The property of admitting a unimodular triangulation embeds into a large hierarchy of algebraic and
convex geometric properties.
We list some of the more combinatorial ones in decreasing strength.  Compare~\cite[p.~2097f]{MFO04}
and \cite[p.~2313ff]{MFO07}.
\begin{enumerate}
\item $P \cap \Z^d$ is totally unimodular
\item $P$ is compressed (see Section~\ref{sec:paco})
\item $P$ has a regular unimodular triangulation
\item $P$ has a unimodular triangulation
\item $P$ has a unimodular binary cover (a cycle generating $H_d(P,\partial P; \Z_2)$ formed by
  unimodular simplices)
\item $P$ has a unimodular cover
\item $\sigma_P$ is integrally closed and has a free Hilbert cover (every lattice point is a
  $\Z_{\ge 0}$-linear combination of linearly independent lattice points in $P \times
  \{1\}$) \label{item:FHC}
\item $P$ is integrally closed
\end{enumerate}
All the implications ($i$) $\Rightarrow$ ($i+1$) are strict~\cite{BG}.  See~\cite{firlaZieglerHilbertBases99} for other
covering properties.

Property~(\ref{item:FHC}) is known to be equivalent to $\sigma_P$ being integrally closed and satisfying the \emph{integral Caratheodory property}~\cite{BG}: every lattice point is a $\Z_{\ge 0}$-linear combination of $\dim C$ many lattice points in $P \times \{1\}$).  

Most of this hierarchy's properties have direct translations into an algebraic language about $R_P$
and $\ideal_P$. However, there are a few additional algebraic properties which do not quite fit and
have their own shorter hierarchy.
\begin{enumerate}
\item[(1')] $P$ has a quadratic triangulation (see \pS\ref{sec:quadratic})
\item[(2')] $\ideal_P$ has a quadratic Gr\"obner basis
\item[(3')] $R_P$ is a Koszul algebra ($\mathbbm{k}$ has a linear free resolution as an
  $R_P$-module)
\item[(4')] $\ideal_P$ is generated by quadrics
\end{enumerate}
The two hierarchies are linked by the fact that a quadratic triangulation is a regular unimodular
triangulation.

\subsection{What is new?}
\label{sec:results}

In this section we will give a brief summary of the results contained in this paper that have not
appeared elsewhere, or not appeared in this form.

\subsubsection{Section~\ref{sec:methods}}
In Section~\ref{sec:pulling} we clarify the relationship between two notions of pulling that have been
used in the literature, and sometimes confused with one another. We also show a general procedure
for obtaining regular, unimodular and/or flag triangulations of (some) lattice polytopes, by first
dicing by hyperplanes and then refining.  We show that all polytopes with unimodular facet matrices
have regular, unimodular triangulations. However, the $(3\times 3)$-Birkhoff polytope shows that not
all have quadratic ones.

Section~\ref{sec:chimney} introduces pull-back and push-forward
techniques for constructing unimodular triangulations. These methods
were announced in \cite{HPchimneys}. Here we give extended versions of
the constructions with full proofs. We use this to construct regular
unimodular triangulations for smooth reflexive polytopes. An update on
these results is given in Section~\ref{subsubsec:reflexive}.

We prove that (regular and unimodular) triangulations of polytopes can
be extended to give (regular and unimodular) triangulations of their
products, joins, and some other constructions
(Section~\ref{sec:joins-and-products}). 
In particular, we rework and extend 
to the case of non regular triangulations a
result of Ohsugi and Hibi about nested configurations, for which we
introduce the notion of a semidirect product of polytopes~\cite{0907.3253}.
We also extend a result of
Sullivant~\cite{SullivantToricFiber} on toric fiber products to the
case of positive codimension.

Section~\ref{sec:toric-groebner-bases} contains a  direct proof
of the correspondence between regular unimodular triangulations
and square-free initial ideals. Finally, in Section~\ref{sec:Hilbert}
we briefly recall the relation between toric Hilbert schemes,
Gr\"obner bases and unimodular triangulations. We briefly reproduce the
example of a disconnected toric Hilbert scheme by Santos, based in the
root configuration of type $\rootF$.

\subsubsection{Section~\ref{sec:Examples}}
Here we  give several new examples of families of polytopes that do (or
do not) have regular unimodular triangulations, and some with additional
properties.

We start by considering polytopes whose facet normals belong to a root
system (Section~\ref{sec:cut-by-roots}). Those of type $\rootA$ can be
triangulated via hyperplane dicing, which implies that they have
quadratic triangulations. For type $\rootB$ we show that the
hyperplane subdivision by short roots can be extended to produce a
regular unimodular triangulation (but not necessarily a quadratic
one). In contrast, we show examples of polytopes cut out by
hyperplanes of type $\rootE_8$ and $\rootF$ and which do not have
unimodular triangulations. In fact, these examples are not even integrally
closed.

Section~\ref{sec:spanned-by-roots} compiles several results about
polytopes whose vertices belong to a root system, giving a unified
approach to and sometimes shorter proofs of known results. 

Smooth $(3\times 3)$-transportation polytopes have been studied in
\cite{TransportPolys} using hyperplane subdivision. Here we extend
this with a theorem of Lenz proving that all simple $3\times 4$
transportation polytopes have  quadratic triangulations
(Proposition~\ref{prop:lenz}).  In Section~\ref{sec:empty} we show
that smooth empty lattice polytopes are products of unimodular
simplices.

\subsubsection{Section~\ref{sec:kmw}}
This section revolves around dilations of lattice polytopes, including the proof 
of our effective version of the Knudsen-Mumford-Waterman Theorem (Theorem~\ref{thm:KMW-effective}).
As the original one, our proof is based on
induction on the maximum volume of a simplex in an initial
triangulation of $P$ 

Before that, in Section~\ref{sec:kmwhypersimplex}  we
study refinements of dilations of unimodular triangulations. In
particular, we show a new procedure to quadratically refine
a dilation of a quadratic triangulation (Theorem~\ref{thm:unimodular-dilations}).

\subsection{What is not here}
\label{sec:whatnot}

Although we try to give a thorough overview of methods for obtaining unimodular triangulations, we
cannot cover everything in this paper. In particular, we restrict our attention to (geometric)
triangulations of lattice polytopes. We do not cover subdivisions of cones or combinatorial
abstractions via oriented matroids. We also do not consider methods for modifying a triangulation via
flips.

There are a number of computational tools available to test for and explore triangulations,
secondary fans, etc. Among the prominent tools are the software package \texttt{normaliz2} of Bruns,
Ichim, and S\"oger \cite{normaliz2,1211.5178} for the computation of normalizations of cones;
\texttt{4ti2} of Hemmecke, K\"oppe, and DeLoera~\cite{4ti2} for the computation of generating sets
of integer points in polyhedra, Hilbert and Gr\"obner bases; \texttt{TOPCOM} by
Rambau~\cite{Rambau:TOPCOM-ICMS:2002} for various types of triangulations; \texttt{LattE Integrale}
by De Loera et al.\ for computing Ehrhart series; The software
\texttt{polymake}~\cite{polymake-sw,GJ00} offers fast enumeration of lattice points and extensive
computations with polyhedra, fans, and triangulations. For the toric methods explained in
Section~\ref{sec:toric-groebner-bases} one should also consider the computer algebra packages
\texttt{Singular}~\cite{GPS}, \texttt{Macaulay2}~\cite{M2}, and \cocoa~\cite{CS}. The software
\texttt{polymake} offers many computations for toric ideals and varieties via
extensions~\cite{polymake-algebra,polymake-toric}.

Finally, this survey also does not comment on connections between the emerging field of tropical
geometry and lattice polytopes, their subdivisions, and the secondary fan. For this, see
for example~\cite{JoswigTropicalCombinatorics}.

\subsection{What is left?}
\label{sec:questions}
To close this introduction we here compile several  open questions on (regular, unimodular, or flag) triangulations of lattice polytopes.  
One general open question is: how special is it for a lattice polytope to have a unimodular
triangulation? One way to make this precise is as follows.

\begin{question}
\label{question:most_do_not}
For a given fixed dimension $d$ and
volume $V$ consider the set of lattice $d$-polytopes of normalized volume at most $V$, modulo
unimodular equivalence, which is a finite
set~\cite{ArnoldLatticePolytopeConjecture,BaranyYuan,BaranyVershik}.

Let $u(d,V)\in [0,1]$ denote the fraction of them that admit unimodular triangulations. We conjecture that
\[
\limsup_{V\to \infty} \sqrt[V]{u(d,V)} < 1,
\]
for every fixed $d\ge 3$.
\end{question}

Other questions we think are important are the following.

\subsubsection{Smoothness and normality, vs.~unimodular triangulations}
The following two questions have been mentioned already in Section~\ref{sec:why:algebraic-geometry}.
\begin{itemize}
\item  Does every smooth polytope have a unimodular triangulation, or at least a unimodular cover? A positive answer to this would imply a positive answer to the following very prominent open problem by Oda: Is every smooth projective toric variety 
\emph{projectively normal}? Both questions are open even in dimension three.
Various partial results have been proved in this direction
(see~\cite{TransportPolys,MFO07,0709.3570,0912.1068}, and computational approaches in
\cite{HaaseLP10,1206.4827}).
\item Is the ideal of every smooth projective toric variety generated by quadrics? Here, a positive
  answer would follow from the existence of a quadratic triangulation for every smooth polytope.
\end{itemize}
More generally, for every property of the hierarchies in Section~\ref{sec:hierarchy} above one can ask whether or
not all smooth polytopes satisfy that property. Every one of these questions is open in dimensions
three and above. In Section~\ref{subsubsec:reflexive} we investigate smooth reflexive polytopes
which have been classified up to dimension nine. Of the 80\,892 smooth reflexive polytopes in dimension at
most seven, there are 18 for which we could not decide the existence of a quadratic triangulation -- two
in dimension six and 16 in dimension seven.

\subsubsection{Polytopes cut out by roots}
Athough we here considerably extend the results on triangulations of polytopes cut out by
root systems (see Section~\ref{sec:cut-by-roots}), some cases are still open. In particular, it is not
known whether polytopes whose facet normals are contained in $\rootC_n$ or $\rootD_n$ have
unimodular triangulations (we expect that they do), and the same for those with facet normals in $\rootE_7$ or $\rootE_6$ do (we expect that they do not; in fact we do not expect them to be  integrally closed, in general). Further, although we show that $\rootB_n$-polytopes have a regular and unimodular
triangulation, we do not know whether they have quadratic ones.

Another natural question is as follows: the way we define polytopes cut out by roots is as those
whose normal vectors are in the root system. But one can also restrict further and look at polytopes whose normal fan
is refined by the cluster complex for that root system.
If the normal fan agrees with the cluster complex, this is a particular question of the
smoothness question above, since generalized associahedra have simplicial and unimodular normal
fans. But it is also a question that has more chances to be answered in the positive, since there is the additional machinery of root systems and cluster algebras available.
In type $\rootA$, this class contains the class of matroid polytopes and thus could shed some
light on an old conjecture of Neil White~\cite{WhiteConjecture} that the toric ideal of a matroid
polytope is generated in degree two. A partial positive answer is in~\cite{MR3197649}, but a quadratic triangulation of the matroid polytope would even yield a quadratic
Gr\"obner basis.

\subsubsection{Dilated polytopes}
There are several open questions concerning the dilation factors $c$ that make the dilation
$cP$ have a unimodular triangulation. Among them we can mention the following:
  \begin{enumerate}
  \item  Is it true that if $c_1P$ and $c_2P$ have unimodular triangulations, then $(c_1+c_2)P$ has one? That is,
  is the set of valid dilation factors a subsemigroup of $\N$? This is known to hold for empty
  simplices in dimension three~\cite{SantosZiegler}, but not in general. 
  \item Is it true that for every $P$ there is a $c_0$ such that $cP$ has a unimodular triangulation for all $c\ge c_0$? This is Problem 5 in \cite{BGTproblems}. Remember that it is not always true that  if $cP$ has a  (regular, flag) unimodular triangulation then $(c+1)P$ has one too
 (see Example~\ref{kmw:eg:c+1})
\item The bound in Theorem~\ref{thm:KMW-effective} is doubly exponential both in the dimension and
  the volume of the starting polytope. Is there a singly exponential one? Is there a bound depending
  on dimension but not on volume (a bound ``constant in fixed dimension'')? 
  Is the latter true at least in dimension four? 
  
  We do not believe a global bound (in fixed dimension) to 
exists for $d \ge 5$. However, given a triangulation of a $d$-polytope $P$ in which no
simplex has volume greater than $V$, we believe that a dilation factor $c$ of
about $ d^V$ should be sufficient for $cP$ to have a unimodular triangulation.

\end{enumerate}

All these questions have a positive answer if we only ask for unimodular
 \emph{covers} rather than triangulations. In this case for every lattice polytope $P$ there is 
a threshold $c_P$ such that $cP$ has a unimodular cover if and only if $c\ge c_P$. Different polytopes may have different thresholds, but for every dimension there is a common $c_d$ upper bounding the thresholds of all $d$-polytopes and depending polynomially on $d$~\cite[Theorem 3.23]{bgBook}. 
  
 We remark that for the IDP property there are polytopes where the valid dilations do not form a semigroup: it follows from~\cite{LasonMichalek}
  that for each $k$ there is a $(2k-1)$-dimensional lattice polytope $P$ such that $cP$ is IDP if, and only if, $c$ is not a proper divisor of  $k$. For example, there is a $49$-dimensional polytope such that $2P$ and $3P$ are IDP but $5P$ is not. See details in Example~\ref{kmw:eg:c1+c2}.
  
\subsubsection{Other questions}

Lattice parallelepipeds and, more generally, lattice zonotopes are known to be IDP. Is it true
  that they all have unimodular triangulations?

Lecture hall simplices (see Section~\ref{sec:lecture-hall}) arise very naturally in combinatorics
and have relations to partitions of integers. In their original form, $\lecturehallsimplex{d+1}$ has
recently been shown to have a quadratic triangulation. But for their generalizations
$\slecturehall{d+1}$ (see  \eqref{eq:sLectureHall}) it is not known what sequences $\s =
(s_i)_{i=1}^d$ yield polytopes admitting unimodular triangulations.
More generally, which $\s$-order polytopes $O(\preccurlyeq,\s)$ have
unimodular triangulations?

Do the homogenized hive matrices of De Loera and McAllister~\cite{deLoeraMcAllisterDilation} always
have unimodular triangulations?
(see Section~\ref{sec:enum_combi} for details).

\section{Methods} %
\label{sec:methods}

\subsection{Pulling Triangulations} 
\label{sec:pulling}
\label{sec:paco}
Pulling refinements are a useful tool for constructing regular
triangulations. Two distinct versions of pulling refinements appear in
the literature. We will call them weak pulling and strong pulling.

\subsubsection{Weak and Strong Pulling}
\label{sec:weakstrong}

We first discuss strong pulling. 
If $\S$ is a  subdivision of $P$ and $\vm \in  P \cap \Z^d$ is
a lattice point in $P$, the \emph{strong pulling 
  refinement} $\pull_{\vm}\S$ is obtained by replacing  every face $F
\in \S$  containing $\vm$ by the pyramids $\conv(\vm,F')$, for each
face $F'$ of $F$ which does not contain $\vm$.

Here are some facts about the structure of pulling subdivisions.
\begin{lemma} \label{lemma:strongPulling} Let $P$ be a lattice
  polytope with a subdivision $\S$.
  \begin{enumerate}
  \item Strong pulling preserves regularity.
  \item Strongly pulling all lattice points of $P$ in any order
    results in a full triangulation.
  \item If only vertices of $P$ are pulled, then every maximal cell of
    the refinement will be
    the join of the first pulled vertex $\vv_1$ with a maximal cell in
    the pulling subdivisions of the facets not containing $\vv_1$.
  \end{enumerate}
\end{lemma}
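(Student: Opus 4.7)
The plan is to prove the three parts in the order $(1) \to (3) \to (2)$, since (2) uses (3) while (1) stands alone.

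For part (1), start with $\S = \S_\vomega$ and assume $(\S,\vomega)$ is tight. Define a perturbed weight $\vomega'$ agreeing with $\vomega$ everywhere except at $\vm$, where $\omega'_\vm = \omega_\vm - \epsilon$ for a sufficiently small $\epsilon > 0$. Geometrically, this drops the lifted point $(\vm,\omega_\vm) \in \tilde P$ to $(\vm,\omega_\vm - \epsilon)$. For $\epsilon$ small enough, the lower envelope of the new lifted polytope coincides with that of $\tilde P$ outside the closed star of $(\vm,\omega_\vm)$, and inside the star each bounded face $\tilde F$ containing the old lifted point is replaced by pyramids with apex $(\vm,\omega_\vm-\epsilon)$ over the bounded faces of $\tilde F$ that do not contain the old lifted point. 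Projecting these pyramids to $\R^d$ yields $\pull_\vm \S$, so this refinement is regular with weight $\vomega'$.

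For part (3), I would induct on the number $k$ of vertices pulled after $\vv_1$, starting from the trivial subdivision $\{P\}$. The base case $k=0$ is the definition of strong pulling applied to $\vv_1$: the maximal cells are $\conv(\vv_1,F)$ with $F$ a facet of $P$ not containing $\vv_1$. For the inductive step, assume after pulling $\vv_1,\ldots,\vv_k$ every maximal cell has the form $\conv(\vv_1,G)$ with $G$ a maximal cell in the pulling refinement of some facet of $P$ not containing $\vv_1$. When we pull $\vv_{k+1}$, only the cells with $\vv_{k+1} \in G$ are affected. The facets of $\conv(\vv_1,G)$ not containing $\vv_{k+1}$ are precisely $\conv(\vv_1,H)$ for facets $H$ of $G$ missing $\vv_{k+1}$, so the pulling replaces $\conv(\vv_1,G)$ with pyramids $\conv(\vv_{k+1},\conv(\vv_1,H)) = \conv(\vv_1,\conv(\vv_{k+1},H))$. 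By the definition of pulling applied to $G$, the polytopes $\conv(\vv_{k+1},H)$ are exactly the maximal cells of the refined pulling subdivision of the facet, closing the induction.

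For part (2), combine (3) with induction on $d$. First, pulling all vertices of $P$ produces a triangulation: by (3) every maximal cell is $\conv(\vv_1,G)$ with $G$ in the pulling refinement of a facet $F$ (whose vertices, being vertices of $P$, have all been pulled); the inductive hypothesis applied to $F$ makes $G$ a simplex, and affine independence of $\vv_1$ from $\aff(F)$ makes $\conv(\vv_1,G)$ a simplex. Second, pulling any lattice point in a triangulation preserves simpliciality, since each simplex $\sigma$ containing $\vm$ is replaced by pyramids $\conv(\vm,\sigma')$ over proper faces $\sigma'$ not containing $\vm$, which are simplices by affine independence. Iterating, pulling the remaining lattice points after the vertices yields a full triangulation. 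For an arbitrary order, first verify (by induction on the number of pullings) that each pulled point $\vm$ remains a vertex of every cell containing it: when another $\vm' \neq \vm$ is pulled, any cell $C$ containing both $\vm$ and $\vm'$ is split into pyramids $\conv(\vm',C')$ for faces $C'$ of $C$ missing $\vm'$, and whenever $\vm \in C'$ the point $\vm$ remains a vertex of the pyramid. Combined with the simpliciality-preservation above, this gives the conclusion for any order.

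The main obstacles are the ``sufficiently small $\epsilon$'' choice in (1), which amounts to staying below the minimal gap between lifted values on each cell containing $\vm$, and the order-independence claim in (2), which reduces to the vertex-preservation fact sketched above combined with the observation that strong pulling of a simplex always yields simplices.
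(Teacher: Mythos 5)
Your parts (1) and (3) are sound and close to the paper: (1) is precisely the paper's perturbation argument (the paper lowers $\omega_\vm$ to the value of the lower envelope at $\vm$ minus $\epsilon$, which is the same as your tightness normalization followed by $\omega_\vm-\epsilon$), and your induction for (3) just spells out what the paper dispatches in one line.

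The genuine gap is in part (2). The statement concerns an \emph{arbitrary} initial subdivision $\S$ and an \emph{arbitrary} order on all lattice points, but your triangulation argument only covers the trivial subdivision with the special order ``vertices of $P$ first'': for a general $\S$, pulling the vertices of $P$ leaves untouched every cell of $\S$ in which they are not apexes (or which contains no vertex of $P$ at all), so it need not yield a triangulation, and your simpliciality-preservation lemma, which presupposes that the current subdivision is already a triangulation, never gets traction along an arbitrary order. More fundamentally, the invariant you propagate --- each pulled point remains a \emph{vertex} of every cell containing it --- is too weak to finish: at the end it only says that every lattice point of a final cell is a vertex of that cell, and a lattice polytope all of whose lattice points are vertices need not be a simplex (an empty unit square, for instance). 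The paper's proof propagates the stronger \emph{apex} invariant: after $\vm$ is pulled, every face containing $\vm$ is a pyramid with apex $\vm$, and if $\vn$ is an apex of a cell $Q$, then $\vn$ remains an apex of every face of $\pull_\vm\S$ inside $Q$ containing $\vn$ (a face of a pyramid through its apex is again such a pyramid, and $\conv(\vm,F')$ with $\vn\in F'$ is again a pyramid with apex $\vn$). Once all lattice points are pulled, every vertex of every cell is an apex, and a polytope in which every vertex is an apex is a simplex; fullness follows since every pulled point is a vertex. To repair your proof, upgrade your vertex-preservation step to this apex-preservation step --- the computation needed is essentially the one you already carried out in your induction for (3).
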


In particular, this Lemma guarantees that every (regular) lattice subdivision of a
lattice polytope has a (regular) refinement which is a full
triangulation. 

\begin{proof}
  {\em (1)\/}: Let $\S$ be the regular subdivision of $P$ given by a weight vector 
 $\vomega \in \R^\configuration$ ($\configuration = P \cap
  \Z^d$). By definition of regularity,
  the lower faces of the lifted polyhedron $\tilde P = \conv ( \va \times
  [\omega_\va, \infty) \ : \ \va \in \configuration )$ in $\R^{d+1}$ project to $\S$. 
  Let $\m \in \configuration$ and set $\omega'_\m = \min \{ h \ : \
  (\m,h) \in \tilde P \} - \epsilon$ and $\omega'_\va = \omega_\va$
  for all $\va \in \configuration \setminus \{\vm\}$. Then, for small
  enough $\epsilon > 0$, the pulling refinement $\pull_\vm(\S)$ is
  induced by the weights $\vomega'$.
  
  {\em (2)\/}: Every face of $\pull_\vm(\S)$ containing $\vm$ is a
  pyramid with apex $\vm$. If $Q \in \S$ has $\vn$ as an apex, then
  every face of $\pull_\vm(S)$ inside $Q$ and containing $\vn$ still has
  $\vn$ as an apex. After strongly pulling all lattice points, each
  lattice point is a vertex of the subdivision, 
  and every vertex of every cellos an apex. So, each cell is a simplex. 
  
  {\em (3)\/}: If we apply the previous argument to the trivial
  subdivision of $P$, $\vv_1$ must be an apex of every cell. 
\end{proof}

The other notion of pulling, weak pulling, arises
in the context of subdivisions of  point configurations~\cite{deLoeraRambauSantos}
(or, equivalently, subdivisions of marked polytopes~\cite{GKZ}).
Basically, in a subdivision of a point configuration 
each face $F \in \S$ is determined by the convex hull of some
(possibly full) subset of its lattice points, and weak pulling treats
lattice points in this set differently than those not in this set for
each $F$. 

For example, consider regular subdivisions $\S_1$ and $\S_2$ of
the one-dimensional configuration $\configuration=\{1,2,3,4\} \subset
\Z^1$ obtained from the weight vectors $\vomega_1=(0,0,0,1)$ and
$\vomega_2=(0,1,0,1)$ respectively.  They both consist of the segments $[1,3]$ and
$[3,4]$ but in $\S_1$ the segment $[1,3]$ appears as the convex hull
of $1$, $2$ and $3$ while in $\S_2$ it appears as the convex hull of
$1$ and $3$ alone. When looking at the set of all subdivisions of
$\configuration$ as a whole, there are good reasons to consider $\S_1$
and $\S_2$ as different, $\S_1$ not being a triangulation and $\S_2$
being a triangulation that refines $\S_1$.

In this context, when performing a pulling at $\vm$ it is natural to
remove  the
lattice points in each pyramid $\conv(\vm,F')$ from the list of points
that can be pulled later, but not those in its base
$F'$. This is what we call \emph{weak pulling\/}.
In the setting of toric algebra as seen in
Section ~\ref{sec:toric-groebner-bases}, the weak pulling triangulation
corresponds to the reverse lexicographic term order.
Figure~\ref{fig:pulling} illustrates the difference of the two
versions. 
\begin{figure}[tb]
  \centering
\begin{tikzpicture}[y=-1cm, scale=.3]

\draw[black] (6.35,21.59) -- (16.51,16.51);
\draw[black] (11.43,21.59) -- (16.51,16.51);
\draw[black] (6.35,21.59) -- (11.43,16.51);

\draw[black] (16.51,21.59) -- (11.43,21.59);
\draw[black] (6.35,21.59) -- (11.43,21.59);

\filldraw[black] (6.35,21.59) circle (0.5cm);
\filldraw[black] (11.43,21.59) circle (0.5cm);
\filldraw[black] (6.35,16.51) circle (0.5cm);
\filldraw[black] (11.43,16.51) circle (0.5cm);
\filldraw[black] (16.51,16.51) circle (0.5cm);
\filldraw[black] (16.51,21.59) circle (0.5cm);
\draw[black] (6.35,21.59) -- (6.35,16.51);
\draw[black] (16.51,21.59) -- (16.51,16.51) -- (6.35,16.51);
\path (11.5,15.24) node[text=black,anchor=base] {$\vm_3$};
\path (6.35,24) node[text=black,anchor=base] {$\vm_1$};
\path (11.5,24) node[text=black,anchor=base] {$\vm_2$};

\end{tikzpicture}%
   \qquad \qquad 
\begin{tikzpicture}[y=-1cm, scale=.3]

\draw[black] (6.35,21.59) -- (16.51,16.51);
\draw[black] (6.35,21.59) -- (11.43,16.51);

\draw[black] (16.51,21.59) -- (11.43,21.59);
\draw[black] (6.35,21.59) -- (11.43,21.59);

\filldraw[black] (6.35,21.59) circle (0.5cm);
\path[draw=black,fill=white] (11.43,21.59) circle (0.5cm);
\filldraw[black] (11.43,16.51) circle (0.5cm);
\filldraw[black] (16.51,16.51) circle (0.5cm);
\filldraw[black] (16.51,21.59) circle (0.5cm);
\draw[black] (6.35,21.59) -- (6.35,16.51);
\draw[black] (16.51,21.59) -- (16.51,16.51) -- (6.35,16.51);
\path (11.5,15.24) node[text=black,anchor=base] {$\vm_3$};
\path (6.35,24) node[text=black,anchor=base] {$\vm_1$};
\path (11.5,24) node[text=black,anchor=base] {$\vm_2$};

\end{tikzpicture}%
   \caption{Strongly and weakly pulling $\vm_1$, then $\vm_2$, then
    $\vm_3$.}
  \label{fig:pulling}
\end{figure}

The following is the analogue of Lemma~\ref{lemma:strongPulling} for
weak pullings. For a proof we refer to
\cite[Section 4.3.4]{deLoeraRambauSantos}:
\begin{lemma} \label{lemma:weakPulling} Let $P$ be a lattice polytope
  with a subdivision $\S$.
  \begin{compactenum}
  \item Weak pulling preserves regularity.
  \item 
    Weakly pulling all lattice points of $P$ in any order yields  a triangulation.
  \end{compactenum}
\end{lemma}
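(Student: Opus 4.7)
The plan is to prove (1) by the same infinitesimal weight perturbation used for strong pulling in Lemma~\ref{lemma:strongPulling}(1), and to prove (2) by induction on $\dim P$, exploiting the pyramid structure produced by the first pull.

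For (1), I would start from a tight regular $\S = \S_\vomega$ with lifted polyhedron $\tilde P$ and perturb $\omega_\vm$ downward, setting $\omega'_\vm := \min\{h : (\vm,h) \in \tilde P\} - \varepsilon$ and $\omega'_\va := \omega_\va$ for all other $\va$. For $\varepsilon > 0$ small enough, the only modifications to $\S$ happen inside cells containing $\vm$, and those cells get replaced precisely by the pyramids $\conv(\vm,F')$ of the pulling rule. The key extra point beyond the strong-pulling argument is to track which lattice points end up on the new lower envelope: since lowering a single weight can only lower the envelope, a lattice point strictly above the old envelope is strictly above the new one, so the only new vertex is $\vm$, and the lattice points in the relative interior of a new pyramid not lying in its base remain above the envelope. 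This matches exactly the weak-pulling prescription of retaining base points and forgetting the rest.

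For (2), I would induct on $d = \dim P$, with the cases $d \le 1$ immediate. Let $\vv_1$ be the first pulled lattice point. Weakly pulling $\vv_1$ converts each cell $C \in \S$ containing $\vv_1$ into pyramids $\conv(\vv_1,F)$ over the facets $F$ of $C$ not containing $\vv_1$, and the surviving candidates inside $C$ are $\vv_1$ together with the lattice points lying in some base $F$. Since subsequent pulls only affect cells containing the pulled point, the pulls restricted to a fixed pyramid $\conv(\vv_1,F)$ act as weak pulls on the lattice points of $F$, in the order inherited from the tail of the sequence. The inductive hypothesis applied to $F$ then yields a triangulation $\T(F)$, and joining each $\sigma \in \T(F)$ with $\vv_1$ gives a $d$-simplex since $\vv_1 \notin \aff(F)$. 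These simplices tile $\conv(\vv_1,F)$, and two adjacent pyramids agree on their shared face $\conv(\vv_1,F \cap F')$ because the triangulations of $F$ and $F'$ restrict identically to $F \cap F'$, both being the weak pull of $F \cap F'$ in the common induced order.

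The main obstacle is the bookkeeping in (2): one must verify that the removal rule is genuinely face-local, so that the lattice points surviving in a base $F$ are precisely those a direct weak pull of $F$ in the induced order would act upon, and that the inductive triangulations on adjacent facets glue consistently along their shared ridges. As an alternative route that circumvents much of this bookkeeping, one can invoke the correspondence between weak pulling and the reverse-lexicographic term order for the toric ideal of $P$ (cf.\ Section~\ref{sec:toric-groebner-bases}), from which both regularity and the triangulation property follow from standard Gr\"obner basis machinery.
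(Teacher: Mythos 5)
Your proposal is correct, and it is essentially the argument the paper outsources: the paper gives no proof of this lemma but refers to Section~4.3.4 of \cite{deLoeraRambauSantos}, and both halves of your sketch follow that route --- regularity via lowering the pulled point's weight by a small $\varepsilon$ (exactly as in Lemma~\ref{lemma:strongPulling}(1), with the extra observation about which lifted points stay on the lower envelope), and the triangulation statement via the pyramid/join structure and induction on dimension.

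Two points would tighten part (2). First, your induction is anchored at the \emph{globally} first pulled point $\vv_1$, but $\vv_1$ need not lie in every cell of $\S$, so cells missing $\vv_1$ are not covered as written. The clean fix is to organize the argument cell by cell: the restriction of the entire pulling process to any cell $C$ of $\S$ (indeed to any face $G$ of the complex at any stage) is precisely the weak pulling of the surviving points of $C$ in the induced order, so it suffices to treat a single cell together with the first point of the order lying in it, and then glue along shared faces. Second, the face-locality you flag as the main obstacle is immediate once you note that the pull at $\vm$ removes a point $\vp$ exactly when the carrier (minimal) face of $\vp$ in the current complex contains $\vm$; equivalently, $\vp$ survives iff it lies in some face not containing $\vm$, i.e.\ in a base. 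This criterion is intrinsic to any face containing $\vp$, so the points surviving in a base $F$ are exactly those a weak pulling of $F$ in the induced order would act on, and the triangulations of adjacent pyramids agree along $\conv(\vv_1, F\cap F')$ as you claim. With these adjustments the induction closes; the reverse-lexicographic/Gr\"obner detour is unnecessary (and somewhat circular, since the identification of weak pulling with the revlex order is itself proved by this combinatorial analysis).
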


In this article, we will only use pulling refinements in cases where
all lattice points in the polytope $P$ are vertices of the subdivision
$\S$. With this assumption, strong and weak pulling subdivisions
agree, so there is no ambiguity when referring to \emph{pulling
  subdivisions}.

\subsubsection{Compressed Polytopes}
Stanley calls a polytope \emph{compressed} if all its weak pulling
triangulations are unimodular~\cite{StanleyDecompositions80}. 
This clearly implies that the only lattice points in $P$ are its vertices. Because of Theorem~\ref{thm:paco} below, compressed polytopes are sometimes called \emph{2-level} polytopes.

Surprisingly many well-known polytopes fall into this category.
Examples of compressed polytopes include
the Birkhoff polytope (Section ~\ref{sec:flow-polytopes}), order
polytopes and hypersimplices (Section ~\ref{sec:type-A-facets}),
stable set polytopes of perfect graphs
(Section~\ref{sec:characteristic-polytopes}), and the integer hulls of weighted Gelfand-Tsetlin polytopes $P_{\lambda/\mu,\1}$ for weight $\1$~\cite{MR3459049}.
Atha\-na\-sia\-dis was even able to use the fact that the Birkhoff polytope is
compressed to prove unimodality of its
$h^*$-vector~\cite{AthanasiadisCompressed} (see
also~\cite{BrunsRoemer}). 

There are several characterizations of compressed polytopes. Here we present one based on
width with respect to facets. If $P$ is a lattice
polytope and  $\langle \vy_i , \vx \rangle \ge  c_i$ for
$i=1,\ldots,n$ are  the  facet defining inequalities  with primitive
integral $\vy_i$, the \emph{width} of $P$ with respect to the $i$-th facet
(or with respect to the direction of $\vy_i$) is the difference
\begin{equation*}
    \max \langle \vy_i, P \rangle\ -\ \min \langle \vy_i, P \rangle  .
\end{equation*}
In particular, $P$ has width one with respect to a facet if it
lies between the hyperplane spanned by this facet and the next
parallel lattice hyperplane.

The main implication of the following theorem is due to the fourth
author. The proof we present here is the original one (MSRI 1997,
unpublished).  It was subsequently also proven by Ohsugi and
Hibi~\cite{OHConvexPolytopesRevLex01} and by
Sullivant~\cite{SullivantCompressed}.
\begin{theorem} \label{thm:paco}
  Let $P$ be a lattice polytope.
  Then the following are equivalent:
  \begin{compactenum}
    \item $P$ is compressed.
    \item $P$ has width one with respect to all its facets.
    \item\label{thm:item:third} $P$ is lattice equivalent 
      to the intersection of a unit cube with an affine space.
  \end{compactenum}
\end{theorem}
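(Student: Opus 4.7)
The plan is to prove the equivalences by cycling $(1) \Rightarrow (2) \Rightarrow (3) \Rightarrow (1)$, with Lemma~\ref{lemma:strongPulling} doing most of the work. For $(1) \Rightarrow (2)$, I would fix a facet $F$ with primitive inward normal $\vy$ and $c = \min \langle \vy, P\rangle$, let $w$ be the width of $P$ with respect to $F$, and choose a vertex $\vv$ of $P$ achieving $\langle \vy, \vv\rangle = c + w$. Strong-pulling $\vv$ first, and then the remaining lattice points in any order, produces a unimodular triangulation $\T$ by the compressedness hypothesis. Since a compressed polytope has no non-vertex lattice points, Lemma~\ref{lemma:strongPulling}(3) applies, so every maximal simplex of $\T$ has the form $\conv(\vv, \sigma)$ with $\sigma$ in the induced pulling triangulation of some facet of $P$ avoiding $\vv$; in particular simplices with $\sigma \subset F$ do occur. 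The lattice height of $\vv$ above $\aff F$ is exactly $w$, so the lattice volume of $\conv(\vv, \sigma)$ equals $w \cdot \vol(\sigma)$, and forcing this product of positive integers to be $1$ yields $w = 1$.

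For $(2) \Rightarrow (3)$, writing the primitive facet inequalities of $P$ as $\langle \vy_i, \vx\rangle \ge c_i$ for $i = 1, \dots, n$, I would consider the affine map $\phi \colon \aff(P) \to \R^n$ defined by $\phi(\vx) = (\langle \vy_i, \vx\rangle - c_i)_{i=1}^n$. The width-one hypothesis places $\phi(P)$ inside $[0,1]^n$, and conversely any $\vx$ with $\phi(\vx) \in [0,1]^n$ satisfies every facet inequality of $P$, so $\phi(P) = [0,1]^n \cap \phi(\aff P)$ at the set level. To upgrade this to a lattice equivalence, I would verify that the primitive facet normals $\vy_i$ generate $(\Z^d)^\ast$ as an abelian group, which I expect to follow from a local analysis at a single vertex $\vv$: the primitive outgoing edge directions at $\vv$ have inner product in $\{-1, 0, 1\}$ with each $\vy_i$ by the width-one condition, and this should suffice to identify the vertex cone at $\vv$ as a unimodular cone whose facet normals sit among the $\vy_i$. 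This lattice-matching step is where I expect the main technical work to lie.

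For $(3) \Rightarrow (1)$, I would proceed by induction on $d = \dim P$, with the base case of a point or unit segment being trivial. The inductive step rests on two observations. First, the implication $(3) \Rightarrow (2)$ is immediate: in a cube slice $P = [0,1]^n \cap A$, the facet $P \cap \{x_i = \varepsilon\}$ lies on the lattice hyperplane $A \cap \{x_i = \varepsilon\}$, and the only other lattice hyperplane of $A$ parallel to it meeting $P$ is $A \cap \{x_i = 1-\varepsilon\}$, forcing width one. Second, every facet of $P$ is of the form $P \cap \{x_i = \varepsilon\}$ for some $\varepsilon \in \{0,1\}$ and is itself a cube slice in ambient dimension $n-1$, hence compressed by the inductive hypothesis. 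Combining these, in any pulling triangulation of $P$ the first-pulled vertex $\vv$ sits at lattice height one above every facet not containing it, each simplex $\sigma$ in the induced pulling of such a facet is unimodular, and so $\conv(\vv, \sigma)$ is unimodular; this shows $P$ is compressed.
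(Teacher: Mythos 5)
Your overall cycle is reasonable, and two of its three legs are fine: your $(1)\Rightarrow(2)$ is correct, and your $(3)\Rightarrow(1)$ follows the same mechanism as the paper's written-out implication $(2)\Rightarrow(1)$ (facets are again of the same type, restrictions of pullings are pullings, and each maximal cell is the join of the first-pulled vertex, at lattice height one, with a unimodular simplex in a facet). The genuine gap is in $(2)\Rightarrow(3)$. You correctly reduce the lattice equivalence to the claim that the primitive facet normals $\vy_1,\dots,\vy_n$ generate $(\Z^d)^\ast$, but the justification you sketch cannot work: compressed polytopes are in general \emph{not} simple --- the hypersimplex $\HS(3,2)$ (an octahedron), the Birkhoff polytope, and stable set polytopes of perfect graphs all have vertices lying on more than $d$ facets --- so the vertex cone at $\vv$ is typically not even simplicial and cannot be ``a unimodular cone.'' Moreover, the observation that primitive edge directions pair with each $\vy_i$ to a value in $\{-1,0,1\}$ does not by itself give lattice generation: a spanning set of $0/\pm1$ covectors can generate a proper sublattice of $(\Z^d)^\ast$, e.g.\ $(1,1,0),(0,1,1),(1,0,1)$ for $d=3$. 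Your local argument does succeed at a \emph{simple} vertex, where the pairing matrix between the $d$ edge directions and the $d$ facet normals is forced to be a permutation matrix, making both sets lattice bases; but a compressed polytope may have no simple vertex at all, so this step is a missing idea, not just unfinished bookkeeping.

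A repair that stays elementary is to induct on dimension over faces rather than localize at one vertex. If $F$ is a facet of $P$ with primitive normal $\vy_F$, then for every $j$ such that $F\cap F_j$ is a facet of $F$ the restriction $\vy_j|_{\operatorname{lin} F}$ is primitive in $\Lambda_F^\ast$, where $\Lambda_F=\operatorname{lin}(F)\cap\Z^d$ (otherwise $F$ would have positive lattice width strictly less than $1$ in that direction), and $F$ again has width one with respect to all its facets; by induction these restrictions generate $\Lambda_F^\ast$. Since $\Lambda_F$ is saturated, the restriction map $(\Z^d)^\ast\to\Lambda_F^\ast$ is surjective, so any $z\in(\Z^d)^\ast$ satisfies $z-\sum_j a_j\vy_j\in\operatorname{Ann}(\operatorname{lin}F)\cap(\Z^d)^\ast=\Z\,\vy_F$ by primitivity of $\vy_F$; hence the normals generate $(\Z^d)^\ast$ and your map $\phi$ is a genuine lattice equivalence onto $[0,1]^n\cap\phi(\aff P)$. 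One further small point: in your $(3)\Rightarrow(2)$ you should first note that all lattice points of a lattice cube section are $0/1$ points (they are vertices of the cube, hence vertices of the section); this is what makes $x_i$ restricted to the affine space primitive in the induced dual lattice, and without it the assertion that the only parallel lattice hyperplanes meeting $P$ are $x_i=0$ and $x_i=1$ is unjustified.
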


\begin{proof}
  $(2)\,\Longrightarrow\,(1)$:    By decreasing   induction  on    the
  dimension one sees that  every face of  $P$ has width one with respect to all facets. The
  restriction of  a pulling  triangulation to  any face is   a pulling
  triangulation  itself  and thus  unimodular  (by another induction).
  Hence, every maximal simplex in the triangulation of $P$ is the join
  of a unimodular  simplex in some facet  with the first lattice point
  that was pulled.

  The other implications are easy.
\end{proof}

\subsubsection{Hyperplane Arrangements} \label{sec:hyperplanes}
In this section, we apply the above characterization of compressed
polytopes to triangulate ``bigger'' polytopes using hyperplane
arrangements.

Let $\configuration:=\{\vn_1,  \ldots, \vn_r \} \subset \Z^d$ be a
collection of 
vectors that span $\R^d$ and form a unimodular matrix, (i.e. such that
all ($d \times d$)--minors of $\configuration$  are either zer0, one, or negative one).
Such a collection induces an infinite arrangement of hyperplanes
\[
 \{ \vx \in \R^d \ : \ \langle \vn_i, \vx \rangle = k \} \quad
\text{for} \quad i=1, \ldots, r \text{ and } k \in \Z\,,
\]
which subdivide $\R^d$ regularly into lattice polytopes. These
subdivisions are referred to in the literature as lattice dicings~\cite{ErdahlRyshkovDicing}.

A lattice polytope $P$ whose collection of primitive facet
normals forms a unimodular matrix is called \emph{facet unimodular\/}. Every
face of a facet unimodular polytope is also facet unimodular in its
own lattice. The lattice dicing hyperplane arrangement slices $P$ into dicing
cells. This is called the \emph{canonical subdivision\/} of
a facet unimodular polytope. The canonical subdivision subdivides
faces canonically.
\begin{theorem} \label{thm:hyperplanes}
  Suppose 
  that $P \subset \R^d$ is a facet unimodular lattice polytope.
  Then:
  \begin{enumerate}
  \item The canonical subdivision of $P$ is regular, and all the cells
    are compressed.
  \item $P$ has a regular unimodular triangulation.
  \end{enumerate}
\end{theorem}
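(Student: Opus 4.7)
For part (1), I would first construct an explicit convex piecewise linear height function certifying regularity of the dicing. For each $i=1,\dots,r$, pick a strictly convex piecewise linear function $\phi_i:\R\to\R$ with breakpoints exactly at $\Z$, and set
\[
 \omega(\vx) \;=\; \sum_{i=1}^r \phi_i(\langle \vn_i,\vx\rangle).
\]
This is convex and piecewise linear on $\R^d$, and its domains of linearity are precisely the dicing cells. Restricting $\omega$ to $\configuration = P\cap\Z^d$ exhibits the canonical subdivision of $P$ as a regular subdivision.

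Next I would show each full-dimensional cell $C$ of the canonical subdivision is a compressed lattice polytope. A vertex of $C$ is determined by $d$ linearly independent active constraints of the form $\langle\vn_{i_j},\vx\rangle=k_j$, where each $k_j\in\Z$ — facets of $P$ itself sit at integer levels because $P$ is a lattice polytope, and cutting hyperplanes are at integer levels by construction. The corresponding $d\times d$ minor of the facet-normal matrix has determinant $\pm 1$ by facet unimodularity, so the unique solution lies in $\Z^d$ and $C$ is a lattice polytope. Every facet of $C$ has primitive normal among $\{\pm\vn_1,\dots,\pm\vn_r\}$, because such a facet arises either as a piece of a facet of $P$ or as a piece of a cutting hyperplane $\langle\vn_i,\cdot\rangle=k$. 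Since $C$ is sandwiched between two consecutive integer hyperplanes of the arrangement in each direction $\vn_i$, it has width at most $1$ with respect to every such direction; full-dimensionality forces width exactly $1$ in those directions appearing as facet normals. By Theorem~\ref{thm:paco}, $C$ is compressed, which proves (1).

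For (2), I would strongly pull all lattice points of $P$ in any order. Because each cell $C$ is compressed, its lattice points are exactly its vertices; hence every $\vm\in P\cap\Z^d$ is already a vertex of the canonical subdivision, and in particular a vertex of every cell containing it. By Lemma~\ref{lemma:strongPulling}(1), regularity is preserved at each step, and by part (2) of the same lemma the final subdivision is a full triangulation. Restricted to any cell $C$, the procedure is a strong pulling of $C$'s own lattice points in some order; since those points are all vertices of $C$, this strong pulling agrees with weak pulling on $C$, and hence produces a unimodular triangulation of $C$ by the defining property of compressed polytopes. Gluing these compatible cell-by-cell triangulations yields a regular unimodular triangulation of $P$.

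The delicate step is the identification of facet normals of dicing cells with elements of $\{\pm\vn_1,\dots,\pm\vn_r\}$, together with the resulting width-$1$ property; once this is secured, Theorem~\ref{thm:paco} and Lemma~\ref{lemma:strongPulling} do all the remaining work almost mechanically.
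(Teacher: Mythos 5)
Your proof is correct and follows essentially the same route as the paper: dice by the lattice hyperplanes, show the cells are width-one lattice polytopes (hence compressed via Theorem~\ref{thm:paco}), and then take a pulling refinement, whose regularity and unimodularity follow from Lemma~\ref{lemma:strongPulling} and compressedness. The only differences are cosmetic: the paper certifies regularity with the weights $\sum_{i=1}^r \langle \vn_i,\vx\rangle^2$ restricted to $P\cap\Z^d$ rather than your sum of one-dimensional convex piecewise linear functions $\phi_i(\langle\vn_i,\cdot\rangle)$, and you spell out the integrality of cell vertices and the identification of cell facet normals with $\pm\vn_i$, which the paper dismisses as holding ``by construction.''
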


\begin{proof}
  The dicing cells have width one with respect to all their facets by
  construction.  This proves part (1) except for regularity.  Regularity
  of the canonical subdivision follows from considering  weights given
  by the
  restriction of the following quadratic function to the lattice
  points in $P$:
  \[
  f(\vx)=\sum_{i=1}^r \langle \vn_i, \vx \rangle^2.
  \]
  Since every lattice point in each cell of the canonical subdivision is
   contained in two consecutive hyperplanes in each
  direction $\vn_i$, the corresponding summand of $f$ is equal, on
  those lattice points, to an affine map. Therefore, $f$ is affine
  on each cell. But any two adjacent cells contain lattice points in
  three hyperplanes for some direction, so $f$ is not affine
  on the union.
  
  For part (2) recall that any pulling refinement of a regular
  subdivision is regular.  Since the cells in the canonical
  subdivision are compressed, a triangulation obtained by pulling will be
  regular and unimodular
\end{proof}

As a direct application of Theorem~\ref{thm:hyperplanes}, flow
polytopes (Section ~\ref{sec:flow-polytopes}) as well as polytopes
with facets in the root system of type $\rootA$ have regular unimodular
triangulations (Section ~\ref{sec:type-A-facets}). 
This dicing method also shows that every dilation $cP$ of a polytope $P$ with
a (regular) unimodular triangulation itself admits a (regular) unimodular triangulation
(Theorem~\ref{thm:unimodular-dilations}).

This approach to finding (regular) unimodular triangulations can be
applied whenever there is a lattice dicing that cuts $P$ into lattice
polytopes. However in this more general case, the cells do not
automatically have width one with respect to the facets given by
$P$. This must be checked separately. Polytopes with facet normals in
the root system of type $\rootB$ are an example where this approach
was successful~(Section~\ref{sec:type-B-facets}).

\subsubsection{Circuits}

A \emph{circuit} of a point configuration $\configuration$ is a minimal affine dependent subset $C$. 
A circuit $C$ comes with a unique (up to a constant) affine dependence 
\[
\sum_{\va \in C} \lambda_\va \va =0, \quad \sum_{\va \in C} \lambda_\va=0.
\]

It is well-known that a configuration that is itself a circuit, or that has a unique circuit, has exactly two triangulations. 
Having a unique circuit is equivalent to the configuration having exactly two points more than its dimension.
The following is essentially Lemma~2.4.2 in~\cite{deLoeraRambauSantos}.

\begin{lemma}
\label{lemma:circuit-triangs}
Let $\configuration$ be a configuration of $d+2$ points spanning a $d$-dimensional affine space. Let $\lambda\in \R^\configuration$ be its unique (up to a constant) affine dependence. Call
\[
\configuration^+:=\{\va\in \configuration: \lambda_\va >0\},\quad
\configuration^0:=\{\va\in \configuration: \lambda_\va =0\},\quad
\configuration^-:=\{\va\in \configuration: \lambda_\va <0\}.
\]

Then, $\configuration$ has exactly two triangulations, namely:
\[
\T^+=\{F\subset \configuration : \configuration^+\not\subset F\}, \qquad 
\T^-=\{F\subset \configuration : \configuration^-\not\subset F\}, 
\]
Both triangulations are regular.
\end{lemma}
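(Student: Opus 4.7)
The plan is to verify four properties in turn: that $\T^+$ (and symmetrically $\T^-$) is a simplicial complex, that its maximal cells tile $\conv(\configuration)$, that any two meet in a common face, and that it is regular; the uniqueness claim will then follow from a separate Radon-point argument. Closure under taking subsets is built into the definition. Every element of $\T^+$ is affinely independent because the support of $\lambda$, namely $\configuration^+\cup\configuration^-$, is the unique circuit of $\configuration$: a subset of $\configuration$ fails to be affinely independent only when it contains $\configuration^+\cup\configuration^-$, hence only when it contains all of $\configuration^+$, and those are precisely the sets excluded from $\T^+$.

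For the covering and intersection properties I would manipulate the dependence $\lambda$ directly. Given a convex combination $\vx=\sum_\va \mu_\va \va$, replace $\mu$ by $\mu+t\lambda$ with $t=-\min_{\va\in\configuration^+}\mu_\va/\lambda_\va$; this leaves $\vx$ unchanged (since $\sum\lambda_\va\va=\0$ and $\sum\lambda_\va=0$), kills one coordinate on $\configuration^+$, and keeps the others non-negative because $t\le 0$ makes $t\lambda_\va\ge 0$ on $\configuration^-$. So every $\vx$ lies in some $\conv(\configuration\setminus\{\vb\})$ with $\vb\in\configuration^+$. For the intersection of two max cells $\configuration\setminus\{\vb\}$ and $\configuration\setminus\{\vb'\}$ with $\vb,\vb'\in\configuration^+$, subtracting two convex representations of a common point produces a vanishing affine combination; by uniqueness this combination must be a scalar multiple $c\lambda$, and comparing signs on $\vb$ and $\vb'$ (both of which have $\lambda>0$) forces $c=0$, so the common point lies in the common face $\conv(\configuration\setminus\{\vb,\vb'\})$.

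For regularity I would exhibit explicit weights: $\omega_\va=\lambda_\va$ should induce $\T^+$ and $\omega_\va=-\lambda_\va$ should induce $\T^-$. Lifting with $\omega=\lambda$ pushes $\configuration^+$ to one side of the affine hyperplane through the remaining lifted points and $\configuration^-$ to the opposite side, so the lower envelope of $\conv\{(\va,\omega_\va):\va\in\configuration\}$ in $\R^{d+1}$ breaks into exactly the facets $\configuration\setminus\{\vb\}$ with $\vb\in\configuration^+$. The sign that selects $\T^+$ over $\T^-$ is captured by $\sum\lambda_\va\omega_\va=\sum\lambda_\va^2>0$; the standard circuit--lifting correspondence in \cite{deLoeraRambauSantos} identifies the sign of $\sum\lambda_\va\omega_\va$ with the choice between $\T^+$ and $\T^-$, so this step can also be outsourced to that reference.

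For uniqueness, any triangulation of $\configuration$ has max cells of size $d+1$, necessarily of the form $\configuration\setminus\{\vb\}$ with $\vb$ in the support $\configuration^+\cup\configuration^-$ of $\lambda$. A triangulation cannot contain both a cell $\configuration\setminus\{\vb\}$ with $\vb\in\configuration^+$ and a cell $\configuration\setminus\{\vb'\}$ with $\vb'\in\configuration^-$: the Radon point of the circuit, which lies in the relative interior of both $\conv(\configuration^+)$ and $\conv(\configuration^-)$, sits inside the first cell as a combination supported on $\configuration^-$ and inside the second as a combination supported on $\configuration^+$, so the two minimal faces containing it are disjoint and the intersection property is violated. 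Hence every triangulation picks its removed vertices entirely from one side and coincides with either $\T^+$ or $\T^-$. The step I expect to be most delicate is regularity, since it requires matching a global sign condition with the combinatorics of the lower envelope; if the direct argument gets unwieldy I would invoke the sign rule from \cite{deLoeraRambauSantos} rather than redo that calculation.
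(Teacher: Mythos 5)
Your argument is correct, but note that the paper does not actually prove this lemma: it simply records that the statement ``is essentially Lemma~2.4.2 in~\cite{deLoeraRambauSantos}'' and outsources everything to that reference. What you have written is a self-contained version of that standard circuit analysis, and all the key steps check out: affine independence of the cells because the only affinely dependent subsets are those containing $\supp\lambda=\configuration^+\cup\configuration^-$; covering by perturbing a convex representation along $\lambda$ with $t=-\min_{\va\in\configuration^+}\mu_\va/\lambda_\va\le 0$; the common-face property via uniqueness of the dependence and the sign comparison at $\vb,\vb'$; regularity from $\omega=\lambda$, since the cell $\configuration\setminus\{\vb\}$ appears on the lower envelope exactly when $\lambda_\vb\langle\lambda,\omega\rangle>0$ and here $\langle\lambda,\lambda\rangle>0$ (this is the sign rule you cite from \cite{deLoeraRambauSantos}, and you in effect reprove it); and uniqueness via the Radon point $\vp$ of the circuit. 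Two small points deserve one extra line each if you write this up: the carriers of $\vp$ in the two simplices are not disjoint as point sets (they share $\vp$) --- what you need, and what your argument really shows, is that their vertex sets $\configuration^-$ and $\configuration^+$ are disjoint, so no common face can contain both, hence no triangulation mixes cells of the two types; and the final claim that such a triangulation ``coincides with $\T^+$ or $\T^-$'' needs the observation that a proper subfamily of the maximal cells of $\T^+$ cannot cover $\conv(\configuration)$ (e.g.\ by comparing $\vol$), which is immediate from your covering and proper-intersection steps. With those two sentences added, your proof is a complete and more informative substitute for the paper's bare citation.
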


Put differently, $\T^+$ (resp. $\T^-$) has $\configuration^+$ (resp. $\configuration^-$) as its only minimal non-face.
Observe that the points of $\configuration^0$ lie in every maximal simplex of both $\T^+$ and $\T^-$. 
This reflects the fact that for a $\va\in\configuration^0$, $ \va$ is not in the affine span of $\configuration\setminus \va$.

It is easy to specify when these triangulations are flag and/or unimodular:

\begin{lemma}
\label{lemma:circuit}
Let $\configuration$ be a configuration of $d+2$ points spanning a $d$-dimensional affine space with its two triangulations $\T^+$ and $\T^-$.
\begin{enumerate}
\item $\T^+$ (resp. $\T^-$) is flag if, and only if, $|\configuration^+| \le 2$ (resp. $|\configuration^-| \le 2$).
\item Suppose $\configuration$ is a lattice point set and that $\lambda$ is normalized to have integer entries with no common factor. Let $\Lambda_\configuration$ be the affine lattice generated by $\configuration$.
Then, $\T^+$ (resp. $\T^-$) is unimodular \emph{in $\Lambda_\configuration$} if, and only if, all positive (resp. negative) coefficients in $\lambda$ are equal to $\pm 1$.
\end{enumerate}
\end{lemma}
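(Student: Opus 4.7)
My plan is to read both parts straight off the description of $\T^+$ given by Lemma~\ref{lemma:circuit-triangs}: its unique minimal non-face is $\configuration^+$, and the symmetric statement holds for $\T^-$. Part 1 then becomes essentially a definition-chase, while Part 2 reduces to the linear-algebraic identification of $|\lambda_\va|$ with the normalized volume of the simplex opposite~$\va$.

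For Part 1, a triangulation is flag iff all its minimal non-faces have cardinality at most two. Since $\configuration^+$ is the only minimal non-face of $\T^+$, this gives $\T^+$ flag $\Longleftrightarrow$ $|\configuration^+|\le 2$, and symmetrically for $\T^-$. That is all there is to it.

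For Part 2 the maximal simplices of $\T^+$ are exactly $\configuration\setminus\{\va\}$ as $\va$ ranges over $\configuration^+$, so it suffices to prove
\[
\vol_{\Lambda_\configuration}\!\bigl(\conv(\configuration\setminus\{\va\})\bigr) \;=\; |\lambda_\va|,
\]
with the volume normalized so unimodular simplices have volume~$1$. I would pass to the homogeneous lift $\tilde\va := (\va,1)\in \Z^{d+1}$, under which the affine dependence becomes a linear one $\sum_\va \lambda_\va \tilde\va = 0$ inside the rank-$(d+1)$ lattice $\widetilde{\Lambda}_\configuration := \langle \tilde\va : \va\in\configuration\rangle_\Z$ (whose covolume coincides with that of $\Lambda_\configuration$). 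Standard linear algebra then identifies the normalized volume of $\conv(\configuration\setminus\{\va\})$ with the index $[\widetilde{\Lambda}_\configuration : \widetilde{\Lambda}_\va]$, where $\widetilde{\Lambda}_\va := \langle \tilde\vb : \vb\neq\va\rangle_\Z$.

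To compute this index, I use that $\lambda$, being primitive and spanning the one-dimensional space of linear relations among the $\tilde\va$, generates the kernel of the surjection $\Z^\configuration \twoheadrightarrow \widetilde{\Lambda}_\configuration$ sending $e_\va \mapsto \tilde\va$. Hence the least positive $n$ with $n\tilde\va \in \widetilde{\Lambda}_\va$ is exactly $|\lambda_\va|$: the relation gives $|\lambda_\va|\tilde\va \in \widetilde{\Lambda}_\va$, and any smaller $n$ would produce a kernel element not a $\Z$-multiple of $\lambda$. Combining, $\T^+$ is unimodular in $\Lambda_\configuration$ iff $|\lambda_\va|=1$ for every $\va\in \configuration^+$, i.e.\ iff all positive entries of $\lambda$ equal $+1$; the ``$\pm 1$'' in the statement simply absorbs the ambiguity in the overall sign of $\lambda$ (flipping which swaps $\configuration^+$ and $\configuration^-$).

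The only non-bookkeeping step is the identification of the index $[\widetilde{\Lambda}_\configuration:\widetilde{\Lambda}_\va]$ with the normalized volume. I would make it explicit by extending a $\Z$-basis $b_1,\dots,b_d$ of the translated lattice $\Lambda_\configuration-\va$ to the basis $(b_1,0),\dots,(b_d,0),\tilde\va$ of $\widetilde{\Lambda}_\configuration$, and then computing $\det(\tilde\vb : \vb\neq\va)$ in this basis; it comes out to $\det(\vb-\va : \vb\neq\va)$ in the basis of $\Lambda_\configuration-\va$, which is precisely the normalized volume of $\conv(\configuration\setminus\{\va\})$.
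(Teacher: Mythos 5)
Your proposal is correct and follows essentially the same route as the paper: part (1) is the identical observation that $\configuration^+$ is the unique minimal non-face of $\T^+$, and part (2) rests on the same fact the paper invokes, namely that the maximal simplex $\configuration\setminus\{\va\}$ affinely spans $\Lambda_\configuration$ exactly when $\lambda_\va=\pm1$. Your homogenization/lattice-index argument simply proves the sharper identity $\vol\bigl(\conv(\configuration\setminus\{\va\})\bigr)=|\lambda_\va|$, of which the paper's one-line equivalence (``$\lambda_\va=\pm1$ iff $\va$ is an integer affine combination of the remaining points'') is the special case actually needed.
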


\begin{proof}
For part (1), observe that $\configuration^+$ is the unique minimal non-face in $\T^+$. For part (2), observe that the coefficient $\lambda_\va$ of a point $\va\in \configuration$ equals $\pm 1$ if, and only if, $\va$ is an integer affine combination of the rest of the points. Since the maximal simplices of $\T^+$ are precisely $\{\configuration \setminus \va : \va \in \configuration^+\}$, the result follows.
\end{proof}

In particular, for $\T^+$ to be quadratic we need $\configuration^+$ to have at most two elements and those elements have a coefficient of one in the dependence. Since $\sum_{\va\in\configuration} \lambda_a=0$, $\configuration^-$ has also at most two elements and there are only the following two possibilities:
\begin{itemize}
\item $\configuration^+=\{\va, \vb\}$ and $\configuration^-=\{\vc, \vd\}$ with $\va + \vb=\vc+\vd$, or
\item $\configuration^+=\{\va, \vb\}$ and $\configuration^-=\{\vc\}$ with $\va + \vb=2\vc$.
\end{itemize}
The circuit consists of the four vertices of a parallelogram in the first case and of three collinear and equally spaced points in the second case.

Finally, let us mention that $\T^+$ (resp. $\T^-$) is the weak pulling of $\configuration$ from any $\va\in \configuration^-$ 
(resp. from any $\va\in \configuration^+$). It agrees with what strong pulling would give unless $\configuration^-$ (resp. $\configuration^+)$ has a single element $\va$. (In this case, $\va$ is not a vertex of $\configuration$). 

\subsection{Push-forward subdivisions and pull-back subdivisions}
\label{sec:chimney}

In some cases the search for triangulations can be simplified via
projection.  This is done via push-forward and pull-back subdivisions.

\subsubsection{Chimney polytopes and  pull-back subdivisions}
In this section we describe a method for recursively constructing
unimodular triangulations of certain lattice polytopes. The process yields
 (regular) unimodular triangulations of \emph{generalized prisms} over
polytopes with a (regular) unimodular triangulation. In particular, this section extends results
announced in \cite{HPchimneys}.

\newcommand{\chim}{\operatorname{Chim}}

We must first define chimney polytopes.  Given a lattice
polytope $Q\subset\R^d$, consider two integral linear functionals
$\vl$ and $\vu$, such that $\vl\le \vu$ along $Q$.  We define the the
\emph{chimney polytope} associated to $Q$, $\vl$ and $\vu$ as
  \begin{align*}
    \chim(Q,\vl,\vu)\ :=\ \{(\vx,y)\in  \R^d\times  \R\ \mid\ \vx\in Q,\ \vl(x)\le y\le
    \vu(\vx)\}\,.
  \end{align*}
  We call $Q$ the \emph{base} of the chimney.
\begin{figure}[tb]
  \centering
\begin{tikzpicture}[y=-1cm, scale=.2]

\draw[red] (3.81,3.81) -- (24.13,24.13);
\draw[blue] (24.13,21.59) -- (2.54,21.59);

\draw[very thick,black,fill=black!15] (6.35,6.35) -- (6.35,21.59) -- (16.51,21.59) -- (16.51,16.51) -- cycle;
\draw[arrows=-to,black] (11.43,23.495) -- (11.43,27.305);
\draw[very thick,black] (6.35,29.21) -- (16.51,29.21);

\filldraw[black] (6.35,21.59) circle (0.6cm);
\filldraw[black] (11.43,21.59) circle (0.6cm);
\filldraw[black] (6.35,16.51) circle (0.6cm);
\filldraw[black] (11.43,16.51) circle (0.6cm);
\filldraw[black] (11.43,11.43) circle (0.6cm);
\filldraw[black] (6.35,11.43) circle (0.6cm);
\filldraw[black] (16.51,16.51) circle (0.6cm);
\filldraw[black] (16.51,21.59) circle (0.6cm);
\filldraw[black] (6.35,6.35) circle (0.6cm);
\filldraw[black] (11.43,29.21) circle (0.6cm);
\filldraw[black] (16.51,29.21) circle (0.6cm);
\filldraw[black] (6.35,29.21) circle (0.6cm);
\path (5.08,20.32) node[text=blue,anchor=base east] {\large{}$l$};
\path (5.08,7.62) node[text=red,anchor=base east] {\large{}$u$};
\path (21.59,15.24) node[text=black,anchor=base west] {\large{}$P$};
\path (12.7,26.035) node[text=black,anchor=base west] {\large{}$\pi$};
\path (21.59,28.575) node[text=black,anchor=base west] {\large{}$Q$};

\end{tikzpicture}%
  \caption{An  example of  the chimney construction. Here $l\equiv 0$
    and $u(x)=3-x$ on $Q=[0,2]$.}
  \label{fig:nakajima2d}
\end{figure}
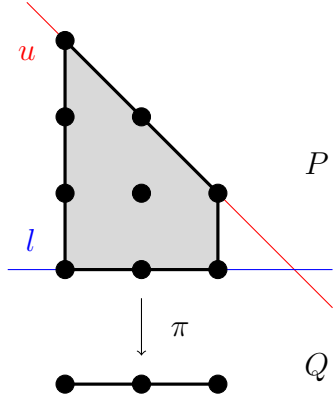
$\chim(Q,\vl,\vu)$ is itself a lattice polytope (see Figure~\ref{fig:nakajima2d}).

We will show that a chimney polytope has a unimodular triangulation
if its  base has one.  For this we introduce the  general concept of a
pull-back subdivision.

Given a lattice polytope $P$ in $\R^d$ and a projection $\pi \colon
\R^d\rightarrow  \R^{d'}$, let $Q:=\pi(P)$ and let $\S'$ be a
subdivision of $Q$.
The \emph{pull-back subdivision} $\pi^*\S'$ of $P$ is obtained from
$\S'$ by intersecting $P$ with the infinite prisms
$\pi^{-1}(F)$ for each cell $F \in \S'$. 

Observe that the cells in the pull-back subdivision
may in principle not be lattice polytopes. A simple example is 
the projection of the triangle $\{(x,y) \colon 0\le 2y \le x \le 2\}$ 
to the segment $[0,2]$. 
The pull-back of $\{[0,1],[1,2]\}$ produces cells with a non-integer vertex, $(1,1/2)$. 
(Observe that this triangle is not a chimney polytope over the segment,
because the functional $\vu(x)=x/2$ is not integral).

We want to show that in the case when
the pull-back is integral and the projection drops only one dimension, it can be refined to a triangulation with nice properties.
To show that the construction preserves degree 
of the triangulation we need the following property:

\begin{lemma} \label{lemma:dualTree}
  Let $\T$ be a simplicial ball whose dual graph is a
  tree. Then $\T$ is flag.
\end{lemma}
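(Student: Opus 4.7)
My plan is to apply the Helly property for subtrees of a tree, namely that any family of subtrees of a tree that pairwise intersect has a common vertex. For each vertex $v$ of $\T$, let $T_v$ denote the collection of maximal simplices of $\T$ containing $v$, viewed as a subset of the vertex set of the dual tree. The key claim I will establish is that each $T_v$ induces a subtree of the dual tree.

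Granting this claim, the statement of the lemma follows quickly. Let $S=\{v_1,\dots,v_k\}$ be a set of vertices such that every pair $\{v_i,v_j\}$ spans an edge of $\T$; I need to show that $S$ is a face. Each edge $\{v_i,v_j\}$ is contained in some maximal simplex (as $\T$ is pure), so $T_{v_i}\cap T_{v_j}\neq\emptyset$ for all $i,j$. Helly then yields a maximal simplex in $\bigcap_i T_{v_i}$, which necessarily contains $S$ as a face. Thus every pairwise-edge subset of $\T$ is itself a face, i.e., $\T$ is flag.

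For the subtree claim, observe first that two maximal simplices adjacent in the dual graph share a codimension one face $F$; if both of them contain $v$, then $v\in F$ as well. Consequently the subgraph of the dual tree induced on $T_v$ coincides with the dual graph of the star $\Star(v)$. Because $\T$ is a simplicial ball, the link of $v$ is either a simplicial sphere (if $v$ lies in the interior) or a simplicial ball (if $v$ lies on the boundary); in either case it is strongly connected, and hence so is $\Star(v)$. This shows $T_v$ is connected in the dual tree, and a connected subgraph of a tree is a subtree.

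The main obstacle will be justifying the strong connectedness of $\Star(v)$: this is where the ball hypothesis enters in an essential way, since mere purity of $\T$ would not suffice. Once this topological input about links is in hand, the rest of the argument is a direct application of the classical Helly theorem for subtrees to the family $\{T_{v_i}\}_{i=1}^k$.
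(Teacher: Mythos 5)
Your argument is correct in substance, and it takes a genuinely different route from the paper. The paper proceeds by induction on the number of maximal cells: it deletes a maximal simplex $F$ corresponding to a leaf of the dual tree, applies the inductive hypothesis to $\T\setminus F$, and checks that a minimal non-face containing the apex vertex $\va$ of $F$ already contains a non-edge $\{\va,\vb\}$ of size two. You instead argue globally: encode each vertex $v$ by the set $T_v$ of maximal cells containing it, show that $T_v$ spans a subtree of the dual tree, and apply the Helly property of subtrees of a tree to produce one maximal cell containing any set of pairwise adjacent vertices. Your route buys a non-inductive proof that never has to analyze what remains after deleting a leaf (in particular it sidesteps the point, left implicit in the paper, that the apex of a leaf cell lies in no other maximal cell), and it isolates exactly where the ball hypothesis enters, namely in the connectivity of vertex stars; the paper's induction is shorter and needs no explicit topological input about links. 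Your bookkeeping steps are fine: the intersection of two facet-adjacent maximal cells is exactly the shared facet, so if both contain $v$ that facet contains $v$, and hence the induced subgraph on $T_v$ is the dual graph of $\overline\Star(v;\T)$, which is the dual graph of $\link(v;\T)$.

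The one step to tighten is the justification that this dual graph is connected. The assertion that the link of a vertex of a simplicial ball is again a simplicial sphere or ball is a PL statement; for an arbitrary simplicial complex whose underlying space is merely homeomorphic to a ball it can fail (double-suspension phenomena), so it does not follow from ``$\T$ is a simplicial ball'' alone. In the situations where the lemma is used in this paper --- full geometric triangulations of lattice polytopes, e.g.\ of a chimney over a simplex --- the closed star of $v$ is star-shaped with respect to $v$ and radial projection identifies the link with a triangulated sphere or ball, which, being a connected triangulated manifold (with boundary), is strongly connected; so your argument goes through verbatim there. If you want the lemma in the stated abstract generality, replace the sphere/ball claim by the weaker fact you actually need, that vertex links in a triangulated manifold with boundary are strongly connected (a normal-pseudomanifold argument, not requiring the link to be a sphere or a ball); with that substitution the proof is complete.
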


\begin{proof}
We use induction on the number of maximal simplices in $\T$.

Let $F$ be a maximal simplex that corresponds to a leaf in the tree. $F$ has a common facet with some other simplex in the triangulation, and a single vertex $\va$ not in that facet. Then, $\T'=\T\setminus F$ is also a triangulation whose dual graph is a tree, so we assume it is flag. 

Let now $N$ be a non-face of $\T$. If $\va\not\in N$ then $N$ is also a non-face in $T'$ hence it has size two. If $\va\in N$ then pick any vertex $\vb\in N \setminus F$ (which exists since $N\not\subset F$) and observe that $N'=\{\va,\vb\}$ is a non-face.
\end{proof}

\begin{theorem} \label{thm:chimney}
  Let $P \subset \R^d$ be a lattice polytope and let $\pi \colon
  \R^d\rightarrow \R^{d-1}$ be a projection such that
  $\pi(\Z^d)=\Z^{d-1}$.
  Let $\T$ be a unimodular triangulation of $Q:=\pi(P)$ and suppose
  $\pi^*\T$ is integral, then 
  \begin{itemize}
    \item Any full refinement $\T'$ of $\pi^*\T$ is a
  unimodular triangulation of $P$. 
    \item $\T'$ does not have minimal non-faces of cardinality larger than those of $\T$.
    \item If $\T$ is regular, $\T'$ is
  regular as well. 
  \end{itemize}
\end{theorem}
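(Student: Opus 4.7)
The plan is to analyse the maximal cells of $\pi^{*}\T$ one at a time, exploiting unimodularity of $\T$ together with $\pi(\Z^{d})=\Z^{d-1}$ to pin down their lattice structure. For a maximal simplex $F\in\T$ with vertices $\vv_{0},\ldots,\vv_{d-1}$, the cell $C_{F}:=P\cap\pi^{-1}(F)$ is integral by hypothesis; since $F$ is unimodular its only lattice points are its vertices, so every lattice point of $C_{F}$ sits on a fiber $\pi^{-1}(\vv_{i})$ above some vertex of $F$. Now let $\sigma\subseteq C_{F}$ be a lattice $d$-simplex in a full refinement $\T'$. Its $d+1$ vertices project into the $d$-element set $V(F)$, so by pigeonhole exactly two vertices $\vp,\vq$ share a fiber and the other $d-1$ cover the remaining vertices bijectively. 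Choosing lattice coordinates in which $F=\conv(\0,\e_{1},\ldots,\e_{d-1})$ and $\pi$ is the projection onto the first $d-1$ coordinates (available because $F$ is unimodular and $\pi(\Z^{d})=\Z^{d-1}$), expanding the determinant of the edge matrix of $\sigma$ along the column $\vp-\vq$ yields $\vol(\sigma)=|h(\vp)-h(\vq)|/d!$, where $h$ denotes the fiber coordinate. Since $\T'$ is full, every lattice point is a vertex of $\T'$, so the edge $[\vp,\vq]$ has no interior lattice point; this forces $|h(\vp)-h(\vq)|=1$ and $\sigma$ unimodular, proving the first assertion.

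For the non-face bound, I would split a minimal non-face $N$ of $\T'$ according to whether $\pi(N)$ is a face of $\T$. If not, $\pi(N)$ contains a minimal non-face $S$ of $\T$; picking one preimage in $N$ over each $\vv\in S$ gives a subset of $N$ that projects onto $S$ and hence cannot fit inside any simplex of $\T'$ (every simplex of $\T'$ sits inside some cell $C_{F}$ and therefore projects into $F$), so by minimality of $N$ this subset equals $N$ and $|N|=|S|$, bounded by the largest minimal non-face of $\T$. Otherwise $\pi(N)\subseteq F$ for some $F\in\T$, so $N\subseteq C_{F}$ is a minimal non-face of the restriction $\T'|_{C_{F}}$. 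Using the previous paragraph, each maximal simplex of $\T'|_{C_{F}}$ is determined by a choice of collided fiber with its two consecutive heights together with one lattice point per remaining fiber, and two such simplices are adjacent precisely when they differ in one such choice; a bookkeeping argument then shows that the dual graph of $\T'|_{C_{F}}$ is a path, so by Lemma~\ref{lemma:dualTree} the local triangulation is flag and $|N|=2$.

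For regularity, given weights $\vomega$ on $Q\cap\Z^{d-1}$ inducing $\T$, I would define $\omega'_{\vp}:=\omega_{\pi(\vp)}+\varepsilon\,h(\vp)^{2}$ on $P\cap\Z^{d}$. At $\varepsilon=0$ the resulting weight lifts $\T$ along $\pi$ and induces exactly the regular subdivision $\pi^{*}\T$; for small generic $\varepsilon>0$ the strictly convex quadratic perturbation in the fiber direction refines each cell $C_{F}$ into a full triangulation while leaving the coarse subdivision undisturbed, producing a regular unimodular full refinement of $P$. The main obstacle is the path dual-graph claim in the second paragraph: once the one-collision-per-simplex picture is in hand, one must verify combinatorially that any full triangulation of a ``fiber column'' cell $C_{F}$ has its $d$-simplices arrangeable in a staircase so that Lemma~\ref{lemma:dualTree} applies. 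The remaining assertions reduce to a lattice volume computation, a pigeonhole argument, and a standard height-function lift.
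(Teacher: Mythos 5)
Your unimodularity and non-face arguments follow the paper's route essentially verbatim: you reduce to a single chimney cell $C_F=P\cap\pi^{-1}(F)$, use integrality of $\pi^*\T$ together with unimodularity of $F$ to see that all lattice points of $C_F$ lie on the fibers over the vertices of $F$, deduce that every $d$-simplex of a full refinement has exactly one doubled fiber whose two heights differ by one (your determinant expansion is just the computation behind the paper's one-line assertion), and you treat minimal non-faces by the same case split on whether $\pi(N)$ is a face of $\T$, invoking Lemma~\ref{lemma:dualTree} once the dual graph of the chimney triangulation is known to be a path. That path claim, which you flag as the remaining obstacle, is also only asserted in the paper (via Figure~\ref{fig:chimneyOrdering}); it can be settled by noting that the only facets of a maximal simplex that can be shared with another simplex of the same chimney are the two obtained by dropping one of the two points of the doubled fiber (dropping any other vertex yields a facet lying over a facet of $F$, i.e.\ in $\pi^{-1}(\partial F)$), so the dual graph has maximum degree two, and it cannot be a cycle because some simplex has a facet on the lower (or upper) boundary of the chimney.

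The genuine gap is your regularity argument. The weights $\omega'_{\vp}=\omega_{\pi(\vp)}+\varepsilon\,h(\vp)^2$ do not refine the cells of $\pi^*\T$ into triangulations for any $\varepsilon>0$: on two consecutive integer heights $k,k+1$ the function $h\mapsto h^2$ is affine, so over a cell lattice-equivalent to $\Delta^{d-1}\times[0,1]$ all lifted points are coplanar and the cell is not subdivided at all; more generally, since your perturbation depends only on the fiber coordinate, it merely slices each chimney by the horizontal hyperplanes $\{h=k\}$, which is not a triangulation and need not even be a lattice subdivision (the chimney over a segment with fibers $[0,2]$ and $\{0\}$ acquires the non-lattice vertex $(\nicefrac12,1)$). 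So no choice of ``small generic $\varepsilon$'' rescues the construction, and in any case a single global weight vector would produce one particular refinement rather than the given $\T'$. The intended (and the paper's) argument is shorter: $\pi^*\T$ is regular whenever $\T$ is (pull the weights back along $\pi$), and a full \emph{pulling} refinement of a regular subdivision is again regular by Lemma~\ref{lemma:strongPulling}; it is this pulling refinement to which the regularity clause of the statement refers.
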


\begin{proof}
  To show that regularity can be preserved, recall that if $\T$ is
  regular, a full pulling refinement of $\pi^*\T$ will be regular as
  well.

  For the unimodularity, it is enough to consider the chimneys
  $\pi^{-1}(G) \cap P$ for each simplex $G \in \T$ individually. They
  are equivalent to some $\chim(\Delta^{d-1},\0,\vu)$. Any $d$-simplex
  $F$ in a full triangulation of $\chim(\Delta^{d-1},\0,\vu)$ has two
  vertices above one vertex of $\Delta^{d-1}$ and one vertex above
  every other vertex of $\Delta^{d-1}$. Since $F$ is part of a full
  triangulation $\T'$, the heights of the two vertices with the same
  projection differ by one. Hence, $F$ is unimodular.

  Let $N \subseteq P \cap \Z^d$ be a non-face of $\T'$. Then either $\pi(N)$
  spans a face of $\T$ or not. If $\pi(N)$ is a non-face then $N$ contains
  a non-face $N'$ with $\pi(N')=\pi(N)$ on which $\pi$ is injective.

  If $\pi(N)$ is a face we can, again, restrict our attention to a
  single chimney of the form $\chim(\Delta^{d-1},\0,\vu)$. 
  The dual graph in a triangulation of such a chimney is a path
  (cf.\ Figure~\ref{fig:chimneyOrdering}) and
  Lemma~\ref{lemma:dualTree} shows that $N$ contains a non-face of
  cardinality two.
   \begin{figure}[tb]
     \centering
\begin{tikzpicture}[scale=.25, y=-1cm]

\draw[black] (6.35,16.51) -- (11.43,21.59);
\draw[black] (6.35,11.43) -- (11.43,21.59);
\draw[black] (11.43,21.59) -- (6.35,21.59);

\filldraw[black] (6.35,21.59) circle (0.5cm);
\filldraw[black] (11.43,21.59) circle (0.5cm);
\filldraw[black] (6.35,16.51) circle (0.5cm);
\filldraw[black] (11.43,16.51) circle (0.5cm);
\filldraw[black] (11.43,11.43) circle (0.5cm);
\filldraw[black] (11.43,6.35) circle (0.5cm);
\filldraw[black] (6.35,11.43) circle (0.5cm);
\draw[black] (11.43,16.51) -- (6.35,11.43) -- (11.43,11.43);
\draw[black] (6.35,21.59) -- (6.35,11.43);
\draw[black] (11.43,21.59) -- (11.43,6.35) -- (6.35,11.43);
\path (9.2,17.62125) node[text=black,anchor=base west] {\large{}$3$};
\path (6.8,20.47875) node[text=black,anchor=base west] {\large{}$1$};
\path (8.5,13.6525) node[text=black,anchor=base west] {\large{}$4$};
\path (8.5,10.31875) node[text=black,anchor=base west] {\large{}$5$};
\path (6.5,16.51) node[text=black,anchor=base west] {\large{}$2$};
\end{tikzpicture}%
      \caption{Ordering of maximal simplices in a simplex chimney.}
     \label{fig:chimneyOrdering}
   \end{figure}
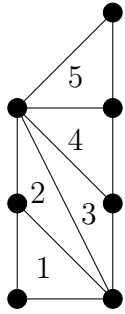
\end{proof}

This method of pull-back subdivisions and induction on dimension works
nicely on the class of recursively defined polytopes known as
Nakajima. A lattice polytope is a \emph{Nakajima polytope} if it is a single
lattice point or it is of the form $\chim(Q,\0,\vu)$ for a Nakajima
polytope $Q$. These are precisely those polytopes $P$ for which the
singularity $U_P$ is a local complete intersection (see
Section~\ref{sec:why:algebraic-geometry}).

\begin{corollary}
  Every Nakajima polytope has a quadratic triangulation. (A
  triangulation which is regular, unimodular and flag.)
\end{corollary}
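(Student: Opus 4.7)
The plan is to proceed by induction on the dimension $d$ of the Nakajima polytope $P$. The base case is trivial: a single lattice point carries the empty (zero-dimensional) triangulation, which vacuously satisfies all required properties.

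For the inductive step, write $P = \chim(Q,\0,\vu)$ where $Q \subset \R^{d-1}$ is a lower-dimensional Nakajima polytope. By induction, $Q$ admits a quadratic triangulation $\T$, that is, a regular, unimodular, and flag triangulation. Let $\pi\colon \R^d \to \R^{d-1}$ denote the projection onto the first $d-1$ coordinates, so that $\pi(P) = Q$ and $\pi(\Z^d) = \Z^{d-1}$. The goal is to apply Theorem~\ref{thm:chimney} to pull $\T$ back through $\pi$ and refine it.

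First I would verify that the pull-back subdivision $\pi^*\T$ is integral, which is the hypothesis required by Theorem~\ref{thm:chimney}. A cell of $\pi^*\T$ over a simplex $G \in \T$ is the chimney $\{(\vx,y) : \vx \in G,\ 0 \le y \le \vu(\vx)\}$; its vertices are the points $(v,0)$ and $(v,\vu(v))$ for vertices $v$ of $G$. Since $G$ is a lattice simplex and $\vu$ is an integral linear functional, every such vertex lies in $\Z^d$, so $\pi^*\T$ is integral. Now take any full refinement $\T'$ of $\pi^*\T$, obtained for instance by pulling all lattice points of $P$ in some order starting from the refinement induced on each chimney cell. By Theorem~\ref{thm:chimney}, $\T'$ is a unimodular triangulation of $P$; it is regular because $\T$ is regular and a full pulling refinement of a regular subdivision is regular (Lemma~\ref{lemma:strongPulling}(1)); and its minimal non-faces have cardinality no larger than those of $\T$, hence at most $2$, so $\T'$ is flag.

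The only subtlety worth flagging is the flag conclusion: Theorem~\ref{thm:chimney} bounds the cardinality of minimal non-faces of $\T'$ by that of minimal non-faces of $\T$, and since every minimal non-face has cardinality at least $2$, this bound is exactly the flag condition in our setting. All three properties of a quadratic triangulation thus transfer from $\T$ to $\T'$, completing the induction.
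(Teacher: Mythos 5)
Your proof is correct and follows essentially the same route as the paper: induct on dimension, note that the pull-back of any lattice subdivision of the base $Q$ of a chimney polytope is again lattice (integral), and apply Theorem~\ref{thm:chimney} recursively, with regularity preserved by a pulling refinement and flagness from the bound on minimal non-face cardinality. The extra details you supply (the vertex description of the chimney cells and the explicit base case) are just elaborations of the paper's terse argument, not a different approach.
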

\begin{proof}
  For a polytope $\chim(Q,\vl,\vu)$, the pull back of every lattice
  subdivision of $Q$ is lattice. Hence, we can apply
  Theorem~\ref{thm:chimney} recursively.
\end{proof}

\subsubsection{Push-forward subdivision}

To apply the chimney Theorem~\ref{thm:chimney} in a case
where $P$ has more than one functional bounding $P$ from below or from
above, we need the subdivision of the projected polytope $Q$ to
respect the intersections of the multiple upper and lower facets. To
this end we define the \emph{push-forward} of a subdivision.

Given a subdivision $\S$ of a lattice polytope $P$ in $\R^d$ and
a projection $\pi \colon \R^d \rightarrow \R^{d'}$, the
\emph{push-forward subdivision} $\pi_*\S$ of $Q:= \pi(P)$ is the
common refinement of the projections of all faces of $\S$
(including low-dimensional faces).

The following theorem tells us under what conditions we can still
apply Theorem~\ref{thm:chimney}.

\begin{theorem}
  Let $P \subset \R^d$ be a lattice polytope and $\pi \colon
  \R^d \to \R^{d-1}$  be the projection which forgets the last coordinate. 
  If $\S$ is a (regular) subdivision of $P$ such that every
  cell $F$ of $\S$ has a description
  $$ F = \{(\vx,y)\in \pi(F) \times \R \ : \ \vl_i(\vx) \le y \le
  \vu_j(\vx), \text{ for } 1 \le i \le r,\ 1 \le j \le s \}$$
  with integral linear functionals $\vl_1, \ldots, \vl_r$ and $\vu_1,
  \ldots, \vu_s$ such that $\vl_i \le \vu_j$ for $1 \le i \le r$, $1
  \le j \le s$ along the lattice polytope $Q$, and that the
  push-forward $\pi_*\S$ of $\S$ to $Q$ has a (regular) unimodular
  refinement, then $\S$ has a (regular) unimodular refinement. The
  degree of minimal non-faces will be preserved.
\end{theorem}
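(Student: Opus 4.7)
The plan is to produce the refinement in two stages: first construct a polyhedral refinement of $\S$ whose cells are chimneys $\chim(G,\vl_i,\vu_j)$ over the unimodular simplices $G$ of the given refinement $\T'$ of $\pi_*\S$, and then apply Theorem~\ref{thm:chimney} cell-by-cell to triangulate each such chimney.

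By the definition of $\pi_*\S$ as the common refinement of the projections of all faces of $\S$, the relative interior of every cell of $\pi_*\S$ lies in $\pi(F)$ for a unique cell $F$ of $\S$, and throughout that cell exactly one upper functional $\vu_{j(F)}$ and one lower functional $\vl_{i(F)}$ from the description of $F$ are binding. Hence, for each simplex $G \in \T'$ and each cell $F \in \S$ with $G \subseteq \pi(F)$, the intersection $F \cap \pi^{-1}(G)$ is precisely the chimney $\chim(G,\vl_{i(F)},\vu_{j(F)})$ over the unimodular simplex $G$, with integral bounds. As $G$ ranges over $\T'$ and $F$ over $\S$, these chimneys form a polyhedral refinement of $\S$ in which any two stacked chimneys over the same $G$ (coming from cells $F,F'$ of $\S$ with $\vu_{j(F)} = \vl_{i(F')}$) share a facet which is lattice-isomorphic to $G$.

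Now apply Theorem~\ref{thm:chimney} to each chimney: any full refinement of the (trivial) pull-back of $\{G\}$ inside $\chim(G,\vl_{i(F)},\vu_{j(F)})$ is a unimodular triangulation whose dual graph is a path and which, by Lemma~\ref{lemma:dualTree}, is flag. Obtaining such a triangulation by pulling all lattice points induces the \emph{full} triangulation of both the top and bottom copies of $G$, so the local triangulations of chimneys stacked over the same $G$ agree on their shared facets and glue to a global unimodular triangulation $\tilde\S$ of $P$ refining $\S$. Every minimal non-face of $\tilde\S$ either projects injectively to a minimal non-face of $\T'$ (by the path-graph argument in the proof of Theorem~\ref{thm:chimney}) or has cardinality two; hence the degree of minimal non-faces is preserved from $\T'$.

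For the regularity claim, given regular heights $\vomega'$ for $\T'$ on $Q \cap \Z^{d-1}$, the weights $N \cdot (\vomega' \circ \pi)$ on $P \cap \Z^d$ induce precisely the chimney refinement for $N$ sufficiently large (combining additively with a regular weight for $\S$ when that subdivision is itself regular). A subsequent small pulling perturbation refines each chimney regularly by Lemma~\ref{lemma:strongPulling}, and the resulting weights induce $\tilde\S$. The main technical obstacle will be making sure the pulling perturbation is small enough to leave the coarser chimney structure undisturbed at boundaries between adjacent cells of $\S$ while still being large enough to trigger the desired refinement inside each chimney; the remaining combinatorial claims then follow directly from the chimney construction applied stratum by stratum.
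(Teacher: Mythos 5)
Your overall route is the intended one: the paper states this theorem without a written proof, precisely as the mechanism for applying Theorem~\ref{thm:chimney} after decomposing each cell $F\in\S$ into the chimneys $F\cap\pi^{-1}(G)$ over the simplices $G$ of the unimodular refinement $\T'$ of $\pi_*\S$ (these are lattice polytopes because the bounding functionals are integral and $G$ is unimodular). Your treatment of minimal non-faces and your weight construction (a weight for $\S$ combined with a suitably scaled $\vomega'\circ\pi$ to certify the intermediate chimney subdivision) are also essentially right, modulo one slip of wording: a cell of $\pi_*\S$ lies in $\pi(F)$ for \emph{many} cells $F$ of $\S$ (they are stacked vertically); what you actually use --- that over its relative interior each such $F$ has a single binding upper and a single binding lower functional, because the ridges of $F$ are faces of $\S$ and their projections enter the common refinement defining $\pi_*\S$ --- is correct.

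The genuine gap is in the gluing step. You triangulate each chimney separately by pulling its lattice points and only verify compatibility across the horizontal interfaces (the top and bottom copies of $G$, which are unimodular simplices, so nothing can go wrong there). But two chimneys over adjacent simplices $G\neq G'$ --- inside the same cell $F$, or in horizontally adjacent cells of $\S$ --- share a \emph{vertical} facet, namely the chimney over $G\cap G'$, which in general carries many lattice points on its vertical edges and admits several full triangulations; independently chosen pulling orders in the two chimneys need not induce the same triangulation of this wall, and then your local triangulations do not glue. The repair is standard and short: refine the whole chimney subdivision at once by strongly pulling all lattice points of $P$ in one fixed global order (Lemma~\ref{lemma:strongPulling}); the restriction to every cell and every shared face is again the pulling triangulation for the induced order, so the pieces agree automatically, each chimney receives a full triangulation (hence unimodular, with path dual graph and flag by Lemma~\ref{lemma:dualTree}, as in the proof of Theorem~\ref{thm:chimney}), and regularity is preserved. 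This also dissolves the ``main technical obstacle'' you leave open at the end: no balancing of perturbation sizes is needed, since by part (1) of Lemma~\ref{lemma:strongPulling} a strong pulling refinement of a regular subdivision is automatically regular and automatically refines it.
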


This theorem provides a heuristic for finding  regular unimodular
triangulations of a lattice polytopes. Namely, given a lattice polytope
$P$: search for unimodular transformations $\Phi$ of $P$ such
that $\Phi(P)$ has the above form; project to $Q$ and check whether
$Q$ has a regular unimodular refinement of the push-forward
subdivision; iterate. The push-forward and pull-back methods are
implemented in an extension to \texttt{polymake}~\cite{subdivProj-sw}
and have been used for triangulating smooth reflexive polytopes (see
Section~\ref{subsubsec:reflexive}).
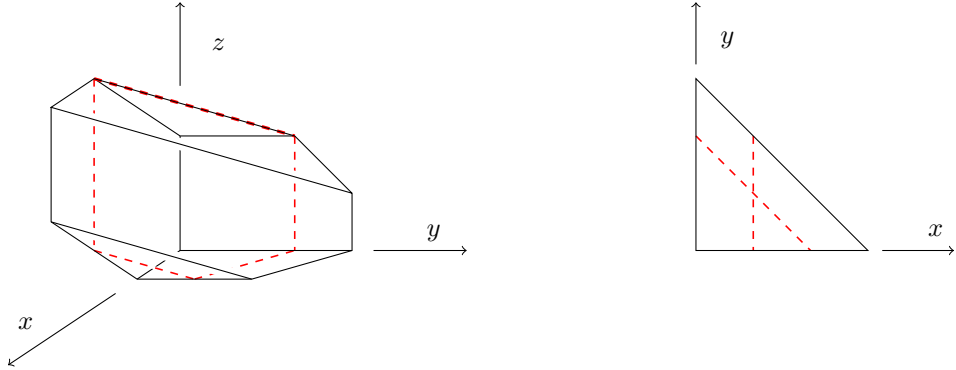
\begin{figure}[tb]
\begin{tikzpicture}[y=-1cm,scale=.6]

\draw[black] (4.1275,6.985) -- (5.08,6.35);
\draw[semithick,dashed,red] (7.62,3.81) -- (7.62,6.35);
\draw[semithick,dashed,red] (3.175,2.54) -- (3.175,6.35);
\draw[semithick,dashed,red] (17.78,3.81) -- (17.78,6.35);
\draw[semithick,dashed,red] (5.3975,6.985) -- (7.62,6.35);
\draw[semithick,dashed,red] (16.51,3.81) -- (19.05,6.35);

\draw[very thick,white] (3.175,6.35) -- (5.3975,6.985);

\draw[black] (3.175,2.54) -- (5.08,3.81) -- (7.62,3.81);
\draw[black] (5.08,3.81) -- (5.08,6.35) -- (8.89,6.35);
\draw[semithick,dashed,red] (3.175,6.35) -- (5.3975,6.985);

\draw[line width=4.5bp,white] (2.2225,5.715) -- (6.6675,6.985);
\draw[line width=4.5bp,white] (2.2225,3.175) -- (8.89,5.08);
\draw[very thick,dashed,red] (3.175,2.54) -- (7.62,3.81);

\draw[black] (2.2225,3.175) -- (2.2225,5.715) -- (4.1275,6.985) -- (6.6675,6.985) -- (8.89,6.35) -- (8.89,5.08) -- (7.62,3.81) -- (3.175,2.54) -- (2.2225,3.175) -- (8.89,5.08);
\draw[black] (6.6675,6.985) -- (2.2225,5.715);
\draw[arrows=-to,black] (3.65125,7.3025) -- (1.27,8.89);
\draw[arrows=-to] (5.08,2.69875) -- (5.08,0.84667);
\draw[arrows=-to] (9.36625,6.35) -- (11.43,6.35);
\draw[arrows=-to] (16.51,2.2225) -- (16.51,0.84667);
\draw[arrows=-to] (20.6375,6.35) -- (22.225,6.35);
\draw[black] (16.51,2.54) -- (16.51,6.35) -- (20.32,6.35) -- cycle;
\path (1.27,8.09625) node[anchor=base west] {\fontsize{9.6}{11.52}\selectfont{}$x$};
\path (10.31875,6.0325) node[anchor=base west] {\fontsize{9.6}{11.52}\selectfont{}$y$};
\path (5.55625,1.905) node[anchor=base west] {\fontsize{9.6}{11.52}\selectfont{}$z$};
\path (21.43125,6.0325) node[anchor=base west] {\fontsize{9.6}{11.52}\selectfont{}$x$};
\path (16.8275,1.74625) node[anchor=base west] {\fontsize{9.6}{11.52}\selectfont{}$y$};

\end{tikzpicture}%
   \caption{The projections $P_{xyz}$ and $P_{xy}$}
  \label{fig:Z1xyz}
\end{figure}

Here  is an example.  Consider  the  following polytope given by eight
inequalities in variables $x,y,z,w$.
\begin{equation}\label{ap_eq1}
  \begin{array}{rcccl}
    0 & \le & x & \\
    0 & \le & y & \le & 3-x \\ 
    0 & \le & z & \\
    x-1 & \le & z \\
    0 & \le & w & \le & 2+x-z  \\
    & & w & \le & 4-y-z 
  \end{array}
\end{equation}
We have ordered the inequalities so that each variable is bounded
above or below by integral linear functionals in the previous
variables.  We want to project $P$ to $x$-$y$-$z$-space.  This
projection $P_{xyz}$ has the representation (see
Figure~\ref{fig:Z1xyz} on the left)
\begin{equation*}
  \begin{array}{rcccl}
    0 &\le& x \\
    0 &\le& y &\le& 3-x \\
    0 &\le& z &\le& 2+x \\
    x-1 &\le& z &\le& 4-y\;.
  \end{array}
\end{equation*}
Observe that $P_{xyz}$ has facets $z \le 2+x$ and $z \le 4-y$ whose
pull-backs are not facets of $P$. They are implied by the inequalities
$0 \le w$ and $w \le 2+x-z$, respectively $w \le 4-y-z$.

The push-forward of the trivial subdivision of $P$ divides $P_{xyz}$
along the plane $x+y=2$, the projection of the ridge formed by the two
upper bounds on $w$ in~(\ref{ap_eq1}),
\begin{equation*}
  \begin{array}{rcccl}
    0 &\le& w &\le& 2+x-z  \\
    && w &\le& 4-y-z \ \raisebox{2.5mm}[0pt][0pt]{$\Big\} \ x+y=2$\;.}
  \end{array}
\end{equation*}
This is a lattice subdivision, as the intersection of this hyperplane
with $P_{xyz}$ is the convex hull of the lattice points $(1,1,0)$,
$(0,2,0)$, $(0,2,2)$, $(2,0,4)$, and $(2,0,1)$.

We can project this again to obtain a subdivided polytope $P_{xy}$ in
the $x$-$y$-plane given by the inequalities $0 \le x$ and $0 \le y \le
3-x$ (see Figure~\ref{fig:Z1xyz} on the right).  Any (regular and
unimodular) triangulation of this subdivision can be used to construct
a triangulation of $P$.

\subsection{Joins and (Fiber) Products}
\label{sec:joins-and-products}

\subsubsection{Products}
Let $\T$ and $\T'$ be subdivisions of $P$ and $P'$ respectively. Then
$$ \T \times \T' := \left\{ F \times F' \colon F \in \T, F' \in \T'
\right\}$$ 
is the \emph{product subdivision} of $P \times
P'$. Un\-for\-tu\-na\-te\-ly, the product of two
triangulations is not a triangulation. It is a subdivision into
products of simplices.  The product of two unimodular simplices is 
totally unimodular (see~\cite[p.72, Ex.(9)]{SturmfelsGBCP},~\cite[p.~282]{LeeHandBook},
or~\cite[Section 6.2.2]{deLoeraRambauSantos}; alternatively, think of it as the
undirected edge polytope of a complete bipartite graph and apply Lemma~\ref{lemma:edge-poly}(\ref{lemma:bipartite-edge-poly:bipartite})). 
In particular, all its triangulations are unimodular.

There is a particularly nice triangulation of a product of simplices $\Delta^d \times \Delta^{d'}$
(compare~\cite[p.~282]{LeeHandBook},~\cite[Section
6.2.3]{deLoeraRambauSantos}).  To define it, order the vertices of the factors
$\va_0 \prec \ldots \prec \va_d$ and $\va'_0 \prec \ldots \prec
\va'_{d'}$. This induces a componentwise partial order on the
vertices of $\Delta^d \times \Delta^{d'}$. The family of totally
ordered subsets yields a quadratic triangulation, called the
\emph{staircase triangulation}. One geometric way to construct this
triangulation is to pull the vertices of $\Delta^d \times \Delta^{d'}$
in the lexicographic order.

\begin{proposition} \label{prop:products}
  Let $P$ and $P'$ be lattice polytopes. If both admit regular
  unimodular triangulations $\T$ and $\T'$, then so does $P \times P'$.

  The set of minimal non--faces consists of lifts of minimal
  non--faces from $P$ and $P'$ together with non--faces of cardinality
  two.
\end{proposition}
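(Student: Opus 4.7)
The plan is to construct the required triangulation of $P \times P'$ in two stages. First, the product subdivision $\T \times \T'$ is itself regular: if $\vomega$ and $\vomega'$ are weight vectors inducing $\T$ and $\T'$ respectively, then the weights
\[
\tilde\vomega_{(\va,\va')} := \omega_{\va} + \omega'_{\va'}, \qquad \va \in P \cap \Z^d,\ \va' \in P' \cap \Z^{d'},
\]
induce $\T \times \T'$ (the lower faces of the Cayley-style lift separate as sums of lower faces of the individual lifts). Each maximal cell is a product $F \times F'$ of unimodular simplices, and as noted in the text such a product is totally unimodular, so every lattice triangulation of $F \times F'$ that uses only the vertices is unimodular.

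Second, I refine $\T \times \T'$ to a triangulation using the staircase construction coherently across cells. Fix any linear order of the vertices of $P$ extending the order on the vertices of $\T$ (and similarly for $P'$); this induces a componentwise partial order on the vertices of $P \times P'$. Apply a weak pulling refinement of $\T\times\T'$, pulling the vertices of $P \times P'$ in the lexicographic order induced by these orders. By Lemma~\ref{lemma:weakPulling}, this produces a regular triangulation. Restricted to each product cell $F \times F'$, the pulling in lexicographic order reproduces exactly the staircase triangulation of $\Delta^d \times \Delta^{d'}$ recalled above, whose maximal simplices are the maximal chains in the componentwise order. Since the vertex orderings are global, the staircase choices on adjacent cells of $\T\times\T'$ agree on their common face, so the pieces glue to a well-defined triangulation $\T''$ of $P \times P'$. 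Each maximal simplex of $\T''$ lies in a product of unimodular simplices and uses only lattice vertices, so $\T''$ is unimodular.

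For the description of the minimal non-faces, let $N \subseteq (P \cap \Z^d) \times (P' \cap \Z^{d'})$ be a minimal non-face of $\T''$, and let $\pi,\pi'$ be the two coordinate projections. There are three cases. If $\pi(N)$ is a non-face of $\T$, pick a minimal non-face $N_0 \subseteq \pi(N)$ of $\T$ and, for each $\va \in N_0$, pick one preimage in $N$; the resulting lift of $N_0$ is itself a non-face of $\T''$ contained in $N$, so by minimality $N$ is a lift of a minimal non-face of $\T$. The symmetric case handles $\pi'(N)$ being a non-face of $\T'$. Otherwise $\pi(N)$ spans a simplex $F$ of $\T$ and $\pi'(N)$ spans a simplex $F'$ of $\T'$, so $N$ is a non-face contained in the staircase triangulation of $F \times F'$. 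The minimal non-faces of the staircase triangulation are exactly the pairs of componentwise-incomparable vertices, so $|N|=2$.

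The main obstacle is consistency of the local staircase choices across the cell structure of $\T \times \T'$; the fix is to derive them all from a single global vertex order, which is why a lexicographic pulling works. A minor technical point is checking that the combined weight scheme (the sum $\tilde\vomega$ plus the small perturbations of the pulling, which by Lemma~\ref{lemma:strongPulling}(1) can be taken small enough not to disturb $\T\times\T'$) still yields a convex piecewise-linear function, but this follows from applying Lemma~\ref{lemma:weakPulling} iteratively to the regular subdivision $\T\times\T'$.
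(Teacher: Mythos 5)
Your proposal is correct and follows essentially the same route as the paper: show the product subdivision $\T\times\T'$ is regular with totally unimodular cells, refine it by pulling the lattice points of $P\times P'$ in a lexicographic order (which reproduces the staircase triangulation on each product cell), and then split the minimal non-faces according to whether a projection is already a non-face or both projections are faces. You simply spell out details the paper leaves implicit (the summed weight vector, the gluing of local staircases, and the three-case minimality argument), so nothing further is needed.
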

\begin{proof}
  If $\T$ and $\T'$ are regular, then the subdivision $\T\times \T'$ of $P \times P'$ into products of unimodular
  simplices is regular, and
  any triangulation that refines it is unimodular. 
  In order to control the non--faces, order the lattice
  points $\vp_1 \prec \ldots \prec \vp_r$ in $P$ and $\vp_1' \prec
  \ldots \prec \vp_s'$ in $P'$. Then pull the lattice points
  $(\vp_i,\vp_j')$ in $P \times P'$ lexicographically.

  Consider a non--face $N$. If both its projections to $P$ or $P'$
  are faces, $N$ is a non--face in a staircase triangulation.
\end{proof}

Proposition~\ref{prop:products} can be extended to
non-regular triangulations.
This extension is at the heart of the counter-examples constructed by
Santos in~\cite{SantosNoFlips, SantosNonconnectedHilbertScheme}
(see also~\cite[Ch.~7]{deLoeraRambauSantos}) which we will return
 to in Section~\ref{sec:Hilbert}. The
main idea is that if we do not care about the regularity
of the unimodular triangulation of $P\times P'$ we do not need to
refine $\T\times \T'$ by pulling vertices. Any refinement of the
individual cells of $\T\times \T'$ is unimodular, and the only concern is that the different refinements agree on common
faces. 
Using staircase refinements of each product of simplices will still accomplish this. 
It does not require a globally
defined ordering of all the vertices of $\T$ and of $\T'$, but only a
local ordering of the vertices in each individual simplex. These local
orderings can be represented via an acyclic orientation of the
1-skeleton of each simplex, as follows.
Let $\T$ be a triangulation of a point configuration. A \emph{locally
  acyclic orientation} of the one-skeleton of $\T$ (or a locally
acyclic orientation of $\T$) is an assignment of a direction to each
edge such that no simplex contains a directed cycle (equivalently, no
triangle in $\T$ is a directed three-cycle).

\begin{proposition}
\label{prop:lao}
Let $\T_1$ and $\T_2$ be triangulations of $P_1$ and $P_2$ with
locally acyclic orientations.
Refining each product of simplices in $\T_1\times \T_2$ in the
staircase manner indicated by the orientations produces a
triangulation $\T$ of $P_1\times P_2$ with the following properties:
\begin{compactenum}
\item $\T$ is unimodular if and only if  $\T_1$ and $\T_2$ are both
  unimodular. 
\item $\T$ is regular if and only if  $\T_1$ and $\T_2$ are both
  regular and the orientations are globally acyclic. 
\item $\T$ is flag if and only if  $\T_1$ and $\T_2$ are both flag.
\end{compactenum}
\end{proposition}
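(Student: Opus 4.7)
The plan is to first confirm that $\T$ is a well-defined triangulation, then address each of the three equivalences, with (2) being the most subtle. Local acyclicity endows each simplex $\sigma \in \T_i$ with a linear order on its vertices (an acyclic orientation of a complete graph is a linear order), and on any common face $\tau$ of two simplices the induced orders agree, since both are just the restriction of the given orientation to the $1$-skeleton of $\tau$. Hence the staircase refinements of any two product cells $\sigma_1 \times \sigma_2$ and $\sigma_1' \times \sigma_2'$ agree on $(\sigma_1 \cap \sigma_1') \times (\sigma_2 \cap \sigma_2')$, and $\T$ is a well-defined refinement of $\T_1 \times \T_2$.

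For (1), the staircase refinement of $\sigma_1 \times \sigma_2$ produces $\binom{d_1+d_2}{d_1}$ simplices each of normalized volume $V_1 V_2$, where $V_i$ is the normalized volume of $\sigma_i$. So every cell of $\T$ is unimodular iff $V_1 = V_2 = 1$ for every pair $(\sigma_1,\sigma_2)$, iff both $\T_1$ and $\T_2$ are unimodular; the converse is automatic since $\sigma_1 \times \{\vv\}$ appears as a simplex of $\T$ for each vertex $\vv$ of $P_2$.

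For (3), let $N$ be a minimal non-face of $\T$. If both projections $\pi_i(N)$ are faces of $\T_i$, then $N$ lies in a product cell $\sigma_1 \times \sigma_2$ and is a non-face of its (flag) staircase refinement, whence $|N|=2$. Otherwise some $\pi_i(N)$ is a non-face of $\T_i$, and flagness of $\T_i$ gives a pair in $\pi_i(N)$ that is a non-face; any lift of this pair inside $N$ is a size-$2$ non-face of $\T$, and minimality forces $|N|=2$. Conversely, if $\T$ is flag and $N_1$ is a minimal non-face of $\T_1$, then $N_1 \times \{\vv\}$ is a non-face of $\T$ for any vertex $\vv$ of $P_2$, so it contains a size-$2$ non-face $\{(\va,\vv),(\vb,\vv)\}$; then $\{\va,\vb\}$ is a non-face of $\T_1$, and minimality gives $|N_1|=2$.

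The most delicate part is (2). For the forward direction, assuming both $\T_i$ are regular with weights $\omega_i$ and the orientations extend to global linear orders on the vertices of the $P_i$, one combines the $\omega_i$ with a lexicographic perturbation based on the global orders, exactly as in the proof of Proposition~\ref{prop:products}. For the converse, restricting $\T$ to a face $P_1 \times \{\vv\}$ (with $\vv$ a vertex of $P_2$) produces $\T_1$, which is thereby regular; similarly for $\T_2$. To establish global acyclicity, suppose toward contradiction that $\va_1 \to \va_2 \to \cdots \to \va_k \to \va_1$ is a directed cycle in the $1$-skeleton of $\T_1$. Pick any directed edge $\vx \prec \vy$ of $\T_2$ and define
\[
\phi(\va) := \omega(\va,\vy) - \omega(\va,\vx).
\]
For each consecutive pair $\va_i, \va_{i+1}$, the four vertices $\{\va_i,\va_{i+1}\} \times \{\vx,\vy\}$ lie in a common product cell whose staircase contains the diagonal with endpoints $(\va_i,\vx)$ and $(\va_{i+1},\vy)$; the regularity inequality at this diagonal rearranges to $\phi(\va_{i+1}) < \phi(\va_i)$. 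Summing around the cycle gives $\phi(\va_1) < \phi(\va_1)$, a contradiction. The expected main obstacle is precisely locating this test functional $\phi$, which converts the global regularity of $\T$ into strict monotonicity along the orientation of $\T_1$.
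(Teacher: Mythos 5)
Your proof is correct and follows essentially the same route as the paper: regularity of the factors via restriction to faces $P_1\times\{\vv\}$, necessity of global acyclicity via the circuit inequality on the squares $\{\va_i,\va_{i+1}\}\times\{\vx,\vy\}$ summed around the cycle (your potential $\phi$ is just a telescoped form of the paper's summed inequalities), sufficiency via lexicographic pulling as in Proposition~\ref{prop:products}, and flagness/unimodularity via the staircase structure of each product cell. No gaps worth noting.
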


The triangulation of Proposition~\ref{prop:lao} is called the
\emph{staircase refinement} of $\T_1\times \T_2$ with respect to the
corresponding locally acyclic orientations.

\begin{proof}
We made it clear above that the triangulation is well-defined.
For (1)  the refinement is unimodular if the factors are unimodular
  since every triangulation of the product of two simplices is
  unimodular. Conversely, if a simplex in one of the factors is not
  unimodular, then the staircase refinement does not give product
  cells using that simplex unimodular refinements. 
 For (2), observe that the restriction of $\T$ to every
  $\{\va_1\}\times P_2$ or $P_1\times\{\va_2\}$ for vertices $\va_1$
  of $P_1$ or $\va_2$ of $P_2$ is affinely isomorphic to $\T_2$ or
  $\T_1$, respectively. So, both must be regular for $\T$ to be
  regular. If the locally acyclic orientation of, say, $\T_1$, has a
  cycle $\va_0,\va_1,\dots,\va_k=\va_0$ then for every (oriented) edge
  $\vb_1\vb_2$ in $\T_2$  the edges $(\va_i,\vb_1)(\va_{i+1},\vb_2)$ ,
  $i=0,\dots,k$ are in $\T$, which implies $\T$ is not
  regular. Indeed, the circuit
\[
(\va_i,\vb_1) + (\va_{i+1},\vb_2) = (\va_i,\vb_2)(\va_{i+1},\vb_1)
\]
implies that if the edge $(\va_i,\vb_1)(\va_{i+1},\vb_2)$ appears
the following inequality must hold for the weight vector $\vomega$:
\[
\vomega(\va_i,\vb_1) + \vomega(\va_{i+1},\vb_2) < \vomega(\va_i,\vb_2)
\vomega(\va_{i+1},\vb_1).
\]
Summing this over all $i$ yields the impossible equation
\[
\sum_i \vomega(\va_i,\vb_1) + \sum_i\vomega(\va_{i},\vb_2) <
\sum_i\vomega(\va_i,\vb_2) \sum_i\vomega(\va_{i},\vb_1).
\]
This proves it is necessary for $\T_1$ and $\T_2$ to be regular. 
For sufficiency, note that if
$\T_1$ and $\T_2$ are regular,  $\T_1\times \T_2$ is a regular
subdivision. Now if the orientations of $\T_1$ and $\T_2$'s 1-skeletons are globally
acyclic, they can be extended to give total orderings on the vertices of
$\T_1$ and $\T_2$, and our staircase refinement can be obtained by
pulling the vertices of $\T_1\times \T_2$ with respect to the
lexicographic ordering, which yields a regular triangulation.

For (3), flagness  follows from the characterization of minimal non-faces
  of the staircase refinement stated in
  Proposition~\ref{prop:products}.
\end{proof}

\subsubsection{Joins}

Let $P$ and $P'$ be polytopes of dimension $d$ and $d'$, and $\0_k$
the origin in $\R^k$. The join $P\star P'$ of $P$ and $P'$ is the
convex hull of
\begin{align*}
  P\times \{\0_{d'}\}\times\{0\}\quad\cup\quad\{\0_d\}\times P'\times
  \{1\}.
\end{align*}
This gives a $(d+d'+1)$-dimensional polytope.  The join of two
simplices, $\Delta^r \star \Delta^{r'}$ is a simplex
$\Delta^{r+r'+1}$. Any subdivisions $\S$ and $\S'$ of $P$  and $P'$
lift to a subdivision $\T$ of $P\star P'$ by taking all joins of cells
in $\S$ and $\S'$, and every subdivision of $P\star P'$ can be
obtained this way. In particular, if $\S$ and $\S'$ are (regular,
unimodular, flag) triangulations of $P$ and $P'$, then $\T$ is a
(regular, unimodular, flag) triangulation of $P\star P'$.

The toric ring of the join is the tensor product of the components:
$R_{P\star P'} = R_P \otimes R_{P'}$ (compare
Section~\ref{sec:Commutative algebra}). 

The facets of $P\star P'$ are joins of $P$ with facets of $P'$ and
joins of $P'$ with  facets of $P$.
Hence $P\star P'$ is compressed if and only if both $P$ and $P'$ are
compressed.

The join can be defined for more than two factors in a similar way,
and is associative. Just as $P \star P'$ has a canonical projection to
$\Delta^1$ --- the last coordinate, $P_0 \star \ldots \star P_r$ has a
canonical projection to $\Delta^r$.

\subsubsection{Fiber Products}
\label{sec:fiberproducts}
Suppose two lattice polytopes project linearly,
respecting their lattices, to the same lattice polytope:  $P
\overset{\pi}{\rightarrow} Q \overset{\ \pi'}{\leftarrow} P'$. Then
the \emph{polyhedral fiber product}, also known as the
\emph{multigraded Segre product} $P \times_Q P'$ is the polytope
$$\left\{ (\vp,\vp') \in P \times P' \ : \ \pi(\vp)=\pi'(\vp')
\right\}\ .$$
This construction was first used by Buczy\'nska and
 Wisniewski in the study of statistical models of binary
symmetric phylogenetic trees~\cite{WisniewskiBuczynska}. 
A closely related \emph{toric fiber product} 
\[
\conv \left\{ (\vp,\vp') \in (P\cap\Z^d) \times (P'\cap\Z^{d'}) \ :
  \ \pi(\vp)=\pi'(\vp') \right\}
  \]
was defined by Sullivant~\cite{SullivantToricFiber}.

Under the assumptions of the following theorem (which includes
the phylogenetic case) $P \times_Q P'$ is a lattice polytope, so
the two notions agree.

\begin{theorem} \label{thm:fiber-product}
  Let $P \overset{\pi}{\rightarrow} Q
  \overset{\ \pi'}{\leftarrow} P'$ be lattice preserving projections.
 If $Q$ admits a unimodular triangulation $\T$, and  $P$
  and $P'$ have unimodular triangulations $\S$ and $\S'$, which refine the
  pull-back subdivisions $\pi^*\T$ and $\pi'^*\T$ respectively,
  then $P \times_Q P'$ admits a unimodular triangulation.

  Further, regularity and the degree of minimal non-faces can be
  preserved.
\end{theorem}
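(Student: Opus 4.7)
The plan is to build the triangulation of $P\times_Q P'$ cell-by-cell over the base triangulation $\T$, then refine each cell, then glue.

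\emph{Reduction to local cells.} Because $\S$ refines $\pi^*\T$ and $\S'$ refines $\pi'^*\T$, every simplex of $\S$ (resp.\ $\S'$) projects onto a single simplex of $\T$. So the collection
\[
\S\times_\T \S' \;=\; \bigl\{\, F\times_G F' \;:\; F\in\S,\; F'\in\S',\; G\in\T,\; \pi(F)=\pi'(F')=G \,\bigr\}
\]
is a polyhedral subdivision of $P\times_Q P'$: the cells tile the fiber product, and two cells meet in the fiber product of their intersections. It now suffices to triangulate each cell $F\times_G F'$ unimodularly and compatibly on common faces.

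\emph{Local join-product structure.} Fix $F$, $F'$, $G$ and let $\vq_0,\dots,\vq_k$ be the vertices of $G$. Since $\pi$ is lattice preserving and $G$ is a unimodular simplex, the unimodular simplex $F$ decomposes as a join $F=F_0\star\cdots\star F_k$ with $F_i=\pi^{-1}(\vq_i)\cap F$ a unimodular face of $F$; likewise $F'=F'_0\star\cdots\star F'_k$. Affine independence of the $\vq_i$ forces $\pi(\sum_i\lambda_i x_i)=\pi'(\sum_i\mu_i x'_i)$ to imply $\lambda_i=\mu_i$ for all $i$, giving a lattice-preserving identification
\[
F\times_G F' \;\cong\; (F_0\times F'_0)\star(F_1\times F'_1)\star\cdots\star(F_k\times F'_k).
\]
Each $F_i\times F'_i$ is a product of unimodular simplices and has a quadratic staircase triangulation by Proposition~\ref{prop:products}; joining these triangulations gives a (regular, unimodular, flag) triangulation of $F\times_G F'$.

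\emph{Gluing, regularity, and non-faces.} Fix, once and for all, total orderings of the lattice points of $P$ and of $P'$, and use them to pick the staircase triangulation inside every $F_i\times F'_i$ we encounter. Because the orderings are global, adjacent cells $F\times_G F'$ and $\tilde F\times_{\tilde G}\tilde F'$ restrict to compatible staircase triangulations on their common face, so the local triangulations assemble into a unimodular triangulation $\T^\times$ of $P\times_Q P'$. Regularity is preserved by lifting convex weights certifying $\T$, $\S$, and $\S'$ and adding a sufficiently small lexicographic-pulling perturbation, in the spirit of Proposition~\ref{prop:products}. The minimal non-faces of $\T^\times$ fall into three families: lifts of minimal non-faces of $\S$, lifts of minimal non-faces of $\S'$, and two-element ``staircase diagonals'' inside some $F_i\times F'_i$, which establishes the non-face degree claim.

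The main obstacle I expect is the consistency in the gluing step: a single pair of global orderings must yield staircase triangulations on the various $F_i\times F'_i$ that match across different cells of $\S\times_\T\S'$. This reduces to checking the lattice-preservation claim of the join decomposition above carefully, using that $\pi$ and $\pi'$ are lattice projections and that $\S$, $\S'$ refine the pull-back subdivisions --- conditions which together guarantee both the fibered join $F=F_0\star\cdots\star F_k$ and the coherence of lattice structures on the fibers across cells with different base simplex $G$.
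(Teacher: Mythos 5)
Your skeleton coincides with the paper's proof up to the cell decomposition: the paper likewise intersects $\S\times\S'$ with $P\times_Q P'$ and observes that every nonempty cell has the form $F\times_G F'$ for $F\in\S$, $F'\in\S'$ projecting to the same simplex $G\in\T$ (using, as you do, that $G$ is unimodular, so its only lattice points are its vertices). The divergence is in how each cell is handled, and that is exactly where your argument has a gap which you flag but do not close: the claim that $F\times_G F'\cong(F_0\times F'_0)\star\cdots\star(F_k\times F'_k)$ is a \emph{lattice-preserving} identification. The barycentric computation you give (affine independence of the $\vq_i$ forcing $\lambda_i=\mu_i$) only identifies the cell with the join as a polytope; it says nothing about the index, inside the lattice $(\Z^d\times\Z^{d'})$ restricted to the linear space parallel to the affine hull of the cell, of the sublattice affinely generated by the cell's lattice points -- and unimodularity of your join-of-staircases triangulation is equivalent to that index being $1$. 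This is not automatic: joins of unimodular polytopes are in general \emph{not} unimodular (the paper notes, right after Lemma~\ref{lemma:cayley}, that the join of three suitable unimodular edges in $\R^3$ is a non-unimodular simplex), so ``joining these triangulations gives a unimodular triangulation of $F\times_G F'$'' is precisely the assertion that needs proof. In the paper this burden is carried by Lemma~\ref{lemma:fiber-products}: the equation matrix of $\Delta^d\times_{\Delta^r}\Delta^{d'}$ is totally unimodular, so each cell is a lattice polytope whose every triangulation is unimodular, after which a lexicographic pulling refinement gives regularity and the non-face bound as in Proposition~\ref{prop:products}.

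The claim you are missing is true, and besides quoting Lemma~\ref{lemma:fiber-products} you could close it directly: fix base vertices $\vv_*\in F_0$, $\vv'_*\in F'_0$ over the same vertex $\vq_0$; any integral vector $(\vu,\vu')$ in the direction space of the cell can be written $\vu=\sum_{\vv} c_\vv(\vv-\vv_*)$, $\vu'=\sum_{\vv'} c'_{\vv'}(\vv'-\vv'_*)$ with integer coefficients because $F$ and $F'$ are unimodular; equality of the images under the linear parts of $\pi$ and $\pi'$, together with affine independence of the $\vq_i$, forces $\sum_{\vv\in F_i}c_\vv=\sum_{\vv'\in F'_i}c'_{\vv'}$ for every $i\ge 1$; and then $(\vu,\vu')$ is an integer combination of the vectors $(\vv,\vv')-(\vv_*,\vv'_*)$ with $\vv\in F_i$, $\vv'\in F'_i$, obtained by choosing for each $i$ an integer matrix with these row and column sums (possible exactly because the sums agree, the row of $\vv_*$ and column of $\vv'_*$ being unconstrained for $i=0$). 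So the lattice points of the cell generate the full lattice, and since each $F_i\times F'_i$ is triangulated unimodularly, the joined simplices are unimodular. With that supplied, your gluing via global vertex orders, the regularity argument, and the description of minimal non-faces go through essentially as in Proposition~\ref{prop:products} and in the paper's own proof; your route then gives a somewhat more explicit, cell-level picture than the paper's appeal to total unimodularity, but as written the central unimodularity step is unjustified.
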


Before proving this theorem, let us state as a corollary 
a slight generalization of a result of Sullivant~\cite[Cor.~15]{SullivantToricFiber} for the case where $Q$ is a
unimodular simplex. In this case, the pull-back
subdivision is trivial.

\begin{corollary} \label{cor:fiber-product}
If $P \overset{\pi}{\rightarrow} \Delta^r \overset{\ \pi'}{\leftarrow}
P'$ are lattice preserving projections such that both $P$ and $P'$
admit unimodular triangulations, then $P \times_{\Delta^r} P'$ admits
a unimodular triangulation.

Regularity and degree of minimal non-faces can be preserved.
\end{corollary}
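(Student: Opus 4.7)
The plan is to decompose the fiber product into elementary cells indexed by pairs of simplices of $\S$ and $\S'$ that project into a common simplex of $\T$, and to triangulate each elementary cell as a join of products of unimodular simplices. Because $\S$ refines $\pi^*\T$, each top-dimensional simplex $F\in\S$ is contained in a cell of $\pi^*\T$, so $\pi(F)\subseteq G$ for some $G\in\T$; similarly for $F'\in\S'$. Hence $P\times_Q P'$ is covered by the elementary pieces $F\times_G F'$, where $G$ ranges over $\T$ and $F,F'$ over simplices in $\S,\S'$ whose images in $G$ meet, and together these pieces form a polyhedral subdivision of $P\times_Q P'$.

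Fix $G\in\T$ with vertices $\vv_0,\dots,\vv_r$. Since $G$ is a unimodular simplex, the $\vv_j$ form an affine lattice basis of $G$, and every simplex $F\in\S$ with $\pi(F)\subseteq G$ decomposes as an iterated join
\[
F = F_0 \star F_1 \star \cdots \star F_r, \qquad F_j := F\cap \pi^{-1}(\vv_j),
\]
where each nonempty $F_j$ is a unimodular simplex in the fiber lattice over $\vv_j$; analogously $F' = F'_0 \star \cdots \star F'_r$. Writing a point of $F$ as $\sum_j t_j \vx_j$ with $\vx_j\in F_j$, $t_j\ge 0$, $\sum_j t_j = 1$, the fiber-product condition $\pi(\vp)=\pi'(\vp')$ translates to $t_j = t'_j$ for every $j$, and therefore
\[
F\times_G F' = (F_0\times F'_0) \star (F_1\times F'_1) \star \cdots \star (F_r\times F'_r),
\]
with the convention that a factor is omitted whenever $F_j$ or $F'_j$ is empty. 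Each nonempty factor is a product of two unimodular simplices, hence totally unimodular, and admits the staircase quadratic triangulation from Proposition~\ref{prop:products}. Taking the join of these staircase triangulations produces a unimodular triangulation of $F\times_G F'$.

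To assemble a global triangulation, fix once and for all linear orderings of the vertices of $\S$ and of $\S'$. These orderings pick out the staircase triangulation on every $F_j\times F'_j$, and the local triangulations of different elementary cells agree on common boundaries, which are themselves elementary cells obtained by replacing $F,F',G$ with faces. When $\S,\S'$, and $\T$ are regular, the subdivision of $P\times_Q P'$ into elementary cells is regular as well (its weight function is the sum of the pull-backs of those of $\S$ and $\S'$), and its lexicographic pulling refinement using the combined vertex order coincides with the global triangulation just described, so regularity is preserved by Lemma~\ref{lemma:strongPulling}. The minimal non-faces fall into three types: staircase non-faces of cardinality two inside a single $F_j\times F'_j$, non-faces lifted from $\S$ via the projection to $P$, and non-faces lifted from $\S'$ via the projection to $P'$; this yields the claim on the degree of minimal non-faces. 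The principal obstacle is to verify the join decomposition cleanly and to check the boundary compatibility of the local staircase triangulations across adjacent elementary cells; both points depend decisively on $G$ being a \emph{unimodular} simplex, so that the map $F\to G$ splits into the $r+1$ fiber coordinates and the combinatorics of the fiber product literally becomes a join of products.
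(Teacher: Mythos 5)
Your cell decomposition matches the paper's: intersecting $\S\times\S'$ with the fiber product yields cells of the form $F\times_G F'$, and your observation that each such cell is the join (equivalently, the Cayley sum over $G$) of the products of fibers $F_j\times F'_j$ is correct --- the paper itself remarks right after Corollary~\ref{cor:fiber-product} that the fiber product is the Cayley sum of the products of the fibers. The gap sits at the decisive step: you assert that the join of the staircase triangulations of the factors is a \emph{unimodular} triangulation of $F\times_G F'$, and the only justification offered is that each factor $F_j\times F'_j$ is totally unimodular. That inference is not valid in general: a polytope that is affinely the join of lattice polytopes carrying unimodular triangulations need not have a unimodular join triangulation with respect to the ambient lattice. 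The paper contains exactly such an example (Section~\ref{sec:kmwreduction}, after Lemma~\ref{lemma:cayley}): the three lattice segments in directions $(1,1,0)$, $(1,0,1)$, $(0,1,1)$ are unimodular, yet their join is a non-unimodular empty simplex. So ``the combinatorics literally becomes a join of products'' is not enough; you must additionally show that the lattice points in the affine span of $F\times_G F'$ are generated by the lattices of the factors $F_j\times F'_j$ together with one connecting vector $(\va_j,\va'_j)-(\va_0,\va'_0)$ per vertex of $G$, i.e.\ that $F\times_G F'$ with its induced lattice is lattice-isomorphic to the \emph{standard} join of the $F_j\times F'_j$. This is true, and it is precisely where unimodularity of $F$ and $F'$ themselves (not merely of the fibers) enters: their vertex sets are affine lattice bases and every vertex maps to a vertex of $G$ because $G$, being unimodular, has no other lattice points. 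The paper packages this point as Lemma~\ref{lemma:fiber-products}: the fiber product of unimodular simplices over a unimodular simplex is cut out by a totally unimodular system, so \emph{all} of its full triangulations --- in particular your join of staircases --- are unimodular. Your write-up names the join decomposition and the boundary compatibility as the points needing verification, but never this lattice normalization, which is the heart of the corollary.

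The remainder of your argument is sound and runs parallel to the paper's proof of Theorem~\ref{thm:fiber-product}: integrality of the cells follows from your join decomposition, the globally fixed vertex orders make the staircase joins agree on shared faces, regularity follows because the cell decomposition is induced by the sum of pulled-back weights and your triangulation is a (lexicographic) pulling refinement of it, and the three types of minimal non-faces give the degree statement as in Proposition~\ref{prop:products}. The paper's route differs only in that it refines the totally unimodular cells by an arbitrary lexicographic pulling instead of the explicit staircase joins. Once you either invoke Lemma~\ref{lemma:fiber-products} or carry out the short lattice computation indicated above, your proof is complete.
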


A lattice polytope projecting to a unimodular simplex is known in the
literature as a Cayley sum of the fibers of the simplex
vertices~\cite[Ch.9,~eq.(1.2)]{GKZ}. The fiber product in the above
corollary is the Cayley sum of the products of the fibers.

The proof of Theorem~\ref{thm:fiber-product} requires the following
lemma.

\begin{lemma} \label{lemma:fiber-products}
  Given lattice preserving projections  $\Delta^d
  \overset{\pi}{\rightarrow} \Delta^r \overset{\ \pi'}{\leftarrow}
  \Delta^{d'}$, the fiber product $\Delta^d \times_{\Delta^r}
  \Delta^{d'}$ is a lattice polytope.
\end{lemma}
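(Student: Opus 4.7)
The plan is to identify an explicit finite set of lattice points in $\Delta^d\times_{\Delta^r}\Delta^{d'}$ and show that every point of the fiber product is a convex combination of them; this will force the vertices of the fiber product to be lattice points.

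The key observation is that $\Delta^r$ is a unimodular simplex, so its only lattice points are its $r+1$ vertices $\e_0=\0,\e_1,\dots,\e_r$. Because $\pi$ is lattice preserving, it sends each vertex of $\Delta^d$ (a lattice point) to a lattice point of $\Delta^r$, hence to some $\e_k$. This partitions the vertex set of $\Delta^d$ into fibers $V_0,\dots,V_r$ with $V_k=\pi^{-1}(\e_k)\cap\{\text{vertices of }\Delta^d\}$, and analogously the vertex set of $\Delta^{d'}$ is partitioned as $V'_0,\dots,V'_r$. Let
\[
\mathcal{L}\ :=\ \bigcup_{k=0}^{r}\, V_k\times V'_k\ \subset\ (\Delta^d\cap\Z^d)\times(\Delta^{d'}\cap\Z^{d'}).
\]
Every element of $\mathcal{L}$ clearly lies in the fiber product and is a lattice point.

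Next I would show $\Delta^d\times_{\Delta^r}\Delta^{d'}=\conv(\mathcal{L})$. Take $(\vp,\vp')$ in the fiber product, and write $\vp=\sum_i \lambda_i\vv_i$ and $\vp'=\sum_j \mu_j\vv'_j$ as convex combinations of the respective vertices. Applying $\pi$ and $\pi'$ and grouping by the fibers $V_k,V'_k$, the condition $\pi(\vp)=\pi'(\vp')$ becomes
\[
s_k\ :=\ \sum_{\vv_i\in V_k}\lambda_i\ =\ \sum_{\vv'_j\in V'_k}\mu_j \qquad(k=0,\dots,r),
\]
with $\sum_k s_k=1$. For each $k$ with $s_k>0$ set $\tilde\vp_k:=\frac1{s_k}\sum_{\vv_i\in V_k}\lambda_i\vv_i\in\conv(V_k)$ and $\tilde\vp'_k:=\frac1{s_k}\sum_{\vv'_j\in V'_k}\mu_j\vv'_j\in\conv(V'_k)$. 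Then $(\vp,\vp')=\sum_k s_k(\tilde\vp_k,\tilde\vp'_k)$. Since $(\tilde\vp_k,\tilde\vp'_k)\in\conv(V_k)\times\conv(V'_k)=\conv(V_k\times V'_k)\subseteq\conv(\mathcal{L})$, expanding each factor $\tilde\vp_k$ and $\tilde\vp'_k$ in its vertices yields $(\vp,\vp')$ as a convex combination of points of $\mathcal{L}$, as required.

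Consequently the vertices of $\Delta^d\times_{\Delta^r}\Delta^{d'}$ form a subset of $\mathcal{L}$ and are therefore lattice points, which is exactly the claim. I do not expect a serious obstacle: the argument rests entirely on the unimodularity of $\Delta^r$ forcing vertex-to-vertex behavior of $\pi$ and $\pi'$, after which the decomposition $(\vp,\vp')=\sum_k s_k(\tilde\vp_k,\tilde\vp'_k)$ is essentially mechanical.
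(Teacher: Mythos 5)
Your proof is correct, but it follows a genuinely different route from the paper. The paper proves the lemma by writing down the inequality description of the fiber product (nonnegativity, the two sum-to-one equations, and the matching equations $\sum_{i\in I_\nu}p_i=\sum_{j\in I'_\nu}p'_j$) and observing that the resulting equation matrix is totally unimodular, so the polytope automatically has integral vertices. You instead exploit unimodularity of $\Delta^r$ on the vertex level: each vertex of $\Delta^d$ and of $\Delta^{d'}$ must map to a vertex of $\Delta^r$, and the resulting fiber partition lets you split any point of the fiber product as $\sum_k s_k(\tilde\vp_k,\tilde\vp'_k)$ with $(\tilde\vp_k,\tilde\vp'_k)\in\conv(V_k)\times\conv(V'_k)=\conv(V_k\times V'_k)$ (using uniqueness of barycentric coordinates in $\Delta^r$ to get $s_k=s'_k$, which you should state explicitly). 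Your argument buys more than the statement asks for: it identifies the fiber product exactly as the convex hull of the matched vertex pairs $\bigcup_k V_k\times V'_k$, i.e.\ as the Cayley sum of the products of the fibers, which is precisely the description the paper only mentions in passing after Corollary~\ref{cor:fiber-product}. The paper's total-unimodularity argument is shorter and yields a slightly stronger by-product that is implicitly reused in the proof of Theorem~\ref{thm:fiber-product}: the cells $F\times_G F'$ are cut out by a totally unimodular system, so \emph{every} triangulation of them is unimodular; with your approach that extra fact would still need a separate argument, but for the lemma as stated your proof is complete.
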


\begin{proof}
  For $\nu = 1, \ldots, r+1$ let $I_\nu := \{ \ i \ : \ \pi(\ve_i) =
  \ve_\nu \}$ and $I'_\nu := \{ \ j \ : \ \pi'(\ve_j) = \ve_\nu \}$.
  With this notation, the fiber product has the inequality description
  \begin{align*}
    (\vp,\vp') &\ge \0 \ , \\
    \sum_{i=1}^{d+1} p_i &= \sum_{j=1}^{d'+1} p'_j = 1 \ , \\
    \sum_{i \in I_\nu} p_i &= \sum_{j \in I'_\nu} p'_j
    \quad\text{ for }\quad 1 \le \nu \le r+1\ .
  \end{align*}
  The equation matrix is (after omission of repeated columns) of the
  form
  $$ \begin{pmatrix}
    1 \cdots 1 & 0 \cdots 0 \\
    0 \cdots 0 & 1 \cdots 1 \\
    \id_{r+1} & -\id_{r+1}
  \end{pmatrix}$$
  which is a totally unimodular matrix.  
\end{proof}

\begin{proof}[Proof of Theorem~\ref{thm:fiber-product}]
Let $\T$, $\S$ and $\S'$ be as in the statement. $\S$ and $\S'$ give a subdivision
$\S\times \S'$ of $P \times P'$ into products of unimodular
simplices. We claim that intersecting $\S \times \S'$ with $P \times_Q P'$ 
gives a lattice subdivision of $P \times_Q P'$. Indeed, consider a cell in this subdivision
 $(F\times F') \cap (P \times_Q P')$ 
a cell in this subdivision, 
for some unimodular
simplices $F$ and $F'$. 
Since $\pi$ and $\pi'$ map simplices of $\S$ and $\S'$ to simplices in $\T$, 
$\pi F$ and $\pi F'$ are simplices in $\T$ and, in fact, (assuming $F\times F'$ is the minimal 
product of simplices containing our cell) they are the same simplex $G$. Then
\[
(F\times F') \cap (P \times_Q P') = F\times_G F'
\]
which, by Lemma~\ref{lemma:fiber-products}, is a lattice polytope. 

Thus, we have a lattice regular subdivision of $P \times_Q P'$ into totally unimodular cells.
(For regularity, observe that the intersection of a regular subdivision with an affine subspace is regular).
Any refinement of it into a triangulation of $P \times_Q P'$ is unimodular. If the refinement
is done, for example, pulling all the lattice points in $P \times_Q P'$ lexicographically as
in the proof of Proposition~\ref{prop:products}, it will also be regular. The 
non--face statement follows as in Proposition~\ref{prop:products}.
\end{proof}

Note that Theorem~\ref{thm:fiber-product} is true for more than two factors, by induction, since the triangulation obtained refines the pull-back of
$P\times_Q P' \to Q$.

\subsubsection{Semidirect products}

Motivated by a construction in algebraic statistics, Aoki, Hibi,
Ohsugi and Takemura introduced \emph{nested configurations}.
Given lattice polytopes $Q \subseteq k\Delta^d$ and $P_i \subset
\R^{d_i}$ for $i = 1, \ldots, d+1$, the \emph{nested polytope} $\NP(Q;P_1,
\ldots, P_{d+1})$ is the convex hull, in $\R^{d+1} \times \prod \R^{d_i}$ of
the polytopes $\{\va\} \times \prod a_i P_i$, where $\va$ runs over
the vertices of $Q$. 
Here and in what follows, $\Delta^d=\{(x_1,\dots,x_{d+1}): \vx\ge 0, \sum x_i=1\}$ denotes
the homogeneous unimodular $d$-simplex in $\R^{d+1}$.

The following is an equivalent definition in terms of joins.
\[
\NP(Q;P_1,\ldots, P_{d+1}) = k \cdot (P_1 \star \dots \star P_{d+1}) \cap
\pi^{-1}(Q),
\]
where $\pi: \R^{d+1} \times \prod \R^{d_i} \to \R^{d+1}$ is the natural
projection.

In~\cite{0907.3253}, Hibi and Ohsugi show 
that many properties of the input polytopes can be inherited by nested polytopes.
These include
normality and the existence of regular
unimodular triangulations as well as degrees of Gr\"obner bases (or
generators).
Their proof uses the algebraic-geometric machinery from
section~\ref{sec:toric-groebner-bases}, so no statements about
non-regular triangulations can be concluded from it.
Here we offer a purely combinatorial proof.
(The definition
in~\cite{AokiHibiOhsugiTakemuraNested} takes more general
configurations as input, but if one is only interested in the normal
case, no generality is lost by taking our definition.)

We first introduce the following alternative way of looking at nested configurations.
Given lattice polytopes $Q\subset \R^d$ and $P_i \subset \R^{d_i}$ for $i = 1, \ldots, n$
and an integer affine map, $\phi:\Z^d \to \Z^n$, that is nonnegative on $Q$,
the \emph{semidirect product} of $Q$ and the tuple $(P_1,\dots,P_n)$ along the map $\phi$ is defined as
\[
Q \ltimes_\phi (P_1,\dots, P_n):= \conv_{\va \in Q} \left(\{\va\} \times \prod \phi_i(\va) P_i\right),
\]
where $(\phi_1,\dots,\phi_n)$ are the coordinates of $\phi$.

If $n=d+1$, $Q\subset k \Delta^d$,  and $\phi$ is the identity map, we recover the definition of nested configuration. Conversely, every semidirect product can be 
rewritten as a nested configuration as follows. If $\phi$ is injective, unimodular (meaning that $\phi(\Z^d)=\aff( \phi(\Z^d) ) \cap \Z^n)$) and homogeneous (meaning that 
$\phi(\Z^d)\subset \{ \sum x_i=k\}$ for some $k\in \N$) then
\[
Q \ltimes_\phi (P_1,\dots, P_n) \cong \NP(\phi(Q) ; P_1,\dots, P_n).
\]
If $\phi$ is not injective, not unimodular, or not homogeneous, consider the modified map $\widetilde \phi=(\phi, \operatorname{Id}, f): \R^d \to \R^n\times \R^d\times \R$, where $f(x)=k- \sum_i \phi(x)_i -\sum_j x_j$ for a sufficiently large $k$, 
take $P_i$ to be a single point for all $i>n$ and observe that
\[
Q \ltimes_\phi (P_1,\dots, P_n) \cong \NP(\widetilde\phi(Q) ; P_1,\dots, P_n, \mathrm{pt}, \dots, \mathrm{pt}).
\]

So, semidirect products are  really an equivalent  operation to nested configurations. But we find them conceptually easier to handle.
They generalize the following constructions:
\begin{itemize}
\item $\Delta^d \ltimes_\mathrm{Id} (P_0,\ldots, P_d)$ is the join of $P_0,\dots,P_d$,
\item $\{\mathrm{pt}\}\ltimes_{\mathrm{1}} (P_0,\ldots, P_d)$ is the product of $P_0,\dots,P_d$,
as is $P_0\ltimes_{\mathrm{1}} (P_1,\ldots, P_d)$. In both cases $1$ denotes the constant map with image $(1,\dots,1)$.
\item $\{\mathrm{pt}\}\ltimes_{k} (P)$ is the $k$-th dilation of $P$ and $\{\mathrm{pt}\}\ltimes_{(k_1,\dots, k_d)} (P_1,\dots, P_d)$
is the product $\prod k_i P_i$.
\item The chimney $\chim(Q,\vl,\vu)$ associated to two integer functionals $\vl\le \vu$ on $Q$ is equivalent to the semidirect product
$Q \ltimes_{\vu-\vl} I$, where $I$ is a unimodular segment. In particular, a Nakajima polytope is one that can be obtained as
\[
(\dots (\{\mathrm{pt} \ltimes_{\phi_1} I)\dots )\ltimes_{\phi_d} I,
\]
for certain choice of functionals $\phi_i$.
\end{itemize}

There are two ways of relating any semidirect product with several factors to semidirect products with only two factors at a time.
One is as a special case of fiber products, taking into account  that every semidirect product comes with a canonical projection $Q \ltimes_\phi (P_1,\dots, P_n)\to Q$.
The other is a special associativity property:

\begin{lemma}
\label{lemma:semidirect_associative}
As in the definition of the semidirect product,
 let $\phi_1,\dots,\phi_n$ denote the coordinates of $\phi: \R^d\to \R^n$.
Then:
\begin{eqnarray*}
Q \ltimes_\phi (P_1,\dots, P_d) 
  &=& (Q \ltimes_{\phi_1} P_1) \times_Q \dots \times_Q (Q \ltimes_{\phi_n} P_n)\\
  &=& (\dots (Q \ltimes_{\widetilde\phi_1} P_1) \ltimes_{\widetilde\phi_2} \dots \ltimes_{\widetilde\phi_d} P_d),
\end{eqnarray*}
where $\widetilde \phi_i$ denotes the composition of 
the natural projection $\Z^d \times \Z^{d_1+\dots+d_{i-1}}\to \Z^d$ with $\phi_i : Z^d\to \Z$.
\qed
\end{lemma}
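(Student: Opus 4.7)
The plan is to prove both equalities by showing that all three polytopes project naturally onto $Q$ and, under this projection, share the same fiber over every $\va\in Q$. A polytope with a linear projection is determined by its fibers, so matching fibers point by point will suffice.

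The central calculation is the following: for any lattice polytope $P$ and integer affine map $\psi\colon\R^d\to\R$ that is nonnegative on $Q$, the fiber of $Q\ltimes_\psi P$ over $\va\in Q$ is exactly $\psi(\va)P$. I would first verify this by taking a general point $\sum_k\lambda_k(\va_k,\psi(\va_k)\vq_k)$ in $Q\ltimes_\psi P$ (with $\vq_k\in P$, $\lambda_k\ge 0$, $\sum\lambda_k=1$), observing that its projection is $\va=\sum\lambda_k\va_k$ and, using affinity of $\psi$, that the second coordinate equals
\[
\sum_k \lambda_k\psi(\va_k)\vq_k \;=\; \psi(\va)\sum_k\frac{\lambda_k\psi(\va_k)}{\psi(\va)}\vq_k \;\in\; \psi(\va)P
\]
whenever $\psi(\va)>0$; the degenerate case $\psi(\va)=0$ forces the fiber to collapse to $\{\0\}=\psi(\va)P$. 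The reverse containment is immediate from convex combinations of the generators $\{\va\}\times\psi(\va)P$.

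Given this fiber formula, the first equality is immediate: the fiber product $(Q\ltimes_{\phi_1}P_1)\times_Q\cdots\times_Q(Q\ltimes_{\phi_n}P_n)$ has, by construction, fiber $\prod_i\phi_i(\va)P_i$ over $\va$, which coincides with the fiber of $Q\ltimes_\phi(P_1,\dots,P_n)$. For the second equality I would argue by induction on $k$ that the iterated construction $R_k:=(\cdots(Q\ltimes_{\widetilde\phi_1}P_1)\ltimes_{\widetilde\phi_2}\cdots\ltimes_{\widetilde\phi_k}P_k)$ has fiber $\prod_{i\le k}\phi_i(\va)P_i$ over $\va\in Q$. The inductive step uses that $\widetilde\phi_{k+1}$ was defined precisely by composing the projection $R_k\to Q$ with $\phi_{k+1}$, so the fiber formula applied at the outer layer gives an extra factor of $\phi_{k+1}(\va)P_{k+1}$ independent of the inner coordinates $(\vp_1,\dots,\vp_k)$.

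The main, and essentially only, subtle point is the handling of the degenerate fibers where some $\phi_i(\va)=0$, which must be tracked both in the base case above and through the inductive step so that $R_k$ retains the correct dimension over the face $\{\phi_i=0\}\cap Q$. Everything else reduces to the affine-linearity of $\phi$ and the fact that a polytope which is the convex hull of a family of affine slices over $Q$ equals the union of those slices. Once the fiber identification is in place, the two decompositions of $Q\ltimes_\phi(P_1,\dots,P_n)$ follow at once.
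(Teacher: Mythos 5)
Your argument is correct, and since the paper states this lemma without proof (the tombstone in the statement signals that the authors regard it as immediate from the definitions), your fiberwise verification is exactly the routine argument they leave implicit: the fiber of $Q\ltimes_\psi P$ over $\va\in Q$ is $\psi(\va)P$ (with the degenerate case $\psi(\va)=0$ handled as you do), and matching fibers over $Q$ gives both the fiber-product decomposition and, by induction using that $\widetilde\phi_{k+1}$ factors through the projection to $Q$, the iterated one. The only cosmetic point worth noting is that the first equality is an equality after the canonical identification of the $n$ repeated copies of the $Q$-coordinates in the fiber product with a single copy — a convention the paper also uses silently — and that your single-factor fiber formula extends verbatim, blockwise, to $Q\ltimes_\phi(P_1,\dots,P_n)$, which is the form you actually invoke.
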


We are going to prove that

\begin{theorem} \label{thm:nested}
  Suppose $Q$ and  $P$ admit unimodular triangulations $\S$ and $\T$. Then, the semidirect product
  $Q\ltimes_\phi P$ admits a unimodular triangulation that refines the pull-back $\pi^*\S$ of $\S$ by the projection
   $\pi: Q\ltimes_\phi P\to Q$. 
  \end{theorem}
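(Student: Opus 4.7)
The plan is to reduce to the case where both $Q$ and $P$ are unimodular simplices and then exhibit $\Delta^d\ltimes_{\psi}\Delta^{d'}$ as an iterated chimney. First, pulling back $\S$ under $\pi\colon Q\ltimes_\phi P\to Q$ yields a lattice subdivision $\pi^*\S$ whose cells are $F\ltimes_\phi P$ for $F\in\S$; these are lattice polytopes because their vertices $(\va_i,\phi(\va_i)\vp_j)$, with $\va_i$ a vertex of $F$ and $\vp_j$ a vertex of $P$, are lattice points (since $\phi$ takes integer values on $\Z^d$). Because $F$ is a unimodular simplex, each cell is lattice-equivalent to $\Delta^d\ltimes_{\psi} P$ for some nonnegative integer linear $\psi$. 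Within each such cell I further subdivide using $\T$: the pieces $\Delta^d\ltimes_\psi\sigma$ for $\sigma\in\T$ form a lattice subdivision, since $\psi(\va)P=\bigcup_\sigma\psi(\va)\sigma$ and $(\Delta^d\ltimes_\psi\sigma_1)\cap(\Delta^d\ltimes_\psi\sigma_2)=\Delta^d\ltimes_\psi(\sigma_1\cap\sigma_2)$. Each piece is then lattice-equivalent to $\Delta^d\ltimes_{\psi'}\Delta^{d'}$ for $d'=\dim\sigma$.

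Next I exhibit $\Delta^d\ltimes_{\psi'}\Delta^{d'}$ as an iterated chimney. Using the homogeneous description $\Delta^{d'}=\{\vy\in\R^{d'+1}_{\ge 0}:\sum y_i=1\}$ and eliminating the coordinate $y_{d'+1}$, this polytope identifies with
\[
R_{d'}:=\bigl\{(\va,z_1,\dots,z_{d'})\colon\va\in\Delta^d,\ z_i\ge 0,\ {\textstyle\sum_{i=1}^{d'}}z_i\le\psi'(\va)\bigr\}.
\]
Setting $R_0:=\Delta^d$ and $R_k:=\chim(R_{k-1},\,0,\,\psi'-z_1-\cdots-z_{k-1})$ for $1\le k\le d'$, the upper functional is integer linear and nonnegative on $R_{k-1}$, so $R_k$ is a well-defined chimney over $R_{k-1}$. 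Iterating Theorem~\ref{thm:chimney}, starting from the trivial unimodular triangulation of $R_0=\Delta^d$, yields a regular unimodular triangulation of $R_{d'}\cong\Delta^d\ltimes_{\psi'}\Delta^{d'}$ that refines the pull-back of the triangulation of $\Delta^d$.

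Finally, to assemble these cell-wise triangulations into a single regular unimodular triangulation of $Q\ltimes_\phi P$ refining $\pi^*\S$, I fix a global total ordering of the lattice points of $Q$ and of $P$ and use it to induce consistent identifications of each $F\in\S$ and $\sigma\in\T$ with the corresponding standard simplex. With these compatible choices, the iterated chimney triangulation of a cell $F\ltimes_\phi\sigma$ should restrict on any face $F'\ltimes_\phi\sigma'$ to the iterated chimney triangulation of the smaller cell, because the chimney construction behaves well under passage to faces of the base. The hard part will be verifying this restriction property rigorously --- one has to check that the refinements produced by Theorem~\ref{thm:chimney} at successive chimney stages can be chosen to commute with passage to faces of both the $Q$-base and the $\sigma$-base, and in particular with the coordinate elimination used in the identification with $R_{d'}$. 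Once this compatibility is established, the cellular triangulations fit together into the desired regular unimodular triangulation of $Q\ltimes_\phi P$.
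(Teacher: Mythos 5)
Your reduction to cells $F\ltimes_\phi\sigma$ with $F\in\S$, $\sigma\in\T$ is the same first step as the paper's proof, and your cell-wise construction is sound as far as it goes: each cell is indeed lattice-equivalent to $\Delta^d\ltimes_{\psi'}\Delta^{d'}$, your tower $R_0\subset R_1\subset\cdots\subset R_{d'}$ does exhibit it as an iterated chimney with integral nonnegative upper functionals, and iterating Theorem~\ref{thm:chimney} produces a regular unimodular triangulation of each single cell (essentially the Nakajima argument). But the statement you need is a triangulation of $Q\ltimes_\phi P$, and the step you yourself flag as ``the hard part'' --- that the independently built cell triangulations agree on shared faces --- is a genuine gap, not a routine verification. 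Theorem~\ref{thm:chimney} only says that \emph{any} full refinement of the pull-back works; a chimney over a unimodular simplex has many full triangulations (already for a trapezoid over a segment), so at every stage of every tower you are making a non-canonical choice. Two adjacent cells $F\ltimes_\phi\sigma_1$ and $F\ltimes_\phi\sigma_2$ meet along $F\ltimes_\phi(\sigma_1\cap\sigma_2)$, a ``vertical'' face in the $P$-direction; the coordinate identifications with $R_{d'}$ (which vertex of $\sigma_i$ you eliminate, in which order the $z_k$ are processed) differ for the two cells, and nothing in your construction forces the two induced triangulations of that face to coincide. Fixing global vertex orders of $Q$ and $P$ does not by itself resolve this: you would still have to prove that the stage-wise refinements (say, pullings in the global order) commute both with restriction to faces of the base \emph{and} with the cell-dependent elimination/ordering used to set up each tower, and no such argument is given.

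This missing compatibility is exactly what the paper's proof supplies, and it is the real content of the theorem. Instead of triangulating each cell separately, the paper first builds a \emph{canonical} intermediate subdivision of each $B\ltimes_\phi C$: the canonical slicing by the hyperplanes $H(j,b)=\{f_j(\vx)=\phi_{\le b}(\vy)\}$, which depends only on a fixed ordering of the vertices of $P$. Lemma~\ref{lemma:nested_canonical} shows (1) this slicing restricts to the canonical slicing on every face --- which is precisely the gluing property your proposal lacks --- and moreover (2) it is a lattice subdivision with (3) compressed cells. Gluing these canonical slicings gives one global subdivision of $Q\ltimes_\phi P$ refining $\pi^*\S$, and a single global pulling of all vertices (Lemma~\ref{lemma:strongPulling}) then yields a unimodular triangulation, with no cell-by-cell matching problem ever arising. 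To repair your proof you would either have to import such a canonical device, or prove a nontrivial compatibility statement for the chimney refinements; as written, the proposal establishes unimodular triangulations of the individual cells but not of $Q\ltimes_\phi P$.
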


\begin{corollary} \label{coro:nested}
  If $Q,P_1,\ldots, $and  $ P_n$ admit unimodular triangulations, then every
  semidirect product  $Q\ltimes_\phi (P_1,\ldots, P_n)$ admits one too. 
\end{corollary}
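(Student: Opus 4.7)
The plan is to deduce the corollary from Theorem~\ref{thm:nested} by an induction on the number $n$ of factors, using the associativity of the semidirect product recorded in Lemma~\ref{lemma:semidirect_associative}. The base case $n=1$ is exactly Theorem~\ref{thm:nested}, so the whole argument amounts to iterating that theorem cleanly.

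For the inductive step, assume the corollary holds for every semidirect product with at most $n-1$ factors, and consider $Q \ltimes_\phi (P_1,\ldots, P_n)$. The second identity in Lemma~\ref{lemma:semidirect_associative} lets us rewrite this as a binary semidirect product
\[
Q \ltimes_\phi (P_1,\ldots, P_n) \ =\ \bigl(Q \ltimes_\phi (P_1,\ldots,P_{n-1})\bigr) \ltimes_{\widetilde\phi_n} P_n,
\]
where here I am using associativity in the form that groups the first $n-1$ factors together (this follows by repeatedly applying the binary associativity in the lemma). By the inductive hypothesis, the base $Q' := Q \ltimes_\phi (P_1,\ldots,P_{n-1})$ admits a unimodular triangulation, and by hypothesis so does $P_n$. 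Applying Theorem~\ref{thm:nested} with base $Q'$ and fiber $P_n$ produces a unimodular triangulation of $Q' \ltimes_{\widetilde\phi_n} P_n$, which is what we wanted.

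As an alternative (equivalent) route, one could instead use the first identity from Lemma~\ref{lemma:semidirect_associative} to express the semidirect product as an iterated fiber product
\[
Q \ltimes_\phi (P_1,\ldots, P_n) \ =\ (Q \ltimes_{\phi_1} P_1) \times_Q \cdots \times_Q (Q \ltimes_{\phi_n} P_n)
\]
over $Q$. Fixing a unimodular triangulation $\S$ of $Q$, Theorem~\ref{thm:nested} supplies unimodular triangulations of each single-factor semidirect product refining $\pi_i^*\S$; then the (iterated) fiber-product version of Theorem~\ref{thm:fiber-product} glues these into a unimodular triangulation of the whole. This works because the triangulation produced at each fiber-product step still refines the pull-back of $\S$, so the hypotheses of Theorem~\ref{thm:fiber-product} remain satisfied throughout the iteration.

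The only real point to watch is that Theorem~\ref{thm:nested} demands a unimodular triangulation of the base in the inductive step; this is precisely what the inductive hypothesis furnishes, so there is no genuine obstacle. The preservation assertions (regularity, and the degree of minimal non-faces) propagate through the induction because both Theorem~\ref{thm:nested} and Theorem~\ref{thm:fiber-product} preserve them at each step.
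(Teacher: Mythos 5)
Your proof is correct and follows the paper's own argument exactly: the paper likewise derives the corollary from Theorem~\ref{thm:nested} in two ways, by induction on $n$ using the associativity in Lemma~\ref{lemma:semidirect_associative}, or via the fiber-product identity together with Theorem~\ref{thm:fiber-product} (whose multi-factor version applies because the triangulation produced at each step refines the pull-back of the triangulation of $Q$). The only caveat is your closing remark about regularity and non-face degrees: Theorem~\ref{thm:nested} as stated does not assert those preservation properties (indeed the paper's point is to handle non-regular triangulations), but this is immaterial since the corollary only claims existence of a unimodular triangulation.
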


\begin{proof}
Lemma~\ref{lemma:semidirect_associative} gives two different ways to derive the corollary from Theorem~\ref{thm:nested}.
One is by associativity, using induction on $n$. The other is via the relation to fiber products, using Theorem~\ref{thm:fiber-product}.
\end{proof}

The key step for the proof of Theorem~\ref{thm:nested} is to look at the case where both $Q$ and $P$ are unimodular simplices. So, let $Q\subset\R^d$ be a unimodular $d$-simplex and $P=\conv\{\vp_0,\dots,\vp_e\} \subset\R^e$ a unimodular $e$-simplex. As implied by our notation, the vertices of $P$ are considered with a given specific order, which will be important both for the construction on this particular simplex and for gluing the constructions between simplices. Let $f_j: P \to \R$ be the affine functional that takes the value zero in $\vp_0,\dots, \vp_{e-i}$ and one in $\vp_{e-j+1},\dots, \vp_e$. Put differently:
\[
 P =
  \left\{ {\vx} \in \R^{e}
  \ : \ 
      0 \le f_1(\vx) \le \dots \le f_e(\vx)
      \le
      1
\right\}.
\]
(If $P$ is the \emph{standard ordered $e$-simplex} $\{\vx: 0\le x_1,\dots , x_e\le 1\}$ then $f_j$ is the $j$-th coordinate). 

Let $\phi:Q\to \R$ be the affine functional in the definition of semidirect product. Then,
\[
Q\ltimes_\phi P =
  \left\{ \binom{\vy}{\vx} \in Q \times \R^{e}
  \ : \ 
      0 \le f_1(\vx) \le \dots \le f_e(\vx)
      \le
      \phi(\vy)
\right\}.
\]
In this setting we can define the canonical slicing of $Q\ltimes_\phi P$.
 For each $b \in \R$ let
$\phi_{\le b}: Q\to \R$ be the (unique) affine functional with $\phi_{\le b}(\vq)=\min(\phi(\vq),b)$ on each vertex $\vq$ of $Q$, 
and for each $b \in \N$ and each $j\in[e]$ consider the hyperplane
\[
H(j,b) := \left\{ \binom{\vy}{\vx} \in \R^{d} \times \R^{e}
  \ : \ f_j(\vx) = \phi_{\le b} (\vy)
    \right\} .
\]
The \emph{canonical slicing} of $Q\ltimes_\phi P$ is the polyhedral subdivision obtained slicing $Q\ltimes_\phi P$ by all these hyperplanes.

Figure~\ref{fig:nested} shows the canonical slicing in the case
$d=1$, $e=2$, with $\phi$ taking the values two and five on the vertices of the segment $Q$.
\begin{figure}[tb]
  \centering
\begin{tikzpicture}[y=-1cm,scale=.3]

\draw[very thick,join=round,black] (8,5) -- (15.5,10) -- cycle;
\draw[semithick,join=round,red] (4.5,9) -- (4.5,11) -- (9.5,6) -- (9.5,4) -- cycle;
\draw[semithick,join=round,red] (6,8) -- (11,3) -- (11,7) -- (6,12);
\draw[semithick,join=round,red] (12.5,2) -- (12.5,8) -- (6,12);
\draw[semithick,join=round,red] (14,1) -- (14,9) -- (6,12);
\draw[join=round,black] (3,10) -- (8,5);
\draw[join=round,black] (6,12) -- (15.5,10);

\draw[line width=4.5bp,join=round,white] (4.5,9) -- (6,10) -- (15.5,8) -- (9.5,4) -- cycle;

\draw[semithick,join=round,blue] (4.5,9) -- (6,10) -- (15.5,8) -- (9.5,4) -- cycle;

\draw[line width=4.5bp,join=round,white] (6,8) -- (15.5,6) -- (11,3) -- cycle;

\draw[semithick,join=round,blue] (6,8) -- (15.5,6) -- (11,3) -- cycle;

\draw[line width=4.5bp,join=round,white] (6,8) -- (15.5,4) -- (12.5,2) -- cycle;

\draw[semithick,join=round,blue] (6,8) -- (15.5,4) -- (12.5,2) -- cycle;

\draw[line width=4.5bp,join=round,white] (6,8) -- (15.5,2) -- (14,1) -- cycle;

\draw[join=round,arrows=-to,black] (3,10) -- (3,4);
\draw[join=round,arrows=-to,black] (3,10) -- (9,14);
\draw[join=round,arrows=-to,black] (8,5) -- (8,-1);
\draw[join=round,arrows=-to,black] (8,5) -- (20,13);
\draw[semithick,join=round,blue] (6,8) -- (15.5,2) -- (14,1) -- cycle;
\draw[very thick,join=round,black] (6,8) -- (15.5,0);
\draw[very thick,join=round,black] (6,12) -- (15.5,10);
\draw[very thick,join=round,black] (3,10) -- (8,5);

\draw[very thick,join=round,black] (6,8) -- (6,12) -- (3,10) -- cycle;
\draw[very thick,join=round,black] (15.5,10) -- (15.5,0) -- (8,5);

\path (1,4.09625) node[anchor=base west] {\fontsize{9.6}{11.52}\selectfont{}$x_1$};
\path (9,13.5) node[anchor=base west] {\fontsize{9.6}{11.52}\selectfont{}$x_2$};

\end{tikzpicture}%
   \caption{The hyperplanes $\color{red}H(1,b)$ and
    $\color{blue}H(2,b)$ for $d=1$, $e=2$, and $\phi(\vq_1)=2$, $\phi(\vq_2)=5$}
  \label{fig:nested}
\end{figure}

\begin{lemma}
\label{lemma:nested_canonical}
Let $Q$ and $P=\conv\{\vp_0,\dots,\vp_e\}$ be unimodular simplices, with the vertices of $P$ given in a specified ordering, and let $\phi:Q\to \R$ be a nonnegative integer affine function on $Q$.
Then:
\begin{enumerate}
\item The canonical slicing of every face of $Q\ltimes_\phi P$ coincides with the restriction to that face of the canonical slicing of $Q\ltimes_\phi P$.
\item The canonical slicing is a lattice subdivision (all  vertices are integer).
\item All cells in the canonical slicing are compressed.
\end{enumerate}
\end{lemma}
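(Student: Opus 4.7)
The plan is to parametrize the maximal cells of the canonical slicing and derive all three properties from that parametrization. For each weakly increasing tuple $\vb = (b_1,\dots,b_e)$ of non-negative integers (with $b_e < \max_Q \phi$), the corresponding cell is
\[
C_\vb = \{(\vy,\vx)\in Q\ltimes_\phi P : \phi_{\le b_j}(\vy) \le f_j(\vx) \le \phi_{\le(b_j+1)}(\vy), \ j=1,\dots,e\}.
\]
The crucial observation is that the ``slab width'' $\chi_b(\vy) := \phi_{\le(b+1)}(\vy) - \phi_{\le b}(\vy)$ is an affine functional on $Q$ taking values in $\{0,1\}$ at each vertex $\vq$ (namely $\chi_b(\vq) = 1$ iff $\phi(\vq) \ge b+1$, since $\phi$ is integer-valued at the lattice vertices). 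This forces $\chi_b \in [0,1]$ throughout $Q$, and underlies both the lattice and the width-$1$ statements.

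I would address (3) first by invoking Theorem~\ref{thm:paco}: it suffices to check that $C_\vb$ has width $1$ with respect to each of its facets. Facets fall into four types: (i) the trace of a facet $\{y_i = 0\}$ of $Q$, (ii) a lower slice $\{f_j = \phi_{\le b_j}(\vy)\}$, (iii) an upper slice $\{f_j = \phi_{\le(b_j+1)}(\vy)\}$, (iv) an ordering $\{f_j = f_{j+1}\}$ (active only when $b_j = b_{j+1}$, since otherwise the ordering is implied by the slicing inequalities). Type (i) gives width $1$ from unimodularity of $Q$. For types (ii)--(iii), the primitive outer normal is $\pm(f_j - \phi_{\le b_j})$ (primitive because $f_j$ is a coordinate of $P$), and its range on $C_\vb$ sits in $[0,\max_Q \chi_{b_j}] \subseteq [0,1]$, attaining both endpoints whenever the facet is non-empty. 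For type (iv) the primitive normal $f_{j+1}-f_j$ is the barycentric coordinate of $P$ at $\vp_{e-j}$, again of range $[0,1]$.

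For (2), I would verify directly that all vertices of $C_\vb$ are lattice points, so that $C_\vb$ is a lattice polytope and Theorem~\ref{thm:paco} genuinely applies. At a vertex of $C_\vb$ enough inequalities must be tight to force $\vy$ to be a vertex $\vq_i$ of $Q$ (hence a lattice point). Over $\vy = \vq_i$ the fiber is cut by integer bounds $\min(\phi(\vq_i), b_j) \le f_j \le \min(\phi(\vq_i),b_j+1)$ together with the orderings, and splits, according to the blocks of equal $b_j$, into a product of order simplices with integer endpoints. Its extreme points have integer $f_j$-values, which translate to lattice points of $P$ because $f_1,\dots,f_e$ form unimodular affine coordinates on the unimodular simplex $P$ (their consecutive differences being the barycentric coordinates).

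For (1) I would proceed by case analysis on the type of face of $Q\ltimes_\phi P$. Each face arises by imposing one of the defining equalities $y_i = 0$, $f_1 = 0$, $f_j = f_{j+1}$, or $f_e = \phi$, and is again a semidirect product $Q' \ltimes_{\phi|_{Q'}} P'$ after suitable re-indexing of $f_1,\dots,f_e$. The identification of hyperplanes then rests on $(\phi|_{Q'})_{\le b} = \phi_{\le b}|_{Q'}$, which holds because both sides are affine functions on $Q'$ agreeing at every vertex. I expect the main obstacle to be the case $f_e = \phi$: here $H(e,b)$ restricted to the face for $b < \max_Q \phi$ does not define a hyperplane of the face but rather cuts out a proper sub-face; one must check that this sub-face already lies in the boundary produced by the $H(j,b)$ with $j < e$ and so does not produce spurious cells. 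Similar but simpler bookkeeping takes care of $f_1 = 0$ and of the degenerate situations where two slicing hyperplanes coincide.
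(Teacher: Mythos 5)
Your plan is essentially workable, and where it overlaps with the paper it uses the same mechanism: the bound $\phi_{\le (b+1)}\le\phi_{\le b}+1$ (your $\chi_b\in[0,1]$) is exactly what the paper uses for width one across the facets cut by the hyperplanes $H(j,b)$, and your treatment of part (1), including the observation that the restriction of $H(e,b)$ to the facet $\{f_e=\phi\}$ only cuts out a face of that facet (the preimage of the face of $Q$ where $\phi\le b$), is more careful than the paper, which simply declares (1) trivial. For your type (iv) facets you depart from the paper, which argues through the fibers over vertices of $Q$ and total unimodularity there; your direct route is fine, but the justification as written is off: $f_{j+1}-f_j$ does \emph{not} have range $[0,1]$ on $Q\ltimes_\phi P$, because the fiber over $\vy$ is $\phi(\vy)P$. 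What you need, and what your own $\chi$-observation supplies, is the bound on the cell itself: when $b_j=b_{j+1}=b$ one has $0\le f_{j+1}-f_j\le\phi_{\le(b+1)}(\vy)-\phi_{\le b}(\vy)=\chi_b(\vy)\le 1$ on $C_\vb$. That repair is immediate, so I count it as a slip rather than a gap.

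The genuine gap is in part (2): the pivotal claim that at a vertex of $C_\vb$ the point $\vy$ must be a vertex of $Q$ is asserted (``enough inequalities must be tight'') but never argued, and it does not follow from a naive count of tight inequalities. Where a slab degenerates, the two constraints $f_j=\phi_{\le b_j}(\vy)$ and $f_j=\phi_{\le(b_j+1)}(\vy)$ can be tight simultaneously and are affinely independent, so up to $2e$ mixed constraints can be tight without $\vy$ obviously being a vertex. Since part (3) invokes Theorem~\ref{thm:paco}, which requires the cells to be lattice polytopes, the missing step undermines both (2) and (3) as written. The paper closes it by reducing, via part (1), to a vertex interior to $Q\ltimes_\phi P$, observing that $b\mapsto\phi_{\le b}(\bar\vy)$ is strictly increasing when $\bar\vy$ is interior to $Q$, so at most one hyperplane $H(j,b)$ per $j$ is tight --- at most $e<d+e$ constraints unless $d=0$ --- and then settling $d=0$ by the total unimodularity of the system $\{f_j,\ f_{j+1}-f_j\}$ (that last piece is the portion of your sketch that is fine). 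You could also close it inside your own framework: if $\vy$ lies in the relative interior of a positive-dimensional face $G$ of $Q$, any tight slab pair forces $\chi_b(\vy)=0$, hence $\chi_b\equiv 0$ on $G$ because $\chi_b$ is a nonnegative affine function; the prescribed $\vy$-parts of all tight mixed constraints then agree along the directions of $G$, and since $f_1,\dots,f_e$ are independent coordinates one can produce a nonzero direction along which every tight constraint stays tight, contradicting vertexhood. Either way, this argument must be spelled out; as it stands the proposal assumes the key fact of part (2).
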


\begin{proof}
Part (1) is trivial.

For part (2), let $\binom{\bar\vy}{\bar\vx}\in Q\ltimes_\phi P$ be a vertex of the canonical slicing. By part (1) there is no loss of generality in assuming that
$\binom{\bar\vy}{\bar\vx}$ lies in the interior of $Q\ltimes_\phi P$, in particular $\bar\vy$ lies in the interior of $Q$. For such an $\bar\vy$ the function $b \mapsto
\phi_{\le b}(\bar\vy)$ is (continuous and) strictly increasing in the
range $b \le \max_{\vq}(\phi(\vq))$. Hence $\binom{\bar\vy}{\bar\vx}$ lies in at most one hyperplane $H(j,b)$ for each $j\in [e]$. In order for these hyperplanes to define a vertex, we need to have (at least) $d+e$ of them, so that the only possibility is $d=0$. But when $d=0$, $Q\ltimes_\phi P$ is just the $k$-th dilation of $P$, where $k$ is the value taken by $\phi$ in the (unique) point of $Q$. The hyperplanes $H(j,b)$ are of the form ``$f_j$ equals a constant'', and the facet-defining hyperplanes of $kP$ are of the form $f_{j+1}-f_j=0$. Together they form a totally unimodular system of possible facet normals (in the basis consisting of the $f_j$'s, which is itself unimodular), so the slicing they produce can only have integer vertices.

The inductive argument above implies that every vertex of the canonical slicing lies in one of the fibers $\{\vq\} \times \phi(\vq) P$ where $\vq$ is a vertex of $Q$,  which will be useful in the last part of the proof.

For part (3), we consider the three possible types of facets separately. 
\begin{itemize}
\item Those contained in facets $F\ltimes_\phi P$, where $F$ is a facet of $Q$, have width one since $Q$ is unimodular and $Q\ltimes_\phi P$ projects to it. 

\item For, those defined by a hyperplane $H(j,b)$, observe that cells incident to that hyperplane are contained between the hyperplanes $H(j,b-1)$ and $H(j,b+1)$. For every point $\binom{\bar\vy}{\bar\vx}$ in those cells we have
\[
\phi_{\le b}(\bar\vy)-1 \le \phi_{\le b-1}(\bar\vy) \le f_j(\bar\vx) \le \phi_{\le b+1}(\bar\vy) \le \phi_{\le b}(\bar\vy)+1,
\]
which proves they have width one.

\item 
For, those contained in facets $Q\ltimes_\phi F_j$ of $Q\ltimes_\phi P$, where $F_j$ is a facet of $P$, 
recall that $F_j$ is defined by the equation $f_j(\vx)=f_{j-1}(\vx)$, $j=1,\dots, e+1$ (with the convention $f_0\equiv 0$ and $f_{e+1}\equiv 1$). The facet $Q\ltimes_\phi F_j$ lies in the hyperplane (in $\R^{d} \times \R^{e}$ defined by the same equation (in the $\R^e$ variables), except for the  $j=e+1$ case, where the equation defining $Q\ltimes_\phi F_{e+1}$ is
$f_{e+1}(\vx) = \phi(\vy)$.
All vertices of the canonical slicing are in fibers over the vertices of $Q$. Since the canonical slicing restricted to these fibers is a dicing by a system of totally unimodular vectors (the same system of vectors as in the proof of part (2)), the cells in each individual fiber have width one. This implies that the cells in the whole slicing also have width one with respect to these functionals.
\end{itemize}
\end{proof}

\begin{proof}[Proof of Theorem~\ref{thm:nested}]
Suppose $Q,P$ admit unimodular triangulations $\S,\T$ respectively.
We then have that the cells
\[
B \ltimes_\phi C,
\]
for all the simplices $B\in \S$ and $C\in \T$ form a subdivision
of $Q\ltimes_\phi P$. (To show this, if $\phi$ is strictly positive on
$Q$ observe that $Q\ltimes_\phi P$ is projectively equivalent to $Q\times P$.
If $\phi$ is zero on some face of $Q$ then the corresponding face of $Q\times P$
collapses to lower dimension in $Q\ltimes_\phi P$, but the result is still true).

Now, Lemma~\ref{lemma:nested_canonical} tells us how to unimodularly subdivide
each $B \ltimes_\phi C$ into compressed lattice polytopes, and the canonical nature of these subdivisions guarantees
that they agree on common faces. Any triangulation that refines the subdivision obtained this way (e.g., by pulling all vertices)
is unimodular.
\end{proof}

\subsection{Toric Gr\"obner Bases} \label{sec:toric-groebner-bases}

The toric dictionary translates between the discrete geometry of
lattice points in polytopes and the algebraic geometry of toric
varieties. We explore the translations involving unimodular
triangulations.

\subsubsection{Unimodular triangulations and Gr\"obner bases}
\label{sec:UnimodularTriangulations}

Let $\mathbbm{k}$ be a field, and  $\configuration := (P \times
\{1\}) \cap \Z^{d+1}$ denote the homogenized set of lattice points in
$P$. Consider the polynomial ring
$S := \mathbbm{k}[x_{\va} \! : \! \va \in \configuration]$
with one variable for each lattice point.
There is a canonical ring homomorphism $\phi_P$ to the Laurent
polynomial ring $\mathbbm{k}[t_1^{\pm 1}, \ldots, t_d^{\pm 1}, t_{d+1}]$
mapping each variable to the corresponding (homogenized)
$\t$-monomial: $\phi_P(x_{\va}) = \vt^{\va} = t_1^{a_1} \cdot \ldots
\cdot t_d^{a_d} \cdot t_{d+1}$.
The \emph{toric ideal} $\ideal_P := \ker \phi_P$ is spanned, as a
$\mathbbm{k}$-vector space, by the set
\begin{equation} \label{eq:groebner-ideal-as-vector-space}
    \left\{ \ \vx^\vu-\vx^\vv \ : \ \vu, \vv \in \Z_{\ge 0}^\configuration
    \ , \  \sum_{\va \in \configuration} u_{\va} \va
    = \sum_{\va \in \configuration} v_{\va} \va \ \right\}  ,
\end{equation}
where $\vx^\vu = \prod_{\va \in \configuration}
x_{\va}^{u_{\va}}$. It thus encodes the affine dependencies
among the lattice points in $P$~\cite[Lemma~4.1]{SturmfelsGBCP}.

The tie between Gr\"obner bases of $\ideal_P$ and regular
triangulations of $P$ is established via two different interpretations
of a generic\footnote{To be on the safe side, assume that the numbers
  $\omega_{\va}$ are linearly independent over $\Q$.}
weight vector $\vomega \in \R^\configuration$. On the lattice polytope
side, $\vomega$ induces a regular triangulation $\T_\vomega$ of $P$
as explained at the end of Section~\ref{sec:what}. On the algebraic
side, such an $\vomega$ induces an ordering of the monomials in $S$ via
$\displaystyle \vx^\vm \prec \vx^\vn \ :\iff \langle \vomega, \vm \rangle <
\langle \vomega, \vn \rangle$.
For a polynomial $f 
\in S$ the \emph{leading term} $\lead_\vomega f$ is the biggest monomial
in this ordering which has a non-zero coefficient in $f$,
and the \emph{initial ideal} $\lead_\vomega I := \langle \lead_\vomega f \ :
\ f \in I \rangle$ of an ideal $I$ collects all the leading terms of
polynomials in $I$. A collection of polynomials in $I$ whose leading
terms generate the initial ideal is called a \emph{Gr\"obner basis} of $I$
(with respect to $\vomega$).
If $\vomega$ is not generic, we only get a partial ordering of the
monomials. A small enough generic perturbation of $\vomega$ refine the partial order
to a term order.

We will now investigate how (regular) subdivisions can help to find
generating sets and Gr\"obner bases of toric ideals.
For a vector $\vv \in \Z_{\ge0}^\configuration$ we call the set $\supp
\vv := \{\va \in \configuration \mid v_\va \neq 0 \}$ the
\emph{support} of $\vv$ and say that $\vx^\vv$ is supported on 
$F \subseteq P$ whenever $\supp \vv \subseteq F$. As a preliminary
step, suppose $P$ has a covering $\calC$ by integrally closed
polytopes (cf.~Section \ref{sec:integer-programming}). Then we can restrict
the generating set~(Section \ref{eq:groebner-ideal-as-vector-space}) of the
vector space $\ideal_P$ to those binomials $\vx^\vu-\vx^\vv$ which
have at least one monomial supported in a cell of $\calC$.

For a subdivision $\S$
of $P \subset \R^d$ into lattice polytopes, we call $N \subseteq
\configuration$ a non-face if $N \not\subset Q$ for all $Q \in
\S$. (Our two notions of a non-face --- for a triangulation and for a
subdivision --- agree for full triangulations.) 
If, for example, $\S$ comes from a lattice dicing as in
Section \ref{sec:hyperplanes}, then all minimal non-faces have size two.

Given $\vomega \in \R^\configuration$ with induced regular subdivision
$\S_\vomega$, and given a monomial $\vx^\vu \in S$, we call a monomial
$\vx^\vv$ \emph{standard}, written $\vx^\vv \in \stdo(\vx^\vu)$, if
$\vx^\vu-\vx^\vv \in \ideal_P$, and $\vv$ minimizes $\langle \vomega,
\vv \rangle$ subject to this condition. Again, we can restrict
the generating set~(Section \ref{eq:groebner-ideal-as-vector-space}) of the
vector space $\ideal_P$ to binomials $\vx^\vu-\vx^\vv$ for which
$\vx^\vv \in \stdo(\vx^\vu)$.
If the cells of $\S_\vomega$ are integrally closed 
then the definition of regular subdivision
says that $\vx^\vv$ is standard if and only if $\vx^\vv$ is supported
on a cell of $\S_\vomega$.
In particular, for every non-face $N \subseteq \configuration$ there
is a monomial $\vx^\vv$ supported on a cell of $\S_\vomega$ with
$f_N:=\vx^N-\vx^\vv \in \ideal_P$, where $\vx^N$ denotes the
squarefree monomial $\prod_{\va \in \configuration} x_\va$ with
support $N$.

For the following lemma, we consider the polynomial rings
$\mathbbm{k}[x_\va \! : \! \va \in \configuration \cap F]$ and their
ideals $\ideal_F$  as subsets of $S$.

A lifting function $\vomega$ producing the regular subdivision $\S_\vomega$ of $\configuration$ is \emph{tight}
if $(\va,\omega_\va)$ lies in the boundary of  $\tilde P:= \conv ( \va \times [\omega_\va, \infty) \ : \ \va
\in \configuration )$ for every $\va\in \configuration$. 
Observe that if $\omega_\va$ is not tight then there is a canonical way of making it tight: decrease the entries of $\omega_\va$ that are not tight until they are (without changing $\tilde P$). 
If all $\va \in \configuration$ are
vertices of $\S_\vomega$, then $(\S_\vomega,\vomega)$ is automatically tight.

\begin{lemma} \label{lemma:normal-covering}
  Suppose $\S_\vomega$ is a regular subdivision of the lattice
  polytope $P$ into integrally closed lattice polytopes.
  \begin{enumerate}
  \item The toric ideal $\ideal_P$ is generated by $\ideal_F$ for $F
    \in \S_\vomega$ together with $f_N$ for minimal non-faces $N$.
    \label{lemma:normal-covering:generation}
  \item If $\vomega$ is tight for $\S_\vomega$ then for any small enough
    generic perturbation $\vomega'$ of $\vomega$, 
         combining Gr\"obner
    bases for the $\ideal_F$ with respect to $\vomega'|_F$ with the
    $f_N$ for minimal non-faces
    $N$
    yields an $\vomega'$-Gr\"obner basis of $\ideal_P$.
     \label{lemma:normal-covering:Groebner}
  \end{enumerate}
\end{lemma}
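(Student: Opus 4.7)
Both parts rest on two geometric facts about the subdivision $\S_\vomega$ of $P$ into integrally closed cells: (a) the intersection of two cells is a common face, which is itself integrally closed; and (b) if $\supp \vu \subseteq F$ for a cell $F$ and $\phi_P(\vx^\vu) \in \deg(\vu) \cdot G$ for some face $G$ of $F$, then $\supp \vu \subseteq G$. Fact (b) falls out of a one-line facet-inequality argument: writing $G = F \cap \{\ell = c\}$ with $\ell \ge c$ on $F$, the identity $\sum u_\va \ell(\va) = \ell(\phi_P(\vx^\vu)) = c \deg(\vu) = c \sum u_\va$ together with $\ell(\va) \ge c$ and $u_\va \ge 0$ forces $\ell(\va) = c$ whenever $u_\va > 0$. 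Throughout I use that $\ideal_P$ is spanned as a $\mathbbm{k}$-vector space by pure binomials $\vx^\vu - \vx^\vv$ with matching $\phi_P$-image, and that being a Gr\"obner basis is equivalent to having leading terms that generate $\lead_{\vomega'}(\ideal_P)$.

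For part (1), the plan is a two-stage reduction of an arbitrary binomial $\vx^\vu - \vx^\vv \in \ideal_P$. In the first stage, each monomial is rewritten modulo the $f_N$'s until its support lies in some cell: whenever $\supp \vu$ contains a minimal non-face $N$, factor $\vx^\vu = \vx^{\vu - \mathbf{1}_N}\vx^N$ and replace $\vx^N$ by $\vx^{\vv_N}$ via $f_N = \vx^N - \vx^{\vv_N}$. Since $\vx^{\vv_N}$ is supported on a cell (by tightness plus integral closedness) while $\vx^N$ is not, its $\vomega$-weight is strictly smaller, so iterating strictly decreases the $\vomega$-weight within a finite class and terminates with monomials $\vx^{\tilde\vu}, \vx^{\tilde\vv}$ supported on cells $F_1, F_2$. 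In the second stage, I bridge them through $\ideal_{F_1} + \ideal_{F_2}$: the common value $\phi_P(\vx^{\tilde\vu}) = \phi_P(\vx^{\tilde\vv})$ lies in $\deg(\tilde\vu) \cdot F_1 \cap \deg(\tilde\vu) \cdot F_2 = \deg(\tilde\vu) \cdot (F_1 \cap F_2)$, so fact (a) produces a monomial $\vx^\vw$ with $\supp \vw \subseteq F_1 \cap F_2$ and the same $\phi_P$-image, giving
\[
\vx^{\tilde\vu} - \vx^{\tilde\vv} = (\vx^{\tilde\vu} - \vx^\vw) + (\vx^\vw - \vx^{\tilde\vv}) \in \ideal_{F_1} + \ideal_{F_2}.
\]

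For part (2), the goal is to prove
\[
\lead_{\vomega'}(\ideal_P) \;=\; \langle \vx^N : N \text{ min.\ non-face}\rangle \;+\; \sum_{F \in \S_\vomega} \lead_{\vomega'|_F}(\ideal_F),
\]
from which the Gr\"obner basis property is immediate. The inclusion $\supseteq$ is trivial. For $\subseteq$, take $\vx^\vu \in \lead_{\vomega'}(\ideal_P)$ and let $\vu^*$ be the unique global $\vomega'$-minimum of the $\phi_P$-class of $\vu$; for $\vomega'$ generic and close enough to $\vomega$, $\vu^*$ is a $\vomega$-minimum and hence supported on some cell $F^*$. If $\supp \vu$ contains a minimal non-face $N$, then $\vx^N \mid \vx^\vu$. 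Otherwise $\supp \vu \subseteq F$ for some cell $F$, and there are two subcases: if $\supp \vu^* \subseteq F$, then $\vx^\vu - \vx^{\vu^*} \in \ideal_F$ has $\vomega'$-leading term $\vx^\vu$; if $F^* \ne F$, I apply fact (b) to the face $G := F \cap F^*$ of $F$ (using $\phi_P(\vx^\vu) \in \deg(\vu) \cdot G$), which forces $\supp \vu \subseteq G \subseteq F^*$ and places $\vx^\vu - \vx^{\vu^*}$ in $\ideal_{F^*}$ with leading term $\vx^\vu$. Either way $\vx^\vu \in \lead_{\vomega'|_{F'}}(\ideal_{F'})$ for some cell $F'$, completing the case analysis.

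The main obstacle I anticipate is making ``small enough generic perturbation'' rigorous: there are infinitely many $\phi_P$-classes and in each one $\vomega'$ must select its minimum from among the $\vomega$-minima, so naive smallness does not suffice. A clean way to sidestep this is to work with the weighted lexicographic refinement that compares $\vomega$ first and uses $\vomega'$ only as a tiebreaker; this produces the same Gr\"obner basis while making ``$\vomega'$-min is a $\vomega$-min'' hold class-by-class by construction. The geometric heart of both parts is the support-propagation fact (b): it converts an intersection phenomenon at the level of $\phi_P$-images into one at the level of supports, and is precisely what lets the ideal-level arguments close cleanly.
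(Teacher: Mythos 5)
Your proposal is correct, but it is organized differently from the paper's proof, so a comparison is worthwhile. The paper proves part (2) first and in one stroke: for an arbitrary binomial $\vx^\vu-\vx^\vv\in\ideal_P$ with $\vomega'$-leading term $\vx^\vu$, either $\supp\vu$ contains a minimal non-face (so $\vx^N$ divides the lead), or $\supp\vu\subseteq F$ for a cell $F$, in which case the inequality $\langle\vomega,\vu\rangle\ge\langle\vomega,\vv\rangle$ combined with $\omega_\va\ge\langle\veta_F,\va\rangle+\zeta_F$ and the tightness equations forces $\supp\vv\subseteq F$ as well, so the whole binomial lies in $\ideal_F$; part (1) is then read off as a corollary. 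You instead prove (1) independently, by a division/termination argument modulo the $f_N$ (strict $\vomega$-decrease inside a finite fiber) followed by bridging the two cell-supported monomials through the integrally closed common face $F_1\cap F_2$, and you prove (2) by comparing each monomial of the initial ideal with the unique $\vomega'$-standard monomial of its fiber and using your support-propagation fact (b) to push $\supp\vu$ into that monomial's cell. Both arguments ultimately rest on the same mechanism --- tightness plus integral closedness identifies the $\vomega$-minimal monomials with the cell-supported ones --- but your route buys two things: your proof of (1) never mentions $\vomega'$ at all and is an explicit reduction algorithm, and you confront head-on the ``small enough generic perturbation'' issue (which the paper dispatches in a parenthetical that, taken literally over all binomials, needs the finiteness of the Gr\"obner fan or a tie-breaking refinement, exactly as you indicate). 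The price is that you need the extra inputs that faces of integrally closed polytopes are integrally closed and that standard monomials of a toric initial ideal are exactly the fiberwise minima, neither of which the paper's shorter computation requires. One small caveat: state explicitly that in part (2) you only need to treat monomial generators of $\lead_{\vomega'}\ideal_P$ coming from binomials (so $\vu\ne\vu^*$ automatically), and that the strict decrease in your part (1) reduction uses tightness, which is formally a hypothesis only of part (2) --- the same implicit assumption the paper makes when defining the $f_N$, and harmless since $\vomega$ can always be retightened without changing $\S_\vomega$.
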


Part (\ref{lemma:normal-covering:generation}) of this lemma will be
used 
in
Corollary~\ref{cor:rootB-quadratic}
to show type $\rootB$ polytopes are quadratically generated.
 It follows readily from
part (\ref{lemma:normal-covering:Groebner}).

\begin{proof}
  Take a binomial $f = \vx^\vu-\vx^\vv \in \ideal_P$ with $\vx^\vu =
  \lead_{\vomega'} f$.

  If $\supp \vu$ is a non-face of $\S_\vomega$,
  then there is a minimal non-face $N \subseteq \supp \vu$, and 
  $\lead_{\vomega'} f_N = \vx^N | \vx^\vu = \lead_{\vomega'} f$.
  
  If $\supp \vu$ is contained in a face $F$ of $\S_\vomega$, we claim
  that $\supp \vv$ must also be contained in $F$: we have
  $\langle \vomega, \vu \rangle \ge \langle \vomega, \vv \rangle$ (as
  $\vomega'$ is a small perturbation of $\vomega$ and we have strict
  inequality for $\vomega'$), and $\sum_{\va \in \configuration}
  u_{\va} \va = \sum_{\va \in \configuration} v_{\va} \va$ is an
  affine dependence. So the tightness condition
    yields $\omega_\va = \langle
  \veta_F, \va \rangle + \zeta_F$ for all $\va \in \supp \vv$.
\end{proof}

If we apply the preceding lemma to a regular unimodular triangulation,
we obtain the following corollary which is contained in
\cite[Corollaries~8.4, 8.8]{SturmfelsGBCP}.
\begin{corollary}
  \label{lem:Groebner}
  If $\T_\vomega$  is a regular unimodular triangulation $\T_\vomega$
  of $P$, then
  \[
  \lead_\vomega \ideal_P = \left\langle \ \prod_{\va\in N}x_\va \ :  N
    \text{ is a minimal non-face of  } \T_\vomega \ \right\rangle.
  \]
\end{corollary}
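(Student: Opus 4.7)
The plan is to invoke Lemma~\ref{lemma:normal-covering}(\ref{lemma:normal-covering:Groebner}) in the special case when the regular subdivision is a unimodular triangulation, where the hypotheses simplify dramatically. The key observation is that the ingredients in that lemma---the local Gr\"obner bases of the $\ideal_F$ and the non-face binomials $f_N$---collapse to just the $f_N$'s once $F$ is a unimodular simplex.

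First I would check that the hypotheses of the lemma apply. Every unimodular triangulation is full, so every lattice point of $P$ is a vertex of $\T_\vomega$; by the discussion following~\eqref{eq:face-of-regular-tight}, this forces $(\T_\vomega,\vomega)$ to be tight. Each cell $F \in \T_\vomega$, being a unimodular simplex, is lattice equivalent to $\Delta^d$, so $F \cap \configuration$ consists of the $d+1$ affinely independent vertices of $F$. There are no nontrivial affine dependencies among them, hence $\ideal_F = 0$. Applying Lemma~\ref{lemma:normal-covering}(\ref{lemma:normal-covering:Groebner}) for a small generic perturbation $\vomega'$ of $\vomega$ therefore yields a Gr\"obner basis of $\ideal_P$ consisting only of the binomials $f_N = \vx^N - \vx^\vv$ attached to minimal non-faces $N$. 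Because $\lead_{\vomega'} f_N = \vx^N$ by construction, the initial ideal $\lead_{\vomega'} \ideal_P$ is generated by the squarefree monomials $\vx^N$ ranging over minimal non-faces.

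It then remains to replace $\vomega'$ by $\vomega$ itself. Because $\T_\vomega$ is a triangulation (not merely a coarser subdivision), for any binomial $\vx^\vu - \vx^\vv \in \ideal_P$ with $\vx^\vv \in \stdo(\vx^\vu)$ and $\vu \ne \vv$, the weights $\langle \vomega, \vu\rangle$ and $\langle \vomega, \vv\rangle$ are strictly separated; otherwise $\vu$ and $\vv$ would give two supports $\supp\vu$ and $\supp\vv$ with the same $\vomega$-value on the lifted polyhedron, contradicting that $\S_\vomega$ is a triangulation. Thus $\vomega$ already picks a unique leading monomial from each binomial in $\ideal_P$, and the same generators $\vx^N$ suffice.

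The substantive content of the proof is entirely packaged inside Lemma~\ref{lemma:normal-covering}; the main---and really only---obstacle is this last bookkeeping step of passing from the generic perturbation $\vomega'$ back to $\vomega$. One can also circumvent it by noting that the Hilbert functions of $S/\lead_\vomega\ideal_P$ and $S/\lead_{\vomega'}\ideal_P$ agree with that of $S/\ideal_P$, so the inclusion $\lead_{\vomega'}\ideal_P \supseteq \lead_\vomega\ideal_P$ forced by perturbation must be an equality.
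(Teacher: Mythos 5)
Your argument is correct and essentially the paper's: the corollary is obtained there simply by applying Lemma~\ref{lemma:normal-covering} to a regular unimodular triangulation, where every cell $F$ is a unimodular simplex (so $\ideal_F=0$ and the cells are integrally closed) and tightness is automatic because a unimodular triangulation is full. Your extra care in passing from the perturbation $\vomega'$ back to $\vomega$ is sound via the direct convexity/uniqueness argument, but note two small points: $\vomega$ strictly separates the two monomials only for binomials having a standard (cell-supported) side---for binomials whose two supports are both non-faces the weights may tie, which is harmless since both monomials already lie in the right-hand ideal---and in your alternative shortcut the inclusion $\lead_\vomega \ideal_P \subseteq \lead_{\vomega'} \ideal_P$ is not automatically ``forced by perturbation'' (an $\vomega$-initial form need not lie in $\lead_{\vomega'}\ideal_P$), so the Hilbert-function route would require establishing that inclusion separately, e.g.\ by the very separation argument you already gave.
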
 
This ideal is known as the Stanley-Reisner ideal of the simplicial
complex $\T_\vomega$.
The formula allows us to recover $\T_\vomega$ from $\lead_\vomega
\ideal_P$ (its faces correspond to the 
monomials not in
$\lead_\omega \ideal_P$, and vice versa). In fact, by
Lemma~\ref{lemma:normal-covering}
even a Gr\"obner basis can be read off of $\T_\vomega$.

If $\T_\vomega$ is not unimodular then the following modified formula
is still true:
\[
\operatorname{Rad} (\lead_\vomega \ideal_P) = \left\langle \ \
  \prod_{\va\in N}\x_\va \ :  N \text{ is a minimal non-face of  }
  \T_\vomega \ \right\rangle.
\]
In particular, we can still recover $\T_\vomega$ from $\lead_\vomega
\ideal_P$, but not the other way around.

\begin{theorem}
  \label{thm:Groebner}
  Given that $\configuration$ generates the lattice $\Z^{d+1}$,
  the initial ideal $\lead_\vomega \ideal_P$ is squarefree if and only
  if the regular triangulation $\T_\vomega$ of $P$ is unimodular. 
\end{theorem}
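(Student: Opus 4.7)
The easy direction, that unimodularity of $\T_\vomega$ implies $\lead_\vomega\ideal_P$ is squarefree, is immediate from Corollary~\ref{lem:Groebner}, which identifies $\lead_\vomega\ideal_P$ with the (squarefree by definition) Stanley--Reisner ideal of the simplicial complex $\T_\vomega$.

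For the converse, my plan is a Hilbert series/multiplicity comparison. Assume $\lead_\vomega\ideal_P$ is squarefree; then it equals its own radical, and by the formula displayed immediately before the theorem this radical is the Stanley--Reisner ideal $I_{\T_\vomega}$ of $\T_\vomega$. Since passing to the initial ideal preserves Hilbert functions, the graded rings $S/\ideal_P$ and $S/I_{\T_\vomega}$ have the same Hilbert polynomial and hence the same Krull dimension and the same degree (multiplicity). The goal is to compute that common degree in two ways, toric and combinatorial, and observe that the resulting identity forces every simplex of $\T_\vomega$ to be unimodular.

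On the toric side, $\dim_\mathbbm{k}(S/\ideal_P)_n$ is the number of monomials in $\N\configuration$ of height $n$, i.e.\ $|nP\cap\Z^d|$, giving an Ehrhart polynomial with leading term $\vol(P)\,n^d$ and therefore $\deg(S/\ideal_P)=d!\,\vol(P)$. Partitioning $P$ by $\T_\vomega$ yields $d!\,\vol(P)=\sum_{i=1}^N V_i$, where $V_1,\ldots,V_N$ are the normalized volumes of the maximal simplices $F_1,\ldots,F_N$ of $\T_\vomega$. On the combinatorial side, $S/I_{\T_\vomega}$ is a pure $(d+1)$-dimensional Stanley--Reisner ring whose dominant Hilbert contribution comes from its facets, giving $\deg(S/I_{\T_\vomega})=N$.

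Equating the two values yields $\sum_{i=1}^N V_i = N$. Since every lattice $d$-simplex satisfies $V_i\ge 1$, with equality if and only if it is unimodular, each $F_i$ must be unimodular and so $\T_\vomega$ is a unimodular triangulation. The step that genuinely uses the hypothesis that $\configuration$ generates $\Z^{d+1}$, and hence the main point to articulate carefully, is the toric degree computation: without this hypothesis, $S/\ideal_P$ computes the volume of $P$ with respect to a proper sublattice of $\Z^d$, and the identity $\deg(S/\ideal_P)=\sum V_i$ (from which the argument draws its force) fails.
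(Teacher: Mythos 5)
Your proposal takes a genuinely different route from the paper. The paper proves the hard direction contrapositively and constructively: if $\T_\vomega$ contains a simplex $F$ of determinant $D>1$, the hypothesis $\Z\configuration=\Z^{d+1}$ is used to manufacture a lattice point $\vb\in\cone F\cap\Z^{d+1}$ outside the sublattice $\Lambda$ spanned by the vertices of $F$; taking an $\vomega$-minimal representation $\vn$ of $\vb$, the monomial $\vx^{\vn}$ is standard while $(\vx^{\vn})^{D}$ lies in $\lead_\vomega\ideal_P$, so the initial ideal is not squarefree. That argument is local and self-contained. Yours is a global multiplicity comparison: squarefreeness forces $\lead_\vomega\ideal_P$ to equal the Stanley--Reisner ideal of $\T_\vomega$, and then equality of multiplicities gives $d!\,\vol(P)=N=\#\{\text{facets}\}$, whence all normalized volumes are $1$. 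This is an attractive and standard style of argument, but note that it leans on the radical formula $\operatorname{Rad}(\lead_\vomega\ideal_P)=I_{\T_\vomega}$, which the paper only states as background (it is Sturmfels, GBCP Thm.~8.3) and does not prove; the paper's own proof avoids it entirely. So your argument is legitimate as a citation-based proof, but not self-contained in the way the paper's is, and you should be aware that in some expositions that radical formula is established jointly with the very statement you are proving.

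There is one step whose justification, as written, is false: $\dim_{\mathbbm k}(S/\ideal_P)_n$ is \emph{not} $\#(nP\cap\Z^d)$ in general. The degree-$n$ piece of $S/\ideal_P$ is spanned by the elements of the semigroup $\N\configuration$ at height $n$, i.e.\ by those lattice points of $nP$ that are sums of $n$ lattice points of $P$; equality with $\#(nP\cap\Z^d)$ is exactly the assertion that $P$ is integrally closed, which is the distinction this paper is at pains to make (the Reeve simplices already violate it). What you actually need --- $\deg(S/\ideal_P)=d!\,\vol(P)$ under the hypothesis $\Z\configuration=\Z^{d+1}$ --- is nevertheless true and standard: the holes $\overline{\N\configuration}\setminus\N\configuration$ contribute only $O(n^{d-1})$ to the count (there is $c\in\N\configuration$ with $c+\overline{\N\configuration}\subseteq\N\configuration$), so the leading coefficient agrees with the Ehrhart leading coefficient; equivalently, cite the degree-equals-normalized-volume theorem for projective toric varieties (Sturmfels, GBCP Thm.~4.16), which is precisely where the lattice-generation hypothesis enters. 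With that repair, and with the explicit citation for the radical formula, your argument goes through; without it, the chain $\deg(S/\ideal_P)=d!\,\vol(P)$ is unsupported as stated.
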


This theorem is Corollary~8.9 in~\cite{SturmfelsGBCP}; it follows
from~\cite[Thm.~5.3]{KapranovSturmfelsZelevinsky}, and
is one of the primary motivations for studying regular unimodular triangulations, from the perspective of algebraic geometry.

\begin{proof}
If $\T_\omega$ is unimodular, the previous lemma shows that
$\lead_\vomega \ideal_P$ is squarefree.

So, assume $\T_\vomega$ is not unimodular, and let $F = \conv ( \va_0,
\ldots, \va_d ) \in \T_\vomega$ be a simplex of determinant $D > 1$.
Let  $\Lambda$ denote the (strict) sublattice of $\Z^{d+1}$ generated by
the vertices of $F$. Observe that $D \vm \in \Lambda$ for all $\vm
\in \Z^{d+1}$.

We will construct a vector $\vb \in \cone F \cap \Z^{d+1} \setminus
\Lambda$ which is a non-negative integral linear combination of
$\configuration$. First, choose $\vb' \in \Z^{d+1} \setminus
\Lambda$. By assumption, $\vb'$ is an integral linear combination of
$\configuration$. Adding a sufficiently large multiple of $D \sum_{\va \in
  \configuration} \va$ will make the coefficients non-negative.
Then adding a sufficiently multiple of $\sum_{\va \in F} \va$ will yield  a point
in $\cone F$.

Among all $\vn \in \Z^\configuration_{\ge 0}$ satisfying $\sum_{\va \in
  \configuration} n_\va \va = \vb$ choose the one with minimal
$\vomega$-weight. Since $\vb \not\in \Lambda$, $\vx^\vn$ is not
supported on $F$. Still, $\vx^\vn$ is never a leading term: $\vx^\vn
\not\in \lead_\vomega \ideal_P$. 

Yet, $D \vb \in \cone F \cap \Lambda$, so $D \vb = \sum_{i=0}^d
m_i \va_i$ for some $\vm$, and $\vx^{D\n} - \vx^\vm \in \ideal_P$. As
$\vx^\vm$ is supported on the face $F$, it cannot be the leading term
and $(\vx^\vn)^D \in \lead_\vomega \ideal_P$. So $\lead_\vomega
\ideal_P$ is not squarefree.
\end{proof}

Theorem~\ref{thm:Groebner} provides a method for constructing regular
unimodular triangulations. Conversely, all regular unimodular
triangulations constructed in the present article yield Gr\"obner
bases of the corresponding toric ideals. Both directions of the
theorem have been used -- compare  sections
~\ref{sec:flow-polytopes} and \ref{sec:edge-polytopes}.

\begin{example}
  Consider the the twisted cubic curve. Let $P=[1,4]$ be a
  1-dimensional polytope whose lattice point set and toric ideal are
  $\configuration=\{(1,1), (2,1), (3,1), (4,1)\}$ and
  $\ideal_P=\left\langle x_1x_3-x_2^2, x_2x_4-x_3^2, x_1x_4 -
    x_2x_3\right\rangle$.  Let $\vomega=(\omega_1, \omega_2, \omega_3,
  \omega_4)$.  There are eight monomial initial ideals and four
  triangulations, depending on the values of
  $\lambda_1:=\omega_1-2\omega_2+\omega_3$ and
  $\lambda_4:=\omega_4-2\omega_3+\omega_2$:
  \[
  \begin{array}{ccc|c|c}
    & \vomega & &  \lead_\vomega \ideal_P  &  \T_\vomega \cr 
    \hline
    \lambda_1 > 0 &\text{ and }& \lambda_4 > 0 &
    \langle x_1x_3, x_2x_4,x_1x_4 \rangle &
    [1,2],[2,3],[3,4]  
    \cr
    \lambda_1<0 &\text{ and }& 2\lambda_1 + \lambda_4 > 0 & 
    \langle x_1x_4, x_2^2, x_2x_4 \rangle &
    [1,3],[3,4] 
    \cr
    \lambda_1 + 2\lambda_4 > 0 &\text{ and }& 2\lambda_1 + \lambda_4 < 0
    &\langle x_2^2, x_2x_3, x_2x_4, x_1x_4^2 \rangle &
    [1,3],[3,4] 
    \cr
    \lambda_1 + 2\lambda_4 < 0 &\text{ and }& \lambda_4 > 0 &
    \langle x_2^2, x_2x_3, x_2x_4, x_3^3 \rangle &  
    [1,4]
    \cr
    \lambda_1 < 0 &\text{ and }& \lambda_4 < 0 &
    \langle x_2^2, x_2x_3, x_3^2 \rangle &
    [1,4]  
    \cr
    \lambda_1 > 0 &\text{ and }& 2\lambda_1 + \lambda_4 < 0 & 
    \langle x_1x_3, x_2x_3, x_2^3, x_3^2 \rangle &
    [1,4] 
    \cr
    \lambda_1 + 2\lambda_4 < 0 &\text{ and }& 2\lambda_1 + \lambda_4 > 0
    &\langle x_1x_3, x_2x_3, x_1^2x_4, x_3^2 \rangle &
    [1,2],[2,4] 
    \cr
    \lambda_1 + 2\lambda_4 > 0 &\text{ and }& \lambda_4 < 0 &\langle
    x_1x_3, x_1x_4, x_3^2 \rangle &
    [1,2],[2,4]
  \end{array}
  \]
  Observe how each triangulation corresponds to the (radical of the)
  initial ideal. The converse works precisely in the first case where
  the initial ideal is squarefree and the triangulation is unimodular.
\end{example}

\subsubsection{Quadratic triangulations} \label{sec:quadratic}
If a polytope has a regular unimodular triangulation, we have seen
that the size of the minimal non-faces controls the degree of the
corresponding Gr\"obner basis.
Of particular interest is the case of degree two -- quadratic
triangulations -- especially given the connection to Koszul
algebras. Recall that an algebra $R$ over a field $\mathbbm{k}$ is
Koszul if $\mathbbm{k}$ has a linear free resolution as an
$R$--module.
In this context, the second hierarchy of properties in
section~\ref{sec:hierarchy} is expressed in the following proposition
(see~\cite[Cor.~2.1.3]{BGT}).
\begin{proposition}
  $\ideal_A$ has a quadratic initial ideal $\Rightarrow$
  $\kk[\x]/\ideal_A$ is Koszul $\Rightarrow$ $\ideal_A$ is generated
  by quadratic binomials.
\end{proposition}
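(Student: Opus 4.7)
The plan is to prove the two implications separately, relying in each direction on standard facts about Koszul algebras and Gröbner degenerations. Throughout, write $R = \mathbbm{k}[\vx]/\ideal_A$ and, for the chosen weight $\vomega$, $R_\vomega = \mathbbm{k}[\vx]/\lead_\vomega \ideal_A$.

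For the first implication, suppose $\lead_\vomega \ideal_A$ is generated by quadratic monomials. Since $\ideal_A$ is a toric (and in particular radical) ideal, Corollary~\ref{lem:Groebner} and Theorem~\ref{thm:Groebner} imply that $\lead_\vomega \ideal_A$ is a squarefree quadratic monomial ideal, i.e., the Stanley--Reisner ideal of a flag simplicial complex. By Fr\"oberg's theorem, the Stanley--Reisner ring of a flag complex is Koszul, so $R_\vomega$ is Koszul. Then I would invoke the standard Gr\"obner deformation: filtering $S = \mathbbm{k}[\vx]$ by $\vomega$ produces a flat one-parameter family with generic fiber $R$ and special fiber $R_\vomega$ (see, e.g., Eisenbud, \emph{Commutative Algebra}, Ch.~15), so the graded Betti numbers satisfy $\beta^R_{i,j}(\mathbbm{k}) \le \beta^{R_\vomega}_{i,j}(\mathbbm{k})$. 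Linearity of the resolution is therefore preserved in passing from $R_\vomega$ to $R$, and so $R$ is Koszul.

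For the second implication, I would use the well-known fact that any Koszul algebra is quadratic in the sense that its defining ideal is generated in degree two (this follows directly from the existence of a linear free resolution of $\mathbbm{k}$ in cohomological degrees one and two: the degree-two part of such a resolution forces the defining ideal to be generated by elements of degree two; see Polishchuk--Positselski, \emph{Quadratic Algebras}, Ch.~1). Since $\ideal_A$ is a toric ideal, it is automatically generated by binomials (by the description in~(\ref{eq:groebner-ideal-as-vector-space})). Combining these two facts, $\ideal_A$ is generated by quadratic binomials.

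The main obstacle is the first implication, specifically verifying that Koszulness transfers from the special fiber $R_\vomega$ to the generic fiber $R$ under the Gr\"obner degeneration. The key point that needs careful invocation is that a flat degeneration cannot decrease the graded Betti numbers of $\mathbbm{k}$; this is a standard upper-semicontinuity statement for Tor in families, and once in hand the linearity of the resolution of $\mathbbm{k}$ over $R_\vomega$ (Fr\"oberg) transfers to $R$. Everything else is essentially bookkeeping: the Fr\"oberg input about flag Stanley--Reisner rings and the tautological fact that a Koszul algebra must be quadratically presented.
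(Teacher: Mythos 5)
Your overall architecture is the standard one; note that the paper itself does not prove this proposition but cites \cite[Cor.~2.1.3]{BGT}, whose proof is essentially the argument you outline (a monomial-quadratic degeneration plus semicontinuity of Betti numbers for the first arrow, and the tautological quadraticity of Koszul algebras plus binomiality of toric ideals for the second). However, your first implication contains a false step. You claim that, because $\ideal_A$ is radical, its quadratic initial ideal $\lead_\vomega \ideal_A$ must be squarefree, hence the Stanley--Reisner ideal of a flag complex. Radicality (indeed primality) of $\ideal_A$ does \emph{not} force squarefreeness of an initial ideal: by Theorem~\ref{thm:Groebner}, squarefreeness of $\lead_\vomega \ideal_A$ is equivalent to unimodularity of the triangulation $\T_\vomega$, which is an additional hypothesis not implied by ``quadratic initial ideal''. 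The paper's own twisted cubic example exhibits quadratic initial ideals such as $\langle x_1x_4,\, x_2^2,\, x_2x_4\rangle$ that are not squarefree, even though the toric ideal is prime. So your reduction to flag Stanley--Reisner rings does not go through as written.

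The damage is local and easily repaired: Fr\"oberg's theorem holds for \emph{arbitrary} monomial ideals generated in degree two, not only squarefree ones (alternatively, polarize the quadratic monomial ideal to reduce to the squarefree flag case). With that in place of your squarefree reduction, the rest of your argument is sound: the inequality $\beta^{R}_{i,j}(\mathbbm{k}) \le \beta^{R_\vomega}_{i,j}(\mathbbm{k})$ under Gr\"obner degeneration transfers Koszulness from $\mathbbm{k}[\x]/\lead_\vomega \ideal_A$ to $\kk[\x]/\ideal_A$, and the second implication is correct as stated, since a Koszul algebra is quadratically presented and $\ideal_A$, being spanned by homogeneous binomials, has its degree-two graded piece spanned by quadratic binomials, which then generate the ideal.
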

For polygons we have the following nice characterization by Bruns,
Gubeladze, and Trung.
\begin{proposition}[{\cite[Cor.~3.2.5]{BGT}}] \label{prop:polygonKoszul}
  A lattice polygon with at least four boundary lattice points has a
  quadratic triangulation. 
\end{proposition}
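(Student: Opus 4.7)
The plan is to verify the three defining conditions of a quadratic triangulation---regular, unimodular, and flag---using the tools from Section~\ref{sec:methods}, handling flagness by splitting on whether $P$ has interior lattice points.

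Regularity and unimodularity are immediate. By Pick's formula any lattice triangle in $\R^2$ with only three lattice points has area exactly $\tfrac12$ and is therefore unimodular, so every full triangulation of a planar lattice polygon is automatically unimodular. Strongly pulling all lattice points in any order then produces a regular full triangulation by Lemma~\ref{lemma:strongPulling}, and the result is regular and unimodular.

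For flagness when $P$ has no interior lattice points, I would invoke Lemma~\ref{lemma:dualTree}. A short Euler-characteristic computation for a triangulated disk with $b$ boundary vertices and $V_i$ interior vertices yields $F = b + 2V_i - 2$ triangles and $b + 3V_i - 3$ interior edges, so the dual graph has exactly $V_i$ independent cycles. When $V_i = 0$ the dual graph is a tree, and Lemma~\ref{lemma:dualTree} immediately gives flagness.

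When $P$ has at least one interior lattice point $p$, I would induct on the total number of lattice points of $P$. Choose a chord of $P$ through $p$ with endpoints $u, v \in \partial P \cap \Z^2$, and split $P$ along this chord into two sub-polygons $P_1, P_2$, each containing strictly fewer interior lattice points. Provided each $P_i$ still has at least four boundary lattice points, induction gives each a quadratic triangulation, and the two triangulations glue along the chord, since both $P_1$ and $P_2$ now have $u, p, v$ on a common edge. Regularity survives the gluing by pasting compatible convex weights; flagness survives because any purported minimal non-face of size three would span a triangle that cannot cross the chord edge, forcing its three vertices to lie entirely in one $P_i$---contradicting flagness there.

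The main obstacle is choosing the chord so that both sub-polygons retain at least four boundary lattice points, together with the base cases of the induction. The hypothesis of at least four boundary lattice points is precisely what excludes the pathology of a lattice triangle with primitive sides and a single interior lattice point, which has only three boundary lattice points and admits no flag triangulation: its three vertices pairwise connect through the interior point, but the triple of vertices forms an unavoidable size-three minimal non-face. Making the inductive choice of chord work in every configuration---lattice quadrilaterals with interior points, situations where $p$ is collinear only with inconvenient boundary lattice points, and other degeneracies---is the technical core of the argument of Bruns, Gubeladze, and Trung~\cite{BGT}.
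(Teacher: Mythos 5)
Your argument for the case of no interior lattice points is complete and correct: full $\Rightarrow$ unimodular by Pick, strong pulling gives regularity (Lemma~\ref{lemma:strongPulling}), and the dual graph of a triangulation with all vertices on the boundary is a tree, so Lemma~\ref{lemma:dualTree} gives flagness. The inductive step for polygons with interior points, however, has a genuine hole, and not merely a "technical" one that you may defer to \cite{BGT}: the chord you need may not exist. Take $P=\conv\{(0,0),(4,1),(1,4)\}$. Its boundary lattice points are $(0,0),(4,1),(3,2),(2,3),(1,4)$ and it has six interior lattice points. Every line through two boundary lattice points is either the edge line $x+y=5$ (which contains no interior lattice points) or a line through $(0,0)$ and a primitive vector (which has no lattice point strictly between its two boundary lattice points). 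Hence no interior lattice point $p$ lies on a chord with lattice endpoints, so your split is impossible for every choice of $p$. Worse, the only proper lattice chords of this $P$ are $(0,0)(3,2)$ and $(0,0)(2,3)$, and each cuts off a triangle with exactly three boundary lattice points and two interior points, which by your own observation has no quadratic triangulation; so no chord-based decomposition whatsoever can succeed here, and your induction has no way to proceed. Since you explicitly leave "making the inductive choice of chord work in every configuration" to \cite{BGT}, the proposal reduces the proposition to a claim that is in fact false as you formulated it.

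A second gap is the sentence "regularity survives the gluing by pasting compatible convex weights." The inductive hypothesis hands you two regular triangulations of $P_1$ and $P_2$ with no control over their lifting functions along the common chord; both restrict to strictly convex heights on the chord's lattice points, but not to the \emph{same} heights, and it is not automatic that the two secondary cones admit liftings with matching chord restrictions. (Once matching restrictions are available one can indeed add a large multiple of a function creased along the chord line to get a global convex lift, but producing the matching restrictions is exactly the point that needs proof, or the triangulations of the two halves must be constructed with prescribed boundary data.) Note also that the paper does not prove this proposition; it is quoted from \cite[Cor.~3.2.5]{BGT}, whose argument does not follow your chord-splitting scheme. Flagness of a glued triangulation and unimodularity are fine as you argue, but as it stands the proposal is not a proof: its key step fails on explicit polygons and its regularity claim is unsupported.
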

A non-unimodular polygon with exactly three boundary lattice points
cannot have a quadratic triangulation because one cannot get rid of
the cubic generator of $\ideal_A$ coming from the product of the
corresponding three variables.

Lemma~\ref{lem:Groebner} also implies the following correspondence
between quadratic triangulations and quadratic Gr\"obner bases
referenced in Section~\ref{sec:Commutative algebra}. 

\begin{theorem} \label{thm:GBtriang} If $P$ has a quadratic
  triangulation $\T$, then the defining ideal $\ideal_P$ of the
  projective toric variety $X_P \subset \mathbb{P}^{r-1}$ has a
  quadratic Gr\"obner basis.  In this case, $\lead (\ideal_P) =
  \langle x_\va x_\vb \ \vert \ \va\vb \text{ is not an edge in } \T
  \rangle$. In particular, $R_P$ is Koszul.
\end{theorem}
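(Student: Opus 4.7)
The plan is to derive this statement as an almost immediate consequence of Corollary~\ref{lem:Groebner} together with the hierarchy of algebraic properties recalled just above the theorem. Fix a weight vector $\vomega \in \R^\configuration$ that induces the quadratic triangulation $\T$, and consider the corresponding initial ideal $\lead_\vomega \ideal_P$. Since $\T$ is regular and unimodular, Corollary~\ref{lem:Groebner} gives the description
\[
\lead_\vomega \ideal_P \ =\ \Bigl\langle\ \prod_{\va \in N} x_\va \ :\ N \text{ a minimal non-face of } \T\ \Bigr\rangle.
\]
Because $\T$ is flag, every minimal non-face has cardinality two. Hence the right-hand side is generated by the squarefree quadratic monomials $x_\va x_\vb$ where $\{\va,\vb\}$ is not an edge of $\T$, proving the second assertion of the theorem.

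Next I would promote this description of $\lead_\vomega \ideal_P$ to a quadratic Gr\"obner basis of $\ideal_P$ itself. For each non-edge $\{\va,\vb\}$, Lemma~\ref{lemma:normal-covering} (applied to $\T_\vomega$ viewed as its own regular subdivision into unimodular, and hence integrally closed, simplices) produces a binomial $f_{\{\va,\vb\}} = x_\va x_\vb - \vx^\vv \in \ideal_P$ in which $\vx^\vv$ is a standard monomial supported on some maximal simplex of $\T$; the leading term of $f_{\{\va,\vb\}}$ with respect to (a small generic perturbation of) $\vomega$ is exactly $x_\va x_\vb$. The collection of these binomials forms a generating set whose leading terms generate $\lead_\vomega \ideal_P$, i.e.\ a quadratic Gr\"obner basis of $\ideal_P$.

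Finally, the Koszul conclusion for $R_P = \kk[\vx]/\ideal_P$ is immediate from the preceding proposition, which records the implication ``$\ideal_P$ has a quadratic initial ideal $\Rightarrow$ $R_P$ is Koszul.'' No real obstacle arises here; the only subtle point is the perturbation argument in the previous paragraph, which is needed because a generic $\vomega$ refines the partial term order to an actual term order without changing the associated triangulation or initial ideal.
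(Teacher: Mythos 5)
Your argument is correct and matches the paper's intended derivation: the paper states Theorem~\ref{thm:GBtriang} as an immediate consequence of Corollary~\ref{lem:Groebner} (with flagness reducing minimal non-faces to size two) together with Lemma~\ref{lemma:normal-covering}, which supplies the quadratic binomials $x_\va x_\vb - \vx^\vv$ forming the Gr\"obner basis, and the Koszul conclusion via the quoted implication from~\cite{BGT}. Your handling of the generic perturbation and of the unimodular cells (whose toric ideals vanish) fills in exactly the details the paper leaves implicit, so there is nothing to correct.
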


See~\cite{BGTproblems} for a collection of unsolved problems in the
field.

\section{Examples}  \label{sec:Examples}

Here we present what is known (and unknown) for some particular families of polytopes.
Most of them are connected to one of the classical crystallographic root systems. 
We  will examine two  distinct ways  of associating  polytopes  to  a root
system   $\Gamma$.    In  Section~\ref{sec:cut-by-roots} we
consider polytopes with facet normals in $\Gamma$ (\emph{polytopes of
  type  $\Gamma$}),  and in  Section~\ref{sec:spanned-by-roots}   polytopes  with vertices  in
$\Gamma$ (\emph{$\Gamma$-root polytopes}).
Then, in Section~\ref{sec:graph-polytopes} we look at polytopes defined
from graphs, including flow polytopes of directed graphs
and characteristic polytopes of undirected ones. Finally,
Section~\ref{sec:Smooth} examines smooth polytopes.

A
(real, finite) \emph{root system} is a family of vectors $\Gamma$ that is
invariant under reflection with respect to the
hyperplanes orthogonal to each of the elements in $\Gamma$.
It is \emph{crystallographic} if $\Gamma$ generates a lattice, which we denote $\Lambda_\Gamma$.
Each system studied here satisfies that property, so when we refer to root systems, 
they are assumed to be crystallographic.

The direct sum
\[
\Gamma\oplus\Gamma':=\Gamma\times \{0\} \cup \{0\}\times \Gamma'
\]
of two root systems is a root system. Root systems that cannot be
decomposed in this fashion are called \emph{irreducible} and are classified as follows. 
(Here we are only considering crystallographic ones).

There are the four infinite families, that exist in all dimensions:
\begin{align*}
  \rootA_{n-1}&:=\{\ve_i-\ve_j\mid 1\le i, j \le n\}\\
  \rootB_n&:=\{\pm \ve_i\mid 1\le i\le n\}\cup \{\pm\ve_i\pm \ve_j\mid
  1\le i< j \le n\}\\
  \rootC_n&:=\{\pm 2\ve_i\mid 1\le i\le n\}\cup \{\pm\ve_i\pm \ve_j\mid
  1\le i< j \le n\}\\
  \rootD_n&:=\{\pm\ve_i\pm \ve_j\mid  1\le i< j \le n\},\\
  \end{align*}
where, as usual,  $\ve_i$ is the  $i$-th standard  unit vector  in $\R^n$. 

In addition to  these, there are five
other root systems:
\begin{itemize}
\item The system $\rootF$ is spanned by the roots in $\rootB_4$
  together with all vectors in
  $\{(\pm\nicefrac12,\pm\nicefrac12,\pm\nicefrac12,\pm\nicefrac12)\}$. 
\item The
  root system $\rootE_8$ is the union of $\rootD_8$ and
  \begin{align*}
  \{\vx\mid x_i=\pm\nicefrac12\text{ for } 1\le i\le 8\text{ and }
  \textstyle\sum x_i\text{ even}\}\,.
  \end{align*}
\item 
  The root system $\rootE_7$ is the subset of $\rootE_8$ of all
  vectors whose entries sum up to zero. 
\item The root system $\rootE_6$ is
  the set of roots of $\rootE_8$ that are spanned by the vector
  \begin{align*}
    \begin{array}{rrrrrrrr}            (\nicefrac12,&-\nicefrac12,&-\nicefrac12,&-\nicefrac12,&-\nicefrac12,&-\nicefrac12,&-\nicefrac12,&\nicefrac12)
    \end{array}
  \end{align*}
  together with $\ve_1+\ve_2$ and $-\ve_i+\ve_{i+1}$ for
  $1\le i\le 4$. 
\item Finally, the root system $\rootG$ is the set of
  vectors
  \begin{align*}
    \begin{array}[t]{rrrrrrrrr}
      \pm(\,1&-1&0\,),&
                        \pm(\,1&0&-1\,),&
                                          \pm(\,0&1&-1\,),\\
      \pm(\,2&-1&-1\,),&
                         \pm(\,1&-2&1\,),&
                                           \pm(\,1&1&-2\,)\,.
    \end{array}
  \end{align*}
\end{itemize}

For  each  root   system  $\Gamma\in\{\rootA_{n-1},  \rootB_n,  \rootC_n,
\rootD_n,  \rootF\}$  we  define  the special choice
of \emph{positive} roots in the root system as the
 subset $\Gamma^+ \subset \Gamma$   consisting of
all  roots whose  first  nonzero  entry  is positive,  and
$\tilde\Gamma$ and $\tilde\Gamma^+$ are defined as $\Gamma\cup\{\0\}$
and $\Gamma^+\cup\{\0\}$, respectively.

\subsection{Polytopes cut out by roots}
\label{sec:cut-by-roots}

Given a root system $\Gamma$, root hyperplanes are hyperplanes of
the form $H_{\vv,b}:=\{\vv \vx \le w\}$ where $\vv\in\Gamma$ and
$w\in\Z$. If $\Gamma$ is a crystallographic root system with lattice
$\Lambda_\Gamma$ then the root hyperplanes are lattice hyperplanes
for the dual lattice
\[
\Lambda_\Gamma^*:=\{\vx : \vv\vx\in \Z, \forall \vv\in\Gamma\}.
\]

A polytope is \emph{of type $\Gamma$} if it is a lattice polytope
(for the lattice $\Lambda_\Gamma^*$) and all its facet-defining
hyperplanes are root hyperplanes.  Note that polytopes cut out by a non-irreducible root
system are products of their irreducible components.

The following is a general result about polytopes cut out by roots
(cf.~\cite[p.~90]{Humphreys}).
\begin{lemma}
\label{lemma:roothyperplanes}
 Given an irreducible root system $\Gamma$ spanning $\R^d$,
 every cell in the hyperplane arrangement given by the
  (infinite) family of hyperplanes of the form $\{\vv x =
z\}$ for $\vv\in \Gamma$ and $z\in \Z$ is a simplex.
\end{lemma}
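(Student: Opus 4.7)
The plan is to invoke the classical theory of affine Weyl groups, which describes precisely the combinatorial structure of this hyperplane arrangement (often called the affine Coxeter arrangement of $\Gamma$). Since the affine Weyl group $W_{\mathrm{aff}}(\Gamma)$ is generated by reflections through the hyperplanes $\{\vv \cdot \vx = z\}$ and permutes the cells of the arrangement, and since it acts simply transitively on the set of cells (\emph{alcoves}), all cells are congruent. It therefore suffices to identify one alcove and check that it is a $d$-simplex.

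To exhibit a concrete alcove, I would first choose a system of simple roots $\alpha_1,\dots,\alpha_d$ for $\Gamma$; these form a basis of $\R^d$ and determine the \emph{fundamental Weyl chamber} $C=\{\vx:\langle\alpha_i,\vx\rangle\ge 0,\ i=1,\dots,d\}$, which is a simplicial cone with apex at the origin. Since $\Gamma$ is irreducible, it has a unique \emph{highest root} $\alpha_0\in\Gamma^+$, characterised by the property that $\alpha_0=\sum c_i\alpha_i$ with all $c_i\ge 1$. I would then define the \emph{fundamental alcove} as
\[
A_0=\bigl\{\vx\in\R^d:\ \langle\alpha_i,\vx\rangle\ge 0,\ i=1,\dots,d,\ \langle\alpha_0,\vx\rangle\le 1\bigr\}.
\]
The simple roots being linearly independent and $\alpha_0$ having strictly positive coefficients in that basis implies that $A_0$ is bounded and has exactly $d+1$ facets, so $A_0$ is a $d$-simplex. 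A standard argument (see, e.g., Humphreys, \emph{Reflection groups and Coxeter groups}, \S 4.3) shows that the interior of $A_0$ meets none of the arrangement's hyperplanes, so $A_0$ is indeed one of the cells.

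The main subtlety—and the reason irreducibility is essential—is the existence and positivity of the highest root $\alpha_0$: without irreducibility, $\Gamma$ decomposes as $\Gamma_1\oplus\Gamma_2$, there is no single highest root, and the fundamental alcove splits as a product of lower-dimensional simplices, which is a simplex only in degenerate cases. Once the positivity of the $c_i$ is in hand, boundedness of $A_0$ and the count of its facets are routine: any ray in $C$ is met by the hyperplane $\langle\alpha_0,\cdot\rangle=1$ precisely because $\alpha_0$ is a strictly positive combination of the $\alpha_i$. Finally, transitivity of $W_{\mathrm{aff}}(\Gamma)$ on alcoves transports this simplicial structure to every cell of the arrangement.
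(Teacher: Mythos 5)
Your proof is correct and coincides with the paper's approach: the paper gives no argument of its own but simply cites the classical theory of affine Weyl groups (Humphreys, p.~90), and your write-up is precisely that standard argument --- transitivity of $W_{\mathrm{aff}}$ on alcoves together with the description of the fundamental alcove by the simple roots and the highest root, where irreducibility supplies the strictly positive coefficients $c_i$ that make the alcove a bounded region with exactly $d+1$ facets, hence a $d$-simplex. Nothing further is needed.
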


That is, the arrangement of all root hyperplanes is an (infinite,
periodic) triangulation of $\R^d$. Its cells are called
\emph{alcoves}. 
It may, however, have vertices which do not belong to
$\Lambda_\Gamma^*$. For example, for the root system $\rootB_2$,
$\Lambda=\Lambda^*=\Z^2$, but the arrangement of root hyperplanes
contains vertices in $(1/2,1/2)+\Z^2$.

Payne proved another important general result about polytopes cut out by roots
 using Frobenius splittings:

\begin{theorem}~\cite[Thm.~1.1]{PayneFrobenius}
\label{thm:rootfacets_integralKoszul}
Every type $\Gamma$
polytope for each of the classical root systems $\rootA_{n-1}$,
$\rootB_n$, $\rootC_n$ and $\rootD_n$ is integrally closed and
Koszul. 
\end{theorem}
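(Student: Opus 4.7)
The plan is to split the four classical root systems into two groups according to the method used: the types $\rootA_{n-1}$ and $\rootB_n$ can be attacked directly via the unimodular-triangulation technology of Section~\ref{sec:methods}, in which case integral closedness and Koszulness follow from Corollary~\ref{lem:Groebner} and Theorem~\ref{thm:GBtriang} as soon as the triangulation is quadratic. For $\rootC_n$ and $\rootD_n$, where the existence of a unimodular triangulation is open (cf.\ Section~\ref{sec:questions}), I would instead work algebraically, following Payne's strategy of constructing a Frobenius splitting of $X_P$ compatible with the torus-invariant subschemes.

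First I would dispose of type $\rootA_{n-1}$. The primitive normals $\ve_i-\ve_j$ form a totally unimodular configuration, so any $\rootA_{n-1}$-polytope is facet unimodular in the sense of Section~\ref{sec:hyperplanes}. Theorem~\ref{thm:hyperplanes} then furnishes a regular subdivision of $P$ into compressed alcoves, and Theorem~\ref{thm:paco} guarantees that a (strong) pulling refinement of this subdivision is unimodular. To upgrade this to a quadratic triangulation I would pull the lattice points in the lexicographic order induced by a generic linear functional, and verify flagness cell-by-cell using the explicit alcove description of the dicing; then Theorem~\ref{thm:GBtriang} delivers Koszulness.

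For type $\rootB_n$, the short roots $\pm\ve_i$ already form a totally unimodular subconfiguration and give the standard cubical dicing of $\R^n$. I would slice $P$ by this short-root dicing and show that every resulting cell $C$ has width one with respect to each $\rootB_n$-facet normal it inherits from $P$: since each coordinate $x_i$ is pinned to a unit interval inside $C$, every functional of the form $\pm(\ve_i\pm\ve_j)$ has image of length at most two on $C$, and a short combinatorial check using the original facet constraints of $P$ sharpens this to length one. Theorem~\ref{thm:paco} then says $C$ is compressed, and a simultaneous pulling gives a regular unimodular triangulation of $P$; integral closedness is immediate, and Koszulness follows once one checks (via Lemma~\ref{lemma:normal-covering}) that combining quadratic Gr\"obner bases on the compressed cells with the binomials $f_N$ for minimal non-faces of the dicing yields a quadratic basis of $\ideal_P$.

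The types $\rootC_n$ and $\rootD_n$ are the genuine obstacle, since neither admits the dicing reduction above: long $\rootC_n$-roots $\pm 2\ve_i$ and the parity constraint of the $\rootD_n$-lattice both break facet-unimodularity. For these I would follow Payne's Frobenius-splitting route. Concretely, one works in characteristic $p$, writes down a torus-invariant section of $-(p{-}1)K_{X_P}$ with the correct leading term, and checks that the induced splitting of $\mathcal{O}_{X_P}\to F_*\mathcal{O}_{X_P}$ is compatible with every torus-invariant subvariety; compatibly split subvarieties are reduced and normal and satisfy $H^i(mL_P)=0$ for $i>0, m\geq 0$, which gives integral closedness by lifting lattice points degree by degree, and Koszulness via the diagonal splitting criterion (compare~\cite{PayneFrobenius}). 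The hard step --- and the real obstacle of the whole proof --- is producing this compatible splitting uniformly over all $\rootC_n$- and $\rootD_n$-polytopes: one must exhibit an explicit invariant section whose vanishing locus meets every torus-orbit closure with the right multiplicity, and then transfer the conclusion back to characteristic zero by standard semi-continuity.
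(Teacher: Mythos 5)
There is a genuine gap, on two fronts. First, the statement you are asked to prove is not proved in this paper at all: it is quoted from Payne, whose argument is a single uniform Frobenius-splitting (diagonal splitting) construction covering all four classical types at once. Your plan only genuinely argues types $\rootA$ and $\rootB$; for $\rootC_n$ and $\rootD_n$ you explicitly defer the decisive step (exhibiting the compatible splitting and the diagonal splitting giving Koszulness) back to Payne, so as written the proposal is not an independent proof of the theorem but a partial re-derivation that still rests on the cited result for its hardest cases.

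Second, and more concretely, your type $\rootB$ argument does not deliver Koszulness. Slicing by the short roots and pulling does give a regular unimodular triangulation (this is Proposition~\ref{prop:type-B}; note you also need Lemma~\ref{lemma:prop:type-B} to see that the short-root dicing of a $\rootB$-polytope has \emph{integral} vertices, a point your sketch skips), and hence integral closedness. But Koszulness needs strictly more than quadratic generation: the hierarchy is quadratic Gr\"obner basis $\Rightarrow$ Koszul $\Rightarrow$ generated by quadrics, and Lemma~\ref{lemma:normal-covering} applied to the compressed dicing cells only yields the last of these (that is exactly Corollary~\ref{cor:rootB-quadratic}). To get a quadratic Gr\"obner basis of $\ideal_P$ from that lemma you would need one global weight vector whose restriction to every dicing cell (an order polytope) induces a flag unimodular triangulation, compatibly across shared faces; the paper records that arbitrary pullings of $[0,1]^3$ can fail to be flag and leaves precisely this compatibility question open, so "combining quadratic Gr\"obner bases on the cells" cannot simply be asserted. (For type $\rootA$ your route is fine and matches Theorem~\ref{thm:rootA}, except that no pulling step is needed: by Lemma~\ref{lemma:roothyperplanes} the alcove dicing is already a triangulation, and its flagness is proved directly.) In short: integral closedness for $\rootA$ and $\rootB$ is recovered, but Koszulness for $\rootB$ and everything for $\rootC$, $\rootD$ still require Payne's splitting argument, which your proposal names but does not carry out.
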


\subsubsection{Type $\rootA$ Polytopes} \label{sec:type-A-facets} The
 type $\rootA$ root system is very special in that the matrix whose
columns are its roots is totally unimodular. In particular, all
polytopes of type $\rootA$ are totally unimodular. Therefore, they have
quadratic triangulations which can be realized via
 the construction in Theorem~\ref{thm:hyperplanes}. The following statement
(without the proof of flagness) appeared already in~\cite[Lemma 2.4]{KKMS3}.

\begin{theorem}
\label{thm:rootA}
Let $P$ be a polytope of type $\rootA$. The lattice dicing subdivision
$\T$ obtained from slicing $P$ by all the lattice hyperplanes with
normal in $\rootA_n$ is a quadratic triangulation of $P$.
\end{theorem}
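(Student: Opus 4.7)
The plan is to derive unimodularity and regularity from Theorem~\ref{thm:hyperplanes} applied to the full root dicing, and then to prove flagness by a clique-capture argument that exploits the integrality of roots against the dual lattice.

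First, since $\rootA_n$ forms a totally unimodular collection of vectors, $P$ is in particular facet unimodular. Moreover, the quadratic weight function $f(\vx) = \sum_{\vr \in \rootA_n^+} \langle \vr, \vx\rangle^2$ used in the proof of Theorem~\ref{thm:hyperplanes} remains affine on the lattice points of each cell of the larger $\rootA_n$-dicing (since every such cell has width one in each root direction) while still failing to be affine across any pair of adjacent cells, so $\T$ is regular and all of its cells have width one with respect to every facet direction, hence are compressed by Theorem~\ref{thm:paco}. By Lemma~\ref{lemma:roothyperplanes}, the alcoves of the ambient $\rootA_n$-arrangement are already simplices. Because every facet of $P$ is itself a root hyperplane, any alcove meeting the interior of $P$ cannot cross a facet of $P$ and therefore sits entirely inside $P$; consequently the maximal cells of $\T$ are precisely the alcoves contained in $P$. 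A compressed simplex is automatically unimodular, so $\T$ is a regular unimodular triangulation.

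For flagness, I will show the following stronger statement: any set $\{\va_0,\dots,\va_k\} \subseteq P\cap\Lambda_{\rootA_n}^*$ pairwise connected by edges of $\T$ lies in a common alcove. Because $\Lambda_{\rootA_n}^*$ is defined by the condition $\langle \vr, \vx\rangle \in \Z$ for every $\vr \in \rootA_n$, two such lattice points lie in a common alcove precisely when $|\langle \vr, \va - \vb\rangle| \le 1$ for all roots $\vr$. Pairwise edges therefore force the integers $\langle \vr, \va_0\rangle, \dots, \langle \vr, \va_k\rangle$ to lie in $\{m_\vr, m_\vr + 1\} \subseteq \Z$ for some $m_\vr$, and intersecting the slabs $\{m_\vr \le \langle \vr, \vx\rangle \le m_\vr + 1\}$ over all $\vr \in \rootA_n$ cuts out an alcove $A$ containing every $\va_i$ as a vertex. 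By the same reasoning used above for maximal cells of $\T$, $A \subseteq P$, so $\conv(\va_0, \dots, \va_k)$ is a face of $\T$.

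The main obstacle is this last step. It rests entirely on the integrality $\langle \vr, \va\rangle \in \Z$ for $\va \in \Lambda_{\rootA_n}^*$ and $\vr \in \rootA_n$, which is what permits the upgrade from ``pairwise distance at most one against each root'' to ``all contained in a common closed slab of integer width one against each root''. Without this feature the clique-capture step breaks, which is consistent with the fact, to be discussed in the next subsection, that the analogous dicing triangulations of type $\rootB_n$ polytopes are only known to be regular and unimodular and not a priori flag.
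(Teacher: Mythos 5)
Your proposal is correct and follows essentially the same route as the paper: unimodularity and regularity come from Theorem~\ref{thm:hyperplanes} (compressed cells via the quadratic weights), Lemma~\ref{lemma:roothyperplanes} makes the dicing a triangulation, and flagness rests on the fact that an edge of $\T$ cannot cross a lattice root hyperplane. Your clique-capture step is just the contrapositive form of the paper's flagness argument, which takes a minimal non-face of size greater than two, separates its convex hull by a root hyperplane spanned by a face of $\T$, and contradicts the pairwise-edge condition.
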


\begin{proof}
Theorem~\ref{thm:hyperplanes} established that $\T$ is a regular lattice
subdivision in which all faces are compressed, and
Lemma~\ref{lemma:roothyperplanes} shows that this is in fact a
triangulation. So, it only remains to be shown that it is flag. 

For this, suppose $N$ is a minimal non-face with more than two
elements. Since $N$ is not a face, the relative interior of $\conv(N)$
is cut by some hyperplane $H$ spanned by a face of $\T$, which  must
be a root hyperplane.  In particular, $N$ contains points $\vn^+$ and
$\vn^-$ on both sides of that hyperplane, which contradicts the fact
that $\vn^+\vn^-$ is an edge.
\end{proof}

We now discuss two particularly interesting cases of type~$\rootA$
polytopes, order polytopes and hypersimplices.

Let $(X,\preccurlyeq)$ be a partial order on $X=\{1, \ldots, n\}$.
A vector $\vv\in[0,1]^n$ is said to \emph{respect the order} if
$v_i\le v_j$ whenever $i\preccurlyeq j$. A \emph{linear extension}
of $\preccurlyeq$ is a total order on $X$ that refines the partial
order. The \emph{order polytope} associated $(X,\preccurlyeq)$ 
is the polytope 
\begin{align*}
  O(\preccurlyeq):=\{\vx\in[0,1]^n \mid \vx \text{ respects the
    order}\}.
\end{align*}

Vertices of $O(\preccurlyeq)$ are the characteristic vectors of \emph{up-close subsets} or 
\emph{filters} of the poset (that is, sets $S\subset X$ with the property that $i\in S$ and $i \preccurlyeq j$ implies $j\in S$).
Facets of $O(\preccurlyeq)$ are in bijection with \emph{covering relations}, \emph{minimal elements} and \emph{maximal elements}. If $i \preccurlyeq j$ is a covering relation (meaning that  there is no $k$ with $i \preccurlyeq k \preccurlyeq j$) then $x_i\le x_j$ defines a facet, and if $i$ is a minimal (resp. maximal) element then
$x_i\ge 0$ (resp. $x_i\le 1$) defines a facet. All facets are of one of these forms.
In particular, the facet vectors of an order polytope are contained in the set of vectors $\{e_i: i\in [n]\} \cup \{ e_i -e_j : i,j\in [n]\}$, which is mapped to $\rootA_n$ by the linear isomorphism $e_i \mapsto e_i-e_{n+1}$. This shows that $O(\preccurlyeq)$ is of type $\rootA$, and also that it is compressed.

The quadratic triangulation of an order polytope guaranteed by Theorem~\ref{thm:rootA} was first studied by Stanley~\cite{StanleyOrderPolytope}.
Its maximal simplices are in bijection to the linear extensions of the partial order. In particular, the normalized volume of $O(\preccurlyeq)$
equals the number of distinct linear extensions of $\preccurlyeq$. 

The $d$-dimensional  hypersimplex $\HS(d,k)$ for  $1\le k\le d$
is defined as
\begin{align*}
  \HS(d,k):=\{\vx\in\R^d\mid  k-1\le \sum_i  x_i\le k,\,  \0\le \vx\le
  \1\}\,.
\end{align*}
Alternatively, each $\HS(d,k)$  can also be realized as the intersection of the $(d+1)$ dimensional
$0/1$-cube with the hyperplane $\{\vx\mid \sum_ix_i=k\}$.  The
hypersimplices for $k=1$ and $k=d$ are simplices.  The hypersimplex
$\HS(3,2)$ is the octahedron.  To see that these polytopes are root
system polytopes of type $\rootA_{d-1}$,  apply the unimodular
transformation given by $y_j:=\sum_{i=1}^jx_i$.  The facet
inequalities are then given by $y_j-y_{j-1}\ge 0$ and $k-1\le y_d\le k$.
The fact that these hypersimplices are also compressed  follows directly from
condition (\ref{thm:item:third}) of Theorem \ref{thm:paco}.

\subsubsection{Type $\rootB$ Polytopes} \label{sec:type-B-facets}

The root system $\B_n \subset \R^n$ has two types of roots. Their distinction appears
in our proofs of Proposition \ref{prop:type-B} and Lemma \ref{lemma:prop:type-B}.
 The ``short roots'' are given by the
vectors $\pm\ve_i$, and the ``long roots'' are given by the vectors
$\pm\ve_i\pm\ve_j$ for $1 \le i,j \le n$.

\begin{proposition} \label{prop:type-B} If $P$ is a lattice
  polytope with facet normals in $\rootB_n$, then $P$ admits a regular
  unimodular triangulation.
\end{proposition}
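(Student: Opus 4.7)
The plan is to dice $P$ by the hyperplanes perpendicular to the short roots of $\rootB_n$, verify that the resulting cells are compressed lattice polytopes (this will be the content of Lemma~\ref{lemma:prop:type-B}), and then apply Lemma~\ref{lemma:strongPulling} to pull all lattice points of $P$ and obtain a regular unimodular triangulation.

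Concretely, the infinite arrangement $\{x_i=k : i \in [n],\ k \in \Z\}$ slices $P$ into cells of the form $P \cap C$, where $C=[\vv,\vv+\1]$ for $\vv\in\Z^n$. Exactly as in Theorem~\ref{thm:hyperplanes}, the quadratic weight $\omega(\vx) := \sum_{i=1}^n x_i^2$ restricted to $P\cap\Z^n$ is affine on each such cell and strictly convex across cell boundaries, so it induces this short-root dicing as a regular subdivision $\S$ of $P$. Note that the short roots alone form a totally unimodular (in fact, the standard) basis, which is what allows this first step to go through; by contrast, the full root system $\rootB_n$ is not unimodular, so Theorem~\ref{thm:hyperplanes} cannot be applied directly.

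For the compressedness claim inside Lemma~\ref{lemma:prop:type-B}, I would invoke Theorem~\ref{thm:paco}: each facet of $P\cap C$ is either a cube wall (width one automatic) or the restriction of a $\rootB_n$-facet $x_i\pm x_j\le c$ of $P$ that meets the interior of $C$. Integrality of $c$ together with the bounds $v_i+v_j\le x_i+x_j\le v_i+v_j+2$ (respectively $v_i-v_j-1\le x_i-x_j\le v_i-v_j+1$) on $C$ then force $c$ to take the unique integer value strictly inside the range, which delivers width exactly one in the direction of that facet normal. Short-root facets of $P$ are parallel to cube walls and give width one for free.

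The main obstacle will be the lattice-ness of the cells. A vertex $\vv$ of $P\cap C$ is determined by $n$ linearly independent active facets; the $k$ of these that come from $P$ meet at a codimension-$k$ face of $P$, which is itself a lattice polytope, while the remaining $n-k$ are axis-aligned cube walls. The task is to show that intersecting this lattice face with the cube walls still lands on a lattice point. Here I would use a parity-type argument tailored to $\rootB_n$: if two facets of $P$ of the forms $x_i+x_j\le c_1$ and $x_i-x_j\le c_2$ both pass through $\vv$, they share a face of $P$, and because $P$ is a lattice polytope this face must contain lattice points, forcing $c_1\equiv c_2\pmod 2$; more generally, every collection of long-root facets meeting inside $P$ satisfies compatibility conditions that, together with the axis-aligned cube walls, make the combined linear system lattice-solvable. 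Once the lemma is in hand, any full pulling refinement of $\S$ is regular by Lemma~\ref{lemma:strongPulling} and unimodular on each cell (since compressed polytopes admit only unimodular pullings), giving the regular unimodular triangulation of $P$. This approach crucially uses the presence of the short roots $\pm\ve_i$ in $\rootB_n$ to produce a unit-cube dicing; this is exactly the feature missing from $\rootC_n$ and $\rootD_n$, where the analogous question remains open.
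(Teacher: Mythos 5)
Your overall strategy is the same as the paper's: dice $P$ by the short-root hyperplanes $x_i=k$, show the cells are lattice polytopes and compressed, then pull. The regularity of the dicing and the width-one analysis of long-root facets of the cells (via Theorem~\ref{thm:paco}) are fine and match the paper's argument.

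The genuine gap is exactly the step you call ``the main obstacle'': integrality of the cell vertices. This is the entire content of the paper's Lemma~\ref{lemma:prop:type-B}, and your treatment of it stops at one special case. You verify the parity condition $c_1\equiv c_2\pmod 2$ only when two long-root facets $x_i+x_j\le c_1$ and $x_i-x_j\le c_2$ share \emph{both} coordinates, and then assert that ``every collection of long-root facets meeting inside $P$ satisfies compatibility conditions that \dots make the combined linear system lattice-solvable.'' That assertion is the theorem, not a proof: the $\rootB_n$ facet normals are not totally unimodular (the $2\times 2$ system $x_i+x_j=c_1$, $x_i-x_j=c_2$ already has determinant $\pm2$), so one must argue that \emph{every} unbalanced chain of long-root equations active at the cell vertex, possibly pinned by several cube walls, still forces an integral solution. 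Your parity idea can in fact be completed -- e.g.\ by viewing the active equations as a signed graph on the coordinates, propagating integrality from the grounded (cube-wall) coordinates, and using a lattice point of the carrier face of $\va$ to resolve the ungrounded components containing an unbalanced cycle -- but none of this is in your proposal. The paper sidesteps the bookkeeping with a cleaner homogenization trick: pick a vertex $\vb$ of the carrier face of $\va$ in $P$ and observe that $\va-\vb$ is the \emph{unique} solution of a system $M\vx=\0$, $N\vx=\vw$ with $M$ having rows in $\rootB_n$, $N$ having rows among the $\ve_i$, and $\vw$ integral; Lemma~\ref{lemma:prop:type-B} then shows such a solution is integral by repeatedly moving any long root sharing a coordinate with a pinned short root into $N$, after which setting all unpinned coordinates to zero produces an integral solution, which by uniqueness is the solution. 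You need either this reduction or a completed version of your compatibility argument before the pulling step (Lemma~\ref{lemma:strongPulling}) can be invoked.
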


The proof
 requires the following lemma.
 
\begin{lemma} \label{lemma:prop:type-B}
  Let $M$ be a matrix with rows in $\rootB_n$, $N$ be a matrix with rows in $\{e_i : i\in [n]\}$ (the set of short roots), 
  and $\vw\in \Z^k$, where $k$ is the number of rows in $N$.
  If the system
  \[
  M\vx = 0, \qquad N\vx = \vw
  \] 
  has a unique solution, then this solution is integral.
\end{lemma}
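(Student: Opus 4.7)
The approach is graph-theoretic: encode the system as a signed graph on $[n]$ and argue one connected component at a time. First, I would build a graph $G$ on vertex set $[n]$ whose edges come from the \emph{long-root} rows of $M$: each row $\epsilon_i \ve_i + \epsilon_j \ve_j$ (with $i\ne j$ and $\epsilon_i,\epsilon_j\in\{\pm 1\}$) contributes an edge $\{i,j\}$ labelled by the sign $\sigma_{ij} := -\epsilon_i \epsilon_j \in \{\pm 1\}$, reflecting the relation $x_i = \sigma_{ij} x_j$. In parallel, call a vertex $i$ \emph{pinned} to a value $c \in \Z$ if either $M$ contains a short-root row $\pm \ve_i$ (pinning $c=0$) or $N$ contains a row $\ve_i$ (pinning $c$ to the corresponding entry of $\vw$, which is integral by hypothesis). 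The system $M\vx = 0$, $N\vx = \vw$ is then equivalent to the combined list of edge equations and pin equations.

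Next, I would analyze each connected component $C$ of $G$ separately, since the equations involving distinct components decouple. Choose a root $r \in C$ and a spanning tree of $C$; propagating signs along the tree produces coefficients $\eta_j \in \{\pm 1\}$ ($j \in C$) so that every tree edge equation is equivalent to $x_j = \eta_j x_r$. The non-tree edges of $C$ then become cycle equations of the form $x_r = \pm x_r$, each either trivial or forcing $x_r = 0$; each pin on a vertex $j$ with value $c$ becomes a condition $\eta_j x_r = \pm c$, which forces $x_r$ to the integer $\pm \eta_j c$. Existence of the (unique) solution guarantees that all such forced values are consistent with one another.

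Finally, I would invoke uniqueness. If for some component $C$ no cycle equation forced $x_r = 0$ and no pin was present, then $x_r$ would remain a free real parameter, contradicting uniqueness. Therefore inside every component either some cycle forces $x_r = 0$ or some pin forces $x_r$ to an integer; in either case $x_r \in \Z$, and hence $x_j = \eta_j x_r \in \Z$ for all $j \in C$. Assembling across components gives $\vx \in \Z^n$.

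The only subtlety worth flagging is the split at the outset between short-root and long-root rows of $M$: short-root rows must be treated as pins rather than as edges, and missing this distinction would break the component analysis. Once the split is made, the argument is a bookkeeping exercise, with the hypotheses ``rows of $N$ are short roots'' and ``$\vw \in \Z^k$'' feeding directly into the fact that every pin value is an integer, which is the only place the integrality of the output is produced.
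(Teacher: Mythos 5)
Your proof is correct, and it is organized differently from the paper's. The paper argues by row manipulation: any short-root rows of $M$ are first absorbed into $N$ (with value $0$); then, whenever a long-root row $\ve_i\pm\ve_j$ of $M$ shares a coordinate with a pinned coordinate, the other coordinate becomes pinned to an integer and that row migrates from $M$ to $N$; once $M$ and $N$ act on disjoint coordinate sets, the explicit vector that takes the pinned values on the $N$-coordinates and $0$ everywhere else is a solution of the system, and uniqueness identifies it with the given solution, which is therefore integral. You instead encode the long roots as a signed graph, propagate $x_j=\eta_j x_r$ along a spanning tree of each component, and use uniqueness component by component: an unpinned component with only balanced cycles would leave $x_r$ free, so every component is anchored either by an unbalanced cycle (forcing $x_r=0$) or by an integral pin coming from $N$ or a short root of $M$. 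Both arguments rest on the same propagation phenomenon, but the paper's version gets integrality by exhibiting one integral solution and invoking uniqueness once at the end (so it never needs to discuss cycles or balancedness at all), whereas yours proves directly that the unique solution is forced to integer values, which makes the role of uniqueness and the possible sources of forcing more transparent, at the cost of the extra spanning-tree and cycle bookkeeping.
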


\begin{proof}
First, observe that there is no loss of generality in assuming that all rows of $M$ are long roots (as all rows that are short roots can be put in $N$).
Also, if a long root $\e_i \pm \e_j$ in $M$ shares a coordinate with a short root $\e_i$ in $N$, then the other coordinate of that long root is fixed
to the value $x_j=\pm w_i$, so  removing  that row from $M$ and putting a new row in $N$, yields equivalent system. 

After this process has been applied as many times as possible, $M$ and $N$ operate in disjoint sets of coordinates. At which point, setting all coordinates in $N$ to their value $w_i$ and all coordinates not in $N$ to zero gives an integral solution of the system (unless $N$ is inconsistent, in which case the original system had no solution).
\end{proof}

\begin{proof}[Proof of Proposition~\ref{prop:type-B}]
  Slice $P$ by all short roots. This gives a regular
  subdivision of $P$. The proposition follows from showing that the cells in this subdivision
  have integral vertices and are compressed.
  
  Let $\va$ be a vertex of this subdivision and let $\vb$ be a vertex of the 
  carrier face of $\va$ in $P$. Then, Lemma~\ref{lemma:prop:type-B}
  implies that $\va- \vb$ is integral, since it is 
  determined by setting some coordinates to a fixed value and 
  homogenized versions of some of the
  facet-defining inequalities of $P$. Thus $\va=\vb+(\va-\vb)$ is also integral.

  It remains to show that all cells in the subdivision are compressed.
  By construction, the cells have width one in the direction of the
  short roots. In the direction of a long root $\pm\ve_i\pm\ve_j$
  consider the projection onto the $x_i$-$x_j$-plane. In each case the
  cell has width one.
\end{proof}

\begin{example}
The quadratic triangulations obtained for type $\rootA$ polytopes in Theorem~\ref{thm:rootA}
are in fact ``type $\rootA$ triangulations'', in the sense that they consist of type $\rootA$ simplices. The same does not in general occur in type $\rootB$.

For example, consider  the following type $\rootB$ polytope:
\[
 P = \conv \left[ 
  \begin{smallmatrix}
    0&1&0&1&1&1 \\
    0&0&1&1&1&0 \\
    0&0&0&0&1&-1
  \end{smallmatrix}
\right]
=
\left\{ \vx \in [0,1]^3 \ : \ z \le x,y; x+z \ge 0; y-z \ge 1
\right\}
\]
The only type $\rootB$ hyperplane cutting through its interior but not creating any non-integer vertices is $z=0$. This is the hyperplane used in the proof of Proposition~\ref{prop:type-B}, and cuts $P$ into two compressed type $\rootB$ polytopes. Each of them is a square pyramid and they both have type $\rootB$ unimodular triangulations, but their type $\rootB$ triangulations use different diagonals of the square. 
Meaning, in the last step of the proof we cannot triangulate the cells of the short-root dicing of $P$ using
transformed $\rootA$-triangulations.
\end{example}

We do not know whether Proposition~\ref{prop:type-B} can be extended
to yield quadratic triangulations of type $\rootB$ polytopes.
We do know arbitrary pulling refinements
will not work, because there are
non-flag pulling triangulations of the order polytope $[0,1]^3$.
Even pulling according to the vertex order of a linear functional 
may produce non-flag triangulations in higher dimensional order polytopes. 
(The smallest example we know of is six-dimensional: the order
polytope of the Boolean lattice on three elements with the minimum and
maximum elements removed.)

However, the following lemma, combined with Lemma~\ref{lemma:normal-covering}
does guarantee  that the toric ideals of type $\rootB$
polytopes are quadratically generated, since $\rootB$-cells
are equivalent to $\rootA$-cells (which have quadratic
triangulations).

\begin{lemma}
  If $P$ is a lattice polytope with facet normals in $\rootB_n$ and
   $Q$ is a cell in the lattice dicing subdivision of $P$ obtained
  from slicing $P$ by all the lattice hyperplanes with normals among
  the short roots in $\rootB_n$, then $Q$ is equivalent to an order
  polytope.
\end{lemma}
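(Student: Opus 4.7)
The plan is to bring $Q$ into the canonical form of an order polytope by applying a suitable sequence of coordinate reflections $x_i\mapsto 1-x_i$. After an integer translation I would assume $Q\subseteq[0,1]^n$; by the proof of Proposition~\ref{prop:type-B}, $Q$ is a full-dimensional lattice polytope with $\{0,1\}$-vertices (it is compressed and sits in a unit cube). Inside the cube, every non-trivial long-root facet is of one of four forms: the \emph{order-type} $x_i\le x_j$ or $x_i\ge x_j$, or the \emph{antipodal-type} $x_i+x_j\le 1$ or $x_i+x_j\ge 1$; the remaining $\rootB_n$-inequalities are either trivial on $[0,1]^n$ or degenerate (they pin two coordinates to fixed values, as does $x_i+x_j\le 0$) and thus cannot appear as facets of $Q$. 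The long-root facets therefore form a graph $G$ on $\{1,\dots,n\}$ whose edges are labelled either ``order'' or ``antipodal''.

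Given a sign vector $\vvarepsilon\in\{\pm1\}^n$, the substitution $y_i:=x_i$ if $\varepsilon_i=+1$ and $y_i:=1-x_i$ otherwise is a lattice-preserving affine automorphism of $[0,1]^n$. A short case analysis shows that it converts every long-root facet of $Q$ into an order-type relation $y_a\le y_b$ precisely when $\varepsilon_i=\varepsilon_j$ along every order-type edge of $G$ and $\varepsilon_i\ne\varepsilon_j$ along every antipodal-type edge. Such an $\vvarepsilon$ exists if and only if $G$, viewed as a signed graph with order edges labelled $+1$ and antipodal edges labelled $-1$, is \emph{balanced}; equivalently every cycle of $G$ contains an even number of antipodal edges. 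Once $\vvarepsilon$ is chosen, the transformed $Q$ is a polytope in $[0,1]^n$ whose long-root facets are all of the form $y_a\le y_b$, i.e.\ the order polytope of the poset generated by those relations (antisymmetry is automatic, as a pair of opposing facets $y_a\le y_b$, $y_b\le y_a$ would force $Q$ into a hyperplane, contradicting full-dimensionality).

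The main obstacle is thus to prove balance, and this is where the lattice hypothesis is used. Suppose, for contradiction, that a cycle $i_1,\dots,i_k,i_1$ of $G$ contains an odd number of antipodal edges. Tracing around the cycle the equations that are tight on the common intersection of the $k$ cycle facets --- each antipodal edge enforcing the involution $t\mapsto 1-t$ and each order edge the identity --- shows that the induced composition equals $t\mapsto 1-t$, which forces $x_{i_\ell}=1/2$ for every $\ell$. The resulting \emph{cycle face} $F_C\subseteq Q$ is then a face of $Q$, so either empty or bounded with vertices. If $F_C$ is non-empty its vertices are vertices of $Q$ with half-integer cycle coordinates, contradicting the lattice hypothesis. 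To exclude $F_C=\emptyset$, I would verify that substituting $x_{i_\ell}=1/2$ into the remaining $\rootB_n$-facet inequalities of $Q$ yields a feasible system of half-integer linear inequalities in the off-cycle coordinates; any infeasibility would require a degenerate inequality like $x_i+x_j\le 0$ or $x_i-x_j\le -1$ that pins two coordinates to fixed values, but such inequalities cannot be facets of the full-dimensional $Q$. This yields the desired contradiction and completes the proof.
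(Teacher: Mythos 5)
Your proposal is correct, but it takes a genuinely different route from the paper's. The paper's proof is shorter and more geometric: every order-type and antipodal-type inequality is tight at the center $\vb=\frac12\1$ and the cube inequalities hold there, so $\vb\in Q$; since $\vb$ is not a vertex it lies in the relative interior of a positive-dimensional face, hence there is a vertex $\va\in\{0,1\}^n$ with $\vb\pm\epsilon(\va-\vb)\in Q$ for small $\epsilon$, and tightness of all long-root facets at $\vb$ then forces the whole diagonal $[\va,\1-\va]$ to lie in $Q$ (indeed on every long-root facet hyperplane); flipping the coordinates with $a_i=1$ sends this to the main diagonal $[\0,\1]$, after which no antipodal-type facet can survive (it would fail at $\0$ or at $\1$), so only order-type facets remain. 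You instead encode the long-root facets as a signed graph and prove it is balanced, obtaining the flips by switching; your balance argument (an unbalanced cycle forces a face of $Q$ on which the cycle coordinates all equal $\frac12$, hence a non-lattice vertex) is a nice combinatorial alternative that makes the choice of flips canonical, whereas the paper's flips depend on choosing a vertex $\va$ on the diagonal. One step of yours is stated more loosely than it should be: excluding $F_C=\emptyset$. The justification "any infeasibility would require a degenerate inequality" is not an argument, since infeasibility of a system need not be witnessed by a single inequality; but the fact you need is immediate: the center $\frac12\1$ satisfies every cube, order-type and antipodal-type inequality and lies on every long-root facet hyperplane, so $\frac12\1\in F_C$. (This is precisely the observation that launches the paper's proof.) With that one-line patch, and noting that your full-dimensionality argument against opposing facets also excludes longer directed cycles among the order relations (so the generated relation really is a partial order), your argument is complete; the paper's approach buys brevity, yours buys an explicit criterion for which coordinates to flip and why such a choice always exists.
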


\begin{proof}
We can assume $Q$ is full-dimensional. Up to translation, $Q$ is given
by the intersection of $[0,1]^n$ with some constraints $x_i \lessgtr
x_j$ and some $x_i + x_j \lessgtr 1$. 
In either case, the cell must contain the point $\vb := \frac12 \1$. Since
$\vb$ cannot be a vertex, there must be a vertex
$\va \in \{0,1\}^d$ such that for small $\epsilon>0$, the points
$\vb \pm \epsilon (\va-\vb)$ both belong to our cell. Due to the
nature of its inequalities, our cell contains the long diagonal from
$\va$ to $\1-\va$. So, mapping $x_i$ to $1-x_i$ whenever $a_i=1$
identifies the cell unimodularly with an order polytope.
\end{proof}

\begin{corollary} \label{cor:rootB-quadratic}
  Toric ideals of type $\rootB$ polytopes
  are quadratically generated.
\end{corollary}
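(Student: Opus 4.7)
The plan is to apply Lemma~\ref{lemma:normal-covering}(1) to the short-root dicing subdivision $\S$ of $P$ (the same subdivision used in the proof of Proposition~\ref{prop:type-B}). All that needs to be checked is that this $\S$ satisfies the hypotheses of the lemma and that its minimal non-faces have size two; once this is done, the generators provided by the lemma are automatically quadratic.

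First I would verify the hypotheses. Regularity of $\S$ comes from the quadratic weight function $f(\vx)=\sum_i \langle\vn_i,\vx\rangle^2$ summed over the short roots $\vn_i$, as in the proof of Theorem~\ref{thm:hyperplanes}. Integrality of each cell of $\S$ is exactly Lemma~\ref{lemma:prop:type-B} (the argument used in the proof of Proposition~\ref{prop:type-B}, where vertices of $\S$ are shown to lie in $\Z^n$). By the preceding lemma, every cell of $\S$ is lattice-equivalent to an order polytope, and order polytopes are compressed (they are of type $\rootA$ and Theorem~\ref{thm:rootA} produces a unimodular triangulation of each of them), hence integrally closed.

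Next I would argue that every minimal non-face $N$ of $\S$ consists of exactly two lattice points. Indeed, if a set of lattice points is not contained in any cell of $\S$, then two of them must be separated by some slicing hyperplane $\{x_i = k\}$, and such a pair is already a non-face; thus the minimal non-faces have size two. After a tightening of the weight vector if necessary, we are in position to apply Lemma~\ref{lemma:normal-covering}(1), which gives that $\ideal_P$ is generated by $\bigcup_{F \in \S}\ideal_F$ together with the binomials $f_N$ indexed by minimal non-faces $N$. Each $f_N$ is quadratic since $|N|=2$, and each $\ideal_F$ is the toric ideal of an order polytope, which by Theorem~\ref{thm:rootA} admits a quadratic triangulation and hence by Theorem~\ref{thm:GBtriang} has a quadratic Gröbner basis (in particular, is quadratically generated). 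Combining, $\ideal_P$ is quadratically generated.

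The proof is essentially bookkeeping, so there is no serious obstacle. The only point that requires a second of attention is the verification that the hypotheses of Lemma~\ref{lemma:normal-covering} really do apply — specifically, integrality and integral-closedness of the cells, and tightness of the pair $(\S,\vomega)$; but each of these follows immediately from earlier results in this section.
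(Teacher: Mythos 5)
Your proposal is correct and follows essentially the same route as the paper: slice $P$ by the short roots, note that the cells are lattice-equivalent to order polytopes (hence have quadratic triangulations and quadratically generated toric ideals), observe that the dicing's minimal non-faces have size two, and conclude via Lemma~\ref{lemma:normal-covering}(1). The paper states this only as a brief sketch; your write-up just fills in the same bookkeeping (regularity, integrality of the cells, tightness) explicitly.
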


\subsubsection{Type $\rootC_n$ and $\rootD_n$ Polytopes}
\label{sec:type-C-facets}

Payne's theorem~(Theorem~\ref{thm:rootfacets_integralKoszul}) guarantees that
all polytopes of type $\rootC$ and $\rootD$ are integrally closed.
However, we do not know whether they all have unimodular triangulations
for cases other than  $\rootD_n\cong \rootA_n$ ($n\le 3$) 
and $\rootC_n\cong \rootB_n$ ($n\le 2$) .

\subsubsection{Type $\rootF$ Polytopes} \label{sec:type-F-facets}

Polytopes with facets in $\rootF$ do not in general have regular
unimodular triangulations.  For an example, consider the polytope
$P_{\rootF}$ defined by the following linear inequalities:
\begin{align*}
  x_4 + x_2&\le 0& x_4 - x_1 &\ge 0& x_4 - x_2&
  \le 0\\
  x_3 + x_1 &\ge 0& x_3 - x_1&\le 2
\end{align*}
The dual lattice $(\rootF)^*$ is the sublattice of $\Z^4$ containing
all points with even coordinate sum. Hence, the lattice points in $P$
are
\begin{align*}
  (-1,\phantom{-}0,\phantom{-}1,\phantom{-}0)&&  (-1,-1,\phantom{-}1,-1)\\
  (\phantom{-}0,\phantom{-}0,\phantom{-}0,\phantom{-}0)&&   (-1,\phantom{-}1,\phantom{-}1,-1)\\
  (\phantom{-}0,\phantom{-}0,\phantom{-}2,\phantom{-}0)
\end{align*}
which are also the vertices of $P_{\rootF}$.  In particular, since 
$P_{\rootF}$ is a non-unimodular empty simplex, it does not
have a unimodular triangulation. In fact, it is not even
integrally closed. For example, $(-1,0,2,-1)\in
(\rootF)^*\cap2P$ is not a sum of two lattice points in
$P_{(\rootF,\vc)}$.  
Therefore,  polytopes cut out by $\rootF$ are not in general
integrally closed, which answers a question left open in~\cite{PayneFrobenius}.

\subsubsection{Type $\rootE_6$, $\rootE_7$, $\rootE_8$ Polytopes} \label{sec:type-E-facets}

Here is an example of a type $\rootE_8$ polytope that does not have
a regular unimodular triangulation. $P_{\rootE_8}$ is the product
of the polytope $P_{\rootF}$ defined in
Section~\ref{sec:type-F-facets} with itself. The facet normals of this
polytope are roots in $\rootE_8$. The $25$ vertices of
$P_{\rootE_8}$ are the products of the five vertices of
$P_{\rootF}$. Additionally, we have the following $12$ lattice
points:
\begin{align*}
  \begin{array}{rrrrrrrr}
  (-1& 0& 1& -1& -1& 0& 1& -1)\\
  (-1& 0& 1& -1& 0& 0& 1& 0)\\
  (\phantom{-}0& 0& 1& 0& -1& 0& 1& -1)\\
  (\phantom{-}0& 0& 1& 0& 0& 0& 1& 0)\\
  (-\nicefrac12& -\nicefrac12& \phantom{-}\nicefrac12& -\nicefrac12& -\nicefrac12& -\nicefrac12& \phantom{-}\nicefrac12& -\nicefrac12)\\
  (-\nicefrac12& -\nicefrac12& \nicefrac12& -\nicefrac12& -\nicefrac12&  \nicefrac12& \nicefrac32& -\nicefrac12)\\
  (-\nicefrac12&  \nicefrac12& \nicefrac12& -\nicefrac12& -\nicefrac12& -\nicefrac12& \nicefrac32& -\nicefrac12)\\
  (-\nicefrac12&  \nicefrac12& \nicefrac12& -\nicefrac12& -\nicefrac12&  \nicefrac12& \nicefrac12& -\nicefrac12)\\
  (-\nicefrac12& -\nicefrac12& \nicefrac32& -\nicefrac12& -\nicefrac12& -\nicefrac12& \nicefrac32& -\nicefrac12)\\
  (-\nicefrac12& -\nicefrac12& \nicefrac32& -\nicefrac12& -\nicefrac12&  \nicefrac12& \nicefrac12& -\nicefrac12)\\
  (-\nicefrac12&  \nicefrac12& \nicefrac32& -\nicefrac12& -\nicefrac12& -\nicefrac12& \nicefrac12& -\nicefrac12)\\
  (-\nicefrac12&  \nicefrac12& \nicefrac32& -\nicefrac12& -\nicefrac12&  \nicefrac12& \nicefrac32& -\nicefrac12)
\end{array}
\end{align*}
The point $(-2,-1,2,-1,-1,0,2,-1)$ is in twice the
polytope $P_{\rootE_8}$, but it is not the sum of two lattice points
in $P_{\rootE_8}$. So $P_{\rootE_8}$ is not integrally closed, and cannot have a regular unimodular triangulation.

The $\rootE_7$ and $\rootE_6$ cases  remain open.

\subsubsection{Type $\rootG$ Polytopes} \label{sec:type-G-facets}

Every type $\rootG$ polytope $P_{\rootG}$ is a polygon. As such, they have
regular unimodular triangulations. By
Proposition~\ref{prop:polygonKoszul} they even have quadratic
triangulations whenever there are at least four boundary lattice points. Up to lattice
equivalence, the only type $\rootG$ polygons with three boundary lattice
points are a unimodular triangle and the triangle of Figure~\ref{fig:non-quad-triangle}.
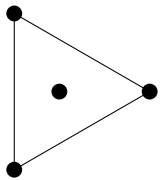
\begin{figure}[tb]
  \centering
  \begin{tikzpicture}
    \foreach \x/\y in {0/p,120/q,240/r} {
      \coordinate (\y) at (\x:1.2);%
    }
    \coordinate (a) at (barycentric cs:p=1,q=1,r=1);
    \draw (p) -- (q) -- (r) -- (p);
    \fill (a) [black] circle (3pt);%
    \fill (p) [black] circle (3pt);%
    \fill (q) [black] circle (3pt);%
    \fill (r) [black] circle (3pt);%
  \end{tikzpicture}
  \caption{A triangle without a quadratic triangulation}
  \label{fig:non-quad-triangle}
\end{figure}

\subsection{Polytopes spanned by roots}
\label{sec:spanned-by-roots}

Now we consider polytopes defined as the convex hull of a subset of
one of the root systems.  Polytopes of this type are widely studied,
see e.g.\ Gelfand, Graev, and Postnikov~\cite{0876.33011}, Ohsugi and
Hibi \cite{1001.05125, 1012.13012}, Meszaros
\cite{Meszaros2009,Meszaros2009a}, and Cho \cite{0939.05085}.  Ohsugi
and Hibi have found regular unimodular triangulations for several
classes of these polytopes.

Before presenting results, we introduce some notation.  Let $\configuration$ be a set of vectors
in $\R^n$ and $\widetilde\configuration:=\configuration\cup\{\0\}$.
The polytope associated to this configuration is
$P_{\configuration}:=\conv(\configuration)$.

\subsubsection{Sub-configurations of $\rootA_n$: arc polytopes.}
\label{subsec:anarcpolytope}

Arc polytopes are the easiest type of a root polytope to find
unimodular triangulations for. Since
$\rootA_n$ is totally unimodular as a vector configuration,
for every $\configuration\subseteq \rootA_n$ each
simplex containing the origin as a vertex is unimodular. As a result:

\begin{theorem}\label{thm:An-1}The following polytopes have regular
  unimodular triangulations.
  \begin{compactenum}
  \item\label{thm:An:4} \label{thm:An:5}
    $P_{\widetilde\configuration}$ for any
    $\configuration\subseteq\rootA_{n-1}$.  In particular,
    $P_{\configuration}$ for any $\configuration\subseteq\rootA_{n-1}$
    with $\0\in P_{\configuration}$.
  \item\label{thm:An:6} $P_{\configuration}$ for any
    $\configuration\subseteq\rootA_{n-1}$ with
    $\0\not\in\aff(\configuration)$.  In this case \emph{every}
    triangulation is unimodular.
  \end{compactenum}
\end{theorem}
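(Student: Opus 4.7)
Both parts follow from the single input recalled in Section~\ref{sec:integer-programming}: the vector configuration $\rootA_{n-1}$ is totally unimodular, i.e., the matrix whose columns are the roots $\ve_i-\ve_j$ has all minors in $\{0,\pm 1\}$. From this I extract two consequences.
\begin{compactenum}
\item[(i)] If $\vv_1,\dots,\vv_d\in\rootA_{n-1}$ are linearly independent, then $\conv(\0,\vv_1,\dots,\vv_d)$ is a unimodular $d$-simplex in $\Z^n$, because its normalized volume equals $|\det[\vv_1\,|\,\cdots\,|\,\vv_d]|=1$.
\item[(ii)] If $\vv_0,\dots,\vv_d\in\rootA_{n-1}$ satisfy $\0\notin\aff(\vv_0,\dots,\vv_d)$, then the $\vv_i$ are linearly independent and $\conv(\vv_0,\dots,\vv_d)$ is unimodular in the affine lattice $\aff(\vv_0,\dots,\vv_d)\cap\Z^n$. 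This follows from (i) by passing to the $(d{+}1)$-dimensional simplicial cone $\cone(\vv_0,\dots,\vv_d)$, whose integer lattice is exactly $\Z\langle\vv_0,\dots,\vv_d\rangle$; the affine lattice of the simplex is then the translate by $\vv_0$ of $\Z\langle\vv_1{-}\vv_0,\dots,\vv_d{-}\vv_0\rangle$.
\end{compactenum}

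For part~(2) the hypothesis $\0\notin\aff(\configuration)$ makes every affinely independent subset of $\configuration$ linearly independent, so (ii) implies every full-dimensional simplex with vertices in $\configuration$ is unimodular. Since a unimodular simplex contains no lattice points beyond its vertices, a Carath\'eodory argument gives $P_\configuration\cap\Z^n\subseteq\configuration$. Every lattice triangulation of $P_\configuration$ thus uses only vertices from $\configuration$ and consists of unimodular simplices. A regular one exists, for instance, as any generic pulling via Lemma~\ref{lemma:strongPulling}.

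For part~(1) I would construct a regular triangulation of $P:=P_{\widetilde\configuration}$ in which $\0$ is a vertex of every maximal simplex; (i) then yields unimodularity. The construction is the star triangulation from $\0$: apply strong pulling (Lemma~\ref{lemma:strongPulling}) starting with $\0$, then continue pulling every remaining lattice point of $P$ in any order. Parts~(1) and (2) of that lemma give a regular full triangulation. To ensure $\0$ remains a vertex of every maximal cell under subsequent pullings, I first prove the auxiliary fact $P\cap\Z^n\subseteq\widetilde\configuration$ by the same Carath\'eodory-plus-(i)/(ii) argument as in part~(2); this forces every lattice point other than $\0$ to lie on a boundary facet of $P$, so later pullings merely refine those facets and $\0$ persists as the apex of each pyramid $\conv(\0,F)$. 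The parenthetical ``in particular'' is immediate, since $\0\in P_\configuration$ forces $P_\configuration=P_{\widetilde\configuration}$.

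The main obstacle is a borderline case inside the auxiliary lemma of part~(1), in which a lattice point's minimal affinely independent support $S\subseteq\configuration$ satisfies $\0\in\aff(S)\setminus S$; neither (i) nor (ii) applies directly. I would handle it using the $\pm 1$-circuit relation $\0=\sum\lambda_i\vv_i$ forced by total unimodularity to rebalance the convex combination representing the lattice point into an equivalent one that explicitly includes $\0$, reducing it to case~(i).
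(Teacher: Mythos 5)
Your proof is correct and follows essentially the same route as the paper: total unimodularity of $\rootA_{n-1}$, star/pulling triangulations with $\0$ pulled first for part (1), and for part (2) the fact that a simplex spanned by roots whose affine hull misses $\0$ is unimodular in the induced lattice (the paper packages this more briefly as: $\conv(\{\0\}\cup\configuration)$ is a pyramid with apex $\0$, so coning a triangulation of $P_\configuration$ to $\0$ yields simplices of determinant one). Two remarks on part (1). First, the detour through the auxiliary lemma $P_{\widetilde\configuration}\cap\Z^n\subseteq\widetilde\configuration$ --- and in particular the borderline circuit case you single out as the main obstacle --- is avoidable: a triangulation in this paper is not required to use all lattice points, so you may pull only the points of $\widetilde\configuration$, with $\0$ first; by the structure of pulling (Lemma~\ref{lemma:strongPulling}) every maximal cell is the join of $\0$ with a simplex in a face of $P$ not containing $\0$, hence unimodular by the vertex matrix, and your emptiness statement then comes out for free, since unimodular triangulations are automatically full. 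Second, your step ``every lattice point other than $\0$ lies on the boundary, so $\0$ persists as the apex'' silently uses that the points of $\configuration$ are vertices of $P_{\widetilde\configuration}$ (all roots have the same Euclidean length, hence are extreme points); it is this, rather than the mere inclusion $P\cap\Z^n\subseteq\widetilde\configuration$, that guarantees each subsequently pulled point lies in the base (not in a side face through $\0$) of every pyramid containing it, so you should state it explicitly.
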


\begin{proof}
Part (\ref{thm:An:4}) follows from
Stanley~{\cite[Ex.\ 2.4a]{StanleyDecompositions80}}. For any
pulling triangulation in which $\0$ is the first point pulled,
each simplex in the triangulation will contain $\0$.  By unimodularity
of the vertex matrix, this means every resulting simplex has volume one.

The argument for Part (\ref{thm:An:6}) is essentially the same.  $Q:=\conv(\{\0\}\cup\configuration^+)$ is a pyramid over
$P_{\configuration^+}$ with apex $\0$, so every simplex of every triangulation of it
contains $\0$, and is hence unimodular.
 \end{proof}

The only configurations $\configuration \subseteq
\rootA_{n-1}$  not addressed by Theorem~\ref{thm:An-1} are those that
have $\0$ in their affine hull, but not in their convex hull. Not all
of these have a regular unimodular triangulation.
In fact, the polytope of such a configuration can even fail to be
integrally closed.
\begin{example}  \label{eg:weirdA-non-normal}
Consider the subconfiguration of $\rootA^+_{4}$ consisting of
$\ve_1-\ve_2$, 
$\ve_2-\ve_3$,
$\ve_3-\ve_4$,
$\ve_4-\ve_5$,
$\ve_1-\ve_4$ and $\ve_3-\ve_5$.
It is 4-dimensional with six vertices, so it has a unique affine dependence. Namely:
\[
(\ve_1-\ve_2) + (\ve_2-\ve_3) + 2  (\ve_3-\ve_5) = (\ve_1-\ve_4) + (\ve_3-\ve_4) + 2(\ve_4-\ve_5).
\]
Since both sides of the dependence have some coefficient different from one, neither triangulation
of this circuit is unimodular (part (1) of Lemma~\ref{lemma:circuit}).
\end{example}
Even when there is a regular unimodular triangulation, it will not necessarily be flag.

\begin{example}  \label{eg:weirdA-non-quadratic}
Consider the subconfiguration of $\rootA^+_{5}$ consisting of
  $\ve_1-\ve_2$, $\ve_2-\ve_3$, $\ve_1-\ve_3$, $\ve_3-\ve_5$,
$\ve_3-\ve_4$, and $\ve_4-\ve_5$. 
This is again a circuit and its only affine dependence is 
\[
(\ve_1-\ve_2)+(\ve_2-\ve_3)+(\ve_3-\ve_5)=
(\ve_1-\ve_3)+(\ve_3-\ve_4)+(\ve_4-\ve_5).
\]
Since the circuit has three points on each side, none of its triangulations is flag 
 (part (2) of Lemma~\ref{lemma:circuit}).
 Let us mention that this configuration is lattice equivalent to the set of vertices of the third Birkhoff polytope.
 (See Theorem~\ref{thm:quadTrans} and the discussion before it.) 
\end{example}
Notice that Theorem~\ref{thm:An-1}(\ref{thm:An:4}) implies
$P_{\rootA_{n-1}}$ has a regular unimodular triangulation.  Even more is
known for the
subconfiguration of all positive roots.

\begin{theorem}\label{thm:An}
  \begin{compactenum}
  \item\label{thm:An:2} $P_{\widetilde\rootA^+_{n-1}}$ has a quadratic
    triangulation.
  \item\label{thm:An:3} $P_{\rootA^+_{n-1}}$ has a regular unimodular
    triangulation in which all non-faces have size two or three.
  \end{compactenum}
\end{theorem}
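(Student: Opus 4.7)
For Part~(\ref{thm:An:2}), my plan is to apply the unimodular change of coordinates $\phi\colon e_i - e_j \mapsto \chi_{[i,j-1]}$, identifying $P_{\widetilde\rootA^+_{n-1}}$ with the subpolytope $Q_n := \conv\bigl(\{\0\} \cup \{\chi_I : I \subseteq [n-1] \text{ a non-empty interval}\}\bigr)$ of $[0,1]^{n-1}$. By Theorem~\ref{thm:An-1}(\ref{thm:An:4}), any pulling triangulation with $\0$ pulled first is already regular and unimodular; the remaining task is to obtain flagness. I would use the explicit weight $\omega(e_i - e_j) := -(j-i)^2$, with $\omega(\0)$ set to a sufficiently negative constant to force $\0$ into every maximal simplex, which produces a regular unimodular triangulation of the form $\0 \star \T'$.

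The minimal affine circuits of the vertex set split into two four-element balanced families: \emph{splitting} circuits $\0 + (e_i - e_k) = (e_i - e_j) + (e_j - e_k)$ for $i < j < k$, and \emph{crossing} circuits $(e_i - e_j) + (e_k - e_l) = (e_i - e_l) + (e_k - e_j)$ for $i < k \le j < l$. A direct computation shows $\omega$ assigns strictly higher weight to $\{e_i - e_j, e_j - e_k\}$ than to $\{\0, e_i - e_k\}$, and to $\{e_i - e_j, e_k - e_l\}$ than to $\{e_i - e_l, e_k - e_j\}$; so by Lemma~\ref{lemma:circuit} these elementary circuits contribute the corresponding pairs as size-$2$ minimal non-faces. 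To rule out larger minimal non-faces I would prove a combinatorial lemma: every other affine circuit arises from an alternating $2m$-cycle $v_1 < v_2 > v_3 < \cdots < v_{2m} > v_1$ in $K_n$ with $m \ge 3$, and at least one of its two alternating matchings contains a crossing pair. The weight $\omega$ always selects a side containing such a crossing as the non-face side; this side strictly contains a size-$2$ crossing non-face and hence is not itself minimal.

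For Part~(\ref{thm:An:3}), the same weight restricted to $\rootA^+_{n-1}$ gives a regular unimodular triangulation in which the splitting circuits disappear while the crossing and alternating-cycle circuits persist. The alternating-cycle lemma still produces a crossing pair on one side of every large circuit, but some residual six-term configurations that in Part~(\ref{thm:An:2}) were broken via splitting pairs can no longer be reduced to size $2$ in the absence of $\0$; these contribute exactly the size-$3$ minimal non-faces permitted by the statement. The main obstacle in both parts is proving the alternating-cycle crossing lemma and verifying the global consistency of the weight-induced triangulation (i.e.\ ruling out hidden minimal non-faces of size $\ge 4$); I would handle this by a uniform wrap-around argument, comparing the outermost interval of the non-crossing matching (when it is nested) to the first interior edge of the opposite matching, and by induction on $m$ in the remaining cases.
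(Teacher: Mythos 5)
First, for context: the paper does not prove Theorem~\ref{thm:An} at all --- it cites Gelfand--Graev--Postnikov for part~(\ref{thm:An:2}) and Kitamura for part~(\ref{thm:An:3}) --- so your attempt at a self-contained argument is welcome, but as written it has genuine gaps. The most serious one is in part~(\ref{thm:An:3}): you assert that ``the same weight restricted to $\rootA^+_{n-1}$ gives a regular unimodular triangulation'', but unimodularity is precisely the nontrivial content of that statement. For $n\ge 3$ the origin lies in the affine hull but not in the convex hull of $\rootA^+_{n-1}$, so Theorem~\ref{thm:An-1} does not apply, and simplices spanned by positive roots alone need not be unimodular: the paper's Example~\ref{eg:weirdA-non-normal} exhibits six roots of $\rootA^+_{4}$ forming a circuit whose dependence carries a coefficient $2$, so by Lemma~\ref{lemma:circuit} simplices on these points of volume $\ge 2$ occur. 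Hence you must prove that your weight avoids \emph{every} non-unimodular simplex on the positive roots; nothing in the proposal addresses this, and it cannot be waved through by analogy with part~(\ref{thm:An:2}), where unimodularity is free because $\0$ is a vertex of every cell.

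The second gap concerns the control of non-faces, in both parts. A minimal non-face of a triangulation is either the full support of a circuit or an affinely independent set; the affinely independent ones correspond to binomials of the toric ideal that are \emph{not} supported on circuits, and these are exactly the ``hidden minimal non-faces of size $\ge 4$'' you acknowledge. Your alternating-cycle lemma, even if proved, only examines which side of each circuit gets selected, so it cannot rule those out --- and since $\widetilde\rootA^+_{n-1}$ is not a unimodular configuration (simplices of different volumes exist, as above), one cannot appeal to ``circuits $=$ Graver basis'' to reduce to circuits. Moreover the circuit classification you propose is incorrect: the affine circuits are the balanced cycles of $K_n$, which need not be alternating, together with the unbalanced cycles joined with $\0$ taken with coefficient equal to the imbalance, e.g.\ $(\ve_i-\ve_j)+(\ve_j-\ve_k)+(\ve_k-\ve_l)=(\ve_i-\ve_l)+2\cdot\0$, none of which appear on your list. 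Finally, $\omega(\ve_i-\ve_j)=-(j-i)^2$ is highly non-generic (all roots of equal length tie), so ``the weight-induced triangulation'' is not yet well defined; you must either perturb and track how ties are broken or show the cells are already simplices. Your local computations (crossing beats nested, composable beats $\{\0,\ve_i-\ve_k\}$) are correct and do point toward a Gelfand--Graev--Postnikov-type non-crossing triangulation, so a more promising route is to describe that triangulation explicitly and characterize its faces by a pairwise (respectively, pairwise-and-triple) condition --- which is how flagness, and the size-$2$-or-$3$ bound consistent with Example~\ref{eg:Aplus-non-quadratic}, are actually obtained in the cited sources --- rather than through circuit bookkeeping.
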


Part (\ref{thm:An:2}) was proved by Gelfand, Graev, and
Postnikov~\cite{0876.33011}, and 
part (\ref{thm:An:3}) is due to Kitamura~\cite{1094.13043}. 
This statement is the best possible since $\rootA_{n-1}^+$   cannot have a quadratic triangulation
for $n\ge 6$, as demonstrated in the following example.

\begin{example}[Example~\ref{eg:weirdA-non-quadratic} continued]
 \label{eg:Aplus-non-quadratic}
The vectors 
$\ve_1-\ve_2$, $\ve_1-\ve_3$, $\ve_2-\ve_3$, $\ve_4-\ve_5$,
$\ve_4-\ve_6$, $\ve_5-\ve_6$   
are the vertex set of the face of  $\rootA_{5}^+$ defined by $x_1+x_2+x_3=0$.
Any flag triangulation of $\rootA_{5}^+$ would in particular give a flag triangulation
of that face, but that face does not have any flag triangulation.
\end{example}

\medskip
$\rootA$-root configurations have a natural
graph-theoretic interpretation. Let $G=(V,A)$ be a directed graph,
which we  assume to be connected in the undirected sense. Its
(directed) incidence matrix is the $(|V|\times |A|)$-matrix $D_G$ with
a $1$ at position $(v,a)$ if $v$ is the head of the directed edge
(arc) $a$, with a $-1$ if $v$ is the tail of $a$, and with a $0$
otherwise. The \emph{arc polytope} of $G$ is the convex hull of the
columns of $D_G$. By construction, the columns are roots of type
$\rootA_{n-1}$. 
Other than possibly the origin, the only 
lattice points in the arc polytope are its vertices, which correspond to arcs in $G$.

The following easy properties were noticed by Hibi and
Ohsugi~\cite{1012.13012}. 

\begin{lemma}
Let $G$ be a connected graph with $n$ vertices, and $\configuration$ be
the set of lattice points in its arc polytope.
\begin{compactenum}
\item $\configuration $ contains the origin if and only if $G$ has a
  directed cycle. 
\item $\configuration$ has dimension $d-1$ (that is, it affinely spans the hyperplane $\sum x_i=0$) if and only if 
$G$ has an \emph{unbalanced cycle}. 
(Here, an undirected cycle in a directed graph is called \emph{balanced}  if it has the same number of edges oriented in both directions.) 
\item
  $\configuration $ is totally unimodular (all its full-dimensional
  simplices, and hence all its triangulations, are unimodular) if and
  only if all unbalanced cycles contain exactly one more edge oriented
  in one direction than in the other.~\cite[Lemma~3.6]{1001.05125}
\end{compactenum}
\end{lemma}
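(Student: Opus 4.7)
All three parts rest on the correspondence between a vanishing sum $\sum_a \lambda_a v_a = \0$ and $\lambda$ being a (signed) circulation on $G$, i.e., $\lambda \in \ker D_G$, where $v_a := e_{h(a)} - e_{t(a)}$ is the column of $D_G$ associated to the arc $a$.

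For part~(1), the plan is to rewrite $\0 \in \conv\{v_a\}$ as the existence of a non-negative combination $\sum_a \lambda_a v_a = \0$ with $\sum_a \lambda_a = 1$. Componentwise vanishing says exactly that $\lambda$ is a non-zero, non-negative circulation; by the classical flow-decomposition theorem every such circulation is a non-negative combination of indicator flows of directed cycles, which gives the equivalence.

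For part~(2), I would first observe that $\aff\configuration \subseteq H := \{\vx : \sum_i x_i = 0\}$ since each $v_a$ has coordinate sum zero, and that for connected $G$ the linear span of $\configuration$ is already all of $H$ (the standard $\rank D_G = n-1$). Hence $\dim \aff\configuration = n-1$ iff $\0 \in \aff\configuration$, iff there is a signed circulation $\lambda$ with $\sum_a \lambda_a = 1$. Expressing $\lambda$ in the cycle-space basis of $\ker D_G$ consisting of the $\pm 1$ indicator flows of the undirected cycles, the functional $\lambda \mapsto \sum_a \lambda_a$ picks out a linear combination of cycle imbalances, and such a $\lambda$ exists iff some undirected cycle of $G$ is unbalanced.

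For part~(3) the plan has two stages. First, classify the affinely independent $n$-subsets of $\configuration$: a dimension count on the cycle space shows that $n$ arcs are affinely independent iff they form a connected unicyclic spanning subgraph whose unique cycle is unbalanced (any disconnected $n$-arc subgraph with $c \ge 2$ components has a $c$-dimensional cycle space on which the total-weight functional necessarily has a nontrivial kernel, producing an affine dependence); the only remaining full-dimensional simplices, which appear when $\0 \in \configuration$, are $\0$ together with the $n-1$ arcs of a spanning tree of $G$. Second, compute the normalized volumes. The spanning-tree-plus-$\0$ case has volume $1$ because the arcs of a spanning tree are a $\Z$-basis of $H \cap \Z^n$. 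For a unicyclic simplex with cycle $C$, pick any arc $e \in C$, let $T$ be the remaining spanning tree, expand $v_e = \sum_{a\in T} c_a v_a$ via the cycle's $\pm 1$ linear relation (so $c_a = -\epsilon_a$ for $a\in C\setminus\{e\}$ and $c_a = 0$ otherwise), and apply the matrix-determinant lemma to $[v_a - v_e]_{a\in T} = I - c\,\1^\top$ to obtain determinant $1 - \sum_a c_a = \mathrm{imb}(C)$ and hence normalized volume $|\mathrm{imb}(C)|$. Since every cycle of $G$ is the unique cycle of some unicyclic spanning subgraph (extend $C \setminus \{e\}$ to a spanning tree of $G$, then restore $e$), the configuration $\configuration$ is totally unimodular iff every unbalanced cycle has imbalance $\pm 1$.

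The main obstacle is the volume calculation in~(3); the flow-decomposition arguments in parts (1) and (2) are routine consequences of classical theory.
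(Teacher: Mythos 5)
Your proposal is correct and, at its heart, follows the same route as the paper: the decisive step in both is the classification of the full-dimensional simplices as spanning unicyclic subgraphs with unbalanced cycle (plus, when $\0\in\configuration$, the simplices $\0$ joined with a spanning tree), together with the identification of the normalized volume of such a simplex with the imbalance of its cycle. The differences are minor variations on standard arguments --- you use flow decomposition instead of a topological ordering for the acyclic direction of (1), you derive (2) from $\rank D_G=n-1$ and the coordinate-sum functional on the cycle space rather than directly from the simplex classification, and you actually carry out (via the spanning-tree lattice basis and the matrix determinant lemma) the volume-equals-imbalance computation that the paper only asserts.
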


In part (3) we mean totally unimodular with respect to the root
lattice. This is different from the convention in~\cite{1012.13012}
where unimodular is meant with respect to the lattice generated by
$\configuration$.

In part (2) observe that balanced cycles must be even, so
that the graphs having only balanced cycles must be bipartite.  An
important subclass are those
where all edges are directed from one part to the other.  In this case
the arc
polytopes are subpolytopes of the product of two simplices, so all
their triangulations are unimodular.  (This follows also from Theorem
\ref{thm:An-1}(\ref{thm:An:6}). See also~\cite{1162.52007}.)

\begin{proof}
We first prove part (1).
  If $G$ has a directed cycle then the sum of the corresponding
  columns of $D_G$ is zero. Conversely, if $G$ does not have a directed cycle
  (that is, it is \emph{acyclic}) then its vertices can be ordered so
  that every directed edge is directed towards the greater vertex.
  So, the following strict linear inequality is satisfied in
  $\configuration$:  $\sum i x_i >0$.

  For parts (2) and (3) we give a full description of the subsets of
  $\configuration$ that span $n-1$ dimensional simplices. The
  corresponding subgraphs are necessarily spanning (all
  coordinates need to be used) and connected (otherwise a proper subset of coordinates has sum equal to zero)
  and have $n$ elements.  This makes each
  the union of a spanning tree with an extra edge.  The converse is
  almost true: If a subgraph is a spanning tree plus one edge, then
  its corresponding subconfiguration is affinely independent (hence it
  spans an $(n-1)$-dimensional configuration) unless the cycle
  contained in the subgraph is balanced. So part (2) is clear. 

   If an unbalanced cycle does
  not exist, then there can not be an $(n-1)$-dimensional simplex. If
  there is an unbalanced cycle, it yields an $(n-1)$-dimensional simplex.
  Part (3) follows from the fact that the  volume (with respect to the $\rootA$ lattice) of any such
  simplex is the difference in number of edges in each direction in
  its cycle.
\end{proof}

Hibi and Ohsugi also proved that if all induced cycles (cycles without
a chord) satisfy part (3) (they are balanced or have one
more edge in one direction) then $P_\configuration$ has a unimodular
cover.

\begin{corollary}
\begin{compactenum}
\item If $G$ is not acyclic then its arc polytope has regular
  unimodular triangulations.
\item If $G$ is bipartite with parts $X$ and $Y$, and the
  edges are all directed from $X$ to $Y$ then all triangulations
  are unimodular.
\end{compactenum}
\end{corollary}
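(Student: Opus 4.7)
The plan is to deduce both parts directly from the preceding lemma on arc polytopes together with Theorem~\ref{thm:An-1}. Since an arc polytope is $P_\configuration$ for a specific configuration $\configuration\subseteq\rootA_{n-1}$ (the columns of the incidence matrix $D_G$), the two structural criteria of Theorem~\ref{thm:An-1}, namely $\0\in P_\configuration$ and $\0\notin\aff(\configuration)$, are exactly the hypotheses one needs to translate into graph-theoretic conditions.

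For part~(1), I would invoke part~(1) of the preceding lemma: if $G$ is not acyclic, then $G$ has a directed cycle, and so the origin lies in $\configuration$; in particular $\0\in P_\configuration$. Applying Theorem~\ref{thm:An-1}(\ref{thm:An:5}) then immediately yields a regular unimodular triangulation (for instance via any pulling triangulation that pulls $\0$ first, as in the proof of that theorem).

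For part~(2), I would first verify that the arc configuration satisfies $\0\notin\aff(\configuration)$. When $G$ is bipartite with bipartition $V=X\sqcup Y$ and every arc is oriented from $X$ to $Y$, each column of $D_G$ has the form $\ve_y-\ve_x$ with $x\in X$ and $y\in Y$, so every point of $\configuration$ satisfies the linear equation $\sum_{v\in Y} x_v = 1$. Hence $\0$ is not in the affine hull of $\configuration$, and Theorem~\ref{thm:An-1}(\ref{thm:An:6}) guarantees that \emph{every} triangulation of $P_\configuration$ is unimodular.

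There is essentially no obstacle here; the corollary is a direct translation of the preceding lemma and Theorem~\ref{thm:An-1} into graph-theoretic language. The only substantive check is the affine-hull computation in part~(2), which is immediate from the explicit form of the columns of $D_G$ under the orientation hypothesis.
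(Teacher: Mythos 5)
Your proof is correct and follows essentially the same route as the paper: part (1) comes from the origin lying in the configuration (via the directed cycle) together with a pulling/regular unimodular triangulation as in Theorem~\ref{thm:An-1}(\ref{thm:An:5}), and part (2) from the observation that $\configuration$ lies in the hyperplane $\sum_{v\in Y} x_v = 1$ (the paper also notes $\sum_{v\in X}x_v=-1$), so $\0\notin\aff(\configuration)$ and Theorem~\ref{thm:An-1}(\ref{thm:An:6}) applies. The only cosmetic difference is that you cite Theorem~\ref{thm:An-1} for part (1) where the paper re-runs its pulling argument explicitly.
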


\begin{proof}
For (1), a regular unimodular triangulation of $P_\configuration$ can be constructed
by pulling the origin first and then the remaining points in any
order. In such a triangulation, the origin is a vertex of every cell,
so their lattice volumes are the determinants of the other
vertices. Since the vertices are roots of $\rootA_{n-1}$, these will
all be one.

In part (2), $\0$ is not  in the affine hull of $P_\configuration$. So letting $X$ and $Y$ denote the two parts in $G$,
we have $\configuration \subset \{\sum_{v\in X} x_a=-1\} \cap
\{\sum_{v\in Y} x_a=1\}$, and theorem~\ref{thm:An-1}(\ref{thm:An:6})
implies the statement.
\end{proof}

\subsubsection{Sub-configurations of\/ $\rootB_n$, $\rootC_n$, $\rootD_n$ and $\rootF$}
Much less is  known  about  sub-configurations  of other  root
systems. 

\begin{theorem}
 \label{thm:rootBC}
 (Ohsugi and Hibi, ~\cite{1012.13012} for part (1),~\cite{1001.05125} for part (2))
  \begin{compactenum}
  \item    $P_{\widetilde\rootB_n^+},        P_{\widetilde\rootC_n^+}$     and
    $P_{\widetilde\rootD_n^+}$ have  \emph{quadratic} triangulations.
  \item If  $n\ge 2$, and  $\configuration^+$  is a set of vectors
    satisfying the following conditions
    \begin{enumerate}
    \item $\{\ve_i+\ve_j\mid 1\le i < j \le n\} \subseteq
      \configuration^+ \subseteq \rootB_n^+\cup\rootC_n^+$.
    \item $\configuration^+\cap \rootA_{n-1}^+$ is transitively
      closed (that is, for  all  $1\le i<j<k\le n$ with  $\ve_i-\ve_j,
      \ve_j-\ve_k\in \configuration^+$ also $\ve_i-\ve_k\in
      \configuration^+$).
    \item either all $\ve_i\in\configuration^+$ or no $\ve_i$ is in
      $\configuration^+$,
    \end{enumerate}
    then $P_{\widetilde\configuration^+}$ has a regular unimodular
    triangulation.
  \end{compactenum}
\end{theorem}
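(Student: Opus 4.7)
The plan is to derive both parts via the toric dictionary of Section~\ref{sec:toric-groebner-bases}: by Theorem~\ref{thm:Groebner}, a regular unimodular triangulation of $P_{\widetilde\configuration^+}$ corresponds to a squarefree initial ideal of $\ideal_{P_{\widetilde\configuration^+}}$, and by Theorem~\ref{thm:GBtriang} quadraticity corresponds to a quadratic squarefree initial ideal. So the task reduces to exhibiting an explicit Gröbner basis of the right shape, quadratic in part (1) and merely squarefree in part (2).

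For part (1), I would introduce a variable $t_0$ for $\0$ and a variable $x_\va$ for each positive root. The toric ideal is generated by binomials translating integer affine relations among the $\va$'s, and the atomic relations are short: a triangle relation $(\ve_i \pm \ve_j) + (\ve_j \mp \ve_k) = \ve_i \pm \ve_k$, which after inserting the origin becomes a quadratic binomial; a parallelogram relation $(\ve_i + \ve_j) + (\ve_k + \ve_l) = (\ve_i + \ve_k) + (\ve_j + \ve_l)$ (with its sign variants), already quadratic; and a diagonal relation $(\ve_i + \ve_j) + (\ve_i - \ve_j) = 2\ve_i$, which in $\rootB_n$ reads $\ve_i + \ve_i$ via the short root, in $\rootC_n$ uses the long root $2\ve_i$ directly, and in $\rootD_n$ is absent. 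I would fix a reverse lexicographic term order with $t_0$ cheapest, an index-based tiebreaker on the long roots, and short roots placed appropriately, and then verify via Buchberger's S-pair criterion that the natural family of quadratic binomials coming from these relations forms a Gröbner basis with squarefree leading terms. Geometrically, the resulting triangulation is the pulling triangulation from $\0$ in the corresponding vertex order: by Lemma~\ref{lemma:strongPulling}(3) every maximal simplex is a cone with apex $\0$ over a simplex in a facet of $P_{\widetilde\configuration^+}$ not containing the origin, which also aids in reading off the non-faces.

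For part (2), conditions (a)--(c) are exactly what is needed for the same Gröbner basis computation to close up inside the restricted variable set. Transitivity of $\configuration^+ \cap \rootA_{n-1}^+$ guarantees that triangle reductions stay inside our available variables; the all-or-nothing clause on $\{\ve_i\}$ keeps the diagonal relations uniform; and the inclusion of every $\ve_i + \ve_j$ supplies a ``backbone'' of long roots connecting the reductions across index pairs. Quadraticity generically fails because the missing $\rootA$-roots force longer reductions, but squarefreeness should survive, which is exactly what part (2) claims.

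The principal obstacle is the combinatorial bookkeeping in the Buchberger verification: several families of S-pairs (triangle--triangle, triangle--parallelogram, triangle--diagonal, and in $\rootB_n$ also short--long mixtures) must each reduce without introducing a non-squarefree leading term, and the outcome is delicate in the choice of term order. A secondary challenge is converting the purely algebraic output back into a geometric triangulation concrete enough to certify the non-face structure directly, to give a proof independent of the Gröbner machinery; the pulling description above is meant to facilitate this, but one must still check carefully that the pulling order matches the term order chosen for the Gröbner basis computation.
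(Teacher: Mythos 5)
There is a genuine gap. First, note that the paper does not prove this theorem at all: it is quoted from Ohsugi--Hibi (\cite{1012.13012} for part (1), \cite{1001.05125} for part (2)), so the entire content lies in the cited verifications. Your proposal correctly identifies the right framework (Theorem~\ref{thm:Groebner} / Theorem~\ref{thm:GBtriang}: exhibit a squarefree, and for part (1) quadratic, initial ideal of the toric ideal), which is indeed the route Ohsugi and Hibi take. But you never carry out the step that constitutes the theorem: you do not specify the term order beyond ``reverse lexicographic with $t_0$ cheapest and an index-based tiebreaker,'' you do not write down the candidate Gr\"obner basis precisely, and you explicitly defer the Buchberger/S-pair verification, which is exactly where the difficulty and the case analysis live (this is why the original papers devote most of their length to it). For part (2) in particular, the assertion that conditions (a)--(c) are ``exactly what is needed for the computation to close up'' is the statement to be proved, not an argument; transitivity and the all-or-nothing condition on the $\ve_i$ enter the Ohsugi--Hibi proof through a delicate reduction analysis that your sketch does not reproduce.

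Second, your geometric fallback is unsound as stated. Pulling from $\0$ makes every maximal cell a cone with apex $\0$ (Lemma~\ref{lemma:strongPulling}(3)), and in type $\rootA$ this suffices because $\rootA_{n-1}$ is totally unimodular; but $\rootB_n^+$, $\rootC_n^+$, $\rootD_n^+$ are not. For instance $\conv\{\0,\ \ve_i+\ve_j,\ \ve_i-\ve_j\}$ has normalized volume $2$ in the lattice generated by the configuration, so a cone over the origin spanned by positive roots need not be unimodular, and an arbitrary pulling order can produce non-unimodular (and non-flag) cells. Thus the pulling description cannot certify unimodularity or flagness without a specific, carefully chosen vertex order matched to a verified term order --- which again is the missing core of the argument. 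To close the gap you would either have to reproduce the explicit sorted/ordered Gr\"obner bases of \cite{1012.13012} and \cite{1001.05125} with their leading-term analysis, or give a direct combinatorial triangulation of $P_{\widetilde\rootB_n^+}$, $P_{\widetilde\rootC_n^+}$, $P_{\widetilde\rootD_n^+}$ (and of $P_{\widetilde\configuration^+}$ under (a)--(c)) together with proofs of regularity, unimodularity with respect to the correct lattice, and, for part (1), flagness.
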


Producing triangulations   for sub-configurations of  (unions of) root
systems is much more  difficult if $\0$ is  not a vertex. For $n\ge 6$
the toric ideal corresponding to $P_{\configuration}$ for
$\configuration=\rootA^+_{n-1}$, $\rootB_n^+$, $\rootC_n^+$,
$\rootD_n^+$ has no \emph{quadratic} initial ideal, since it
contains Example~\ref{eg:Aplus-non-quadratic} as a face.
So no regular unimodular triangulation can be \emph{flag}. However,
the ideal can still have a \emph{square-free} initial ideal.

For $\rootF$ we know that some convex hulls of subsets of $\rootF$ are not integrally 
closed. For example, consider the polytope given by the convex hull
$P$ of the unit vectors together with $\ve_1+\ve_2$. The point
$\frac12 (1,1,1,1)$ is contained in $2P$, but is not a sum of two
lattice points in $P$.

\subsubsection{Edge Polytopes of Undirected Graphs}
\label{sec:edge-polytopes}

Sub-configurations  of $\{\ve_i +  \ve_j\mid  1\le i\le j \le n\} \subset
\rootC_n^+$ have attracted some attention. They can be interpreted
as  \emph{edge polytopes} of  undirected graphs, perhaps with loops but without
multiple edges.   For this,  let
$G=(V,E)$ be a finite graph on $n=|V|$  vertices and $m=|E|$ edges. The \emph{incidence matrix}   $D_G=(d_{ve})$   of $G$  is  the
$(n\times m)$ matrix with  entries in $\{0,1,2\}$, where $d_{ve}=1$ if
$e$ is  incident to $v$, but not  a loop, and  $d_{ve}=2$ if $e$  is a
loop at $v$, and all other entries of $D_G$ are zero.

Letting $\configuration$ be the set of columns of $D_G$
the  \emph{edge polytope} $P_G$ of $G$ is defined to be the convex  hull of
$\configuration$. Different graphs
may define the same polytope.   Namely, if there are loops attached to
two vertices $i$  and $j$  of $G$,  then adding or removing the edge
between $i$ and $j$ does not change $P_G$.  To avoid this ambiguity we
define  the graph  $\widetilde G$  to be  obtained from  $G$ by
adding  all edges  between vertices  incident  to a  loop. 
Then the only lattice points in $P_G$  correspond to
edges of $\widetilde G$.

Observe that Theorem~\ref{thm:rootBC}  implies that
the edge polytope $P_{K_n}$ of the complete  graph $K_n$ has a regular
unimodular triangulation. 

\begin{lemma} \label{lemma:edge-poly}
Let $G$ be connected graph on $n$ vertices.
\begin{compactenum}
\item $P_G$ is contained in the hyperplane $\sum x_i=2$, and 
it affinely spans that hyperplane if and only if $G$ has an odd cycle (that
  is, if it is not bipartite).
  \label{lemma:bipartite-edge-poly:non-bipartite}
\item If $G$ is bipartite then $P_G$ is totally unimodular.
  \label{lemma:bipartite-edge-poly:bipartite}
\item If $G$ is not bipartite then a subgraph $N$ with $c$ connected
  components corresponds to the vertices of an $(n-1)$-dimensional
  simplex in $P_G$ if and only if $N$ spans all vertices and it has a
  unique cycle in each component, each of which is an odd cycle.
  In this case, the (lattice) volume of $N$ is $2^{c-1}$.
  \label{lemma:bipartite-edge-poly:simplex}
\end{compactenum}
\end{lemma}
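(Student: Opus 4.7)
The plan is to address the three parts in sequence, with part (1) being essentially a rank calculation for the unsigned incidence matrix, part (2) reducing to a classical fact, and part (3) combining a rank count with a careful determinant evaluation.

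For part (1), I would first observe that every column of $D_G$ has coordinate sum equal to $2$ (either $1+1$ for a non-loop edge, or $2$ for a loop), so $P_G$ lies in $\{\sum x_i = 2\}$ automatically. For the dimension statement, I would translate the affine-span question into the linear span of pairwise column differences. If $G$ is bipartite with parts $X, Y$, then every edge contributes exactly one to $\sum_{v\in X} x_v$ and one to $\sum_{v\in Y} x_v$, so both of these equal $1$ on $P_G$; together with the already-known $\sum_{v\in V} x_v = 2$, these yield a second independent linear relation and force $\dim P_G \leq n-2$. Conversely, if $G$ has an odd cycle, then I would show that pairwise column differences linearly span $\{\sum x_i = 0\}$: the difference of two columns sharing a vertex yields $\ve_u - \ve_w$ whenever $u,w$ are at even distance in $G$, and an odd cycle together with connectedness of $G$ allows any two vertices to be joined by an even walk.

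For part (2), I would invoke the classical fact that the unsigned incidence matrix of a bipartite graph is totally unimodular: negating the rows corresponding to one of the two parts transforms $D_G$ into the signed incidence matrix of some orientation of $G$, and signed incidence matrices are always totally unimodular. The total unimodularity of $D_G$ as a vector configuration then forces every full-dimensional simplex to be unimodular.

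For part (3), the key combinatorial input is the classical rank formula: the rank of the unsigned incidence matrix of a graph equals $n$ minus the number of bipartite connected components. Since all columns of $D_G$ lie in an affine hyperplane not through the origin, $n$ columns are affinely independent precisely when they are linearly independent. So $D_N$ must have rank $n$, forcing every component of $N$ to be non-bipartite, hence contain an odd cycle or a loop. A per-component edge count then yields unicyclicity: if component $i$ has $n_i$ vertices and $k_i$ edges, connectedness plus the presence of a cycle give $k_i \geq n_i$, and since $\sum k_i = n = \sum n_i$, equality $k_i = n_i$ must hold, so each component is a tree plus exactly one edge whose unique cycle is odd.

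The most delicate step, which I expect to be the main obstacle, is establishing the volume formula $2^{c-1}$. I would first reduce this to computing $|\det D_N|$: the normalized volume of an $(n-1)$-simplex with integer vertices in the hyperplane $H = \{\sum x_i = 2\}$ equals $|\det D_N|/2$, because the pyramid from the origin over the simplex has normalized volume $|\det D_N|$, while $H$ lies at lattice distance $2$ from $\0$ (its primitive normal is $\1$ and its equation value is $2$). It therefore suffices to show $|\det D_N| = 2^c$. I would prove this by iteratively stripping off pendant vertices: a leaf of $N$ has a single non-loop incident edge, so its row in $D_N$ contains a single $1$ entry, and cofactor expansion along that row gives $|\det D_N| = |\det D_{N \setminus \{v,e\}}|$. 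Repeating this inside each component until no leaves remain leaves exactly the union of the $c$ cycles, and the incidence matrix of an odd cycle $C_{2k+1}$ is a circulant whose determinant is easily seen to be $\pm 2$ (with the convention that a loop is a cycle of length $1$ with incidence submatrix $(2)$). The product of these $c$ factors of $2$ gives the claim.
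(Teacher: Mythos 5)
Your proof is correct and follows essentially the same route as the paper: the two bipartite relations force $\dim P_G\le n-2$ in (1) while an odd cycle restores full dimension, (2) is the sign-flip reduction of the bipartite incidence matrix to a (totally unimodular) directed one, and (3) rests on the same computation of a block determinant of $\pm 2$ per spanning unicyclic component divided by the lattice distance $2$ of the hyperplane $\{\sum x_i=2\}$ from the origin. The only cosmetic differences are that you certify independence via the classical rank formula ($n$ minus the number of bipartite components) and evaluate the odd-cycle block by leaf-stripping, whereas the paper derives the affine dependences from even cycles and even closed walks directly and simply neglects the tree rows and columns in the determinant.
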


\begin{proof}
Part (2) follows from the fact that if $G$ is bipartite with parts $X$ and $Y$ then orienting all
edges from $X$ to $Y$ makes $P_G$ (modulo a sign change on the coordinates corresponding to $Y$)
the arc polytope of a directed bipartite graph satisfying the conditions of \ref{thm:An-1}(\ref{thm:An:6}).
Also, in this case $P_A$ lies in the codimension-two affine subspace $\sum_{i\in X} x_i = \sum_{i\in Y} x_i = 1$,
which is part of statement (1). For the rest of part (1), 
suppose that $G$ is not bipartite. Then, it has an odd cycle $C$, and this cycle can be extended to a spanning
subgraph $H$ containing no other cycle. $H$ has $n$ vertices and $n$ edges, and we only need to check that the 
determinant of the corresponding matrix is non-zero. In this determinant the rows and columns of vertices and edges 
that are not in the cycle can be neglected, and the determinant of the odd cycle itself is positive or negative one.

For part (3) note that,
as in the directed case, the subgraph $N$, corresponding to a full
dimensional simplex, needs to use all coordinates. However, it does
not need to be connected. It cannot contain even cycles, since an even cycle produces an affine dependence (the alternating sum of the points corresponding to the edges in the cycle is zero). But it can contain odd cycles, as we saw in the proof of part (1). However, no  two odd cycles can be in the same connected component. If
two cycles have more than one vertex in common then there is an even
cycle; If they have one or no vertices in common then, joining them by
a path, if needed, yields an even \emph{walk} in $N$, which also
induces an affine dependence. So, we cannot have more than one cycle
per component which means we cannot have more then $n$ edges in
total. Since we need $n$ edges as the vertices of an $(n-1)$ simplex, we 
need all components to contain exactly one cycle, which proves the ``only if'' direction of (3). 

For the converse, assume $N$ is spanning and has a unique cycle in each component. Since $N$ lies in an affine hyperplane not containing the origin, the (lattice) volume of its convex hull is the determinant of the corresponding matrix, divided by the (lattice) distance from that hyperplane to the origin. The latter is two. To show that the former is $2^c$ observe that the matrix has a block for each connected component, and the determinant of each block is positive or negative two, by the same argument as in part (1).
\end{proof}

The following result characterizes the existence
of a unimodular cover of the edge polytope of a graph. 

\begin{theorem}[Ohsugi \& Hibi '07 \cite{Ohsugi1998,Ohsugi1999}, Simis,
  Vasconcelos, and Villarreal~\cite{SVV98}]
  \label{thm:OH:graphpolytopes} 
  Given a connected graph $G$, the polytope $P_G$ has a unimodular cover if and only if no induced subgraph 
  consists of two disjoint odd cycles.
  \end{theorem}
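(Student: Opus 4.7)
My plan is to prove the two directions separately, using Lemma~\ref{lemma:edge-poly}(\ref{lemma:bipartite-edge-poly:simplex}) for the combinatorial description of full-dimensional simplices of $P_G$. Throughout, I may assume $G$ is non-bipartite: otherwise, Lemma~\ref{lemma:edge-poly}(\ref{lemma:bipartite-edge-poly:bipartite}) already gives that $P_G$ is totally unimodular, so any triangulation is a unimodular cover.

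For the necessity direction, suppose $G$ has two vertex-disjoint odd cycles $C_1, C_2$ whose induced subgraph in $G$ is exactly $C_1 \cup C_2$. The vertices of $P_G$ with support in $V(C_1) \cup V(C_2)$ are then precisely the edges of $C_1 \cup C_2$, and they span a face $F$ of $P_G$, defined by the vanishing of the coordinates outside $V(C_1) \cup V(C_2)$. By Lemma~\ref{lemma:edge-poly}(\ref{lemma:bipartite-edge-poly:simplex}), $F$ is a simplex of lattice volume $2^{2-1}=2$, and by the inducedness hypothesis its only lattice points are its vertices. No empty simplex of volume $>1$ admits a unimodular cover, yet any unimodular cover of $P_G$ would restrict to one on $F$ (faces of unimodular simplices being unimodular). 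Hence no such cover of $P_G$ exists.

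For the sufficiency direction, assume $G$ has no induced pair of disjoint odd cycles. I would show that every point $p \in P_G$ lies in the convex hull of some spanning subgraph with a single odd cycle per component, that is, by Lemma~\ref{lemma:edge-poly}(\ref{lemma:bipartite-edge-poly:simplex}), a unimodular full-dimensional simplex; the family of all such simplices then furnishes a unimodular cover. To this end, fix $p$ and write $p = \sum_{f \in N} \mu_f f$ with $\mu_f > 0$, supported on a spanning subgraph $N$ whose $c$ components each carry a single odd cycle. If $c=1$ we are done. Otherwise, pick two components $N_1, N_2$, shrink the odd cycles inside them to induced odd cycles $C_i \subseteq N_i$ via chord reduction (an operation internal to each $N_i$, so vertex-disjointness is preserved), and apply the hypothesis: the subgraph induced on $V(C_1) \cup V(C_2)$ must contain some edge beyond $E(C_1) \cup E(C_2)$, and since $C_1, C_2$ are induced this edge is a bridge $e = uv$ with $u \in V(C_1)$, $v \in V(C_2)$. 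The even closed walk traversing $C_1$ from $u$, crossing $e$, traversing $C_2$ from $v$, and returning along $e$ produces the affine dependence $2e = \sum_{f \in E(C_1) \cup E(C_2)} \epsilon_f\, f$ with $\epsilon_f \in \{\pm 1\}$. Adding a multiple of this dependence to the expression of $p$ and choosing the multiplier $t = \min\{\mu_f : \epsilon_f = +1\}$ produces a new convex combination $p = \sum (\mu_f - t\epsilon_f) f + 2t\,e$ whose support is a spanning subgraph with $c-1$ odd-cycle components: the edge $f^*$ achieving the minimum is dropped, $e$ is added, the cycle of $N_1$ breaks, and the merged component carries only the (odd) cycle of $N_2$. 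Iterating reduces $c$ to $1$, yielding the desired unimodular simplex containing $p$.

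The main obstacle is the correctness of the flip step: one must verify simultaneously that all new coefficients $\mu_f - t\epsilon_f$ are nonnegative (ensured by the choice of $t$), that the convex-combination sum is preserved (true because the signed cycle sums give $\sum \epsilon_f = 2$, matching the coefficient of $e$), and crucially that the new support is again a spanning subgraph with exactly one odd cycle per component, so the induction on $c$ continues. This last point is why the chord-shrinking preprocessing is essential: if $C_i$ were not induced, the extra edge supplied by the hypothesis could be a chord rather than a bridge, the resulting dependence would live inside a single $N_i$, and the flip would fail to decrease the number of odd-cycle components.
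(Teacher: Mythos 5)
Your necessity argument is fine (and in fact more careful than the paper's, which only says the face is a non-unimodular simplex; your observation that it is an \emph{empty} simplex of volume $2$, so that its only available lattice points are its vertices, is the right justification). The sufficiency direction, however, has a genuine gap in the flip step, namely the assertion that the chord-reduced cycles satisfy $C_i\subseteq N_i$ as subgraphs. Chord reduction keeps you inside the vertex set $V(Z_i)$ of the unique cycle $Z_i$ of the component $N_i$, but the induced odd cycle it produces may use chords of $Z_i$ that are edges of $G$ \emph{not} present in $N_i$: for instance, if $Z_i$ is a $5$-cycle whose vertex set carries one chord in $G$, the only induced odd cycle on $V(Z_i)$ is a triangle through that chord. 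Consequently the dependence $2e=\sum_{f\in E(C_1)\cup E(C_2)}\epsilon_f f$ may involve configuration points outside the simplex $N$ carrying $p$. If such an outside edge has sign $+1$, your multiplier $t=\min\{\mu_f:\epsilon_f=+1\}$ is forced to be $0$ and the iteration makes no progress; if it has sign $-1$, that edge enters the new support together with $e$, the support has more edges than vertices, is no longer a simplex, and the claimed ``spanning subgraph with $c-1$ odd-cycle components'' structure fails. Either way the induction on $c$ does not close as written.

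The repair is exactly what the paper does: use the induced cycles $C_1,C_2$ \emph{only} to invoke the hypothesis and produce a single edge $e$ of $G$ joining $V(C_1)\subseteq V(Z_1)$ to $V(C_2)\subseteq V(Z_2)$, and then form the even closed walk around the \emph{original} cycles $Z_1,Z_2$ of $N$ (both endpoints of $e$ lie on them). The resulting unique dependence of the circuit $N\cup\{e\}$ is supported inside $N\cup\{e\}$, with $e$ the only element of coefficient $\pm2$ and all other nonzero coefficients $\pm1$ on edges of $Z_1\cup Z_2\subseteq N$. With that dependence your flip goes through verbatim; equivalently one can argue at the level of simplices, as the paper does via Lemma~\ref{lemma:circuit-triangs}: the triangulation of $N\cup\{e\}$ not using $N$ as a cell consists of simplices of half the volume of $N$, so every non-unimodular cell can be covered by cells of strictly smaller volume, and iterating yields the unimodular cover. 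Apart from this point, your point-wise flip formulation and the paper's volume-reduction covering argument are the same strategy.
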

This   characterization   was   originally   conjectured   by   Simis,
Vasconcelos, and  Villarreal~\cite{SVV94}. The condition  of the graph
$G$ in the theorem is sometimes called the \emph{odd cycle condition} 
(and usually stated in a different but equivalent way).
In particular, this theorem implies that $P_G$ is  integrally closed if
and only if $\widetilde G$ satisfies the odd cycle condition and that no simple
graph violating  the condition can have  a regular unimodular  triangulation.

\begin{proof}
Every induced subgraph of $G$ corresponds to the points in a particular face of $G$ (the converse is not true).
If an induced subgraph $N$ consists of two disjoint odd cycles, then the corresponding face is a non-unimodular
simplex, so $P_G$ cannot have a unimodular cover.

Conversely, assuming the odd cycle condition, every disjoint pair of odd
cycles in $G$ can be connected to one another by a single edge, otherwise the minimal (induced) cycles contained in them violates the odd cycle condition.

To show that $G_P$ has a unimodular cover
it suffices to show that every non-unimodular simplex can be covered by simplices of smaller volume.
For this,  we show that if $N$ is (the subgraph of $G$ corresponding to) a non-unimodular simplex
then we can add a single edge $e$ to $N$ so that all the simplices contained in $N\cup\{e\}$ have smaller volume than
$N$.

So, let $N$ be the subgraph corresponding to a non-unimodular simplex of full dimension. By the previous lemma,
$N$ has at least two connected components, each with an odd cycle. Let $N_1$ and $N_2$ be these cycles, and let $e$
be an edge of $G$ connecting them, which exists by the odd cycle condition. Then, $N\cup\{e\}$ has a unique affine dependence and it has, by Lemma~\ref{lemma:circuit-triangs}, two triangulations, one using $N$ as a simplex and the other not. In the affine dependence, $e$ is the only point having coefficient positive or negative two. All others have coefficients of positive or negative one. This implies that $N$ has twice the volume of every other simplex in $N \cup\{e\}$.
\end{proof}

\begin{figure}[t]
  \centering
  \begin{tikzpicture}
    \foreach \x/\y in {0/p,72/q,144/r,216/s,288/t} {
      \coordinate (\y) at (\x:2);%
    }
    \foreach \x/\y in {180/a,252/b,324/c,36/d,108/e} {
      \coordinate (\y) at (\x:2.45);%
    }
    \foreach \x in {a,b,c,d,e,p,q,r,s,t} {
      \fill (\x) [black] circle (3pt);%
    }
    \draw (p) -- (c) -- (b) -- (s) -- (t) -- (p);
    \draw (q) -- (e) -- (r);
    \draw [line width=2pt] (p) -- (q) -- (d);
    \draw [line width=2pt] (a) -- (r) -- (s);
    \draw [line width=2pt, dashed] (p) -- (d);
    \draw [line width=2pt, dashed] (a) -- (s);
    \draw [line width=2pt, dashed] (q) -- (r);
    \node[right=5pt] at (p) {$v_0$};
    \node[left=5pt] at (e) {$v_3$};
    \node[above=5pt] at (q) {$v_2$};
    \node[right=5pt] at (d) {$v_1$};
    \node[left=5pt] at (r) {$v_4$};
    \node[right=5pt] at (c) {$v_9$};
    \node[left=5pt] at (s) {$v_6$};
    \node[left=5pt] at (b) {$v_7$};
    \node[right=3pt,below=3pt] at (t) {$v_8$};
    \node[left=5pt] at (a) {$v_5$};
    \node[left=3pt] at (barycentric cs:p=.5,q=.5) {$p_{02}$};
    \node[right=3pt] at (barycentric cs:p=.5,d=.5) {$p_{01}$};
    \node[above=3pt] at (barycentric cs:q=.5,d=.5) {$p_{12}$};
    \node[below=3pt] at (barycentric cs:q=.5,r=.5) {$p_{24}$};
    \node[left=3pt] at (barycentric cs:r=.5,a=.5) {$p_{45}$};
    \node[right=3pt] at (barycentric cs:r=.5,s=.5) {$p_{46}$};
    \node[left=3pt] at (barycentric cs:s=.5,a=.5) {$p_{56}$};
  \end{tikzpicture}
  \caption{The graph $G_{\rm NRUT}$ of Example~\ref{ex:NRUT}}
  \label{fig:gnrut}
\end{figure}
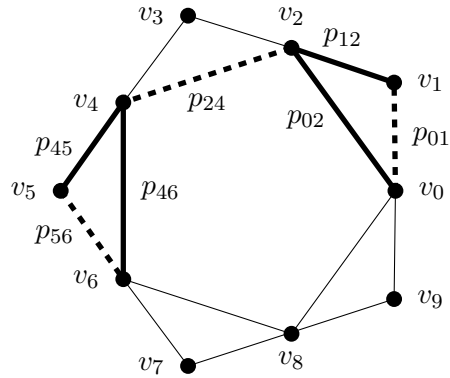

However, not every polytope satisfying  the odd-cycle condition
has a regular unimodular triangulation. This was observed by Ohsugi
and Hibi in \cite{1027.52008}.

\begin{example}[\cite{1027.52008}]
\label{ex:NRUT}
  Let $G_{\rm NRUT}$ be the graph obtained from a $10$-cycle with
  nodes numbered consecutively by $v_0, \ldots, v_9$ by adding the
  edges $(v_0,v_2)$, $(v_2,v_4)$, $(v_4,v_6)$, $(v_6,v_8)$ and
  $(v_0,v_8)$.  See Figure~\ref{fig:gnrut}.  This graph satisfies the
  odd cycle condition, so its edge
  polytope $P_{\rm NRUT}$ is integrally closed. It has a vertex
  $p_{ab}$ corresponding to every   edge $(v_a,v_b)$ in the graph.
 
\begin{lemma}
$P_{\rm NRUT}$ does not have a regular unimodular triangulation.
\end{lemma}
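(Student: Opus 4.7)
The plan is to exhibit five affine dependencies (circuits) among the vertices of $P_{\rm NRUT}$, each of which will force a strict inequality on the height vector $\vomega$ of any regular unimodular triangulation, and then to sum these inequalities into an impossible equality $0>0$. The circuits come from the five pairs of vertex-disjoint odd triangles in $G_{\rm NRUT}$.

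Label the five triangles of $G_{\rm NRUT}$ by $T_i=\{v_{2i-2},v_{2i-1},v_{2i}\}$, with $i\in\Z/5\Z$ and the $v$-indices read modulo $10$. For each $i$ the triangles $T_i$ and $T_{i+2}$ are vertex-disjoint and the only edge of $G_{\rm NRUT}$ joining them is the chord $c_{i+1}=(v_{2i},v_{2i+2})$, which is simultaneously the ``chord edge'' of $T_{i+1}$. Applying the identity $-p_{ab}+p_{bc}+p_{ac}=2\ve_b$ to each of $T_i$ and $T_{i+2}$ at its attachment vertex on the joining chord and combining with $p_{c_{i+1}}=\ve_{2i}+\ve_{2i+2}$ produces an affine dependence
\[
R_i\colon\quad \sum_{e\in A_i} p_e \;-\; \sum_{e\in B_i} p_e \;-\; 2\,p_{c_{i+1}}\;=\;0,
\]
where $A_i$ consists of the four edges of $T_i\cup T_{i+2}$ incident to an attachment vertex (two from each triangle) and $B_i$ consists of the two edges opposite to the attachment vertices. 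Since the positive part $A_i$ has all coefficients $+1$ while the negative part carries the coefficient $-2$ on the chord, Lemma~\ref{lemma:circuit}(2) identifies $\T^+$ (with unique minimal non-face $A_i$) as the \emph{only} unimodular triangulation of this $7$-point circuit.

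For a regular unimodular triangulation $\T$ of $P_{\rm NRUT}$ with height vector $\vomega$, the restriction of $\T$ to the seven points of $R_i$ must be $\T^+$, and the lifted-polyhedron description of regular subdivisions (Section~\ref{sec:what}) converts this into the strict inequality
\[
\langle \vomega, R_i\rangle \;:=\; \sum_{e\in A_i}\omega_e - \sum_{e\in B_i}\omega_e - 2\,\omega_{c_{i+1}}\;>\;0 \qquad (i\in\Z/5\Z).
\]
The contradiction will follow from the identity $\sum_{i\in\Z/5\Z} R_i=0$ as a vector of edge-coefficients. Each chord $c_j$ appears with coefficient $-2$ in $R_{j-1}$ (as the joining chord) and with coefficient $+1$ in both $R_{j-2}$ and $R_j$ (as the chord edge of $T_j$, incident to one of $T_j$'s two attachment vertices in each of those circuits), totalling $0$. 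Each cycle edge belongs to a unique triangle $T_j$ and appears in $R_{j-2}$ and $R_j$: incident to the attachment vertex in one (coefficient $+1$) and opposite in the other (coefficient $-1$), again totalling $0$. Consequently $\sum_i\langle\vomega,R_i\rangle=0$, which contradicts the sum of five strict inequalities $>0$.

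The main obstacle is the edge-by-edge sign bookkeeping in $\sum R_i=0$: fifteen edges, each participating in several circuits with signs determined by its graph-theoretic role (joining chord, chord edge of a triangle, incident cycle edge, or opposite cycle edge). Organising the coefficients in a $15\times 5$ table makes the cancellation transparent. A secondary point is to confirm the sign convention: ``non-face $A_i$ of the regular subdivision'' corresponds to $\langle\vomega,R_i\rangle>0$ (and not $<0$), as one sees by lifting the circuit and noting that $A_i$ is the part pushed above $B_i\cup\{c_{i+1}\}$ in the lifted polyhedron.
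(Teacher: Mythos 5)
This is essentially the paper's own argument: your five circuits $R_i$ are exactly the five faces the paper obtains from the pentagonal symmetry (two disjoint triangles of $G_{\rm NRUT}$ joined by a chord), your sign convention for when a height vector selects $\T^+$ is the correct one, and your edge-by-edge verification that $\sum_i R_i=0$ supplies the cancellation that the paper merely asserts. The one inference you leave unjustified is the pivotal one, namely that ``the restriction of $\T$ to the seven points of $R_i$ must be $\T^+$.'' A circuit among arbitrary vertices of a polytope imposes no condition whatsoever on the heights; what you need is that the seven points are precisely the vertices of $P_{\rm NRUT}$ lying on a common face, so that a regular unimodular triangulation of $P_{\rm NRUT}$ restricts, together with the restricted heights, to a regular unimodular triangulation of that face, which by Lemma~\ref{lemma:circuit} must then be $\T^+$ and forces your strict inequality. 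This is exactly where the paper's remark that the seven edges form a five-dimensional face (``a non-bipartite induced subgraph with six vertices'') does real work. You already record the relevant combinatorial fact --- the chord $c_{i+1}$ is the \emph{only} edge of $G_{\rm NRUT}$ joining $T_i$ to $T_{i+2}$, so the subgraph induced on those six vertices has exactly your seven edges --- but you should add the geometric conclusion: for an edge polytope, the edges induced on a vertex set $W$ are precisely the lattice points on the face cut out by $\sum_{v\in W}x_v=2$, so each $R_i$ lives on a face whose only lattice points are its seven vertices. (A small labeling slip: with edges $ab$, $bc$, $ac$ the identity reads $-p_{ab}+p_{bc}+p_{ac}=2\ve_c$ rather than $2\ve_b$; the circuits $R_i$ you then write down are nevertheless correct.)
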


\begin{proof}
  The seven thick edges of Figure~\ref{fig:gnrut} form a five dimensional face $F$ of $P_{\rm NRUT}$  (they are a non-bipartite induced subgraph with six vertices) with the unique affine dependence
    \begin{align*}
    p_{03}+p_{23}+p_{45}+p_{46} =  p_{01}+2p_{24}+p_{56}\,.
  \end{align*} 
  Since one of the sides has all coefficients equal to one but the other does not, only one of the two triangulations of $F$ is unimodular  (Lemma~\ref{lemma:circuit}). If it is also regular, the weight vector defining it must choose the first term of the  binomial  $x_{03}x_{23}x_{45}x_{46}-x_{01}x_{24}^2x_{56}$ derived from the circuit (Theorem~\ref{thm:Groebner}). That is, the weight vector $\vomega\in\Z^{15}$ must satisfy
  \begin{align*}
    \omega_{02}+\omega_{23}+\omega_{45}+\omega_{46} >     \omega_{01}+2\omega_{24}+\omega_{56}\,.
  \end{align*}
  But there are four other faces obtained from this one by the pentagonal symmetry in $G_{\rm NRUT}$, leading to the inequalities
  \begin{align*}
    \omega_{24}+\omega_{34}+\omega_{67}+\omega_{68} &>     \omega_{23}+2\omega_{46}+\omega_{78}\,.\\
    \omega_{46}+\omega_{56}+\omega_{08}+\omega_{89} &>     \omega_{45}+2\omega_{68}+\omega_{01}\,.\\
    \omega_{68}+\omega_{78}+\omega_{01}+\omega_{02} &>     \omega_{67}+2\omega_{08}+\omega_{12}\,.\\
    \omega_{08}+\omega_{09}+\omega_{23}+\omega_{24} &>     \omega_{89}+2\omega_{02}+\omega_{34}\,.\\
  \end{align*}
  Since the sum of the five left-hand sides equals the sum of the five right-hand sides, these five inequalities cannot be simultaneously satisfied.
  Therefore, $P_{\rm NRUT}$ cannot have a regular   unimodular triangulation.
  \end{proof}
It was shown by Firla and Ziegler \cite{firlaZieglerHilbertBases99} (and further
studied by De Loera using his program PUNTOS) that the polytope
$P_{\rm NRUT}$ of the previous example does have a non-regular
unimodular triangulation.  Later, Ohsugi~\cite{1013.52010} showed that
$G_{\rm   NRUT}$ can be generalized to an infinite family of
integrally closed edge polytopes without regular unimodular
triangulations. Each graph in this family is obtained by successively
replacing a node of degree two with a path of length two.
\end{example}

The next theorem collects classes of graphs whose edge polytopes do have
regular unimodular triangulations. 

\begin{theorem}\label{thm:edgepolys}
  Let $G=(V,E)$ be a  finite simple graph,  possibly with loops,  and
  $P_G$ the associated edge polytope.
  \begin{compactenum}
  \item\label{thm:edgepolys:1} If $G$ is bipartite, then all
     triangulations of $P_G$  are unimodular. Further,  $P_G$  has a
    quadratic  triangulation if and only  if  every minimal cycle in $G$
    has length four.
  \item\label{thm:edgepolys:2} If $P_G$  is simple, but not a simplex,
    then $P_G$ has a quadratic triangulation. 
  \item\label{thm:edgepolys:3}    If $G$ does  not   contain a pair of
    disjoint  odd  cycles, then any  regular triangulation  of $P_G$ is
    unimodular.
  \end{compactenum}
\end{theorem}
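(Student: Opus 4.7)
The plan is to treat the three parts separately. For part~(\ref{thm:edgepolys:1}), total unimodularity is immediate from Lemma~\ref{lemma:edge-poly}(\ref{lemma:bipartite-edge-poly:bipartite}): when $G$ is bipartite, $P_G$ is totally unimodular, so every maximal simplex in any triangulation has volume one. For the ``quadratic'' equivalence I would combine circuit analysis with Theorem~\ref{thm:Groebner}. If $G$ contains a chordless cycle $C=v_1v_2\cdots v_{2k}$ of length $2k\ge 6$, then the $2k$ edges of $C$ form the vertex set of a face $F$ of $P_G$ (cut out by the inequalities that separate $C$ from the rest of $G$) which is itself a circuit of dimension $2k-2$, with unique affine dependence given by alternating $\pm 1$ signs around $C$. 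Since both $\configuration^+$ and $\configuration^-$ then contain $k\ge 3$ elements, Lemma~\ref{lemma:circuit}(1) shows that neither triangulation of $F$ is flag, and no triangulation of $P_G$ is quadratic. Conversely, if every chordless cycle has length $4$, one views $P_G$ as a subpolytope of $\Delta^{|X|-1}\times\Delta^{|Y|-1}$ for the bipartition $V=X\sqcup Y$ and restricts a staircase triangulation of this product (with respect to chosen total orderings of $X$ and $Y$) to $P_G$; the result is a regular unimodular triangulation whose minimal non-faces are exactly the pairs of edges forming $4$-cycles, hence flag.

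For part~(\ref{thm:edgepolys:3}), by Theorem~\ref{thm:Groebner} a regular triangulation is unimodular iff the associated initial ideal is squarefree; this reduces to showing that every primitive circuit in the vertex configuration of $P_G$ has all coefficients $\pm 1$, so that by Lemma~\ref{lemma:circuit}(2) both of its induced triangulations are unimodular and no non-squarefree contribution to the initial ideal can arise. An affine dependence $\sum_{uv\in E}\lambda_{uv}(\ve_u+\ve_v)=0$ among the edges of $G$ corresponds to a signed closed-walk decomposition in $G$, and its primitive instances come from either (a) even cycles of $G$, all coefficients $\pm 1$; or (b) two vertex-disjoint odd cycles joined by a path, where the path edges receive coefficient $\pm 2$. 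The hypothesis of part~(\ref{thm:edgepolys:3}) eliminates case (b), so every circuit is $\pm 1$-unimodular and every regular triangulation of $P_G$ is unimodular.

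For part~(\ref{thm:edgepolys:2}) I would exploit the restrictions imposed by simplicity of $P_G$: each vertex of $P_G$ lies in exactly $\dim P_G$ facets, which heavily constrains how edges of $G$ may share endpoints. The graphs $G$ for which $P_G$ is simple but not a simplex form a short list (cycles and a handful of small exceptional cases), and for each one a quadratic triangulation can be built directly, or obtained by assembling known-quadratic pieces via the product, join, and chimney constructions of Sections~\ref{sec:joins-and-products} and~\ref{sec:chimney}.

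The main obstacle is likely the converse direction in part~(\ref{thm:edgepolys:1}): one must verify that the restricted staircase triangulation is really a full lattice triangulation of $P_G$ (using every lattice point) and that its minimal non-faces are all of size two---any larger potential non-face would correspond to a chordless cycle of length $\ge 6$, whose absence is precisely the standing hypothesis. In part~(\ref{thm:edgepolys:3}) the delicate point is justifying that cases (a) and (b) exhaust all primitive circuits of the edge configuration, which I would carry out by an inductive peeling argument on the support of a kernel vector of the incidence matrix, recursively removing a shortest cycle from the underlying edge multiset.
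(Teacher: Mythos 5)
Your part~(\ref{thm:edgepolys:3}) rests on an invalid inference: knowing that every circuit of the edge configuration has coefficients $\pm1$ does not imply that every regular triangulation is unimodular. Circuits do not control the volumes of full-dimensional simplices---an affinely independent configuration has no circuits at all, yet its unique (regular) triangulation can be a non-unimodular simplex, as for Reeve's tetrahedra---and the initial ideal $\lead_\vomega \ideal_P$ is not generated by initial terms of circuit binomials, so ``no non-squarefree contribution can arise'' does not follow from your case analysis; the binomials occurring in a reduced Gr\"obner basis are primitive (Graver) elements, which for graphs need not be circuits. The paper's argument is both shorter and stronger: by Lemma~\ref{lemma:edge-poly}(3), a full-dimensional simplex spanned by lattice points of $P_G$ corresponds to a spanning subgraph whose $c$ components each contain a unique odd cycle, and its lattice volume is $2^{c-1}$; the absence of two disjoint odd cycles forces $c=1$, so \emph{every} full-dimensional lattice simplex is unimodular and hence every triangulation (not only the regular ones) is unimodular. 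Your classification of circuits is correct, but it is not the invariant that controls unimodularity here.

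In part~(\ref{thm:edgepolys:1}) your necessity argument matches the paper's (a chordless $2k$-cycle gives a circuit face with $k\ge 3$ points on each side, so no triangulation of that face is flag by Lemma~\ref{lemma:circuit}), but the sufficiency construction does not work as stated: $P_G$ is a subpolytope of $\Delta^{|X|-1}\times\Delta^{|Y|-1}$ spanned by a \emph{subset} of its vertices, and the subcomplex of a staircase triangulation induced on that subset need not cover $P_G$; you would have to prove that, for a suitable ordering, the staircase simplices supported on edges of $G$ actually fill $P_G$, which is essentially the nontrivial content of the Ohsugi--Hibi result the paper cites for this direction (your description of the minimal non-faces is also off: an incomparable pair of edges need not lie in a $4$-cycle of $G$). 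In part~(\ref{thm:edgepolys:2}) the claim that simple non-simplex edge polytopes come from ``cycles and a handful of small exceptional cases'' is false---every complete bipartite graph $K_{m,n}$ already gives the simple polytope $\Delta^{m-1}\times\Delta^{n-1}$---so the proposed case-by-case verification has no basis. The paper handles this part via Ohsugi--Hibi's characterization (simple edge polytopes other than simplices are smooth) and, for loopless $G$, via Theorem~\ref{thm:empty-smooth}: a smooth polytope whose only lattice points are its vertices is a product of unimodular simplices, which has a quadratic triangulation by Proposition~\ref{prop:products}.
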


\begin{proof}
The first part of (\ref{thm:edgepolys:1}) follows from the fact that
the  incidence  matrix of a  bipartite   graph is unimodular,  and the
convex hull of the rows  does not  contain  $\0$. Necessity in the second 
part comes from the fact that an induced cycle of length $2k$ in $G$ 
produces a face in $P_G$ containing a unique circuit, and the circuit
has $k$ positive and $k$ negative elements. Neither of the two triangulations
of such a circuit is flag (Lemma~\ref{lemma:circuit}). For sufficiency of the
second part we refer to Hibi and Ohsugi in \cite{0916.05046}.

(\ref{thm:edgepolys:2}) follows from work of Hibi and
Ohsugi~\cite[combining Prop.\ 1.1, Thms.\ 1.8 and 2.1]{Ohsugi2008}. In
the same paper, they also characterize which graphs $G$ give a simple
edge polytope other than a simplex. Further, they prove that all such
polytopes are smooth~\cite[Thm.\ 1.8]{Ohsugi2008}.  
Theorem~\ref{thm:empty-smooth} below provides an alternative proof of part (\ref{thm:edgepolys:2})
for the case where $G$ has no loops. 

(\ref{thm:edgepolys:3}) directly follows from Part~\ref{lemma:bipartite-edge-poly:simplex} of Lemma~\ref{lemma:edge-poly}.
\end{proof}

\medskip

Our last result in this section is from \cite{1027.52008}, where Ohsugi characterizes which graph polytopes are compressed.
We call $G$  an \emph{odd   cycle graph}
if $G$ has at least two disjoint odd cycles and satisfies the odd
cycle condition of Theorem~\ref{thm:OH:graphpolytopes}. 
Given two disjoint odd cycles $C$ and $C'$ in a graph $G$ we define
\begin{align*}
  S_{C'}(C):=\left\{v\in V(C)\;\bigg\vert\;%
    \genfrac{}{}{0pt}{}%
    {\text{$v$ is incident to a chord of C}}%
    {\text{or a bridge between $C$ and $C'$}}%
  \right\}\,.
\end{align*}
$S_{C'}(C)$ decomposes  $C$ into paths. Let  $s_{C'}(C)$ be the number
of paths of odd length in this decomposition. 

\begin{theorem}[\protect{\cite[Thm.\ 4.1]{1027.52008}}]
\label{thm:edgepolys:4} 
$P_G$ is compressed if and only if 
$s_{C'}(C)>1$ for all disjoint pairs of odd cycles $C,C'$ in $G$.
\end{theorem}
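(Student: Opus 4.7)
The plan is to apply Theorem~\ref{thm:paco}, which characterizes compressed polytopes as exactly those having width $1$ with respect to every facet-defining hyperplane. The proof thus reduces to (i) enumerating the facets of $P_G$ and (ii) checking the width condition on each.

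Since a compressed polytope is in particular integrally closed, Theorem~\ref{thm:OH:graphpolytopes} lets us assume from the outset that $G$ satisfies the odd cycle condition (otherwise both sides of the biconditional fail, since a pair of disjoint odd cycles with no bridge between them automatically satisfies $s_{C'}(C) = 0$). Under the odd cycle hypothesis, the facets of $P_G$ are known, following Ohsugi and Hibi, to fall into two families: \emph{independent-set facets}, whose defining inequalities have the form $\sum_{v \in W} x_v \ge 0$ for certain independent sets $W \subseteq V(G)$; and \emph{cycle-pair facets}, one for each disjoint pair $(C, C')$ of odd cycles in $G$, whose defining linear form is built from $C$, $C'$, the chords of each, and the bridges between them. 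Writing down this facet description precisely is the first concrete step.

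Independent-set facets are easy: since $W$ is independent, every edge of $G$ meets $W$ in at most one endpoint, so $\sum_{v \in W} x_v$ takes only the values $0$ and $1$ on the columns of the incidence matrix, giving width $1$ automatically. The substantive computation is the width along a cycle-pair facet. I would evaluate the defining linear form on the incidence vector of every edge, splitting cases according to whether the edge lies inside $C$, inside $C'$, in a bridge between them, in a chord, or outside $C \cup C'$. The key observation is that the contributions from edges in the $S_{C'}(C)$-decomposition of $C$ are controlled by the parity of the path they occupy: a path of odd length contributes exactly one unit to the width, while an even-length path contributes zero (and symmetrically for the decomposition of $C'$ by $S_C(C')$). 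Careful bookkeeping then shows that the width along the $(C,C')$-facet equals $1$ precisely when the decomposition produces at least two odd paths, i.e.\ when $s_{C'}(C) > 1$.

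The main obstacle is extracting the explicit form of a cycle-pair facet inequality and then verifying the parity bookkeeping above; both pieces are combinatorial but intricate, and the delicate point is to check that a single odd path is already enough to spoil width $1$ while two odd paths suffice to restore it. Granted these, both implications follow. For the ``only if'' direction, a disjoint pair with $s_{C'}(C) \le 1$ exhibits two edges whose incidence vectors differ by at least $2$ in value on the defining functional, so $P_G$ has width $\ge 2$ with respect to that facet and fails to be compressed. For the ``if'' direction, every cycle-pair facet has width $1$ by the parity analysis, every independent-set facet does too by the argument above, and Theorem~\ref{thm:paco} delivers compressedness.
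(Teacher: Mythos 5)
Your overall strategy---reduce compressedness to the width-one criterion of Theorem~\ref{thm:paco} and verify the width facet by facet---is legitimate in principle, and in fact the paper gives no proof of this statement at all (it is quoted from Ohsugi~\cite{1027.52008}), so a complete argument along these lines would be genuinely new content here. But your proposal rests on a facet description of $P_G$ that is not correct, and the incorrect part is exactly where you place the entire substance of the proof. Edge polytopes do not have ``cycle-pair facets, one for each disjoint pair $(C,C')$ of odd cycles.'' Concretely, for $G=K_6$ one has $P_{K_6}=\conv\{\ve_i+\ve_j : i\ne j\}=[0,1]^6\cap\{\textstyle\sum_i x_i=2\}$, whose only facets are $x_i\ge 0$ and $x_i\le 1$, while $K_6$ contains many disjoint pairs of triangles; so no facet family can be indexed by such pairs. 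Likewise, for an independent set $W$ with $|W|\ge 2$ the inequality $\sum_{v\in W}x_v\ge 0$ is a nonnegative combination of the coordinate inequalities and is essentially never a facet, so your ``easy'' family is not the right one either. The actual facet description (due to Ohsugi and Hibi) consists of coordinate facets $x_v\ge 0$ for suitable vertices $v$ together with inequalities $\sum_{i\in T}x_i\le \sum_{j\in N(T)}x_j$ for suitable (``fundamental'') independent sets $T$. Since your central parity bookkeeping is carried out on facets that do not exist, the core of the argument is missing, not merely sketched.

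Two further issues would remain even with the correct facet list in hand. First, width must be measured by a normal that is primitive with respect to the lattice in the affine hull of $P_G$ (which lies in $\{\sum_v x_v=2\}$): for instance, when $T\cup N(T)=V$ the functional $\sum_{j\in N(T)}x_j-\sum_{i\in T}x_i$ equals $2-2\sum_{i\in T}x_i$ on that hyperplane and is twice a primitive functional, so naively evaluating the written inequality on the edge vectors overestimates the width; your proposal never addresses this normalization. Second, for a facet of the form $\sum_{i\in T}x_i\le\sum_{j\in N(T)}x_j$ the width is governed by whether some edge of $G$ has both endpoints in $N(T)$ (such an edge evaluates to $2$), not by a parity count along a pair of cycles. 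The substantial work in Ohsugi's theorem is precisely to translate between this facet-level condition and the cycle condition: from a fundamental $T$ with an edge inside $N(T)$ one must produce a disjoint pair of odd cycles with $s_{C'}(C)\le 1$ (using the odd cycle guaranteed in a component of $V\setminus(T\cup N(T))$ and an odd cycle through the offending edge), and conversely from a pair with $s_{C'}(C)\le 1$ one must build such a $T$. That equivalence---where bridges, chords and path parities genuinely enter---is exactly the step your proposal compresses into the unsupported assertion that they assemble into a facet normal, so as it stands the proposal does not establish the theorem.
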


Observe that the condition in the statement implies that $G$ satisfies
the odd cycle condition. Indeed, if $G$ has an induced subgraph
consisting of two cycles $C$ and $C'$, then $s_{C'}(C)$ equals
one. The same paper also gives a complete characterization of all
graphs $G$ whose edge polytope has a regular unimodular pulling
triangulation~\cite[Thm.\ 4.4]{1027.52008}.  Moreover, it asserts that
for a simple odd cycle graph, the existence of a lifting function
defining a regular unimodular triangulation can be tested by checking
that a certain linearly defined subspace of edge space is
non-empty. The linear inequalities only depend on bridges and chords
in pairs of odd cycles~\cite[Thm.\ 3.5]{1027.52008}.

\subsection{Other Graph Polytopes}
\label{sec:graph-polytopes}

\subsubsection{Flow Polytopes} \label{sec:flow-polytopes}

Let $G=(V,A)$ be a directed graph with $n$ vertices and $m$ edges. The
incidence matrix $D_G$ of $G$ is the $(n\times m)$-matrix with a $1$
at position $(i,j)$ if edge $j$ is directed towards $i$, $-1$ if
$j$ is directed away from $i$, and $0$ otherwise.  Let $\va\in\Z^n$
be a \emph{demand vector} (i.e.\ some integer number for each vertex)
and $\vl,\vu\in (\Z\cup\{\infty\})^m$ be upper and lower bounds for the
flow on each edge.  The \emph{flow polytope} corresponding to the
digraph $D$ with demand $\va$ and bounds $\vl, \vu$ is
\begin{align}\label{eq:transport}
  F_{D,\va,\vl,\vu}:=\{\vx\in\R^m\mid  D_G\cdot \vx=\va, \;  \vl\le \vx\le
  \vu\}.
\end{align}
The dimension of $F_{D,\va,\vl,\vu}$ is at most $m-n+k$, where $k$ is
the number of connected components of the undirected graph
corresponding to $D$.  In the following we assume that
$\sum_{i=1}^na_i= 0$, as otherwise $F_{D,\va,\vl,\vu}$ is empty.
Roughly, a flow polytope is the set of all assignments of a flow to
all edges such that the upper and lower bounds are respected and at
each vertex the incoming and outgoing flow differ by the demand at
that vertex.

\emph{Transportation polytopes} are the class of flow polytopes for which $D=\vec\K_{n,m}$,
$\va=(-\vr,\vc)$, $\vl=\0$ and $\vu=\mathbf\infty$, where $\vec\K_{n,m}$
is the complete bipartite directed graph on $m$ and $n$ vertices.   Fulkerson's
integral    flow theorem    (see  e.g.\ \cite{Schrijver})
induces a path  decomposition of any  integral flow, which proves  the
following proposition.
\begin{proposition}
  Flow polytopes are lattice polytopes.
\end{proposition}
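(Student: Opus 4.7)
The plan is to prove integrality of every vertex of $F_{D,\va,\vl,\vu}$, which is equivalent to $F_{D,\va,\vl,\vu}$ being a lattice polytope. The key tool is the classical fact that the directed incidence matrix $D_G$ is totally unimodular. This can be proved by induction on the size of a square submatrix $M$: each column of $M$ has at most one $+1$ and one $-1$ (inherited from $D_G$). If some column has a single nonzero, expand along it to invoke the induction hypothesis; if every column has two opposite nonzeros, then summing all rows of $M$ gives zero, so $\det M = 0$.

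Given this, I would invoke the standard linear programming fact that at any vertex $\vx^*$ of $F_{D,\va,\vl,\vu}$, exactly $m$ linearly independent constraints are active. These split into the flow-conservation equations $D_G\vx = \va$ (contributing at most $n$ independent rows) together with bound constraints of the form $x_j = l_j$ or $x_j = u_j$. After discarding redundant rows of $D_G$, the active system is a square invertible system whose coefficient matrix is a submatrix of $\bigl[\, D_G^\top \mid I \mid -I\,\bigr]^\top$, which is still totally unimodular. The right-hand side is integer (a subvector of $\va$, $\vl$, or $\vu$), so Cramer's rule yields $\vx^* \in \Z^m$. The presence of coordinates with $u_i = \infty$ causes no trouble since those constraints are never active and hence never appear in the active system.

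Alternatively, following the hint more literally via Fulkerson's path decomposition, I would argue by contradiction: if $\vx^*$ were a fractional vertex, write $\vx^* = \lfloor \vx^* \rfloor + \vy$ with $\vy \in [0,1)^m$ and integer part $\lfloor\vx^*\rfloor$. Then $\vy$ satisfies $D_G \vy = \va - D_G\lfloor\vx^*\rfloor \in \Z^n$, so $\vy$ is itself a flow with integral demands. Let $S \subseteq A$ be its support. If the underlying undirected graph on $S$ were a forest, an iterative leaf-pruning argument using flow conservation at vertices of degree one in $S$ would force some $y_i \in \Z$, contradicting $0 < y_i < 1$. Hence $S$ contains an undirected cycle $C$; pushing flow $\pm\varepsilon$ around $C$ (with signs compatible with arc orientations) keeps us in $F_{D,\va,\vl,\vu}$ for all sufficiently small $\varepsilon > 0$, producing a nontrivial line segment through $\vx^*$ and contradicting its vertex status.

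The main obstacle is conceptual rather than technical: one must set things up so that the infinite upper bounds and the possibly non-full-rank incidence matrix $D_G$ do not break either argument. Both difficulties are handled by restricting attention to the active constraints at $\vx^*$, as both arguments above naturally do.
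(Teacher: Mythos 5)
Your proposal is correct, and both of your routes go through: the total-unimodularity-plus-Cramer argument and the direct vertex argument (the fractional support is either a forest, killed by leaf pruning against the integral demands, or contains an undirected cycle along which a $\pm\varepsilon$ perturbation contradicts extremality). The paper itself does neither computation: it disposes of the proposition in one sentence by citing Fulkerson's integral flow theorem (via Schrijver), whose path/cycle decomposition of flows with integral demands and bounds yields integrality of the vertices. Your first argument is essentially the standard proof sitting behind that citation --- the directed incidence matrix is totally unimodular, appending unit rows for the box constraints preserves total unimodularity, and an independent active square subsystem at a vertex is solved integrally by Cramer's rule --- while your second argument is a self-contained reproof of the network-integrality fact and is closest in spirit to the decomposition idea the paper invokes. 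What your write-up buys is a citation-free proof that also makes explicit the two points the one-liner glosses over: constraints with infinite bounds are never active, and the rank deficiency of $D_G$ is harmless once one restricts to an independent active subsystem. Three trivial touch-ups: at a vertex there are \emph{at least} $m$ linearly independent active constraints (choose $m$ of them); in the determinant induction an all-zero column should also be mentioned (it gives determinant $0$ outright); and with infinite bounds $F_{D,\va,\vl,\vu}$ may be an unbounded polyhedron rather than a polytope, so what one really proves (as you do) is that all its vertices are lattice points --- the paper is equally casual about this.
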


Birkhoff polytopes, denoted  $B_n$,  are another 
well known special case of transportation polytopes.
 They are the case where $\vr=\vc=\1$.  $B_n$ is
an $(n-1)$-dimensional \zo-polytope with facets $x_{ij}\ge 0$ for
$1\le i, j\le n$. This polytope can also be
defined as the convex hull of all $(n\times n)$-permutation matrices
or as the set of all $(n\times n)$-doubly stochastic matrices.

Consider the  $n=3$ case.  Geometrically, the Birkhoff polytope
$B_3$ is a direct sum of the two triangles. Viewed as
the convex hull of permutation matrices, the two triangles are given
by the odd and even permutations respectively.  These triangles are
not faces of $B_3$, and they form a circuit in $B_3$, i.e.\ a
minimally dependent set.  Hence, its toric ideal is the principal
ideal
\begin{align*}
\ideal_{B_3} =  \langle x_{123}x_{231}x_{312} - x_{132}x_{213}x_{321} \rangle.
\end{align*}
$\ideal_{B_3}$  has  two  initial  ideals,  $\langle  x_{123}x_{231}x_{312}
\rangle$ and $\langle x_{132}x_{213}x_{321} \rangle$.  This means $\ideal_{B_3}$ is
not  quadratically  generated,  and  hence,  $B_3$  does  not  have  a
quadratic triangulation. In fact,
$B_3$ and its
multiples are the only  $(3\times 3)$-transportation polytopes that fail to have a quadratic
triangulation.

\begin{theorem}[Haase, Paffenholz 07~{\cite[Thm.\ 1.5]{TransportPolys}}]\label{thm:quadTrans}
  If a $(3\times 3)$-transpor\-ta\-tion polytope $\transport$ is not a
  multiple of $B_3$ then $\transport$ has a quadratic triangulation.
\end{theorem}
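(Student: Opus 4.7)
The plan is to exploit facet-unimodularity together with a reduction-by-symmetry-and-translation to a small list of base cases. A key observation is that the $(3\times 3)$-transportation polytope $\transport$ is facet-unimodular: parametrizing it by four free coordinates $(x_{11},x_{12},x_{21},x_{22})$, the nine non-negativity inequalities $x_{ij}\ge 0$ project to normals which form a totally unimodular $4\times 9$ configuration, because the constraint matrix (the incidence matrix of $K_{3,3}$) is itself totally unimodular. Theorem~\ref{thm:hyperplanes} therefore yields a regular dicing of $\transport$ by the coordinate hyperplanes $\{x_{ij}=\ell\}$ into compressed cells, and any pulling refinement is a regular unimodular triangulation. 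The remaining content of the theorem is to show that the pulling can be chosen so that the resulting triangulation is flag whenever $\transport$ is not a dilate of $B_3$.

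To organize the argument I would first reduce to a finite list of lattice-equivalence classes. The group $S_3\times S_3\times \Z/2$ acts on the marginals $(\vr,\vc)$ by permuting rows, permuting columns, and transposing; up to this action we may assume $r_1\le r_2\le r_3$, $c_1\le c_2\le c_3$, and $r_3\le c_3$. Moreover, whenever $(\vr',\vc')\le (\vr,\vc)$ componentwise and $\operatorname{T}_{\vr'\vc'}$ contains a lattice point $M$, translation by $M$ yields a lattice equivalence $\transport\cong \operatorname{T}_{\vr-\vr',\vc-\vc'}$. Iterating such reductions (``peeling off'' rank-one transportation matrices supported on a single row or column) leaves only a finite list of irreducible normalized marginals to inspect.

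In each irreducible case, I would produce a flag triangulation refining the coordinate dicing. The unique obstruction to flagness inside a compressed facet-unimodular cell of dimension four is the $3+3$ circuit coming from the binomial
\[
x_{11}x_{22}x_{33}-x_{12}x_{23}x_{31},
\]
or its partner via transposition; by Lemma~\ref{lemma:circuit}, this is precisely what prevents $B_3$ from having a quadratic triangulation. I would show that such a circuit can appear as a minimal non-face of a dicing cell only when all six ``permutation-matrix-like'' vertices of $B_3$ coexist in the same cell, and that this rigidity forces $\vr=\vc=k\1$ for some $k$. For every other irreducible marginal, I would exhibit a pulling order (e.g.\ by a generic linear functional on the compressed cells) that avoids the circuit; alternatively, when one coordinate of $\vr$ or $\vc$ dominates the others, I would invoke Theorem~\ref{thm:chimney} to view $\transport$ as a chimney over a $(2\times 3)$-transportation polytope and pull back a quadratic triangulation from the lower-dimensional base, which is a facet-unimodular polygon or 3-polytope handled by Theorem~\ref{thm:rootA}.

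The main obstacle is the final step: the case-by-case verification that the only lattice-equivalence class of $(3\times 3)$-transportation polytopes whose dicing cells carry the $3+3$ circuit is the family $kB_3$. This is a finite combinatorial check, but it requires careful bookkeeping because the reduction to irreducible marginals leaves several non-trivial cases in which the dicing cells can be as large as $B_3$ itself, and one must argue that the global marginal structure (not just the shape of the cells) rules out the offending circuit as a face of the full triangulation.
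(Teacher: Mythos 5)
The regular unimodular part of your plan is fine (facet unimodularity plus Theorem~\ref{thm:hyperplanes}), but the flagness argument rests on a false claim. You assert that a $3+3$ circuit can occur inside a cell of the \emph{full} coordinate dicing only when $\vr=\vc=k\1$. In fact, whenever all margins are at least $1$, the transportation polytope $\operatorname{T}_{\vr-\1,\vc-\1}$ is nonempty and integral, so it contains an integer matrix $A$, and the dicing cell $\{\,\vx\in\transport : A_{ij}\le x_{ij}\le A_{ij}+1\,\}$ is a lattice translate of $B_3$; the smallest example is $\vr=\vc=(1,1,2)$ with $A$ the matrix having a single $1$ in position $(3,3)$. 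Since $B_3$ has exactly two triangulations and neither is flag (its unique circuit has three points on each side, cf.\ Lemma~\ref{lemma:circuit}), the restriction of any triangulation refining the full dicing to such a cell has a minimal non-face of size three, and that set is also a minimal non-face of the \emph{global} triangulation: its pairs are edges, and any face containing it would have its convex hull inside the cell and hence be a face of the restriction. So no refinement of the dicing by all nine hyperplane families can be flag for a large class of non-multiples of $B_3$, and your proposed finite case-check can never close along this route. A second, smaller error: the ``peeling'' step is not a lattice equivalence — translating $\transport$ by a lattice point of $\operatorname{T}_{\vr'\vc'}$ does not map it to $\operatorname{T}_{(\vr-\vr')(\vc-\vc')}$, because the nonnegativity facets $x_{ij}\ge 0$ are not preserved by the translation.

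This is exactly the difficulty the paper's proof is designed to avoid: there one dices only by the hyperplanes $x_{ij}=\ell$ for all but one pair of adjacent edges of $\vec\K_{3,3}$ (i.e.\ one omits two coordinate directions sharing a vertex), so that the resulting cells fall into a finite list of lattice isomorphism types, none of which is $B_3$, and then one exhibits a linear functional whose induced pulling order gives a flag regular unimodular triangulation of each cell, consistently on shared faces. Minimal non-faces not contained in a single cell automatically have size two, since two points strictly separated by a dicing hyperplane cannot span an edge; the real work is the cell-by-cell flagness and compatibility, which your proposal does not supply. If you want to keep a dicing-based argument you must make a comparable choice of a \emph{proper} subfamily of the coordinate hyperplanes (depending on $(\vr,\vc)$) whose cells admit flag triangulations, rather than the full dicing.
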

Ohsugi and Hibi showed that for $k\ge 2$ all multiples $kB_3$ 
have  quadratic Gr\"obner bases~\cite{0907.3253}, but their initial
ideals are not square-free.

The proof of Theorem~\ref{thm:quadTrans} uses pulling refinements of
hyperplane subdivisions. Specifically, it looks at the set of
hyperplanes obtained by fixing the variable corresponding to one edge
in the definition of $ F_{D,\va,\vl,\vu}$ \eqref{eq:transport}. The transportation polytope is then subdivided
by the hyperplanes corresponding to all but one pair of adjacent
edges in $\vec\K_{n,m}$. The result is a finite list of lattice
isomorphism types of dicing cells, and there is a linear functional
that induces a flag pulling refinement on each cell. This gives a flag
regular unimodular triangulation of the transportation polytope.

The following proposition shows that the class of transportation
polytopes that are not multiples of some $B_n$ includes all smooth
transportation polytopes.
\begin{proposition}[Haase, Paffenholz 07~{\cite[Lemma~1]{TransportPolys}}]
  For  an  $(m\times   n)$ transportation polytope  $\transport$,  the
  following are equivalent.
  \begin{compactenum} 
  \item $X_{\transport}$ is smooth.
  \item $\transport$ is smooth.
  \item $\transport$ is simple.
  \item $\sum_{i \in  I}r_i \neq \sum_{j \in  J}c_j$  for all index sets    $I   \subset [m]$,  $J     \subset  [n]$  satisfying
    $|I|\cdot|J^c|, |I^c|\cdot|J|>1$,  where for $K\subset[n]$,  $K^c$ denotes the set $[n]\setminus
  K$.
  \end{compactenum}
 
\end{proposition}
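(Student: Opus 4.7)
My plan is to establish the cycle $(1) \Leftrightarrow (2) \Rightarrow (3) \Leftrightarrow (4) \Rightarrow (2)$.

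The equivalence $(1) \Leftrightarrow (2)$ is the standard toric dictionary (cf.\ Section~\ref{sec:why:algebraic-geometry} and~\cite{FultonToric}): the projective toric variety $X_P$ is smooth precisely when $P$ is a smooth polytope, i.e., at each vertex the primitive edge vectors form a $\Z$-basis of the lattice. The implication $(2) \Rightarrow (3)$ is immediate since smoothness forces each vertex to lie on exactly $d = (m-1)(n-1)$ facets, which is simplicity.

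For $(3) \Rightarrow (2)$, the idea is to leverage total unimodularity. The polytope $\transport$ is cut out by the inequalities $\vx \ge \0$ together with the row/column-sum equalities encoded by the (directed) incidence matrix of $K_{m,n}$. As noted in Section~\ref{sec:integer-programming}, this matrix is totally unimodular, so any set of $d$ primitive facet normals that span the tangent space at a vertex is unimodular and hence forms a lattice basis of the normal lattice. At a simple vertex of $\transport$ the $d$ active inequalities are by definition linearly independent modulo the affine span, so their primitive normals dualize to a lattice basis of edge directions, giving smoothness.

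For $(3) \Leftrightarrow (4)$, the plan is the classical analysis of basic feasible solutions. Each vertex of $\transport$ corresponds to a basic feasible solution, and its support $\{(i,j): x_{ij}>0\}$ forms a subforest of $K_{m,n}$, viewed as a bipartite graph on $[m]\sqcup[n]$. Counting shows that a vertex has exactly $d = (m-1)(n-1)$ zero entries (i.e., is simple) if and only if its support is a spanning tree with $m+n-1$ edges; otherwise the support decomposes into $k\ge 2$ connected components. For a two-component degenerate vertex, the row/column partition induced by one component gives subsets $I\subsetneq [m]$ and $J\subsetneq [n]$ with $\sum_{i\in I}r_i = \sum_{j\in J}c_j$, because each component must be balanced. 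Conversely, given such a balance relation, one constructs a non-simple vertex by gluing any basic feasible solution of the sub-transportation polytope on $I\times J$ (with row sums $r|_I$, column sums $c|_J$) with one on $I^c\times J^c$; the cross-edges $I\times J^c$ and $I^c\times J$ are forced to zero, contributing $|I|\cdot|J^c|+|I^c|\cdot|J|$ tight facets on top of those from the two sub-vertices.

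The main obstacle is matching the exact nontriviality condition $|I|\cdot|J^c|,\,|I^c|\cdot|J|>1$ appearing in (4) with the actual existence of non-simple vertices: one must check which balance relations genuinely produce a vertex of $\transport$ with more than $d$ tight facets, and verify that the boundary cases excluded by the size condition either correspond to trivial balances ($I=\emptyset$ or $I=[m]$, and similarly for $J$) or produce vertices that do not contribute extra tight facets beyond $d$. The remainder is bookkeeping: enumerate the possible component partitions and verify the correspondence in both directions.
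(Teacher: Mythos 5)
The survey states this proposition only by citation to \cite[Lemma~1]{TransportPolys} and contains no proof of its own, so there is nothing in the paper to compare your route against; judging the proposal on its merits, the overall architecture is sound but one step is genuinely incomplete. The easy parts are fine: $(1)\Leftrightarrow(2)$ is the toric dictionary and $(2)\Rightarrow(3)$ is definitional, and $(3)\Rightarrow(2)$ can indeed be run through total unimodularity of the incidence matrix of $K_{m,n}$. Note, however, a slip in how you phrase it: at a simple vertex of $\transport$ the number of \emph{tight inequalities} $x_{ij}\ge 0$ may exceed $d=(m-1)(n-1)$ (take $r=(2,1)$, $c=(1,1,1)$ and the vertex with $x_{11}=0$: it is a vertex of a polygon, hence simple, but has three tight inequalities), so ``the $d$ active inequalities are linearly independent'' is false as stated; you must argue with the $d$ tight \emph{facet-defining} inequalities, or with the $d$ edge directions, and then the unimodularity argument goes through.

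The genuine gap is in $(3)\Leftrightarrow(4)$, and it is exactly the part you defer to ``bookkeeping.'' The precise thresholds $|I|\cdot|J^c|>1$ and $|I^c|\cdot|J|>1$ encode the distinction between tight inequalities and facets, so the crux is a characterization of which inequalities $x_{i_0j_0}\ge 0$ are facet-defining: for instance, in the boundary balance $r_{i_0}=\sum_{j\ne j_0}c_j$ (equivalently $c_{j_0}=\sum_{i\ne i_0}r_i$, i.e.\ $|I|\cdot|J^c|=1$) the set $\{x_{i_0j_0}=0\}\cap\transport$ is a single vertex, not a facet, and that vertex is still simple even though it has $d+1$ tight inequalities. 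Without such a lemma your forward construction does not prove non-simplicity: gluing basic solutions on $I\times J$ and $I^c\times J^c$ produces a point with at least $d+1$ tight inequalities, but you must check that it is a vertex and that strictly more than $d$ of the tight inequalities are facet-defining; your count of ``$|I|\cdot|J^c|+|I^c|\cdot|J|$ tight facets'' silently assumes every tight inequality is a facet, which fails precisely in the excluded cases. Symmetrically, the direction ``non-simple $\Rightarrow$ a balance with both products $>1$'' needs the same characterization to exclude that all extra tight inequalities at a bad vertex are of the non-facet type. So the skeleton (forest supports, balanced splits, gluing) is the right one, but the actual content of the lemma --- which balances create more than $d$ facets through a common vertex --- is missing from the argument.
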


In his MSc-Thesis, M.~Lenz~\cite{0709.3570} considered the case of smooth
$(3\times 4)$-transportation polytopes with the following result.
\begin{proposition}[Lenz 07, {\cite[Satz
    4.5.4]{0709.3570}}]\label{prop:lenz}
  Every smooth $(3\times 4)$-trans\-por\-ta\-tion polytope has a
  quadratic triangulation.
\end{proposition}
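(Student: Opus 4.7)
The plan is to extend the hyperplane-dicing-plus-flag-pulling method used by Haase and Paffenholz for $(3\times 3)$-transportation polytopes (Theorem~\ref{thm:quadTrans}) to the $(3\times 4)$ setting. A smooth $(3\times 4)$-transportation polytope $\transport$ is a $6$-dimensional simple lattice polytope in $\R^{12}$ whose facet normals form a sub-configuration of $\rootA$, so Theorem~\ref{thm:hyperplanes} implies that any slicing of $\transport$ by integer translates of its facet hyperplanes yields a regular lattice subdivision with compressed cells, and any pulling refinement of that subdivision is already regular and unimodular. The goal is to pick a dicing and a global pulling order that additionally make the resulting triangulation flag.

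First, I would subdivide $\transport$ by the hyperplanes $\{x_{ij} = k\}$ for $k \in \Z$ and for all edges $(i,j) \in [3]\times[4]$ except one fixed pair of adjacent edges, mirroring the $(3\times 3)$ construction. By total unimodularity of the bipartite incidence matrix of $\vec{\K}_{3,4}$, every dicing cell is facet-unimodular, hence compressed (Theorem~\ref{thm:paco}). A direct enumeration, which is the heart of Lenz's argument, shows that up to lattice equivalence there are only finitely many types of full-dimensional cells, each essentially parametrized by the two ``free'' adjacent coordinates left after the dicing. I would classify these types and check that each of them admits a flag pulling triangulation with respect to a suitably chosen linear functional: this is direct for product-of-simplex types (Proposition~\ref{prop:products}) and for cells cut out by $\rootA$-hyperplanes (Theorem~\ref{thm:rootA}), and the remaining low-dimensional types can be handled one-by-one using Proposition~\ref{prop:polygonKoszul} and, where useful, the pull-back construction of Section~\ref{sec:chimney}.

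The critical step is verifying that the smoothness hypothesis — equivalently, the condition $\sum_{i\in I} r_i \neq \sum_{j\in J} c_j$ for all admissible $(I,J)$ — excludes the one problematic cell type, namely cells lattice-equivalent to a dilate of the Birkhoff polytope $B_3$. By the circuit analysis of Lemma~\ref{lemma:circuit}, any such cell carries an unavoidable circuit of six vertices split $3$-against-$3$, so no triangulation of it is flag; this is exactly the obstruction that forced multiples of $B_3$ to be excluded in Theorem~\ref{thm:quadTrans}. Once every cell type is shown to admit a flag pulling and the individual pulling orders are synchronized by a single linear functional on $\transport \cap \Z^{12}$, Lemma~\ref{lemma:strongPulling} glues them into a global regular unimodular flag — hence quadratic — triangulation of $\transport$. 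The main obstacle is the case analysis: the $(3\times 4)$ geometry yields substantially more dicing cell types than the $(3\times 3)$ case, and tracing through the partial-sum conditions to rule out every potential $B_3$-subcell requires careful combinatorial bookkeeping on $\vr$ and $\vc$.
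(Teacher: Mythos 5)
Your overall strategy---dice by the hyperplanes $\{x_{ij}=k\}$ for all but one adjacent pair of edges, classify the resulting cell types, and refine by a pulling order induced by a single linear functional---is exactly the method the paper sketches for the $(3\times 3)$ case (Theorem~\ref{thm:quadTrans}) and attributes, for $(3\times 4)$, to Lenz's thesis; the paper itself offers no proof of Proposition~\ref{prop:lenz} beyond the citation of \cite[Satz 4.5.4]{0709.3570}, so there is no conflict of approach. The problem is that what you have written is a programme, not a proof: every step in which the actual content of the statement resides is deferred. You do not produce the finite list of lattice types of dicing cells for the $(3\times 4)$ dicing, you do not exhibit a linear functional whose induced pulling is simultaneously flag (and unimodular) on all of them, and you do not prove the assertion on which everything hinges, namely that smoothness---via the partial-sum condition $\sum_{i\in I}r_i\neq\sum_{j\in J}c_j$---excludes every cell on which flagness fails. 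That last claim needs real work: in the $(3\times 3)$ case the exceptional objects are the multiples of $B_3$ as whole polytopes, not as cells of a dicing, so the obstruction analysis does not transfer verbatim, and determining which cell types can actually occur inside a smooth $(3\times 4)$ transportation polytope is precisely the combinatorial bookkeeping that constitutes Lenz's Satz 4.5.4.

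Two further points would need repair even within your outline. First, your claim that each dicing cell is compressed because it is facet unimodular is not justified: Theorem~\ref{thm:paco} characterizes compressedness by width one with respect to \emph{every} facet, and dicing by only ten of the twelve coordinate directions does not guarantee this in the two free directions (for instance $x_{11}=c_1-x_{21}-x_{31}$ sweeps an interval of length two once $x_{21}$ and $x_{31}$ are each confined to unit intervals), so unimodularity and flagness of the pulling refinement must be verified per cell type rather than quoted. Second, the circuit argument of Lemma~\ref{lemma:circuit} rules out flag triangulations for a cell lattice equivalent to $B_3$ itself, but not for a dilate $kB_3$ with $k\ge 2$, whose lattice point set is strictly larger; if such dilates could occur as cells, a separate argument is required. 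The gluing step (local flag pullings on a hyperplane dicing assemble into a global flag triangulation, as in the proof of Theorem~\ref{thm:rootA}) is fine, but the proposal only becomes a proof once the per-cell case analysis is actually carried out.
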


Based on experimental evidence, Diaconis and Eriksson
conjectured that the toric ideal
of the Birkhoff polytope is generated in degree
three~\cite[Conj.~7]{DiaconisEriksson}. This conjecture was later
confirmed by Yamaguchi, Ogawa, and Takemura~\cite{BirkhoffDegreeThree}. Their theorem leaves room for
speculation about degrees of Gr\"obner bases and minimal non-faces of
unimodular triangulations.

\subsubsection{Polytopes defined by characteristic vectors of subsets
  of vertices and edges} \label{sec:characteristic-polytopes}

Let $G=(V,E)$  be a finite simple  graph with vertex set  $V$ and edge
set $E$.   For any subset $W\subset  V$ the characteristic
vector $\vchi^W\in\R^V$  is given by $\chi^W_i=1$ if  $i\in W$ and
$0$  otherwise.   For a  subset  $\calA\subseteq  2^V$  
the\emph{      characteristic      polytope}  is    $P_\calA:=\conv(\vchi_A\mid
A\in\calA)$. We address such polytopes for various special choices of
$\calA$.

Given a graph $G=(V,E)$  \emph{vertex cover} is a  subset $C\subseteq E$ such that each $v\in
V$  is incident  to at  least one  $e\in C$.   A vertex  cover  $C$ is
\emph{minimal}  if no proper  subset of  $C$ is  a vertex  cover.  Let
$\calVC$ denote the set of all minimal vertex covers.
\begin{theorem}[Herzog, Hibi, Ohsugi '09 \cite{Herzog2009}]
  Let $G$ be  a bipartite graph.  If all minimal  vertex covers of $G$
  have  the  same  size,  then  $P_\calVC$ has  a  quadratic triangulation.
\end{theorem}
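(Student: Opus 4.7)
My plan is to identify $P_\calVC$ (up to lattice equivalence) with an order polytope $O(\overline{P}_M)$ of a poset $\overline{P}_M$ built from a suitable perfect matching of $G$, and then invoke Theorem~\ref{thm:rootA}, which guarantees a quadratic (regular unimodular flag) triangulation of every order polytope.

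First, I verify that $G$ has a perfect matching. We may assume $G$ has no isolated vertices, so both parts $X$ and $Y$ of the bipartition are themselves minimal vertex covers. Hence $|X|=|Y|=k$, where $k$ is the common size of all minimal vertex covers. By König's theorem the minimum vertex cover size equals the maximum matching size, so $G$ has a matching of size $k=|X|=|Y|$, that is, a perfect matching $M=\{x_iy_i : i\in[n]\}$ with $n=k$. Any minimal vertex cover $C$ is then a minimum vertex cover, and must contain exactly one endpoint of each matching edge. Encoding $C$ by $S(C):=\{i : y_i\in C\}\subseteq [n]$, the requirement that $C$ cover every non-matching edge $\{x_i,y_j\}$ becomes the implication $i\in S(C) \Rightarrow j \in S(C)$. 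In other words, minimal vertex covers correspond bijectively to filters of the binary relation
\[
i \preceq j \ :\iff\ \{x_i,y_j\}\in E
\]
on $[n]$, and both $\{x_i: i\notin S\}\cup\{y_i : i\in S\}$ is automatically a minimum (hence minimal) vertex cover for any such filter $S$.

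Next I need $\preceq$ to be a preorder. Reflexivity is immediate. For transitivity, invoke Ravindra's structure theorem for well-covered bipartite graphs: such a graph admits a perfect matching $M$ with the property that, for every $\{x_i,y_i\}\in M$, the induced subgraph on $N(x_i)\cup N(y_i)$ is a complete bipartite graph. This is exactly the statement that $\{x_k,y_i\}\in E$ and $\{x_i,y_j\}\in E$ force $\{x_k,y_j\}\in E$, i.e., $k\preceq i$ and $i\preceq j$ imply $k\preceq j$. Choosing $M$ of this form and quotienting by the symmetric part $\sim$ of $\preceq$ yields a genuine poset $\overline{P}_M$ on $[n]/\sim$, whose filters biject with those of $\preceq$. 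The coordinate projection $\chi^C \mapsto \chi^{C\cap Y}$ is a lattice equivalence from $P_\calVC$ onto $\conv\{\chi^S : S \text{ a filter of } \preceq\}\subset\R^{[n]}$ (the inverse reads off $\chi^C_{x_i}=1-\chi^C_{y_i}$ along matched pairs), and collapsing each $\sim$-class to a single coordinate gives a further lattice equivalence onto $O(\overline{P}_M)$. Applying Theorem~\ref{thm:rootA} then yields the desired quadratic triangulation of $P_\calVC$.

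The main obstacle is the transitivity step, which rests on the nontrivial structure theorem of Ravindra and demands a careful choice of the perfect matching: for a generic perfect matching of a well-covered bipartite graph, the complete-bipartite neighborhood property may fail, so one must argue that the well-covered assumption furnishes \emph{some} $M$ for which it holds. An alternative route would be to bypass Ravindra entirely by checking directly that $P_\calVC$ equals the intersection of $[0,1]^V$ with the affine hull of its vertices and that its pulling triangulations are flag, but the order polytope route appears cleanest and reuses the machinery already developed in Section~\ref{sec:type-A-facets}.
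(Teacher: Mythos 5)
Your proof is correct. Since the survey does not include a proof of this statement (it is quoted from Herzog--Hibi--Ohsugi), there is no internal argument to compare against; your identification of $P_\calVC$ with an order polytope, followed by Theorem~\ref{thm:rootA}, is essentially the structural content of the cited result, where the same bijection (minimal vertex covers $\leftrightarrow$ filters of a poset supported on a perfect matching) appears in the guise of a sublattice of the Boolean lattice. Note that you are reading ``vertex cover'' in the standard sense $C\subseteq V$, which is what the cited theorem intends. The individual steps check out: discarding isolated vertices is harmless; $X$ and $Y$ are then minimal covers, so K\"onig gives a perfect matching $M=\{x_iy_i\}$; every minimal cover has size $n$ and hence meets each matching edge in exactly one vertex; the covering condition for non-matching edges is exactly the filter condition for $i\preceq j\iff x_iy_j\in E$; and both coordinate maps you use are lattice-preserving on the relevant affine subspaces (the first because $P_\calVC$ lies in $\{u_{x_i}+u_{y_i}=1\ \forall i\}$, the second because coordinates within a $\sim$-class agree on all vertices), so $P_\calVC$ is lattice equivalent to the full-dimensional order polytope of the quotient poset.

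The one inessential detour is the appeal to Ravindra's theorem: the ``main obstacle'' you flag is not an obstacle, because the complete-bipartite neighborhood property holds for \emph{every} perfect matching of an unmixed bipartite graph and has a short direct proof. Indeed, if $x_k\in N(y_i)$ and $y_j\in N(x_i)$ but $x_ky_j\notin E$, extend the independent set $\{x_k,y_j\}$ to a maximal independent set $I$; then $x_i\notin I$ and $y_i\notin I$, so $I$ meets the $n$ matching edges in at most $n-1$ vertices and $|I|\le n-1$, contradicting that all maximal independent sets have size $n=|X|$ (complementation turns the equal-size hypothesis on minimal covers into well-coveredness, and $X$ is a maximal independent set once isolated vertices are removed). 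Substituting this three-line argument for the citation makes your proof self-contained apart from K\"onig's theorem, and removes the need to choose a special matching.
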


A \emph{stable set}  in $G$ is a subset $S$ of  the vertices such that
no two  vertices in  $S$ are connected  by an  edge.  
   For the set $\calS$ of all stable
sets  in   $G$, the  polytope  $P_\calS$  is   commonly  called  the
\emph{stable set polytope} of $G$.
A \emph{clique} in  a finite simple graph is  an induced subgraph that
is complete (i.e.\ every pair of vertices is  connected by an  edge).  The
\emph{chromatic number}  of a graph  is the minimal number  of colors
needed to  assign a color to  each vertex so that  no two
adjacent vertices have the same color.  Clearly, the chromatic number
of a  graph is at least the  cardinality of the maximal  clique in the
graph.   When the  chromatic
number of  every subgraph in a  finite  simple  graph equals the cardinality  of its maximal
clique, the graph is called  \emph{perfect}.

\begin{theorem}
The stable set polytope of a perfect graph is compressed.
\end{theorem}

\begin{proof}
Let $P$ be the stable set polytope of a graph $G=(V,E)$.
It is obvious that the following inequalities are feasible on $P$:
\[
\begin{cases}
x_v\ge 0 & \forall v\in V\\
\sum_{v\in C} x_v \le 1 & \forall \ \text{clique } C.
\end{cases}
\]

Chv\'atal~\cite[Thm.\ 3.1]{Chvatal75} proved that these inequalities actually define $P$ if (and only if) $G$ is perfect. Now, since clearly at every vertex of $P$ we have
\[
\begin{cases}
x_v\le 1 & \forall v\in V\\
\sum_{v\in  C} x_v \ge 0 & \forall \ \text{clique } C,
\end{cases}
\]
$P$ has width one with respect to every facet. In particular, by Theorem~\ref{thm:paco},
$P$ is compressed.
\end{proof}

As is somehow apparent in the above proof, for perfect graphs there is a duality between cliques (face inequalities) and stable sets (vertices). Since cliques in a graph $G$ correspond to stable sets in the complement graph $\overline G$, \cite[Thm.\ 3.1]{Chvatal75} is in fact a polyhedral proof of Lov\'asz theorem that the complement of a perfect graph is perfect.

One example of perfect graph is the comparability graph of any poset $(X,\preccurlyeq)$. Its cliques and stable sets are, respectively, the chains and antichains of $X$. That is, the corresponding stable set polytope has as vertices the characteristic vectors of antichains and as defining inequalities $\sum_{i\in  C} x_i \le 1$ for each chain $C$ (together with $x_i\ge 0$ for every $i$). This polytope was studied by Stanley~\cite{StanleyOrderPolytope}, who constructed a particular quadratic triangulation of it that piecewise linearly bijects to the dicing triangulation of the order polytope, mentioned in Section~\ref{sec:type-A-facets}. As a corollary, the order polytope and the chain polytope of $X$ have the same Ehrhart polynomial (in particular, the same volume, equal to the number of linear extensions of the poset).

\begin{remark}
Order polytopes and chain polytopes have been generalized to \emph{double posets} in~\cite{1606.04938}. Here, a double poset is a triple $(X,\preccurlyeq_1, \preccurlyeq_2)$ where $\preccurlyeq_1$ and $\preccurlyeq_2$ are two partial orders on $X$. The two orders, or the double poset, are said to be \emph{compatible} if they have at least one common linear extension. The Cayley difference of two lattice polytopes $P_1, P_2\subset \R^d$ is the lattice polytope $\conv( P_1\times \{ 0 \} \cup (-P_{2})\times \{ 1
\} )$ (compare with the definitions of Cayley sum in Sections~\ref{sec:fiberproducts}
and~\ref{sec:kmwreduction}). With these preliminaries, the \emph{double order polytope} (resp. \emph{double chain polytope} of $(X,\preccurlyeq_1, \preccurlyeq_2)$ is the Cayley difference of the order polytopes (resp. of the chain polytopes) of $(X,\preccurlyeq_1)$ and $(X,\preccurlyeq_2)$. 

Corollary~4.1 and Theorem 4.3 in \cite{1606.04938} show that if 
$(X,\preccurlyeq_1, \preccurlyeq_2)$ is a compatible double poset then its double order and double chain polytopes have quadratic triangulations. Moreover, such triangulations can be constructed so that there is a piecewise linear map between them, as in the original case studied by Stanley. Corollary 4.7  in \cite{1606.04938} gives a summation formula for the volume of these polytopes.
\end{remark}

Going back to characteristic vectors related to graphs, 
an alternate common definition takes the set of
all edges instead of the vertices as a base set, yielding
vectors $\vchi_W\in \R^E$.  Then for any subset $U\subset V$, we can
associate the subset $C_U\in E$ of all edges incident to exactly one
node from $U$, and define the \emph{cut polytope} $\cut(G)\subset\R^E$ of $G$ as
 the characteristic polytope given by the set $\calA:=\{C_U\mid U\subset
V\}$.
\begin{theorem}[Sullivant 2004, \cite{SullivantCompressed}; see also \cite{Sturmfels2007}]
  Let $G=(V,E)$ be a finite simple undirected graph. The cut polytope
  $\cut(G)$ of $G$ is compressed if and only if
  \begin{enumerate}
  \item $G$ has no $K_5$-minor, and
  \item all induced cycles in $G$ have length at most four.
  \end{enumerate}
\end{theorem}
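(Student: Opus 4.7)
The plan is to apply Theorem~\ref{thm:paco}, which asserts that $\cut(G)$ is compressed if and only if every facet has lattice width one. The ambient lattice here is the cut lattice $L=\Z\{\chi_{C_U}:U\subseteq V\}$ affinely spanned by the vertices of $\cut(G)$, with widths computed via primitive normals in $L^*$. A basic observation used throughout is that for any cycle $C$ of $G$, the $\pm 1$ vector $\chi_F-\chi_{E(C)\setminus F}$ supported on $E(C)$ pairs to an even integer with every cut vector (cuts meet cycles in even-size subsets); hence its half lies in $L^*$ and is in fact the primitive normal to any facet defined by the corresponding odd cycle inequality.

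For sufficiency, suppose $G$ has no $K_5$-minor and every induced cycle has length at most $4$. By the classical theorem of Barahona and Mahjoub, the facets of $\cut(G)$ are precisely the trivial inequalities $0\le x_e\le 1$ together with the odd cycle inequalities
\[
\sum_{e\in F} x_e - \sum_{e\in E(C)\setminus F} x_e \le |F|-1
\]
for chordless cycles $C\subseteq G$ and odd subsets $F\subsetneq E(C)$. The trivial inequalities have lattice width one immediately. For a cycle inequality on an induced cycle of length $k\in\{3,4\}$, I compute directly on the $2^{k-1}$ even-parity cut vertices of $\cut(C_k)$ that the functional attains a range of exactly $2$ in $\Z^{E(G)}$; combined with the halving of the primitive normal in $L^*$ noted above, this yields lattice width $2/2=1$.

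For necessity, I use that compressedness is hereditary under minor operations. Edge contraction $G\leadsto G/e$ identifies $\cut(G/e)$ (after forgetting the $e$-coordinate) with the face $\{x_e=0\}$ of $\cut(G)$; faces of affine cube sections remain affine cube sections, so by characterization~(3) of Theorem~\ref{thm:paco}, faces of compressed polytopes are compressed. Edge or vertex deletion corresponds to a coordinate projection of $\cut(G)$ onto $\cut(G-e)$ resp.\ $\cut(G-v)$; coordinate projections of affine cube sections can be re-expressed as affine cube sections (a projected-out constrained coordinate can be reintroduced as a new equation in a cube of the same dimension), so compressedness is again preserved. Thus if $\cut(G)$ is compressed, so is $\cut(H)$ for every minor $H$. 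It remains to exhibit non-compressedness of two obstruction minors. If $G$ has an induced cycle of length $k\ge 5$, then $C_k$ is a minor of $G$ and the same cycle-inequality computation yields lattice width $\lceil (k-1)/2\rceil\ge 2$ on $\cut(C_k)$. If $G$ has $K_5$ as a minor, I verify directly by computing on the $16$ cuts of $K_5$ that the pentagonal hypermetric inequality $\sum_{i<j} b_i b_j\, x_{ij}\le 0$ with $b=(1,1,1,-1,-1)$ is a facet attaining values in $\{0,-2,-6\}$, so its lattice width in $L_\cut(K_5)^*$ is $6/2=3$.

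The main obstacle in this outline is the invocation of the Barahona--Mahjoub theorem characterizing the facets of $\cut(G)$ for $K_5$-minor-free graphs: this is a substantial structural result relying on graph-minor combinatorics that I would have to take as a black box. The remaining pieces---the lattice-width computations on small cycles and on $K_5$, and the hereditary behavior of compressedness under face and coordinate-projection operations---are direct and self-contained.
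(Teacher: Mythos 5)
The paper itself offers no proof of this theorem---it is quoted from Sullivant \cite{SullivantCompressed}---so the comparison is with that source. Your sufficiency direction is essentially the standard argument: use the Barahona--Mahjoub description of $\cut(G)$ for $K_5$-minor-free graphs by trivial and (chordless) cycle inequalities, observe that halved cycle functionals take integer values on cuts and hence lie in the dual of the cut lattice, and verify width one facet by facet via Theorem~\ref{thm:paco}. Granting Barahona--Mahjoub as a black box, that half is sound (your care in working with the lattice affinely spanned by the cut vectors, rather than $\Z^E$, is exactly what the statement requires).

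The necessity direction has a genuine gap: the claim that compressedness survives coordinate projections, which is how you handle edge and vertex deletions when realizing the $K_5$ minor and the long cycle, is false. Every $0/1$ polytope $P=\conv\{\vv_1,\dots,\vv_N\}\subset\R^d$ is the coordinate projection of the $0/1$ simplex $\conv\{(\ve_1,\vv_1),\dots,(\ve_N,\vv_N)\}\subset\R^{N+d}$; a $0/1$ simplex is trivially compressed with respect to the affine lattice spanned by its vertices (it is unimodular there), and the projection maps that lattice onto the lattice spanned by the vertices of $P$. So if your claim held, every $0/1$ polytope would be compressed with respect to its vertex lattice---but $\cut(C_5)$ is not, by exactly your own computation (the pentagon inequality $\sum_{e\in C_5}x_e\le 4$ is a facet whose primitive normal in the dual cut lattice is half the all-ones functional and has width $2$). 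The flaw in your parenthetical justification is that a coordinate projection of $(\text{cube})\cap(\text{affine space})$ is in general not again of that form; re-expressing it as a cube section requires enlarging the ambient cube, and that is precisely what can destroy the width-one property. The contraction step is fine (faces of compressed polytopes are compressed), and the long-induced-cycle obstruction does not need minors at all: a chordless cycle inequality is facet-defining for $\cut(G)$ itself (Barahona--Mahjoub), so for an induced cycle of length $\ge 5$ you can compute a width $\ge 2$ directly in $G$. The $K_5$-minor obstruction, however, genuinely involves deletions, so you must replace the projection claim by an actual lifting statement for cut-polytope facets under edge/vertex deletion (as in Barahona--Mahjoub or Deza--Laurent, or Sullivant's own argument); as written, that step fails.
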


In this context, $G$ contains a
$K_5$-minor if we can find five distinct vertices together with a set
of pairwise internally vertex disjoint paths that connects each pair
of the five vertices.

In discrete optimization one is often interested in whether the
polytope  associated to a combinatorial optimization
problem is integrally closed, as this provides a way to solve integer
linear optimization problems. In this setting, integrally closed
polytopes are said to have the \emph{integer decomposition
  property}. Examples include the $s$-$t$-connector polytope $P_G$ of
a directed graph $G=(V,A)$ (Trotter, see \cite[Thm.~13.8]{MR1956924}),
the base polytope, the spanning set polytope  of a matroid and the
independent set polytope of a matroid~\cite[Cor.~42.1e]{MR1956924},
and the up and down hull of the perfect matching polytope
\cite[Cor.~20.9c, 20.11b]{MR1956924}.  Note that this is not true for
matching polytopes in general, as illustrated by the Petersen graph. 

Various polytopes associated to combinatorial optimization problems
are facet unimodular (i.e.~the matrix of facet normals is totally
unimodular). These include
$b$-transhipment polytopes~ \cite[Section 11.4]{MR1956924}, bipartite matching and bipartite perfect
matching polytopes~\cite[Section 18.1]{MR1956924}, vertex cover~\cite[Section 18.4]{MR1956924}, edge cover and stable set polytopes of bipartite graphs~\cite[Section 19.5]{MR1956924}. Hence, by
Theorem~\ref{thm:hyperplanes} each of these classes have  regular unimodular
triangulations.

\subsection{Lecture hall polytopes}
\label{sec:lecture-hall}

Euler's classic result that there are as many partitions of an integer
$n$ into odd parts as there are partitions into distinct parts can be
regarded as a ``limit'' ($d \to \infty$) of the following theorem by
Bousquet-M\'elou and Eriksson~\cite{MR1607531}: For every $n,d\in \N$ the
number of so called $d$-lecture hall partitions of $n$ is equal to the
number of partitions of $n$ into an odd and less than $2d$ number of parts.
Here, a
\emph{$d$-lecture hall partition} is a partition $\lambda \in \Z^d$
satisfying the inequalities
\[
  0 \le \frac{\lambda_1}{1} \le \frac{\lambda_2}{2} \le \ldots \le
  \frac{\lambda_d}{d} \,.
\]
As Savage says in her survey \cite{MR3534075}, ``Over the past twenty
years, lecture hall partitions have emerged as fundamental structures
in combinatorics, number theory, algebra, and geometry, leading to new
generalizations and interpretations of classical theorems and new
results''.

Already Bousquet-M\'elou and Eriksson point out that the study of
lecture hall partitions falls naturally within the theory of lattice
points in cones~\cite[Sect.~5]{MR1606188}.
For $d \ge 1$, the lecture hall simplex is defined as
\[
\lecturehallsimplex{d+1} \ := \ \conv \left[
\begin{array}{cccccc}
0 & 0 & \cdots & \cdots & 0 & 1 \\
0 & \vdots &  & \dddots & 2 & 2 \\
0 & \vdots & \dddots & \dddots & & \vdots \\[.6mm]
0 & 0 & d-1 & \cdots & \cdots & d-1 \\
0 & d & d & \cdots & \cdots & d 
\end{array}
\right]
\subset \R^d \,.
\]
It has the inequality description
\[
  \lecturehallsimplex{d+1} \ = \left\{ \vx \in \R^d \ \Big| \ 0 \le
    \frac{x_1}{1} \le \frac{x_2}{2} \le \ldots \le \frac{x_d}{d} \le 1
  \right\} \,.
\]
We use $d+1$ rather than $d$ as index since the cone over this simplex is
unimodularly equivalent to the more familiar $d+1$st lecture hall cone.
The triangulation in the following statement was communicated by C.~Haase to the authors of 
\cite{lecturehall-triangulation}, where it appears in detail. 
Beck, Braun, K\"oppe, Savage, and Zafeirakopoulos
then ask~\cite[Conj.~6.1]{lecturehall-triangulation} whether the
triangulation can be chosen in such a way that it elucidates the
desirable enumerative properties of $\lecturehallsimplex{d+1}$.

\begin{theorem}[\negthickspace\negmedspace\protect{\cite[Thm.\ 4.2]{lecturehall-triangulation}}]
\label{thm:lecturehall} 
The lecture hall simplex $\lecturehallsimplex{d+1}$ has a quadratic
triangulation.
\end{theorem}

\begin{proof}
We proceed by induction on $d$. For $d=1$, $\lecturehallsimplex{2}$ is
a unit interval.

For $d \ge 2$, it is natural to intersect $\lecturehallsimplex{d+1}$ with the
hyperplane $x_d-x_{d-1} = 1$:
\[
  P \ := \ \left\{ \vx \in \lecturehallsimplex{d+1} \ | \ x_d-x_{d-1}
    = 1
  \right\} \ = \ \conv \left[
    \begin{array}{cccccc}
      0 & \cdots & \cdots & 0 & 1 \\
      \vdots &  & \dddots & 2 & 2 \\
      \vdots & \dddots & \dddots & & \vdots \\[.6mm]
      0 & d-1 & \cdots & \cdots & d-1 \\
      1 & d & \cdots & \cdots & d 
    \end{array}
  \right]
\]
to obtain a simplex $P$ which is unimodularly equivalent to
$\lecturehallsimplex{d}$ and thus has a quadratic triangulation $\T$
by induction. The hyperplane splits $\lecturehallsimplex{d+1}$ into
two simplices which can be triangulated in a compatible fashion.

\noindent
\underline{$x_d - x_{d-1} \le 1$:}
This polytope below $P$ is a lattice pyramid over $P$ with apex the
origin. Coning off $\T$, we obtain a quadratic triangulation which is
compatible with any weights inducing $\T$ on $P$.

\noindent
\underline{$x_d - x_{d-1} \ge 1$:}
This polytope above $P$ is (equivalent to) a chimney polytope over $P$
(compare Sect.~\ref{sec:chimney}). In fact, let $\pi \colon \R^d \to
\R^{d-1}$ be the projection that forgets the last coordinate, and let $P' := \pi(P) \cong P$.
Then
\[
  \left\{ \vx \in \lecturehallsimplex{d+1} \ | \ x_d-x_{d-1}
    \ge 1 \right\}
  \ = \
  \left\{ (\vy,y_d) \in P' \times \R \ | \ y_{d-1}+1 \le y_d \le d
  \right\} \,.
\]
Thus,  we can lift $\T$ to a quadratic triangulation of the polytope above
$P$.
\end{proof}

More generally, for any sequence $\s = (s_i)_{i=1}^d$, Savage and
Schuster~\cite{SavageSchuster} define the $\s$-lecture hall simplex
$\slecturehall{d+1}$ to be
\begin{equation}
  \label{eq:sLectureHall}
  \slecturehall{d+1} \ := \left\{ \vx \in \R^d \ \Big| \ 0 \le
    \frac{x_1}{s_1} \le \frac{x_2}{s_2} \le \ldots \le \frac{x_d}{s_d}
    \le 1
  \right\} \,.
\end{equation}
Equivalently, 
\[
\slecturehall{d+1} \ := \ \conv \left[
\begin{array}{ccccc}
0 &  \cdots & 0 & s_1 \\
 \vdots & \dddots  & \dddots & \vdots \\[.6mm]
0 & s_d  & \cdots & s_d 
\end{array}
\right]
\subset \R^d \,.
\]
It would be desirable to understand for which $\s$ this simplex has a
unimodular triangulation.

The simplices $\slecturehall{d+1}$ appear also in the following works: (1)
The fundamental parallelepiped of the simplex $\slecturehall{d+1}$ has been studied by Liu and
Stanley~\cite{MR3245894}.
(2) Consider a finite partial order $(X,\preccurlyeq)$ as
in Section~\ref{sec:type-A-facets}, and take as additional input 
a function $\s \in \Z_{>0}^X$ giving positive integer weights to
its elements. Then Br\"and\'en and Leander~\cite{BrandenLeander}
define an $\s$-order polytope $O(\preccurlyeq,\s)$ as the image of the
order polytope $O(\preccurlyeq)$ under the coordinate-wise scaling $x
\mapsto (s_1x_1,\ldots,s_nx_n)$.
Via this map, the canonical triangulation of the usual order polytope triangulates 
$O(\preccurlyeq,\s)$ into the simplices
$\slecturehall[\sigma(\s)]{n+1}$ where the permutation $\sigma$ runs
over all linear extensions of $\preccurlyeq$.

\subsection{Smooth Polytopes}  \label{sec:Smooth}

We now turn our attention to two classes of smooth polytopes,  those
with no interior points and those with one interior lattice
point,  satisfying a special condition.

\subsubsection{Empty Polytopes} \label{sec:empty}  

First we consider  lattice polytopes whose only lattice  points are their
vertices.   The  following theorem  establishes  strong
restrictions on the combinatorics of such polytope if they are smooth.
\begin{theorem}  \label{thm:empty-smooth}  Every  smooth polytope  $P$
  such  that  $P\cap\Z^d=\vertices{P}$  is  lattice equivalent  to  a
  product of unimodular simplices.
\end{theorem}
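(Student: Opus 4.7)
The proof goes by induction on $d = \dim P$, with the trivial cases $d \le 1$ as base. The case $d = 2$ is handled by a preliminary lemma: \emph{a smooth empty lattice polygon is lattice equivalent to $\Delta^2$ or $\Delta^1 \times \Delta^1$.} One places a chosen vertex at $0$ with primitive edge vectors $e_1, e_2$; smoothness at $e_1$ forces its other neighbor $w$ to satisfy $w - e_1 = \alpha e_1 + e_2$ for some $\alpha \in \Z$, and ccw convexity together with the emptiness hypothesis leaves only $w = e_2$ (the triangle) or $w = e_1 + e_2$ with the next vertex being $e_2$ (the unit square).

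For the inductive step with $d \ge 3$, I fix a vertex of $P$ and change coordinates so that it becomes $0$ with primitive edge directions the standard basis $e_1, \dots, e_d$. Each $2$-face of $P$ through $0$ is a smooth empty polygon, so by the lemma it is either the triangle $\conv(0, e_i, e_j)$ or the unit square $\conv(0, e_i, e_i+e_j, e_j)$. Declare $i \sim j$ iff the $2$-face in directions $(e_i, e_j)$ is a triangle. To prove $\sim$ is transitive, apply the inductive hypothesis to the $3$-face $P \cap \{x_l = 0 : l \notin \{i,j,k\}\}$, which is a smooth empty $3$-polytope and hence a product of simplices: the three possibilities $\Delta^3$, $\Delta^2 \times \Delta^1$, $(\Delta^1)^3$ contain respectively $3$, $1$, and $0$ triangular $2$-faces at $0$, so the presence of two triangles (from $i \sim j$ and $j \sim k$) forces $\Delta^3$ and hence $i \sim k$. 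Let $C_1, \dots, C_r$ be the equivalence classes. A direct one-class argument (when every $2$-face at $0$ is a triangle, each $e_i$ is already adjacent to $0$ and to each $e_j$ with $j \ne i$, using up the smoothness budget of $d$ edges per vertex and leaving no room for further vertices, so the $1$-skeleton is $K_{d+1}$ and the polytope equals $\Delta^d$) applied to the face $F_s := P \cap \{x_j = 0 : j \notin C_s\}$ identifies it with the unimodular simplex $\conv(0, e_i : i \in C_s) = \Delta^{|C_s|}$.

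It remains to show $P = F_1 \times \cdots \times F_r$, and this is the technical core of the argument and the main obstacle I expect. Simplicity of $P$ provides, for each class $C_s$, a unique additional facet $G_s = \{\ell_s(x) \le c_s\}$ of $P$ containing the top face $\{x_j = 0 : j \notin C_s,\ \sum_{i \in C_s} x_i = 1\}$ of $F_s$, beyond the $d - |C_s|$ coordinate facets $\{x_j = 0\}$ with $j \notin C_s$. Writing $\ell_s = \sum b_i x_i$ as a primitive integer functional, the containment $e_i \in G_s$ for $i \in C_s$ gives $b_i = c_s$ on $C_s$, and smoothness at $e_i$ (where the inner facet normals $\{e_k : k \ne i\} \cup \{-\ell_s\}$ form a $\Z$-basis with determinant $\pm b_i$) forces $c_s = 1$. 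For $j \notin C_s$, the $(i,j)$-$2$-face is a square and produces an edge at $e_i$ in direction $+e_j$ going to the vertex $e_i + e_j$; by simplicity this edge lies in $d - 1$ of the $d$ facets at $e_i$, and since its $x_j$-coordinate changes it is not in $\{x_j = 0\}$, so it must lie in all the other facets at $e_i$, including $G_s$. Hence $e_i + e_j \in G_s$, i.e.\ $\ell_s(e_i + e_j) = 1$, giving $b_j = 0$. Thus $\ell_s = \sum_{i \in C_s} x_i$ and $P \subseteq \prod_s \Delta^{|C_s|}$. The reverse inclusion follows by a short induction: every vertex $\sum_{i \in S} e_i$ of the product (with $|S \cap C_s| \le 1$) lies in the face of $P$ at $0$ spanned by the directions in the classes meeting $S$, which by the inductive hypothesis is the corresponding smaller product of unimodular simplices; since emptiness makes lattice points and vertices of both polytopes coincide, comparison of vertex sets yields equality.
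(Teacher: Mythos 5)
Your setup (vertex at the origin with primitive edge directions $\ve_1,\dots,\ve_d$, the triangle/square dichotomy for $2$-faces, the adjacency classes $C_s$, and the identification of $F_s$ with a unimodular simplex) parallels the paper, while your facet computation showing $\ell_s=\sum_{i\in C_s}x_i$ and hence $P\subseteq\prod_s\Delta^{|C_s|}$ is a genuinely different and attractive route: the paper never proves this inclusion, arguing instead that all vertices and edges of the product lie in $P$ and then using connectivity plus simplicity. However, there are two gaps. The smaller one is your transitivity step: for $d=3$ the ``$3$-face'' $P\cap\{x_l=0: l\notin\{i,j,k\}\}$ is $P$ itself, so you are invoking the theorem in the very dimension you are proving. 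This is easy to repair (and makes the detour through $3$-faces unnecessary): if $\ve_i\sim\ve_j$ and $\ve_j\sim\ve_k$ but $\ve_i\not\sim\ve_k$, then the $2$-face of $P$ at $\ve_j$ whose edges at $\ve_j$ point towards $\ve_i$ and $\ve_k$ would be a unit square with fourth vertex $\ve_i+\ve_k-\ve_j$, violating $x_j\ge0$; this is exactly the paper's argument and works in every dimension.

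The serious gap is the reverse inclusion. Your induction only reaches vertices $\sum_{i\in S}\ve_i$ of the product whose support misses at least one class: if $S$ meets every class, the ``face of $P$ at $0$ spanned by the classes meeting $S$'' is $P$ itself, so the inductive hypothesis gives nothing (and even for proper faces the hypothesis only yields lattice equivalence to some product, not the coordinatewise identification you use, so the induction statement would have to be strengthened). These ``diagonal'' vertices are precisely the crux of the theorem --- ruling out, say, that $P$ is the product with its far corners missing is where the paper spends its key lemma that $F(v,N)$ is a combinatorial cube, via the observation that a cube minus a vertex is not smooth in dimension $\ge3$; your proposal never addresses them. The hole can be filled with what you already have: for $S$ meeting all classes pick $i_0\in S\cap C_{s_0}$ and set $S'=S\setminus\{i_0\}$; the vertex $v_{S'}=\sum_{i\in S'}\ve_i$ lies on the $d$ pairwise distinct facets $\{x_j=0\}$ ($j\notin S'$) and $G_s$ ($C_s\cap S'\neq\emptyset$), which by simplicity are all facets through it; the edge of $P$ at $v_{S'}$ omitted only from $\{x_{i_0}=0\}$ must then have direction $\ve_{i_0}$, and your inclusion $P\subseteq\{\sum_{i\in C_{s_0}}x_i\le1\}$ forces its other endpoint to be $v_{S'}+\ve_{i_0}=\sum_{i\in S}\ve_i$. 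Some argument of this kind (or the paper's cube lemma) must be supplied; as written, the decisive step is missing.
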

Together with Proposition~\ref{prop:products}, this yields
the following corollary.
\begin{corollary}
  Any smooth lattice polytope  $P$ satisfying $P\cap\Z^d=\vertices{P}$ has a
  quadratic triangulation.
\end{corollary}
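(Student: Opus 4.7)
The plan is to combine Theorem~\ref{thm:empty-smooth} with Proposition~\ref{prop:products} applied inductively. By Theorem~\ref{thm:empty-smooth}, up to lattice equivalence we may write $P = \Delta^{d_1} \times \cdots \times \Delta^{d_k}$ as a product of unimodular simplices. Since a quadratic triangulation is preserved under lattice equivalence, it suffices to exhibit a quadratic triangulation of this product.

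The base case is a single unimodular simplex $\Delta^{d_i}$: its trivial triangulation (itself) is obviously regular and unimodular, and it is flag for the vacuous reason that it has no minimal non-faces at all (every subset of its vertex set is a face). So each factor admits a quadratic triangulation.

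The inductive step is to show that if $P_1$ and $P_2$ both admit quadratic triangulations, then so does $P_1\times P_2$. Proposition~\ref{prop:products} already gives a regular unimodular triangulation $\T$ of $P_1\times P_2$, so the only thing left to verify is flagness. The proposition further asserts that the minimal non-faces of $\T$ are either lifts of minimal non-faces of $\T_1$ or $\T_2$, or else have cardinality two. Since by the inductive hypothesis all minimal non-faces of $\T_i$ have cardinality two, the same holds for $\T$, which means $\T$ is flag. Iterating this over the $k$ factors of $P$ produces a quadratic triangulation of $P$.

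The only mild subtlety is making sure the factors of a product of unimodular simplices really do have only lattice points at vertices (so that Proposition~\ref{prop:products}, applied as stated for full triangulations with minimal non-faces being vertex subsets, behaves as expected), but this is immediate since a unimodular simplex has no lattice points other than its vertices, and the lattice points of a product are products of lattice points of the factors. There is no real obstacle; the entire content of the corollary is that the two cited results combine cleanly, with the unimodular simplices serving as the atomic building blocks and Proposition~\ref{prop:products} providing the closure of the class of quadratically triangulable polytopes under products.
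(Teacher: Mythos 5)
Your proposal is correct and follows exactly the route the paper intends: Theorem~\ref{thm:empty-smooth} reduces to a product of unimodular simplices, and Proposition~\ref{prop:products} (applied inductively, with flagness read off from the description of the minimal non-faces) yields the quadratic triangulation. The paper leaves this combination implicit, and your write-up simply makes the same argument explicit.
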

The proof of the theorem is  very much inspired by 
Kaibel and Wolff's proof of a slightly different result.
\begin{theorem}[Kaibel, Wolff '00~\cite{KaibelWolff00}]
  \label{thm:KaibelWolff}
  Any simple \zo-po\-ly\-to\-pe is the product of some \zo-simplices.
\end{theorem}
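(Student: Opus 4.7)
My plan is to proceed by induction on the dimension $d$. The cases $d \le 1$ are immediate. For the inductive step, let $P$ be a simple \zo-polytope of dimension $d$. Since translating to a vertex combined with coordinate flips $x_i \mapsto 1-x_i$ are lattice equivalences that preserve the \zo-property, I may assume $\0$ is a vertex of $P$; by simplicity, $\0$ has exactly $d$ neighbors $\vv_1, \dots, \vv_d$, each a \zo-vector.

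The first step is a classification of the $2$-faces at $\0$. By simplicity, each pair $\vv_i, \vv_j$ spans a unique $2$-face $F_{ij}$ of $P$. Parametrizing the plane $\aff(\0, \vv_i, \vv_j)$ as $\{t_1\vv_i + t_2\vv_j\}$ and imposing the \zo-condition coordinate by coordinate yields $t_1 \in \{0,1\}$ on $\supp(\vv_i) \setminus \supp(\vv_j)$, $t_2 \in \{0,1\}$ on $\supp(\vv_j) \setminus \supp(\vv_i)$, and $t_1 + t_2 \in \{0,1\}$ on $\supp(\vv_i) \cap \supp(\vv_j)$. A direct case analysis then shows that $F_{ij} = \conv(\0, \vv_i, \vv_j, \vv_i + \vv_j)$ is a unit square when $\supp(\vv_i) \cap \supp(\vv_j) = \emptyset$, and $F_{ij} = \conv(\0, \vv_i, \vv_j)$ is a triangle when the supports overlap (using simplicity to rule out the only other candidate \zo-vertex in the plane, namely $\vv_j - \vv_i$ in the case of proper containment of supports, since its presence would force the $\0$-neighbors in the resulting parallelogram to be $\vv_i$ and $\vv_j - \vv_i$ rather than $\vv_i$ and $\vv_j$).

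Next, I define a relation on $[d]$ by $i \approx j$ iff $\supp(\vv_i) \cap \supp(\vv_j) \ne \emptyset$, take its transitive closure to obtain equivalence classes $B_1, \dots, B_r$, and set $C_k := \bigcup_{i \in B_k} \supp(\vv_i)$. Analyzing the $3$-faces $F_{ijk}$ at $\0$ — classifying which simple \zo-$3$-polytopes can realize a prescribed mixed pattern of triangle and square $2$-faces incident to $\0$ — I would prove that the $C_k$'s are pairwise disjoint. After additional coordinate flips within each $C_k$ to standardize the intra-block structure, an edge-tracing argument starting from $\0$ shows that every vertex of $P$ splits blockwise across the $C_k$. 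This yields $P = \prod_{k=1}^r P_k$, where $P_k$ is the projection of $P$ onto the coordinates in $C_k$. Each $P_k$ is a simple \zo-polytope of dimension $|B_k|$, and induction completes the proof: when $r > 1$ the factors have dimension less than $d$, while when $r = 1$ the standardization identifies $P$ directly with a single \zo-simplex.

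The main obstacle is the third step: upgrading the local pairwise-overlap information at $\0$ into a global product decomposition of $P$. The $3$-face analysis must carefully rule out configurations in which disjoint and overlapping support patterns coexist in a way incompatible with the combinatorics of simple \zo-$3$-polytopes, and the vertex-tracing step must verify that the local block structure at $\0$ propagates globally to every vertex of $P$ — a non-trivial task since overlap of supports of $\vv_i$ and $\vv_j$ could a priori be destroyed or created at vertices far from $\0$.
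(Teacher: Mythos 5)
Your overall architecture --- pass to a vertex $\0$, classify the $2$-faces spanned by pairs of its neighbors, extract an equivalence relation on the neighbors, and read off a product-of-simplices decomposition --- is exactly the strategy of Kaibel--Wolff and of this paper's proof of the analogous Theorem~\ref{thm:empty-smooth} (note that the paper only cites Theorem~\ref{thm:KaibelWolff}; it does not reprove it). However, there is a genuine gap in your very first step, and it propagates through everything built on it. You claim that $F_{ij}$ is a unit square whenever $\supp(\vv_i)\cap\supp(\vv_j)=\emptyset$. That is false: disjointness of supports only says that $\vv_i+\vv_j$ is a \emph{candidate} \zo-point in the plane $\aff(\0,\vv_i,\vv_j)$; nothing forces it to lie in $P$, and the $2$-face can perfectly well be the triangle $\conv(\0,\vv_i,\vv_j)$. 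The standard simplex $\Delta^d=\conv(\0,\ve_1,\dots,\ve_d)$ with $d\ge 2$ kills the whole scheme: all supports are pairwise disjoint, so your relation $\approx$ is trivial, your blocks are the singletons $C_k=\{k\}$, each projection $P_k$ is the segment $[0,1]$, and your claimed decomposition would give $P=[0,1]^d$, which is wrong. (The converse direction you state --- overlapping supports force a triangle --- is correct, and your $\vv_j-\vv_i$ argument for the nested-support case is fine.)

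The invariant you need is not support overlap but adjacency: declare $i\sim j$ when $\vv_i$ and $\vv_j$ are adjacent vertices of $P$, equivalently when the $2$-face $F_{ij}$ is a triangle. Support overlap implies this but not conversely, which is exactly why your classes can come out too fine. With the correct relation the two substantive lemmas are: (i) transitivity of $\sim$ --- in the paper's proof of Theorem~\ref{thm:empty-smooth} this is the observation that if $\ve_i\sim\ve_j\sim\ve_k$ but $\ve_i\not\sim\ve_k$ then $\ve_i-\ve_j+\ve_k$ would be a fourth vertex of the relevant $2$-face, violating a facet inequality at $\0$; and (ii) the cube lemma: for any set $N$ of pairwise non-adjacent neighbors of $\0$, the face $F(\0,N)$ is a combinatorial cube, proved by induction using that a cube minus a vertex is not simple. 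Then each $\sim$-class together with $\0$ spans a \zo-simplex, the cube lemma plus an edge-propagation argument shows every vertex and edge of the product of these simplices lies in $P$, and connectivity of the graph of $P$ together with simplicity forces equality. Your ``$3$-face analysis'' and ``edge-tracing'' steps are gesturing at the right ideas, but as written they are anchored to the wrong equivalence relation and are not actually carried out.
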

Note that in the vertex-edge graph of a simple polytope every vertex has
$d$ neighbors, and for any choice of neighbors  $N$ of a vertex $v\in
P$ in  the graph of $P$, there is a unique face $F$ of $P$ such that
$\{v\}\cup N \subseteq F$ and $N$ 
is the  set of neighbors of  $v$ in the  graph of $F$. This
face is denoted  $F(v,N)$.  The following  lemma is needed for our proof of
Theorem~\ref{thm:empty-smooth}.    The  crucial  observation   in
both  proofs is that every  face of $P$ is again smooth and
all its lattice points are vertices. In particular, every two-face is
either a standard triangle or a unit square.
\begin{lemma}
  If    $P$    is   a    smooth    lattice    polytope   such    that
  $P\cap\Z^d=\vertices{P}$,  $v\in\vertices{V}$, and $N$ is a set
  of neighbors of  $v$ in the graph of  $P$ such that no  two vertices
  in $N$  are  adjacent  in  the   graph  of  $P$, then  $F(v,N)$  is
  a combinatorial cube.
\end{lemma}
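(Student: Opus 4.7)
The plan is to prove by induction on $k := |N| = \dim F(v,N)$ that $F := F(v,N)$ is lattice equivalent to the standard cube $[0,1]^k$. The cases $k \le 1$ are immediate. For $k = 2$, the face $F$ is a smooth lattice polygon whose only lattice points are its vertices, so by the observation preceding the lemma $F$ is either a unimodular triangle or a unit square; a triangle would make the two elements of $N$ adjacent in $F$ (and hence in $P$), which is forbidden, so $F$ is a unit square.

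For the inductive step ($k \ge 3$), write $N = \{u_1, \dots, u_k\}$ and set $\ve_i := u_i - v$. Smoothness of $F$ at $v$ makes $\{\ve_1, \dots, \ve_k\}$ a $\Z$-basis of the lattice in $\aff F$; I would use this basis to identify $\aff F$ with $\R^k$ so that $v = 0$ and $u_i$ becomes the $i$-th standard basis vector $\ve_i$. For each $i$, the set $N \setminus \{u_i\}$ consists of $k-1$ pairwise non-adjacent neighbors of $v$, so the inductive hypothesis shows that the facet $F_i := F(v, N \setminus \{u_i\})$ is a combinatorial $(k-1)$-cube, which under the identification is $[0,1]^k \cap \{x_i = 0\}$. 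Iterating, $F_i \cap F_j$ is the $(k-2)$-cube $[0,1]^k \cap \{x_i = x_j = 0\}$, and every element of $\{0,1\}^k$ except possibly $v^* := \ve_1 + \dots + \ve_k$ is already a vertex of $F$.

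At each antipode $\bar v^{(i)} := \sum_{j \ne i} \ve_j$ of $v$ in $F_i$, the $k$ edges of $F$ consist of the $k-1$ edges of $F_i$ (in directions $-\ve_j$, $j \ne i$) together with one additional edge of primitive direction $\xi^{(i)}$ exiting $F_i$ at a new vertex $w^{(i)}$. A unimodularity (determinant) computation on the edge basis at $\bar v^{(i)}$, combined with the fact that $F$ lies in the half-space $\{x_i \ge 0\}$ bounded by $F_i$, forces $\xi^{(i)}_i = +1$. The goal is then to show that the remaining coordinates of $\xi^{(i)}$ vanish, so that $w^{(i)} = v^*$ for every $i$. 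Granting this, all $2^k$ points of $\{0,1\}^k$ appear as the vertex set of $F$, and a symmetric analysis at $v^*$ pins the $k$ facets through $v^*$ in the hyperplanes $x_i = 1$, yielding $F = [0,1]^k$.

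The main obstacle is this last pinning down of $\xi^{(i)}$. For $j \ne i$, the two edges of $F$ at $\bar v^{(i)}$ in directions $-\ve_j$ and $\xi^{(i)}$ span a $2$-face $G_{ij}$, which by the base case is either a unit square or a unimodular triangle. The common vertex $\bar v^{(ij)} := \bar v^{(i)} - \ve_j = \sum_{\ell \ne i, j} \ve_\ell$ lies in $F_i \cap F_j$; combining the inductive cube structures of $F_i$ and $F_j$ with smoothness shows that the $k$ edges of $F$ at $\bar v^{(ij)}$ are exactly $\{-\ve_\ell : \ell \ne i, j\} \cup \{+\ve_i, +\ve_j\}$. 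Matching the edge directions of $G_{ij}$ at $\bar v^{(ij)}$ against this list, using $\xi^{(i)}_i = +1$, forces $\xi^{(i)} = \ve_i$ in the square case and $\xi^{(i)} = \ve_i - \ve_j$ in the triangle case. Since $\xi^{(i)}$ depends only on $i$, the triangle case can occur for at most one index $j$; since $k \ge 3$, one can then pick a further $j' \ne i, j$ and verify that the putative value $\xi^{(i)} = \ve_i - \ve_j$ matches none of the admissible edge directions at $\bar v^{(ij')}$, a contradiction. Hence every $G_{ij}$ is a unit square, $\xi^{(i)} = \ve_i$, and $w^{(i)} = \bar v^{(i)} + \ve_i = v^*$, completing the induction.
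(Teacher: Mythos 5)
Your proposal is correct, and it shares the paper's overall skeleton: normalize by smoothness so that $v=\0$ and the chosen neighbors are standard basis vectors, induct on $|N|$, settle $|N|=2$ by the triangle-or-square dichotomy for $2$-faces, and observe that induction already places every $0/1$-point except possibly the top vertex $(1,\dots,1)$ among the vertices of $F(v,N)$. Where you genuinely diverge is in the decisive last step. The paper finishes in one line by asserting that the face then ``differs from the cube by at most the vertex $\vv_I$'' and that a cube minus a vertex is not smooth in dimension $\ge 3$; this leans on an auxiliary fact left unproved and, more importantly, leaves implicit why the face cannot have additional vertices outside $\{0,1\}^k$ (the would-be vertices $w^{(i)}$ in your notation). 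Your argument replaces that appeal by a local computation: at each antipode $\bar v^{(i)}$ simplicity and unimodularity force exactly one outgoing edge with $i$-th coordinate $+1$, the $2$-faces $G_{ij}$ pin its direction to $\ve_i$ or $\ve_i-\ve_j$, and consistency over two different indices $j$ (available since $k\ge 3$) kills the triangle case; the final containment $F\subseteq[0,1]^k$ then follows from reading off the facet hyperplanes at the top vertex. This is longer, but it is self-contained, closes the gap about extra vertices, and yields the stronger conclusion (also implicitly used later in the paper's Theorem on empty smooth polytopes) that $F(v,N)$ is not merely a combinatorial cube but the unit cube in the coordinates determined by the edges at $v$; the paper's route, by contrast, is shorter and conceptually crisper once one grants the non-smoothness of the cube minus a vertex.
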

\begin{proof}
  Up  to a unimodular  transformation we  can assume  that $\vv$  is the
  origin $\0$  and the adjacent  vertices are $\ve_1,  \ldots, \ve_d$.

  Let $E_I:=\{\ve_i\mid i\in I\}$  for some $I\subset\{1,\ldots, r\}$ be
  a set of non-adjacent vertices.   We use induction on the coordinate
  sum  $\vv_I:=\sum_{i\in  I}\ve_i$.   If   $|I|=2$,  then  by  the  above
  observation  the vertices  in $E_I$  span a  square at  $\0$. Hence,
  $\vv_I\in P$.   If $|I|\ge 3$, then  by induction all  partial sums of
  the elements of $E_I$ are contained  in $P$. Hence, they span a face
  that  differs from  the cube  by at  most the  vertex $\vv_I$.  But in
  dimensions three and higher, the cube minus a vertex is not a smooth polytope,
  so $\vv_I\in P$.
\end{proof}

\begin{proof}[Proof of Theorem~\ref{thm:empty-smooth}.]
  We can again assume that $\0$  is a vertex with neighbors $\ve_i$ ($1
  \le i  \le d$). So the  incident facet inequalities are  $x_i \ge 0$
  ($1 \le  i \le d$).   Since $P$  is a simple  polytope, any $k$  of the
  $\ve_i$ define a $k$-dimensional face.
  
  The first step is to  show that ``being adjacent'' is an equivalence
  relation  among  the  vertices  $\ve_i$.  That is, if for  
  pairwise  distinct $1\le  i,j,k\le  d$,  $\ve_i$ is  adjacent  to  $\ve_j$,  and $\ve_j$  is
  adjacent  to  $\ve_k$, then  all  three  are  contained in  a  common
  two-dimensional face.  Suppose $\ve_i$ and $\ve_k$ were not adjacent, then
  $\ve_i-\ve_j+\ve_k$ would  be a  forth vertex in  this face.   But this
  violates the inequality $x_j\ge 0$. So, $\ve_i$ and $\ve_k$ are
  adjacent, and the three vertices are in a common two dimensional face.

  If $C_0,  \ldots, C_r$  are the equivalence  classes of this relation,
   then for $1\le j\le r$,  $C_j\cup  \{\0\}$ spans a unimodular
  simplex  $\Delta_j$.    Letting  $Q:=  \Delta_1   \times  \cdots  \times
  \Delta_r$ be  the product of these  simplices, we aim  to show that
  $P=Q$.

 By the previous  lemma all vertices of $Q$ are  contained in $P$. It
  remains to show  that all  edges of $Q$  are edges of $P$.   Before 
  proving this  we show  that this will  finish the  proof. The
  graph of $P$ is connected, so if  there was a vertex $v$ of $P$
  that was not  a vertex of $Q$,  then at least one of  the vertices of
  $P$ that was  in $Q$ would have a neighbor in  $P$ that was not a
  neighbor in $Q$, so $P$ would not be simple.

  We now show that all edges of $Q$ are edges of $P$. By the  previous lemma, if $E_I$  is a set  of pairwise non-adjacent
  neighbors of $\0$,  then all edges of  the cube $F(\0,E_I)$ are
  in  $P$.  It remains  to  argue that  if $\ve_1$  and $\ve_2$  are
  adjacent  and  $\{1\}   \sqcup  I$  indexes  pair-wise  non-adjacent
  vertices  of $P$,  then  $\ve_1+\ve_I$ and  $\ve_2+\ve_I$ are  adjacent.
  Assume they are not  adjacent.  The faces $F(\0,\{\ve_1\}\cup E_I)$
  and $F(\0,\{\ve_2\}\cup E_I)$ are cubes.
  So as neighbors of the
  vertex $v_I$,  vertices  $\ve_1+\vv_I$, $\ve_2+\vv_I$, and
  $\vv_I-\ve_i$ are  pair-wise non-adjacent, for
  $i\in  I$.  Hence, they  span a  cube at
  $\vv_I$, contradicting the adjacency of $\ve_1$ and $\ve_2$.
\end{proof}

\subsubsection{Reflexive Polytopes}
\label{subsubsec:reflexive}

In this section we report on a class of polytopes where a
computational approach implementing pull-back and push-forward
subdivisions has been quite successful.

A lattice polytope $P$ is called \emph{reflexive}, if it contains a
unique interior lattice point and all facets are lattice distance one
from this point.  Without lose of generality it can be assumed that the interior point is the
origin.  The \emph{polar} of a polytope $P\subseteq\R^d $ with
$\0\in\interior(P)$ is
\begin{align*}
  P^\vee :=\{\vu\in\R^d \mid \langle \vx, \vu\rangle \ge -1\; \forall
  \, \vx\in P\}\,.
\end{align*}
If $P$ is reflexive then $P^\vee$ is again a lattice polytope. $P$ is
\emph{smooth} if $P$ is simple and the primitive generators of every
vertex cone span the lattice.

\begin{theorem}
 For $d\le 3$, every reflexive $d$-dimensional lattice polytope has a
  regular unimodular triangulation. 
\end{theorem}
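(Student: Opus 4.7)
The plan is to prove the statement uniformly for $d\in\{1,2,3\}$ by strong pulling from the unique interior lattice point $\0$. Starting from the trivial subdivision of $P$, I would pull $\0$ first and then pull the remaining lattice points of $P$ in any order. By Lemma~\ref{lemma:strongPulling}(1)--(2) the result is a regular full triangulation $\T$ of $P$; the task is to show that every maximal cell of $\T$ is unimodular.

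The first step is to observe that every maximal cell of $\T$ has the form $\conv(\0,\sigma)$, where $\sigma$ is a $(d-1)$-simplex in the strong pulling triangulation of some facet $F$ of $P$ obtained by pulling all lattice points of $F$. This follows by induction on the pulling steps: after pulling $\0$ (which lies in the interior of $P$, hence in no proper face), the cells are exactly the cones $\conv(\0,F)$ over facets $F$ of $P$; and when a subsequent point $p\in F$ is pulled, the new maximal cells inside such a cone are $\conv(p,\conv(\0,F'))=\conv(\0,\conv(p,F'))$ as $F'$ ranges over facets of $F$ not containing $p$. Thus the apex $\0$ is preserved at each pulling step while $F$ itself is refined by the corresponding pulling on $F$. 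After all lattice points are pulled, the triangulation induced on each facet $F$ is therefore a full lattice triangulation of $F$.

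The second step combines two ingredients. First, since $\dim F=d-1\le 2$, every full lattice triangulation of $F$ is unimodular in the induced lattice on $\aff(F)$: this is trivial in dimension $\le 1$ and follows from Pick's formula (the earlier Proposition for lattice polygons) in dimension $2$. Second, reflexivity gives that $F$ lies on an affine hyperplane at lattice distance $1$ from $\0$. A short determinant check --- after applying a unimodular change of coordinates that moves $\aff(F)$ to $\{x_d=1\}$ --- shows that the cone over a unimodular $(d-1)$-simplex $\sigma\subset F$ with apex $\0$ is a unimodular $d$-simplex, because its normalized volume equals the $(d-1)$-dimensional normalized volume of $\sigma$ in the lattice on $\aff(F)$. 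Hence every maximal cell of $\T$ is unimodular, as desired.

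The main obstacle, and the reason this argument is confined to $d\le 3$, lies in the dimension of the facets. For $d\ge 4$ the facets are $3$-polytopes, and not all $3$-polytopes admit unimodular triangulations (Reeve's tetrahedron). So Pick's formula is no longer available to supply a unimodular triangulation on each facet, and the lattice-height-$1$ trick of coning from $\0$ cannot compensate for that defect --- the cones would simply inherit non-unimodular simplices from the facets.
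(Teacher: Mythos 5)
Your proposal is correct and follows essentially the same route as the paper's proof: pull the unique interior lattice point first, so that every cell is a cone with apex $\0$ over a simplex in a full (pulling) triangulation of a facet, which is unimodular because facets have dimension at most two and lie at lattice distance one from $\0$ by reflexivity. You merely make explicit the bookkeeping (the induction on pulling steps and the determinant computation at height one) that the paper leaves implicit.
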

\begin{proof}
  The origin is the unique interior lattice point of $P$. Any
  pulling triangulation in which the origin is pulled first will be
  regular and unimodular. For $d=3$
  we use the fact that any full triangulation of a polygon is
  unimodular.
\end{proof}

  There are $5, 18, 124, 866, 7622$, $72256$, $749892$, and
$8229721$ \emph{smooth} reflexive polytopes in dimensions two, three, four, five, six, seven, eight, and nine
 respectively (up to lattice
equivalence). Those of dimension up to eight were first computed by
{\O}bro~\cite{OebroSmoothFano} and a full list of explicit
representatives can be obtained from the \texttt{polymake database}
(\href{http://www.polymake.org/doku.php/data}{www.polymake.org/doku.php/data});
the classification has been extended to dimension nine by Lorenz and
the third author, see
\href{http://polymake.org/polytopes/paffenholz/www/fano.html}{polymake.org/polytopes/paffenholz/www/fano.html}.

We used two
approaches to establish regular unimodular triangulations of the
polars of these polytopes up to dimension seven (checking dimensions eight
and nine is in progress):
\begin{enumerate}
\item We checked whether the facet normals define a unimodular system
  (then the existence of a triangulation follows from
  Theorem~\ref{thm:hyperplanes}).
\item We searched for  a sequence of projections along
  coordinate directions satisfying the conditions for push-forward
  and pull-back subdivisions given in Section \ref{sec:chimney}. If
  such a sequence projects the polytope down to
  dimension two, the polytope has a regular
  unimodular triangulation (as all two-dimensional lattice polytopes
  have regular unimodular triangulations).  If there is such a sequence
  projecting the polytope to
  dimension one, we know that the polytope has a
  \emph{quadratic} triangulation.

  For those polytopes for which a two-dimensional, but no one-dimensional projections were found, individual inspection confirmed
  a quadratic triangulation for some of them.
\end{enumerate}
Both checks were done with software system
\polymake~\cite{Joswig2009,polymake-sw} using an extension for
projections of polytopal subdivisions\cite{subdivProj-sw}.  The
detailed results are listed in
Table~\ref{tab:smoothreflexive}, and are summarized in the next
theorem.
\begin{table}[t]
  \centering
  \begin{tabular}{rrrrrrrrrrr}
    dim.\ & number of&  RUT&  quadratic& facet uni- & projects to\\
    &polytopes&&triangulation&modular&dimension one\\
    \toprule  
    2&      5&      5&      5&      5&      5\\
    3&     18&     18&     18&     16&     18\\
    4&    124&    124&    124&     96&    124\\
    5&    866&    866&    866&    554&    866\\
    6&   7622&   7622&   $\ge$7620&   4097&   $\ge$7620\\
    7&  72256&  72256&  $\ge$72240&  31881&  $\ge$72240\\
  \end{tabular}
  \caption{Number of smooth reflexive polytopes of dimension$\le 7$; all of them possess regular unimodular triangulations, 
  and all except perhaps 18 possess quadratic ones.
  }
  \label{tab:smoothreflexive}
\end{table}
\begin{theorem}[Haase, Paffenholz~'09~\cite{HPchimneys}]\label{thm:smoothreflexive}\hfill
  \begin{compactitem}
  \item All smooth reflexive $d$-polytopes for $d\le 8$ are integrally
    closed.%
  \item All smooth reflexive polytopes in dimension six have a regular
    unimodular triangulation, and all but at most two
    have a quadratic triangulation.
  \item All smooth reflexive polytopes in dimension seven have a regular
    unimodular triangulation, and all but at most $16$
    have a quadratic triangulation.
\end{compactitem}
\end{theorem}
The remaining two smooth Fano polytopes in dimension six and the $16$
smooth Fano polytopes in dimension seven may still have a quadratic
triangulation, but we were not able to construct such a triangulation
with our approach.  We checked whether the polytopes are integrally
closed using the \polymake-interface to
\texttt{Normaliz}~\cite{normaliz2}.  Previously it was shown by
Piechnik that all smooth reflexive $d$-polytopes have a regular
unimodular triangulation for $d\le 4$. Computations in dimensions eight
and nine are currently work in progress. Data vor these computation can be found at \href{http://polymake.org/polytopes/paffenholz/www/rut.html}{\texttt{polymake.org/polytopes/paffenholz/www/rut.html}}.

\subsection{The Gr\"obner fan and the toric Hilbert scheme}

\label{sec:Hilbert}

\subsubsection{The Gr\"obner fan and the secondary fan}

Here we examine the relation between Gr\"obner bases and subdivisions, mentioned in section~\ref{sec:toric-groebner-bases}.

As in section~\ref{sec:toric-groebner-bases},%
\footnote{Everything we say in this section can be extended to the more general case where $\configuration$ is any finite and homogeneous set of lattice vectors in $\Z^{d+1}$}
we let $\configuration := (P \times
\{1\}) \cap \Z^{d+1}$ be the homogenized set of lattice points in a polytope
$P\in \R^d$,
 take the polynomial ring $S := \mathbbm{k}[x_{\va} \! : \! \va \in \configuration]$
with one variable for each lattice point in $P$, and consider the toric ideal of $P$, i.e., the binomial ideal generated by linear dependences among the lattice points:
\[
\ideal_P = \left\langle \ \vx^\vm-\vx^\vn \ : \ \vm, \vn \in \Z_{\ge
    0}^\configuration \ , \ 
    \sum_{\va \in \configuration} m_{\va} \va
  = \sum_{\va \in \configuration} n_{\va} \va \ \right\rangle.
\]
Recall, each
choice of weights $\vomega \in \R^\configuration$
induces a regular subdivision $\T_\vomega$ of $P$, on the lattice
polytope side, and 
an initial ideal $\lead_\vomega \ideal_P := \langle \lead_\vomega f \ : \ f \in I
\rangle$, on the toric algebra side. 

However, if $\vomega$ is not \emph{generic}, the subdivision
$\T_\vomega$ may not be a triangulation, and the ideal $\lead_\vomega \ideal_P$
may not be a monomial. This means a polynomial $f$ may have several
monomials of highest weight with respect to $\vomega$, and all those
monomials form the leading part of $f$, which we also denote $\lead f$.
For example, if $\vomega_{\va}$ is the same constant for every $\va$ then:
\begin{itemize}
\item $\T_v\vomega$ is the \emph{trivial subdivision} ($P$ itself is its only full-dimensional cell) and 
\item $\lead_\vomega \ideal_P =\ideal_P$, since $\ideal_P$ is
  homogeneous and for a homogeneous $f$, and $\lead f = f$.
\end{itemize}

We state without proof two properties relating the initial ideals
$\lead_\vomega \ideal_P$ and the corresponding regular subdivisions
$\T_\vomega$.
 (See~\cite[Chapter 10]{SturmfelsGBCP} for details.)
\begin{theorem}[Sturmfels 96 \cite{SturmfelsGBCP}]
If  $\lead_{\vomega_1} \ideal_P = \lead_\vomega (\lead_{\vomega_2} \ideal_P)$ for some $\vomega_1$, $\vomega_2$ and $\vomega$ (that is, if $
\lead_{\vomega_1} \ideal_P $ is an initial ideal of $\lead_{\vomega_2} \ideal_P $) then the subdivision $\T_{\vomega_1}$ refines the subdivision $\T_{\vomega_2}$.
\end{theorem}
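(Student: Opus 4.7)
My plan is to pass through an intermediate perturbation of the weights, combining a standard Gr\"obner-theoretic identity with the geometric behavior of regular subdivisions under small perturbations. I would prove three facts and then combine them.

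First, I would establish the composed-weight identity
\[
\lead_\vomega(\lead_{\vomega_2} \ideal_P) \;=\; \lead_{\vomega_2 + \epsilon \vomega} \ideal_P
\]
for all sufficiently small $\epsilon > 0$. This is the standard statement that the weight $\vomega_2 + \epsilon\vomega$ uses $\vomega_2$ as its primary order and $\vomega$ only to break ties, so iterating initial ideals achieves the same result. I would verify it by fixing any reduced Gr\"obner basis $G$ of $\ideal_P$ with respect to $\vomega_2$, checking that for $\epsilon$ small enough $G$ remains a Gr\"obner basis with respect to the composed weight, and comparing leading forms term by term on each element of $G$. By the hypothesis of the theorem this identifies $\lead_{\vomega_1} \ideal_P = \lead_{\vomega_2 + \epsilon\vomega} \ideal_P$ for some small $\epsilon > 0$.

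Next, I would use the standard fact that the regular subdivision $\T_\vomega$ is determined by the (generally non-monomial) initial ideal $\lead_\vomega \ideal_P$. The cleanest formulation is the combinatorial criterion: a subset $F \subseteq \configuration$ is contained in a cell of $\T_\vomega$ if and only if no pure monomial with support in $F$ belongs to $\lead_\vomega \ideal_P$. One direction is immediate from the tightness description~\eqref{eq:face-of-regular-tight}, the other from the definition of the leading form by $\vomega$-minimality of a representative in each affine equivalence class of exponent vectors. This extends the unimodular statement of Corollary~\ref{lem:Groebner} to arbitrary regular subdivisions. Combined with the first step, it reduces the theorem to proving that $\T_{\vomega_2 + \epsilon \vomega}$ refines $\T_{\vomega_2}$ for small $\epsilon > 0$.

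Finally, I would prove this geometric refinement statement directly from the lifted-polytope description of regular subdivisions recalled at the end of Section~\ref{sec:what}: the lower envelope of the perturbed lift $\va \mapsto (\va, \omega_{2,\va} + \epsilon \omega_\va)$ breaks each cell $F$ of $\T_{\vomega_2}$ into the cells of the subdivision that $\vomega|_F$ induces on $F$, while no two cells of $\T_{\vomega_2}$ can be merged once $\epsilon$ is sufficiently small (points not in $F$ remain strictly above the affine function supporting $F$). The hard part will be the second step --- cleanly extending the initial-ideal-to-subdivision dictionary to the non-generic case where the initial ideal is binomial rather than monomial and the subdivision is not necessarily a triangulation; the first and third steps are routine Gr\"obner-fan and secondary-fan manipulations.
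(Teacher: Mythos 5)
Your outline is correct---each of the three facts you invoke is true, and the deduction goes through---but note that the paper does not prove this theorem at all: it is quoted from~\cite[Chapter~10]{SturmfelsGBCP}, so the comparison to make is with the standard argument there, and your route is genuinely different and somewhat more roundabout. All of the real content sits in your second step, the dictionary asserting that $F\subseteq\configuration$ lies in a cell of $\T_\vomega$ if and only if no monomial supported in $F$ lies in $\lead_\vomega\ideal_P$; this is the non-generic extension of Corollary~\ref{lem:Groebner}, equivalently the statement that $\operatorname{Rad}(\lead_\vomega\ideal_P)$ is the Stanley--Reisner ideal of $\T_\vomega$ \cite[Thm.~8.3]{SturmfelsGBCP}. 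Once that is available for arbitrary weights, the theorem follows in one line with no perturbation at all: a monomial $m\in\lead_{\vomega_2}\ideal_P$ is its own $\vomega$-leading form, hence $m\in\lead_{\vomega}(\lead_{\vomega_2}\ideal_P)=\lead_{\vomega_1}\ideal_P$, so every non-face of $\T_{\vomega_2}$ is a non-face of $\T_{\vomega_1}$, i.e.\ every cell of $\T_{\vomega_1}$ lies in a cell of $\T_{\vomega_2}$, which is exactly refinement. Your steps 1 and 3 (the identity $\lead_\vomega(\lead_{\vomega_2}\ideal_P)=\lead_{\vomega_2+\epsilon\vomega}\ideal_P$ for small $\epsilon>0$, and the fact that $\T_{\vomega_2+\epsilon\vomega}$ refines $\T_{\vomega_2}$) are correct standard facts, so nothing in your plan breaks, but they buy nothing beyond what step 2 already gives, since you still need both directions of the dictionary to pass from equality of initial ideals to equality of subdivisions. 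When writing up step 2, which you rightly flag as the hard part, watch three points: (i) ``cell'' must be read in the marked, point-configuration sense the paper insists on for the secondary fan---with the geometric reading the criterion is false for non-tight weights, e.g.\ $\configuration=\{0,1,2\}\subset\Z$ with $\vomega=(0,1,0)$ has the single cell $\{0,2\}$ while $x_1^2\in\lead_\vomega\ideal_P$, even though the point $1$ lies in $\conv\{0,2\}$ (note also that a power is genuinely needed: $x_1\notin\lead_\vomega\ideal_P$); (ii) even your ``immediate'' direction uses, besides the supporting functional from~\eqref{eq:face-of-regular-tight}, the fact that a monomial attaining the minimal $\vomega$-weight in its fiber never lies in $\lead_\vomega\ideal_P$---the same short argument the paper uses inside the proof of Theorem~\ref{thm:Groebner}; (iii) for the converse you must exhibit, for every non-face $F$, a strictly $\vomega$-lighter point of the fiber of $\bigl(\prod_{\va\in F}x_\va\bigr)^N$ for a suitable $N$, which comes from convexity of the lower envelope plus clearing denominators. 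Finally, in step 1 a ``reduced Gr\"obner basis with respect to $\vomega_2$'' requires a tie-breaking term order, since $\vomega_2$ need not be generic.
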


\begin{corollary}
If $\lead_{\vomega_1} \ideal_P =\lead_{\vomega_2} \ideal_P$ for different  $\vomega_1$ and $\vomega_2$, then $\T_{\vomega_1} = \T_{\vomega_2}$. This follows from the previous property by letting $\vomega=\vomega_1$ (and then switching the roles of $\vomega_1$ and $\vomega_2$).
\end{corollary}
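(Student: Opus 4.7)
The plan is essentially spelled out by the hint, so I would write a short, direct proof. Set $J := \lead_{\vomega_1}\ideal_P = \lead_{\vomega_2}\ideal_P$. My strategy is to verify the hypothesis of the preceding theorem (with $\vomega := \vomega_1$) in order to conclude that $\T_{\vomega_1}$ refines $\T_{\vomega_2}$, and then to exchange the roles of $\vomega_1$ and $\vomega_2$ to get the opposite refinement, yielding equality.

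First I would observe the easy idempotence fact: $\lead_{\vomega_1}(\lead_{\vomega_1}\ideal_P) = \lead_{\vomega_1}\ideal_P$. This is because taking the initial ideal is idempotent with respect to a fixed weight $\vomega_1$ (applying $\lead_{\vomega_1}$ to a $\vomega_1$-homogeneous ideal leaves it unchanged). Substituting the hypothesis $\lead_{\vomega_1}\ideal_P = \lead_{\vomega_2}\ideal_P$ on the inside then gives
\[
\lead_{\vomega_1}(\lead_{\vomega_2}\ideal_P) = \lead_{\vomega_1}\ideal_P.
\]
This is precisely the situation covered by the preceding theorem with $\vomega = \vomega_1$, so it yields that $\T_{\vomega_1}$ refines $\T_{\vomega_2}$.

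Interchanging the roles of $\vomega_1$ and $\vomega_2$ in the same argument shows that $\T_{\vomega_2}$ refines $\T_{\vomega_1}$. Two regular subdivisions that mutually refine each other must be equal, so $\T_{\vomega_1} = \T_{\vomega_2}$, as claimed. There is no real obstacle here — the only subtle point worth noting carefully is the idempotence observation, which relies on the fact that initial ideals with respect to a given $\vomega$ are $\vomega$-homogeneous, so applying $\lead_{\vomega}$ to them a second time has no effect.
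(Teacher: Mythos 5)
Your proof is correct and follows exactly the route the paper indicates: set $\vomega=\vomega_1$, use idempotence of $\lead_{\vomega_1}$ (initial ideals are $\vomega_1$-homogeneous) to verify the hypothesis of the preceding theorem, obtain that $\T_{\vomega_1}$ refines $\T_{\vomega_2}$, and then swap the roles of $\vomega_1$ and $\vomega_2$ to conclude equality by mutual refinement. The paper gives only this sketch inside the corollary statement, and your write-up fills in the same steps without deviation.
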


These properties mean that there is an order-preserving map from the poset of all initial ideals of $\ideal_P$ to the poset of all regular subdivisions of $\configuration$, where the latter are partially ordered by refinement and the former are ordered by $\ideal_1 
< \ideal_2 $ if $\ideal_1 $ is an initial ideal of $\ideal_2 $. This
was proved by Sturmfels in~\cite{SturmfelsGroebnerBasesTohoku}.

Put another way, the regular subdivision and the initial ideal
construction provide two stratifications of the  vector space
$\R^\configuration$. The first is based sets of $\vomega$'s which give
the same regular subdivisions and the second is based on which
$\vomega$'s give the same initial ideals.
Both stratifications are 
complete rational polyhedral fans, and they are called , respectively,
the \emph{secondary fan} and the \emph{Gr\"obner fan} of
$\configuration$.

\begin{theorem}[Sturmfels 91,~\cite{SturmfelsGroebnerBasesTohoku}]
\label{thm:groebner-fan}
The Gr\"obner fan of $\configuration$ refines the secondary fan of
$\configuration$.
\end{theorem}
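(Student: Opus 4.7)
The plan is to derive the theorem directly from the corollary stated immediately above, upgraded from a set-theoretic containment of equivalence classes to a genuine refinement of polyhedral fans. The corollary already supplies the crucial implication: if $\vomega_1$ and $\vomega_2$ lie in the same cone of the Gröbner fan, that is $\lead_{\vomega_1}\ideal_P = \lead_{\vomega_2}\ideal_P$, then $\T_{\vomega_1} = \T_{\vomega_2}$, so they lie in the same cone of the secondary fan. Thus every relatively open Gröbner cone is contained in a unique relatively open secondary cone.

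Next I would check that both stratifications really are complete rational polyhedral fans on $\R^\configuration$. For the Gröbner fan this is standard: pick a universal Gröbner basis of $\ideal_P$ (which is finite, since $\ideal_P$ has only finitely many distinct monomial initial ideals); then the cone of weights inducing a given initial ideal is cut out by the linear inequalities $\langle \vomega, \vm - \vn \rangle \le 0$ for each binomial $\vx^\vm - \vx^\vn$ in the basis whose leading monomial is $\vx^\vn$, so it is a rational polyhedral cone, and the finite collection of such cones tiles $\R^\configuration$. For the secondary fan one uses instead the sign patterns of $\vomega$ on the circuits of $\configuration$ (equivalently, the GKZ description via the lifted polyhedron $\tilde P$), which again cuts out a complete rational polyhedral fan.

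Once both are fans with the same support $\R^\configuration$, the statement ``the Gröbner fan refines the secondary fan'' is equivalent to the assertion that every maximal cone $\sigma_G$ of the Gröbner fan is contained in a (necessarily unique) maximal cone $\sigma_S$ of the secondary fan; and this containment holds because a generic point $\vomega$ in the interior of $\sigma_G$ lies in the interior of some $\sigma_S$, and the corollary forces the entire interior of $\sigma_G$ into the same $\sigma_S$, hence by closure $\sigma_G \subseteq \sigma_S$. The compatibility of lower-dimensional cones then follows from the standard fact that a complete fan is determined by the intersection lattice of its maximal cones: every face of $\sigma_G$ is an intersection of maximal Gröbner cones, and the corresponding intersection of the containing secondary cones gives a face of the secondary fan containing it.

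The main obstacle is not in this packaging step but in the preceding theorem of Sturmfels that yields the corollary; once one has the refinement statement for initial ideals, the present theorem is essentially a bookkeeping consequence. The only delicate point worth spelling out is the passage from ``interiors match'' to ``closed cones are contained'', which is handled by noting that both fans are closed under taking faces and that the equivalence class of $\vomega$ in either fan is determined by a limit of generic perturbations $\vomega + \epsilon \vomega'$.
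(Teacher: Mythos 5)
Your argument is correct and is essentially the paper's own treatment: the paper states Theorem~\ref{thm:groebner-fan} without proof, citing Sturmfels, and presents it exactly as you do --- as the fan-level repackaging of the preceding corollary (equal initial ideals force equal subdivisions) together with the standard facts that both stratifications are complete rational polyhedral fans. Your bookkeeping step is fine, though it can be shortened: once each relatively open Gr\"obner cone lies in a relatively open secondary cone, taking closures gives the containment of closed cones, and lower-dimensional Gr\"obner cones are handled at once because each is a face of a maximal cone, so the detour through intersection lattices of maximal cones is unnecessary.
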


Observe that for the secondary fan to be well defined, the
subdivisions of $\configuration$ need to be considered as subdivisions
of
point configurations, as when we defined weak pulling in Section~\ref{sec:weakstrong}.

Theorem~\ref{thm:groebner-fan} implies that  $\T_\omega$ can be
clearly determined from $\lead_{\omega} \ideal_P$.

\subsubsection{The toric Hilbert scheme}
We now look at a property that is shared by all initial ideals of
$\ideal_P$ and the toric ideal $\ideal_P$ itself.

As detailed in 
\cite[Chapter 10]{SturmfelsGBCP}, 
$\configuration$ defines a $d$-dimensional
multi-grading on the polynomial ring
$\mathbbm{k}[x_{\va} \! : \! \va \in \configuration]$, assigning multi-degree $\va _i$
to the variable $x_i$.
Ideals $\ideal\subset\mathbbm{k}[x_{\va} \! : \! \va \in \configuration]$ that are
homogeneous with respect to this grading have 
well-defined Hilbert functions
\[
\begin{tabular}{ccc}
$\Z_{\ge 0}\configuration$ & $\longrightarrow$ & $\Z_{\ge 0}$ \cr
$\vb$            &  $\longmapsto $   & $\dim_{\mathbbm{k}}\ideal_\vb$ \cr
\end{tabular}
\]
where $\Z_{\ge 0}$ is the set of non-negative integers, $\Z_{\ge 0}\configuration$ is the
semigroup of non-negative integer combinations of $\configuration$, and for each
$\vb\in\Z_{\ge 0}\configuration$, $\ideal_\vb$ is the degree $\vb$ part of $\ideal$.

The most natural $\configuration$-homogeneous ideal is the toric ideal $\ideal_P$, generated by the binomials 
\[
\{  \ \vx^\vm-\vx^\vn \ : \ \vm, \vn \in \Z_{\ge
    0}^\configuration \ , \ 
    \sum_{\va \in \configuration} m_{\va} \va
  = \sum_{\va \in \configuration} n_{\va} \va \ 
\},
\]
because every $\vb\in\Z_{\ge 0}\configuration$, $(\ideal_P)_\vb$ has
codimension one in $(\mathbbm{k}[x_{\va} \! : \! \va \in \configuration])_\vb$. This characterizes the
Hilbert function of $\ideal_\vb$.

An $\configuration$-homogeneous
ideal $\ideal \subset \mathbbm{k}[x_{\va} \! : \! \va \in \configuration]$ is called \emph{$\configuration$-graded}
if it has the same Hilbert function as the toric ideal $\ideal_P$.
$\configuration$-graded ideals include all the
initial ideals of $\ideal_P$, but can include other ideals as well. 
The \emph{toric Hilbert scheme}, as introduced by Peeva and Stillman
\cite{PeevaStillman}, is
the set of all $\configuration$-graded ideals with a suitable algebraic
structure defined by some determinental equations. An equivalent
 description via
binomial equations appeared in \cite[\pS6]{Sturmfels-agraded}.
(See also \cite{MaclaganThomas,StillmanSturmfelsThomas}.)

Surprisingly, the $\configuration$-graded ideals that are not
initial are still related to subdivisions of $\configuration$.
Sturmfels \cite[Theorem 10.10]{SturmfelsGBCP} proved that 
the order-preserving map implied in Theorem~\ref{thm:groebner-fan} extends to an
order preserving map from the poset of all $\configuration$-graded
ideals, where the partial ordered is given by
by ``toric deformation'' (a generalization of the property of being an initial ideal), to the poset
of all subdivisions, still ordered by refinement.

That is to say, every $\configuration$-graded ideal 
 $I$ has a canonically associated
polyhedral subdivision $\T_I$ of $\configuration$. 
If $I$ is monomial, then $\T_I$ is a triangulation, whose simplices are spanned by the
standard monomials in $\mathbbm{k}[ x_{\va} \! : \! \va \in \configuration]/\operatorname{Rad}(I)$. 

Santos~\cite{SantosNonconnectedHilbertScheme} used this map to show the existence
of non-connected toric Hilbert schemes, elaborating on work of
Maclagan and Thomas~\cite{MaclaganThomas}. 
Here are the main ideas.
There is a natural and well-known adjacency relation between
triangulations of the same configuration. It can be defined as an
operation that takes out certain simplices and inserts others, but it
is equivalent to the following~\cite[Section~2.4]{deLoeraRambauSantos}: two triangulations $\T_1$ and $\T_2$
of $\configuration$ are related by a \emph{geometric bistellar flip}
(or just \emph{flip}, for short) if there is a polyhedral subdivision
$\T$ of $\configuration$ whose only refinements are $\T_1$ and
$\T_2$. Maclagan and Thomas defined an analogous adjacency relation
between $\configuration$-graded monomial ideals ({\em
  mono-$\configuration$-GIs} for short), which they also called
flip. Their relation has the following properties.

\begin{proposition}[MacLagan \& Thomas 02, \cite{MaclaganThomas}]
\begin{compactenum}
\item A toric Hilbert scheme is connected if and only if the graph of
  mono-$\configuration$-GIs is connected.
\item Triangulations 
of $\configuration$ corresponding to neighboring mono-$\configuration$-GIs either
  coincide or differ by a geometric bistellar flip.
\end{compactenum}
\end{proposition}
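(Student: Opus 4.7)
The plan is to combine two structural features of the toric Hilbert scheme $H_\configuration$: a natural torus action whose fixed points are precisely the mono-$\configuration$-GIs, and the order-preserving Sturmfels map $I\mapsto \T_I$ sending $\configuration$-graded ideals to subdivisions of $\configuration$ (already invoked in the paragraph preceding the proposition). The key preliminary observation is that the torus $T$ acting on $S$ (by rescaling variables) also acts on $H_\configuration$, and every $T$-orbit contains a fixed point in its closure. Consequently every connected component of $H_\configuration$ meets the monomial locus.

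For part (1), the preliminary observation implies that the connected components of $H_\configuration$ are determined by which mono-$\configuration$-GIs can be joined through $H_\configuration$. Two mono-$\configuration$-GIs lying in the closure of a common positive-dimensional $T$-orbit are obviously in the same component; conversely, by standard arguments on torus actions on Hilbert-type schemes, connected components of $H_\configuration$ are connected through chains of such one-parameter $T$-invariant families. Thus $H_\configuration$ is connected if and only if the graph on mono-$\configuration$-GIs whose edges are such $T$-invariant curves is connected. The content of Maclagan--Thomas's definition is precisely that their combinatorial flip relation reproduces this torus-adjacency relation, which I would verify by unpacking their generator-based definition of flips and matching it to the first-order deformation theory around a mono-$\configuration$-GI.

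For part (2), let $I_1$ and $I_2$ be neighboring mono-$\configuration$-GIs, connected by a minimal $T$-invariant curve $C \subset H_\configuration$. Choose any non-fixed point $I \in C$; then both $I_1$ and $I_2$ are $T$-specializations of $I$, so by the order-preserving property of the Sturmfels map, $\T_{I_1}$ and $\T_{I_2}$ both refine $\T_I$. The minimality of the flip (the curve $C$ contains no intermediate $T$-fixed point) forces $\T_I$ to admit no refinements to triangulations other than $\T_{I_1}$ and $\T_{I_2}$; this is exactly the characterization of a geometric bistellar flip given in the preceding paragraph of the paper, applied to $\T_I$. When $\T_{I_1}\ne\T_{I_2}$ we therefore obtain a genuine flip; when $\T_{I_1}=\T_{I_2}$ the motion along $C$ alters only the non-radical part of the monomial ideal (the multiplicities of the standard pairs), so the triangulation is unchanged.

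The main obstacle will be matching Maclagan--Thomas's purely combinatorial flip relation---formulated in terms of replacement of generators of monomial ideals---with the geometric notion of a ``minimal $T$-invariant curve'' used above. Two implications must be verified: that every combinatorial flip lifts to an actual one-parameter flat family in $H_\configuration$ (a flatness check using the multigraded Hilbert function), and, in the harder direction, that minimality of the combinatorial flip forces minimality of the resulting subdivision change. The latter is subtle because the Sturmfels map $I\mapsto \T_I$ loses information when passing from generators to radicals, so one has to track carefully which combinatorial data on monomials is actually visible in the associated triangulation---this is essentially where the dichotomy between ``coincide'' and ``differ by a flip'' in the statement of part (2) comes from.
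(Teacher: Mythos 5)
You cannot really be checked against the survey here: the survey states this proposition without proof, quoting it from Maclagan and Thomas~\cite{MaclaganThomas}, and it does not even record their definition of the flip relation on mono-$\configuration$-GIs. Their actual proof proceeds through the affine charts of the toric Hilbert scheme indexed by monomial $\configuration$-graded ideals and a direct combinatorial analysis of a flip in the degree where the two monomial ideals differ, rather than through the orbit-geometric picture you sketch. So the only question is whether your sketch stands on its own, and it does not yet.

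There are two genuine gaps. For part (2), the decisive step fails as stated: from the fact that $I_1$ and $I_2$ are the only $T$-fixed points on the invariant curve $C$ you conclude that $\T_I$ admits no refinements other than $\T_{I_1}$ and $\T_{I_2}$, but minimality upstairs does not transfer through the Sturmfels map, precisely because that map is neither injective nor surjective --- a point the survey itself emphasizes (not every subdivision of $\configuration$ is of the form $\T_J$ for an $\configuration$-graded ideal $J$). The subdivision $\T_I$ could admit further refining triangulations that are simply not realized by ideals on $C$, or by any $\configuration$-graded ideal at all, so the defining property of a geometric bistellar flip (a subdivision whose only refinements are the two given triangulations) is not verified by your argument; this is exactly the step that must be replaced by a combinatorial analysis of the flip itself. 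For part (1), both implications rest on identifications you explicitly defer: that every combinatorial Maclagan--Thomas flip lifts to a flat $T$-invariant one-parameter family joining the two fixed points, and conversely that connectedness of the (possibly reducible and non-reduced) toric Hilbert scheme forces any two fixed points to be joined by a chain of one-dimensional $T$-invariant families each of which realizes a flip. The latter is not a citation-free ``standard argument''; it needs properness of the scheme plus an induction over $T$-invariant irreducible components, and the identification of such invariant curves with the combinatorial flips is essentially the content of the definition your proof never engages. As written, this is a plausible strategy outline rather than a proof of either part.
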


Lattice point configurations with non-connected graphs of flips are
rare, but they were shown to exist in 2000~\cite{SantosNoFlips}. Still,
this does not necessarily imply the corresponding toric Hilbert
scheme is non-connected, because the Sturmfels map going from
$\configuration$-graded ideals to subdivisions of $\configuration$ is
in general not surjective \cite[Example 10.13]{SturmfelsGBCP},
\cite{PeevaStillman}.
However, Maclagan and Thomas also  observed, based on \cite[Theorem
10.14]{SturmfelsGBCP}, that the image of the map always 
contains all the unimodular triangulations of $\configuration$. 

\begin{corollary}
\label{coro:mactho}
The toric Hilbert scheme of $\configuration$ has at least as many
connected components as
there are connected components in the graph of triangulations of $\configuration$ that
contain unimodular triangulations. 
\end{corollary}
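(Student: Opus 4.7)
The plan is to exploit the Sturmfels map $\pi$ (from the proposition preceding the corollary, combined with Theorem~10.14 of~\cite{SturmfelsGBCP}) that sends each mono-$\configuration$-GI to a triangulation of $\configuration$, and show that this map descends to an injection on the relevant sets of connected components.

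First I would set up the following two graphs: the graph $\mathcal{G}_{\mathrm{GI}}$ whose vertices are mono-$\configuration$-GIs and whose edges are the Maclagan--Thomas flips, and the graph $\mathcal{G}_{\mathrm{tri}}$ of triangulations of $\configuration$ with geometric bistellar flips. By part~(1) of the MacLagan--Thomas proposition, the connected components of the toric Hilbert scheme are in bijection with the connected components of $\mathcal{G}_{\mathrm{GI}}$. By part~(2), $\pi$ sends edges of $\mathcal{G}_{\mathrm{GI}}$ to either an edge or a loop in $\mathcal{G}_{\mathrm{tri}}$; in particular $\pi$ maps each connected component of $\mathcal{G}_{\mathrm{GI}}$ into a single connected component of $\mathcal{G}_{\mathrm{tri}}$.

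Next, I would invoke the key fact (cited from \cite[Thm.~10.14]{SturmfelsGBCP}) that every unimodular triangulation of $\configuration$ lies in the image of $\pi$. Given a connected component $\mathcal{K}$ of $\mathcal{G}_{\mathrm{tri}}$ that contains at least one unimodular triangulation $\T$, pick any mono-$\configuration$-GI $I$ with $\pi(I)=\T$, and let $\mathcal{K}'_I$ be the connected component of $I$ in $\mathcal{G}_{\mathrm{GI}}$. By the previous paragraph, $\pi(\mathcal{K}'_I)\subseteq \mathcal{K}$. Hence the assignment $\mathcal{K}\mapsto \mathcal{K}'_I$ is well-defined up to the choice of $I$, and injective: two distinct components $\mathcal{K}_1\neq \mathcal{K}_2$ of $\mathcal{G}_{\mathrm{tri}}$ must give distinct components of $\mathcal{G}_{\mathrm{GI}}$, since a single component of $\mathcal{G}_{\mathrm{GI}}$ cannot simultaneously map into both $\mathcal{K}_1$ and $\mathcal{K}_2$.

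Combining these two facts, the number of connected components of $\mathcal{G}_{\mathrm{GI}}$ (equivalently, of the toric Hilbert scheme) is at least the number of connected components of $\mathcal{G}_{\mathrm{tri}}$ that meet the set of unimodular triangulations, which is exactly the statement of the corollary. The only subtle point is verifying that each component of $\mathcal{G}_{\mathrm{GI}}$ indeed maps into a single component of $\mathcal{G}_{\mathrm{tri}}$; this is essentially immediate from the Maclagan--Thomas property that flipped mono-$\configuration$-GIs give either equal or bistellarly adjacent triangulations, so no serious obstacle arises once the surjectivity onto unimodular triangulations is granted.
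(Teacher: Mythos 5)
Your argument is correct and is essentially the proof the paper intends: the corollary is stated there without a separate argument, as an immediate consequence of the Maclagan--Thomas proposition (read component-wise) together with the observation, based on \cite[Thm.~10.14]{SturmfelsGBCP}, that every unimodular triangulation of $\configuration$ lies in the image of the Sturmfels map, i.e.\ arises from a mono-$\configuration$-GI. The only extrapolation is that part (1) is quoted in the paper merely as a connectivity statement, while you use the component-wise bijection; that stronger form is exactly what Maclagan--Thomas establish and what the corollary implicitly relies on, so there is no gap.
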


Santos' non-connected toric Hilbert scheme is based on the
construction of a polytope $P$ for which the corresponding
configuration $\configuration$ has unimodular triangulations in
different components of the graph of flips. Let $Q\in \R^4$ be the
$24$-cell. The $24$-cell is one of the six regular four-dimensional polytopes, and can be realized as the convex hull of
the root system of type $\rootD_4$.  We
consider it as a lattice polytope in the lattice it
generates,
$D_4:=\{(a_1,a_2,a_3,a_4)\in \Z^4 : \sum a_i \in 2\Z\}$. Its only
lattice points are the origin and the $24$ roots $\pm \ve_i \pm
\ve_j$, with $(i,j)\in {\binom{[4]}2}$. $Q$ has 24-facets, all
regular octahedra, and 96 two-faces, all regular triangles.

\begin{theorem}[Santos 05, \cite{SantosNonconnectedHilbertScheme}, see
  also~\protect{\cite[Section~7.3]{deLoeraRambauSantos}}]
\label{thm:nonconnected:hilbert}
If $P=Q\times [0,1]$ is a lattice polytope in $D_4\times \Z$, such that
$\configuration$ has 50 points: the 48 vertices of $P$ plus the
centers of $Q\times\{0\}$ and $Q\times\{1\}$, then:
\begin{compactenum}
\item The graph of triangulations of $\configuration$ has at least 13
  connected components, each containing at least $3^{48}$ unimodular
  triangulations.
\item The toric Hilbert scheme of $\configuration$ has at least
  13 connected components each containing at least $3^{48}$ monomial
  ideals, and they all have dimension at least $96$.
\end{compactenum}
\end{theorem}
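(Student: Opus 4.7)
The plan is to prove part~(1) by a detailed analysis of the flip graph and then derive part~(2) from Corollary~\ref{coro:mactho}. Throughout I would use that the lattice points of $P$ in $D_4\times\Z$ are exactly the $50$ points of $\configuration$, so every unimodular triangulation is full and uses each of the two centers.

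First I would dissect the anatomy of a full unimodular triangulation $\T$ of $\configuration$. Because $\vc_i$ is the unique lattice point in the interior of the facet $Q\times\{i\}$, the restriction of $\T$ to $Q\times\{i\}$ must be a cone from $\vc_i$ over a unimodular triangulation $\T_i$ of the simplicial $3$-sphere $\partial Q\times\{i\}$. Each of the $24$ octahedral facets of $Q$ admits exactly three triangulations into four tetrahedra (one per long diagonal), and these choices are independent across facets because every shared $2$-face is a single triangle with a unique triangulation. Hence there are $3^{24}$ choices for $\T_0$ and $3^{24}$ for $\T_1$, yielding $3^{48}$ admissible pairs. The rest of $\T$ lies in the slab $\partial Q\times[0,1]$ and, by Proposition~\ref{prop:lao}, decomposes into staircase refinements of products (tetrahedron)$\times[0,1]$, each parametrized by a locally acyclic orientation of its $1$-skeleton.

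Next I would classify bistellar flips on unimodular triangulations of $\configuration$. Two types are ``soft'' and should stay within a single component: flips that change the triangulation of a single octahedron inside $\T_0$ or $\T_1$, and flips that swap the orientation of a single edge in one staircase of the slab when this preserves local acyclicity. The hard part is constructing a bistellar-flip invariant $\Phi$ on unimodular triangulations of $\configuration$ that takes at least $13$ distinct values. Following the template of~\cite{SantosNoFlips}, $\Phi$ would be a global combinatorial quantity extracted from the collection of locally acyclic orientations in the slab, exploiting the rotational symmetries of the $24$-cell so that no circuit-supported bistellar flip can alter it. Because each bistellar flip is supported on a circuit (a minimally affinely dependent subset) of at most $d+2=7$ points, every flip is a bounded local move, and the invariant must be rigid against all such moves; the verification that $\Phi$ is truly flip-invariant is the technical heart of the argument and requires a careful case analysis over the circuits in $\configuration$ that can support a flip touching the slab.

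Once $\Phi$ is in hand, part~(1) is immediate: each of its $13$ values defines a union of triangulations internally connected by soft flips (which preserve $\Phi$), and within each value the $3^{48}$ independent choices of $(\T_0,\T_1)$ produce $3^{48}$ unimodular triangulations in the same component. Part~(2) then follows from Corollary~\ref{coro:mactho}: Sturmfels' map from the toric Hilbert scheme to the poset of polyhedral subdivisions has image containing all unimodular triangulations, hence it meets each of the $13$ flip components, forcing the Hilbert scheme itself to have at least $13$ connected components. The lower bound of $96$ on the dimension of each component would follow from a standard tangent-space computation at any monomial ideal in that component, the independent deformation directions corresponding to $2$-faces of $P$ at which the triangulation can be locally perturbed.
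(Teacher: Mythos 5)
There is a genuine gap: the step you yourself call ``the technical heart'' --- constructing a flip invariant $\Phi$ with at least $13$ values and verifying its invariance under all circuit-supported flips --- is precisely the content of the theorem, and your proposal gives no construction and no verification, only the assertion that one should exist ``following the template of~\cite{SantosNoFlips}.'' The actual mechanism is quite specific and is not an abstract invariant: one exhibits an explicit locally acyclic orientation of the $1$-skeleton of the $24$-cell (Figure~\ref{fig:24cell_lao}) with two properties: no single edge can be reversed without creating a directed cycle in one of the $96$ triangular $2$-faces, and each of the $24$ octahedral facets contains exactly one directed quadrilateral $4$-cycle. One then triangulates each octahedron using the diagonal orthogonal to its directed quadrilateral, cones this boundary triangulation to the center of $Q$ (orienting the new edges away from the center, and checking these new edges are also non-reversible), and takes the staircase refinement of this oriented triangulation with the oriented edge $[0,1]$, as in Proposition~\ref{prop:lao}. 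It is the non-reversibility of every edge that prevents any flip from connecting this triangulation to one built from a different initial orientation; the count $13$ is not a mysterious number of invariant values but comes from $12$ symmetry-inequivalent copies of this orientation plus one further component containing the triangulations arising from globally acyclic orientations. Without producing the orientation and proving the rigidity statement, your outline does not establish part~(1), and hence part~(2) (which, as you correctly say, follows from Corollary~\ref{coro:mactho}) is also not established.

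Two secondary points. First, your structural claim that any unimodular triangulation of $\configuration$ restricts on $Q\times\{i\}$ to the cone from $\vc_i$ over a triangulation of $\partial Q$ does not follow from $\vc_i$ being the unique interior lattice point: fullness forces $\vc_i$ to be used, but a priori the facet can contain maximal cells not containing $\vc_i$, so this needs proof (and similarly, Proposition~\ref{prop:lao} builds triangulations of products from locally acyclic orientations; it does not say that an arbitrary triangulation of the ``slab'' decomposes into staircases). Second, your derivation of the $3^{48}$ bound --- independent re-choices of diagonals in the $24+24$ octahedra of the top and bottom facets, all allegedly within one component --- is unsubstantiated: in the construction above the diagonals are forced by the orientation, and you would have to show that each such re-triangulation extends to the interior and is reachable by flips preserving your invariant. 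In the paper the $3^{48}$ triangulations per component and the dimension bound $96$ are by-products of the symmetries and flexibility of the explicit construction, not of free facet re-triangulations, and the dimension bound is not obtained by the tangent-space computation you sketch.
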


\begin{proof}
The proof uses the idea of \emph{staircase refinements} introduced in Section~\ref{sec:joins-and-products}.
We sketch it here, and refer to~\cite[Section 7.3]{deLoeraRambauSantos} or \cite{SantosNonconnectedHilbertScheme} for details. The crucial tool is an orientation of the
graph of the two-cell with the following properties:
\begin{itemize}
\item It is locally acyclic, but no edge can be reversed in it without creating a cycle in one of the $96$ triangular faces of $D_4$
\item In each of the $24$ octahedral facets of $Q$, one (and only one) of the four-cycles of edges is a directed cycle.
\end{itemize}

\begin{figure}
\begin{tikzpicture}[y=-1cm,scale=.53]

\draw (16.66875,3.6449) -- (15.08125,4.5974);
\draw (16.98625,6.1849) -- (15.39875,7.1374);
\draw (19.20875,7.29615) -- (16.51,5.39115);
\draw (16.66875,3.6449) -- (16.01682,9.2075);
\draw (15.38182,3.81) -- (16.98625,6.1849);
\draw (16.01682,9.2075) -- (15.39875,7.1374);
\draw (16.98625,6.1849) -- (19.20875,7.29615);
\draw[arrows=-stealth'] (15.39875,7.1374) -- (16.1925,6.66115);
\draw[arrows=-stealth'] (16.01682,9.2075) -- (17.62125,8.24865);
\draw[arrows=-stealth'] (15.38182,3.81) -- (16.98625,2.85115);
\draw[arrows=-stealth'] (15.08125,4.5974) -- (15.875,4.12115);
\draw[arrows=-stealth'] (16.99048,6.18067) -- (16.8275,4.9149);
\draw[arrows=-stealth'] (15.3924,7.09083) -- (15.24423,5.9055);
\draw[arrows=-stealth'] (15.38182,3.81) -- (15.69932,6.35);
\draw[arrows=-stealth'] (17.24448,3.12208) -- (17.4625,2.93582);
\draw[arrows=-stealth'] (17.69957,6.55532) -- (17.85832,6.63575);
\draw[arrows=-stealth'] (15.76282,8.27617) -- (15.66757,8.04757);
\draw[arrows=-stealth'] (15.17015,4.33917) -- (15.25482,4.15925);
\draw[arrows=-stealth'] (17.3355,3.90525) -- (17.4625,3.7465);
\draw[arrows=-stealth'] (16.54175,4.71382) -- (16.5735,4.49157);
\draw[arrows=-stealth'] (17.8054,6.2865) -- (17.653,6.15738);
\draw[arrows=-stealth'] (16.65182,5.72982) -- (16.77882,5.90338);
\draw[arrows=-stealth'] (16.27082,7.27075) -- (16.27082,7.06332);
\draw[arrows=-stealth'] (15.97025,6.20607) -- (16.08032,6.03038);
\draw[arrows=-stealth'] (16.03375,4.74557) -- (16.1544,4.953);
\draw[arrows=-stealth'] (15.7099,4.953) -- (15.875,5.03132);
\draw (15.09607,4.60375) -- (16.4465,5.34882);
\draw (19.20875,7.29615) -- (16.01682,9.2075);
\draw (18.57375,1.89865) -- (16.66875,3.6449) -- (16.98625,6.1849) -- (19.20875,7.29615) -- (18.57375,1.89865) -- (15.38182,3.81) -- (15.08125,4.5974) -- (15.39875,7.1374) -- (16.01682,9.2075) -- (15.38182,3.81);
\draw (15.39875,7.1374) -- (18.57375,1.89865);
\draw (16.66875,3.6449) -- (18.57375,1.89865);
\draw[arrows=-stealth'] (18.57798,1.93675) -- (18.89125,4.43865);
\draw (21.9075,3.4925) -- (22.225,6.0325) -- (20.6375,6.985);
\draw (24.765,6.35) -- (22.225,6.0325);
\draw (23.1775,7.3025) -- (24.765,6.35) -- (24.4475,3.81);
\draw (20.32,4.445) -- (20.6375,6.985) -- (23.1775,7.3025) -- (22.86,4.7625) -- (20.32,4.445) -- (21.9075,3.4925) -- (24.4475,3.81) -- (22.86,4.7625);
\draw (20.6375,6.985) -- (24.4475,3.81);
\draw (24.765,6.35) -- (20.32,4.445);
\draw (21.9075,3.4925) -- (23.1775,7.3025);
\draw (22.86,4.7625) -- (22.225,6.0325);
\draw[arrows=-stealth'] (20.6375,6.985) -- (20.47875,5.715);
\draw[arrows=-stealth'] (24.765,6.35) -- (24.60625,5.08);
\draw[arrows=-stealth'] (22.225,6.0325) -- (22.06625,4.7625);
\draw[arrows=-stealth'] (23.1775,7.3025) -- (23.01875,6.0325);
\draw[arrows=-stealth'] (20.32,4.445) -- (21.59,4.60375);
\draw[arrows=-stealth'] (22.86,4.7625) -- (23.65375,4.28625);
\draw[arrows=-stealth'] (24.4475,3.81) -- (23.1775,3.65125);
\draw[arrows=-stealth'] (21.9075,3.4925) -- (21.11375,3.96875);
\draw[arrows=-stealth'] (20.6375,6.985) -- (21.9075,7.14375);
\draw[arrows=-stealth'] (23.1775,7.3025) -- (23.97125,6.82625);
\draw[arrows=-stealth'] (24.765,6.35) -- (23.495,6.19125);
\draw[arrows=-stealth'] (22.225,6.0325) -- (21.43125,6.50875);
\draw[arrows=-stealth'] (22.33718,5.79543) -- (22.44725,5.60493);
\draw[arrows=-stealth'] (22.68643,5.82718) -- (22.63775,5.715);
\draw[arrows=-stealth'] (22.87693,5.55625) -- (22.71818,5.49275);
\draw[arrows=-stealth'] (21.88422,5.93725) -- (22.08953,5.77003);
\draw[arrows=-stealth'] (22.6695,5.30225) -- (23.20925,4.83447);
\draw[arrows=-stealth'] (22.65468,5.11175) -- (22.7965,4.92125);
\draw[arrows=-stealth'] (22.44725,5.09482) -- (22.38375,4.8895);
\draw[arrows=-stealth'] (22.0345,5.17525) -- (21.717,5.04825);
\draw (15.39875,11.303) -- (16.03375,16.383) -- (21.11375,17.018) -- (20.47875,11.938);
\draw (18.57375,9.398) -- (15.39875,11.303) -- (20.47875,11.938) -- (23.65375,10.033) -- cycle;
\draw (21.11375,17.018) -- (24.28875,15.113) -- (23.65375,10.033);
\draw[arrows=-stealth'] (20.47875,11.938) -- (21.11375,10.668);
\draw[arrows=-stealth'] (15.39875,11.303) -- (14.2875,10.82675);
\draw[arrows=-stealth'] (23.65375,10.033) -- (24.60625,9.23925);
\draw[arrows=-stealth'] (18.57375,9.398) -- (18.25625,8.4455);
\draw (18.57375,9.398) -- (19.20875,14.478) -- (16.03375,16.383);
\draw (19.20875,14.478) -- (24.28875,15.113);
\draw[arrows=-stealth'] (21.336,17.78) -- (21.2725,17.49425);
\draw[arrows=-stealth'] (25.4,15.58925) -- (24.60625,15.27175);
\draw[arrows=-stealth'] (14.9225,17.3355) -- (15.08125,17.17675);
\draw[arrows=-stealth'] (18.89125,15.113) -- (19.05,14.7955);
\draw[arrows=-stealth'] (16.03375,16.383) -- (18.57375,16.7005);
\draw[arrows=-stealth'] (21.11375,17.018) -- (22.70125,16.0655);
\draw[arrows=-stealth'] (24.28875,15.113) -- (21.74875,14.7955);
\draw[arrows=-stealth'] (19.20875,14.478) -- (17.62125,15.4305);
\draw[arrows=-stealth'] (15.39875,11.303) -- (17.93875,11.6205);
\draw[arrows=-stealth'] (20.47875,11.938) -- (22.06625,10.9855);
\draw[arrows=-stealth'] (23.65375,10.033) -- (21.11375,9.7155);
\draw[arrows=-stealth'] (18.57375,9.398) -- (16.98625,10.3505);
\draw[arrows=-stealth'] (15.39875,11.303) -- (15.71625,13.843);
\draw[arrows=-stealth'] (18.57375,9.398) -- (19.05,13.208);
\draw[arrows=-stealth'] (23.65375,10.19175) -- (23.97125,12.573);
\draw[arrows=-stealth'] (20.47875,11.938) -- (20.79625,14.478);
\draw (21.11375,17.018) -- (21.2725,17.49425);
\draw (16.03375,16.383) -- (15.08125,17.17675);
\draw (24.28875,15.113) -- (25.4,15.58925);
\draw (18.25625,16.39782) -- (19.20875,14.478);
\draw (11.43,15.08125) -- (13.6525,16.1925);
\draw (11.1125,12.54125) -- (13.0175,11.1125);
\draw[arrows=-stealth'] (11.70517,12.26608) -- (11.8745,12.10733);
\draw[arrows=-stealth'] (12.28725,15.51517) -- (12.04383,15.38817);
\draw (11.43,15.08125) -- (13.6525,16.1925);
\draw (8.89,14.76375) -- (8.5725,15.5575);
\draw[arrows=-stealth'] (11.43,15.08125) -- (10.16,14.9225);
\draw[arrows=-stealth'] (13.6525,16.1925) -- (11.1125,15.875);
\draw (7.9375,10.48808) -- (8.5725,12.22375);
\draw[arrows=-stealth'] (8.6995,15.22942) -- (8.78417,15.02833);
\draw (13.6525,16.1925) -- (13.0175,11.1125) -- (7.9375,10.48808) -- (8.5725,15.5575) -- cycle;
\draw (11.1125,12.54125) -- (8.5725,12.22375) -- (8.89,14.76375) -- (11.43,15.08125) -- cycle;
\draw[arrows=-stealth'] (11.1125,12.54125) -- (9.8425,12.3825);
\draw[arrows=-stealth'] (13.0175,11.1125) -- (10.4775,10.80558);
\draw (7.9375,10.48808) -- (11.43,15.08125);
\draw (8.5725,15.5575) -- (11.1125,12.54125);
\draw (13.0175,11.1125) -- (10.23408,13.55725) -- (8.89,14.76375);
\draw[arrows=-stealth'] (8.5725,12.22375) -- (10.25525,13.53608) -- (13.6525,16.1925);
\draw[arrows=-stealth'] (11.01725,14.53092) -- (10.84792,14.30867);
\draw[arrows=-stealth'] (11.03842,14.17108) -- (10.84792,14.00175);
\draw[arrows=-stealth'] (10.90083,13.00692) -- (10.7315,13.1445);
\draw[arrows=-stealth'] (10.68917,13.0175) -- (10.55158,13.19742);
\draw[arrows=-stealth'] (9.94833,13.32442) -- (9.76842,13.17625);
\draw[arrows=-stealth'] (9.9695,13.13392) -- (9.78958,12.954);
\draw[arrows=-stealth'] (9.76842,13.97) -- (9.5885,14.12875);
\draw[arrows=-stealth'] (9.82133,14.097) -- (9.68375,14.26633);
\draw[arrows=-stealth'] (8.43492,11.8745) -- (8.33967,11.60992);
\draw[arrows=-stealth'] (8.89,14.76375) -- (8.73125,13.54667);
\draw[arrows=-stealth'] (7.9375,10.4775) -- (8.255,13.0175);
\draw[arrows=-stealth'] (11.43,15.08125) -- (11.27125,13.81125);
\draw[arrows=-stealth'] (13.0175,11.1125) -- (13.335,13.6525);
\draw (6.985,9.525) -- (6.35,7.46125);
\draw (3.81,7.14375) -- (1.905,8.89);
\draw (7.9375,6.50875) -- (10.16,7.62);
\draw (5.3975,6.19125) -- (5.08,6.985);
\draw (10.16,7.62) -- (5.08,6.985) -- (1.905,8.89) -- (6.985,9.525) -- cycle;
\draw (7.9375,6.50875) -- (5.3975,6.19125) -- (3.81,7.14375) -- (6.35,7.46125) -- cycle;
\draw (10.16,7.62) -- (3.81,7.14375);
\draw (7.9375,6.50875) -- (1.905,8.89);
\draw (6.35,7.46125) -- (5.08,6.985);
\draw[arrows=-stealth'] (7.9375,6.50875) -- (6.6675,6.35);
\draw[arrows=-stealth'] (5.3975,6.19125) -- (4.60375,6.6675);
\draw[arrows=-stealth'] (3.81,7.14375) -- (5.08,7.3025);
\draw[arrows=-stealth'] (6.35,7.46125) -- (7.14375,6.985);
\draw[arrows=-stealth'] (10.16,7.62) -- (7.62,7.3025);
\draw[arrows=-stealth'] (5.08,6.985) -- (3.52425,7.90575);
\draw[arrows=-stealth'] (1.905,8.89) -- (4.445,9.2075);
\draw[arrows=-stealth'] (6.985,9.525) -- (8.5725,8.5725);
\draw[arrows=-stealth'] (6.80508,6.95325) -- (7.20725,6.81567);
\draw[arrows=-stealth'] (5.70442,6.84742) -- (5.57742,6.56167);
\draw (5.3975,6.19125) -- (6.985,9.525);
\draw[arrows=-stealth'] (6.08542,7.366) -- (6.25475,7.41892);
\draw[arrows=-stealth'] (5.40808,7.27075) -- (5.16467,7.239);
\draw[arrows=-stealth'] (6.31825,8.10683) -- (6.20183,7.89517);
\draw[arrows=-stealth'] (7.03792,7.39775) -- (6.80508,7.37658);
\draw[arrows=-stealth'] (5.36575,7.06967) -- (5.57742,7.16492);
\draw[arrows=-stealth'] (5.1435,6.80508) -- (5.24933,6.55108);
\draw[arrows=-stealth'] (3.19617,7.71525) -- (3.34433,7.54592);
\draw[arrows=-stealth'] (6.70983,8.5725) -- (6.62517,8.27617);
\draw[arrows=-stealth'] (8.98525,7.02733) -- (8.81592,6.96383);
\draw[arrows=-stealth'] (3.96875,8.05392) -- (4.1275,8.01158);
\draw (5.08,2.2225) -- (5.715,4.28625);
\draw (8.255,4.60375) -- (10.16,2.8575);
\draw (4.1275,5.23875) -- (1.905,4.1275);
\draw (6.6675,5.55625) -- (6.985,4.7625);
\draw (1.905,4.1275) -- (6.985,4.7625) -- (10.16,2.8575) -- (5.08,2.2225) -- cycle;
\draw (4.1275,5.23875) -- (6.6675,5.55625) -- (8.255,4.60375) -- (5.715,4.28625) -- cycle;
\draw (1.905,4.1275) -- (8.255,4.60375);
\draw (4.1275,5.23875) -- (10.16,2.8575);
\draw (5.715,4.28625) -- (6.985,4.7625);
\draw[arrows=-stealth'] (4.1275,5.23875) -- (5.3975,5.3975);
\draw[arrows=-stealth'] (6.6675,5.55625) -- (7.46125,5.08);
\draw[arrows=-stealth'] (8.255,4.60375) -- (6.985,4.445);
\draw[arrows=-stealth'] (5.715,4.28625) -- (4.92125,4.7625);
\draw[arrows=-stealth'] (1.905,4.1275) -- (4.445,4.445);
\draw[arrows=-stealth'] (6.985,4.7625) -- (8.5725,3.81);
\draw[arrows=-stealth'] (10.16,2.8575) -- (7.62,2.54);
\draw[arrows=-stealth'] (5.08,2.2225) -- (3.4925,3.175);
\draw[arrows=-stealth'] (5.25992,4.79425) -- (5.38692,4.74133);
\draw[arrows=-stealth'] (6.44525,5.06942) -- (6.36058,4.94242);
\draw (6.6675,5.55625) -- (5.08,2.2225);
\draw[arrows=-stealth'] (5.85258,4.34975) -- (5.99017,4.39208);
\draw[arrows=-stealth'] (6.97442,4.51908) -- (6.64633,4.47675);
\draw[arrows=-stealth'] (7.5565,3.86292) -- (7.9375,3.73592);
\draw[arrows=-stealth'] (5.74675,3.64067) -- (5.67267,3.48192);
\draw[arrows=-stealth'] (5.02708,4.34975) -- (4.80483,4.32858);
\draw[arrows=-stealth'] (6.48758,4.572) -- (6.65692,4.62492);
\draw[arrows=-stealth'] (6.80508,5.21758) -- (6.858,5.08);
\draw[arrows=-stealth'] (8.86883,4.03225) -- (9.05933,3.85233);
\draw[arrows=-stealth'] (5.51392,3.58775) -- (5.461,3.39725);
\draw[arrows=-stealth'] (3.37608,4.85775) -- (3.24908,4.78367);
\draw (11.27125,9.2075) -- (13.17625,7.46125) -- (12.85875,4.92125) -- (10.63625,3.81) -- (11.27125,9.2075) -- (14.44625,7.3025) -- (14.76375,6.50875) -- (14.44625,3.96875) -- (13.81125,1.905) -- (14.44625,7.3025);
\draw (13.17625,7.46125) -- (14.76375,6.50875);
\draw (12.85875,4.92125) -- (14.44625,3.96875);
\draw (10.63625,3.81) -- (13.81125,1.905) -- (13.81125,2.06375);
\draw (10.63625,3.81) -- (13.335,5.715);
\draw (13.17625,7.46125) -- (13.81125,1.905);
\draw (14.44625,3.96875) -- (11.27125,9.2075);
\draw (14.44625,7.3025) -- (12.85875,4.92125);
\draw (13.81125,1.905) -- (14.44625,3.96875);
\draw (12.85875,4.92125) -- (10.63625,3.81);
\draw[arrows=-stealth'] (14.44625,3.96875) -- (13.6525,4.445);
\draw[arrows=-stealth'] (13.81125,1.905) -- (12.22375,2.8575);
\draw (13.17625,7.46125) -- (11.27125,9.2075);
\draw[arrows=-stealth'] (14.44625,7.3025) -- (12.85875,8.255);
\draw[arrows=-stealth'] (14.76375,6.50875) -- (13.716,7.14375);
\draw[arrows=-stealth'] (13.17625,7.46125) -- (13.0175,6.19125);
\draw[arrows=-stealth'] (10.63625,3.81) -- (10.95375,6.6675);
\draw[arrows=-stealth'] (14.76375,6.50875) -- (14.605,5.3975);
\draw[arrows=-stealth'] (13.81125,1.905) -- (14.12875,4.7625);
\draw[arrows=-stealth'] (12.24068,8.3439) -- (12.3825,8.17033);
\draw[arrows=-stealth'] (12.14543,4.55083) -- (11.98668,4.4704);
\draw[arrows=-stealth'] (14.25575,3.28083) -- (14.17743,3.05858);
\draw[arrows=-stealth'] (14.55843,6.99558) -- (14.6685,6.7564);
\draw[arrows=-stealth'] (13.22917,5.94783) -- (13.10217,6.1595);
\draw[arrows=-stealth'] (13.28208,6.46642) -- (13.2715,6.61458);
\draw[arrows=-stealth'] (13.18683,5.60917) -- (13.02808,5.49275);
\draw[arrows=-stealth'] (13.19318,5.37633) -- (13.06618,5.20277);
\draw[arrows=-stealth'] (13.49375,4.70958) -- (13.45142,5.15408);
\draw[arrows=-stealth'] (13.81125,5.02708) -- (13.62075,5.334);
\draw[arrows=-stealth'] (13.81125,6.36058) -- (13.68425,6.13833);
\draw[arrows=-stealth'] (13.91708,6.04308) -- (13.68425,5.91608);
\draw (14.76375,6.5024) -- (13.3985,5.75733);
\draw (4.445,12.85875) -- (2.2225,11.7475);
\draw (4.7625,15.39875) -- (2.8575,16.8275);
\draw[arrows=-stealth'] (4.7625,15.39875) -- (4.60375,14.12875);
\draw[arrows=-stealth'] (2.2225,11.7475) -- (2.54,14.2875);
\draw[arrows=-stealth'] (3.82693,16.2814) -- (3.96875,16.10783);
\draw[arrows=-stealth'] (3.73168,12.48833) -- (3.57293,12.4079);
\draw (4.445,12.85875) -- (2.2225,11.7475);
\draw (6.985,13.17625) -- (7.3025,12.3825);
\draw[arrows=-stealth'] (4.445,12.85875) -- (5.715,13.0175);
\draw[arrows=-stealth'] (2.2225,11.7475) -- (4.7625,12.065);
\draw (7.9375,17.4625) -- (7.3025,15.71625);
\draw[arrows=-stealth'] (7.3025,12.3825) -- (7.62,14.9225);
\draw[arrows=-stealth'] (7.493,16.40417) -- (7.57132,16.62642);
\draw[arrows=-stealth'] (7.10142,12.90108) -- (7.16492,12.68942);
\draw (2.2225,11.7475) -- (2.8575,16.8275) -- (7.9375,17.4625) -- (7.3025,12.3825) -- cycle;
\draw (4.7625,15.39875) -- (7.3025,15.71625) -- (6.985,13.17625) -- (4.445,12.85875) -- cycle;
\draw[arrows=-stealth'] (7.3025,15.71625) -- (7.14375,14.44625);
\draw[arrows=-stealth'] (4.80483,15.40933) -- (6.04308,15.56808);
\draw[arrows=-stealth'] (2.8575,16.8275) -- (5.3975,17.145);
\draw[arrows=-stealth'] (4.8895,13.82183) -- (5.02708,13.95942);
\draw[arrows=-stealth'] (5.03767,13.67367) -- (5.16467,13.83242);
\draw[arrows=-stealth'] (4.87892,15.01775) -- (5.04825,14.89075);
\draw[arrows=-stealth'] (5.04825,15.07067) -- (5.17525,14.91192);
\draw[arrows=-stealth'] (5.969,14.859) -- (6.08542,15.01775);
\draw[arrows=-stealth'] (6.0325,14.72142) -- (6.223,14.88017);
\draw[arrows=-stealth'] (5.97958,14.097) -- (6.1595,13.91708);
\draw[arrows=-stealth'] (5.969,13.92767) -- (6.10658,13.7795);
\draw (4.7625,15.39875) -- (7.3025,12.3825);
\draw (2.2225,11.7475) -- (7.3025,15.71625);
\draw (4.445,12.85875) -- (7.9375,17.4625);
\draw (6.985,13.17625) -- (2.8575,16.8275);

\end{tikzpicture}%
 \caption{A locally acyclic orientation of the graph of a 24-cell.}{\label{fig:24cell_lao}}
\end{figure}
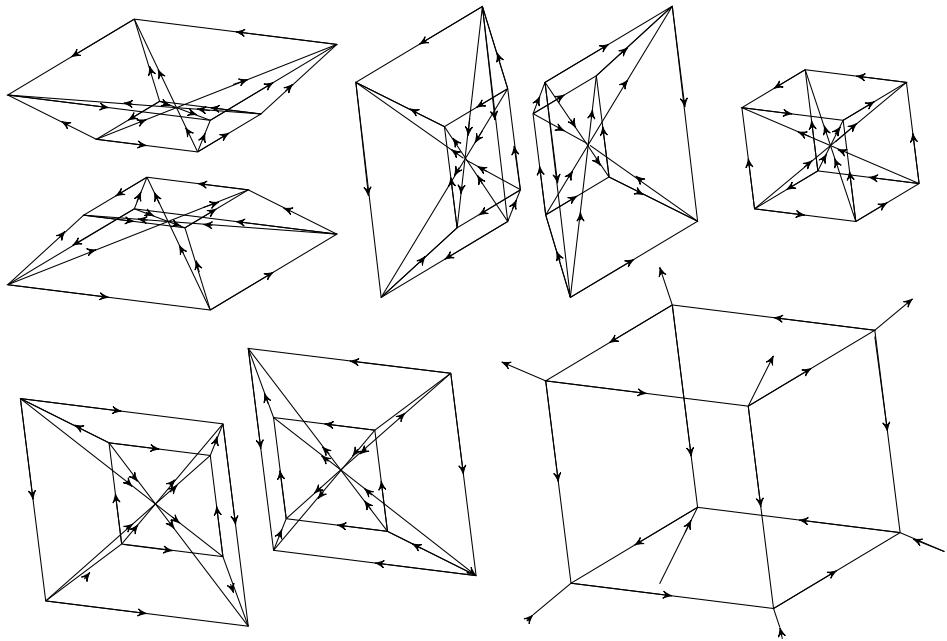

Figure~\ref{fig:24cell_lao} depicts this orientation, in several
pieces. To understand the figure, observe that that $24$ vertices of
$Q$ are the union of the $16$ vertices of a four-cube plus the eight
points $\pm2e_i$. Moreover, the graph of $Q$ is the graph of the
four-cube plus the edges joining each of the eight three-cube facets of
it to the corresponding $e_i$ or $-e_i$. The figure shows
stereographic projections of the eight three-cube facets of the four-cube, each with the orientation of its edges. The projection makes two of the facets regular cubes, so that one of the extra points is sent to infinity. We leave it to the reader to check that the orientation displayed has the two claimed properties (thanks to symmetry this task is not as hard as it might seem).

In order to  triangulate $P$ we first triangulate each octahedral facet
of $Q$ in the unique way that does not create a local cycle. That is,
 in each octahedron use the diagonal that is orthogonal to the
quadrilateral cycle of that octahedron. Also, by the properties of the
triangulation, this new diagonal can only be given one of its two
orientations without creating a local cycle. Cone this triangulation
of the boundary of $Q$ to the origin, and orient every new edge away
from the origin.
 This produces a triangulation $\T$ of $Q$ with a
locally acyclic orientation.
 Further, this triangulation and orientation 
 is unimodular, and it has no 
  reversible edge. That is, reversal of any individual edge 
 creates a directed cycle. 
For the $96$ original edges of the
$24$-cell, this is part of what we claimed before. It is also true and
easy to check for the new $24$ diagonals of the octahedra and $24$ edges from the center.

We can now triangulate $P$ using the staircase refinement of $\T$ and the edge $[0,1]$ (we give to $[0,1]$ either of its two orientations). 
It turns out that the fact that no edge can be reversed in the locally
acyclic orientation implies that this triangulation cannot be
connected by flips to any triangulation constructed in the same way
using a different initial orientation of the edges of the
$24$-cell. This proves that the graph of triangulations contains
unimodular triangulations in different components. There are at least
$13$ such components because symmetries of the $24$-cell produce $12$
different ways of constructing the initial locally acyclic
orientation, plus another component that will contain all the
\emph{globally acyclic} orientations. The  $3^{48}$  triangulations in
each component and the $96$ in the dimension of the toric Hilbert scheme are 
by-products of the many symmetries of the construction.
\end{proof}

\section{Dilations and the KMW Theorem} \label{sec:kmw}

Dilating a lattice polytope by a positive integer is a natural
operation.
One of the first theorems about unimodular triangulations was proved
in the early days of toric geometry by~Knudsen, Mumford, and Waterman%
\footnote{We call this the Knudsen-Mumford-Waterman Theorem, or KMW, since
Knudsen  says: ``One of the key steps is due to Alan Waterman. 
The rest is a truly joint effort by Mumford and
me''~\cite[p.~109]{KKMS3}.}~\cite{KKMS3},
who were interested in semi-stable reduction of families over
curves.
\begin{theorem}[KMW theorem, \cite{KKMS3}]\label{thm:kmw}
  Given a polytope $P$, there is a  $c \in \Z_{>0}$ such
  that the dilation $c \cdot P$ admits a regular unimodular
  triangulation.
\end{theorem}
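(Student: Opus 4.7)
My plan is to prove Theorem~\ref{thm:kmw} by a nested induction: an outer reduction to the case of a single lattice simplex, followed by an inner induction on normalized volume. First, by Lemma~\ref{lemma:strongPulling}, $P$ admits a regular, full lattice triangulation $\T_0$ whose maximal cells have positive integer volumes. I would reduce to the simplex case: if every lattice $d$-simplex $\sigma$ admits an integer dilation $c_\sigma$ for which $c_\sigma\sigma$ has a regular unimodular triangulation compatible with prescribed triangulations on its dilated boundary faces, then a common dilation factor $c$ (a suitable multiple of the $c_\sigma$ over the maximal cells of $\T_0$) together with a face-dimension induction yields a regular unimodular triangulation of $cP$.

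For the simplex case, I would induct on $V=\vol(\sigma)$. The base case $V=1$ is trivial. For $V\ge 2$, the central statement is the \emph{local volume-decrease claim}: there exists an integer $c'\ge 1$ and a regular lattice triangulation of $c'\sigma$ in which every maximal cell has volume strictly less than $V$. Granted the claim, the inductive hypothesis applied to each smaller-volume piece provides a further dilation with a regular unimodular refinement; these assemble (after passing to a common dilation factor) into a regular unimodular triangulation of some $c\sigma$, and the induction terminates since $V$ is a strictly decreasing positive integer.

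To establish the local claim, I would exploit the $V$ lattice points in the half-open fundamental parallelotope at a vertex $v_0$ of $\sigma$. For $V\ge 2$ there is a non-zero such point $p=\sum_{j\ge 1}\mu_j(v_j-v_0)$ with $0\le\mu_j<1$. When $q:=v_0+p$ happens to lie in the relative interior of $\sigma$, taking $c'=1$ and pulling at $q$ decomposes $\sigma$ into $d+1$ sub-simplices of volumes $(1-\sum\mu_j)V$ and $\mu_jV$, all non-negative integers strictly less than $V$; regularity is preserved by Lemma~\ref{lemma:strongPulling}(1). When $\sigma$ has no interior lattice points (e.g., for empty simplices like Reeve's tetrahedra), one must first dilate $\sigma$ by a suitable factor to create them, then apply a more elaborate sequence of pullings on $c'\sigma$.

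\emph{Main obstacle.} The essential difficulty is the local volume-decrease claim for empty or sparse simplices. A naive pulling at the centroid of $(d+1)\sigma$ yields sub-simplices of volume $(d+1)^{d-1}V$, which exceeds $V$ in dimension $\ge 2$, so single-point refinements do not suffice. Producing the correct dilation and iterated pulling scheme requires careful use of the lattice structure of the parallelotope and forces substantial dilation factors -- precisely the source of the doubly exponential bound in the effective Theorem~\ref{thm:KMW-effective}. Ensuring compatibility of the simplex-level refinements across the shared faces of $\T_0$ is the secondary challenge, handled by performing the refinements simultaneously on the dilated triangulation and invoking Lemma~\ref{lemma:strongPulling}(1) to maintain regularity throughout.
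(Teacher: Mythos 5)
Your overall skeleton (triangulate $P$, then induct on the maximal normalized volume $V$ by showing some dilation of each offending simplex can be re-triangulated into cells of volume $<V$) is indeed the shape of the paper's argument, but your proof has a genuine gap at exactly the point where the theorem lives: the ``local volume-decrease claim'' for simplices with no interior lattice points. You prove it only when the fundamental-parallelotope point $q$ lies in the relative interior of $\sigma$ (where $c'=1$ and a single pull works), and for the empty/sparse case you observe correctly that pulling one point of a dilation cannot work, and then merely assert that some ``more elaborate sequence of pullings'' after dilation exists. That unproved assertion is the whole content of Knudsen--Mumford--Waterman. The paper's mechanism is quite different from iterated pulling: it fixes a \emph{box point} $\m\in\Lambda\setminus L_\Delta$ (a nonzero class modulo the vertex lattice of the simplex, which always exists when $V\ge 2$ even if $\Delta$ is empty), dilates by a multiple of $d!$, and slices the dilated star of the carrier face of $\m$ into concentric layers bounded by dilated copies of the star boundary; consecutive layers are Cayley sums of type-$\rootA$ polytopes, and Lemma~\ref{lemma:cayley} shows any triangulation of such a Cayley sum extending the canonical ($\rootA$-dicing) triangulations is unimodular with respect to the lattice $L_\Delta\cup(\m+L_\Delta)$, which forces every resulting cell to have volume strictly below $V$ (Lemma~\ref{lemma:kmw:pull}). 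Nothing in your proposal substitutes for this construction.

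A secondary but real problem is your outer reduction: you assume each simplex admits a dilated regular unimodular triangulation ``compatible with prescribed triangulations on its dilated boundary faces,'' and that compatibility is then ``handled by performing the refinements simultaneously.'' Prescribing boundary behaviour is not a harmless bookkeeping step --- already in dimension $3$ one cannot always achieve a unimodular triangulation of $c\Delta$ with standard boundary (cf.\ Theorem~\ref{thm:dilated3simplices}), and gluing simplex-by-simplex constructions is precisely what made the original KMW proof delicate. The paper avoids prescribing arbitrary boundary data: it orders the vertices globally, uses the canonical triangulation of each dilated ordered simplex (Definition~\ref{def:typeA_canonical_triangulation}), and arranges in Lemma~\ref{lemma:kmw:pull}/Corollary~\ref{coro:simultaneous} that every local refinement induces exactly this canonical triangulation on $c\,\partial\Star(F;\T)$, with explicit weights certifying that regularity survives the simultaneous refinements; Theorem~\ref{thm:kmw-inductive} then runs the induction on $V$. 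To repair your proposal you would need both the box-point/Cayley volume-reduction lemma and this canonical-boundary gluing device (or equivalents), neither of which is supplied.
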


We say that $c$ is a \emph{KMW-number} of $P$ if $cP$ has a unimodular
triangulation.  This KMW-theorem raised several questions that are still open, including:
\begin{itemize}
\item What is the minimum $c(P)$ for a given polytope $P$?
  Is there a $c(d)$ that is a KMW-number for every polytope of
  dimension $d$? 
\item What is the structure of the set of KMW-numbers of a given $P$?
  Is it a monoid? Theorem~\ref{thm:unimodular-dilations} 
  implies it is closed under taking multiples of an element. It is not
  clear whether it is closed under taking sums.
\item To our knowledge, Example~\ref{kmw:eg:c+1} below is the first
  class of polytopes $P$ and integers $c$ in the literature such that
  $c$ is a KMW-number for $P$ but $c+1$ is not.
\end{itemize}

The following two theorems by Bruns,
Gubeladze, and Trung; and Eisenbud,
Reeves, and Totaro, respectively, show that we cannot expect algebraic
obstructions to these questions.
\begin{theorem}[\cite{BGT}]
  $c \cdot P$ is integrally closed for $c \ge d-1$, and Koszul for $c \ge
  d$.
\end{theorem}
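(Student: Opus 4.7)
The plan is to treat the two bounds separately, as they sit at different levels of the hierarchy of Section~\ref{sec:hierarchy}: integral closure is a combinatorial statement about Hilbert bases, while Koszulness is purely algebraic.

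For the integral-closure bound $c\ge d-1$, I would first reduce to the simplex case: fix any lattice triangulation of $P$ (which always exists, e.g.\ by iterated strong pulling), and observe that any lattice point of $mcP$ lies in some dilated simplex $mc\Delta$ of the triangulation. It therefore suffices to show that $c\Delta$ is integrally closed for every lattice $d$-simplex $\Delta=\conv(v_0,\dots,v_d)$ and every $c\ge d-1$. A lattice point $p\in mc\Delta$ has a barycentric representation $p=\sum_{i=0}^d a_iv_i$ with $\sum a_i= mc$ and $a_i\ge 0$. Splitting $a_i=\lfloor a_i\rfloor+\{a_i\}$ produces an auxiliary lattice point $q:=\sum\{a_i\}v_i$ lying in $s\Delta$, where $s=\sum\{a_i\}$ is a non-negative integer bounded by $d$. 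When $s\le c$, one peels off $q$ as a single lattice summand in $c\Delta$ and distributes the integer residue $\sum\lfloor a_i\rfloor v_i$ freely among the other $m-1$ summands; the easy bound that this yields is $c\ge d$. Improving to $c\ge d-1$ means handling the extremal case $s=d$, where all the $\{a_i\}$ are close to $1$; here one shows that $q$ necessarily lies on a proper face of $\Delta$ of dimension~$\le d-1$, and applies the inductive hypothesis on dimension to that face.

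For the Koszul bound $c\ge d$, the plan is to invoke the Eisenbud--Reeves--Totaro machinery together with a regularity estimate. The $c$-th Veronese $R_P^{(c)}$ coincides with the toric ring $R_{cP}$, and a theorem of Backelin / Eisenbud--Reeves--Totaro guarantees that $R^{(c)}$ is Koszul once $c$ exceeds the Castelnuovo--Mumford regularity of the standard graded algebra $R$. It then suffices to establish $\operatorname{reg}(R_P)\le d$, which follows from the Danilov--Stanley description of local cohomology of a semigroup ring, forcing vanishing of the appropriate $H^i_\mathfrak{m}(R_P)$ in high degrees. A subtlety is that the Koszul conclusion is naturally a statement about $\tilde R_P$, but this is harmless because $\tilde R_P$ and $R_P$ agree in all sufficiently high degrees and Koszulness of the Veronese depends only on those.

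The main obstacle is sharpening the integral-closure bound from $d$ to $d-1$. The weaker bound $c\ge d$ is a one-line barycentric calculation, but the extremal case needed for $c=d-1$ demands a genuine induction on dimension, together with the observation that one cannot simultaneously have $\sum\{a_i\}=d$ and keep $q$ in the relative interior of $\Delta$. The Koszul half is, by comparison, essentially a black-box consequence of Backelin--Eisenbud--Reeves--Totaro once the regularity bound $\operatorname{reg}(R_P)\le d$ has been verified.
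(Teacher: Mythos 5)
First, a point of reference: the survey does not prove this statement at all --- it is quoted from Bruns--Gubeladze--Trung \cite{BGT} as background --- so your argument has to stand on its own, and it has a genuine gap precisely at the step you yourself identify as the crux. In the integral-closure half, after writing a lattice point of $mc\Delta$ as $z=\sum_i a_i v_i$ and setting $q=\sum_i\{a_i\}v_i$, $s=\sum_i\{a_i\}$, your peeling argument is fine whenever $s\le c$, and since $s\le d$ this only yields the bound $c\ge d$. For the extremal case $s=d$ you assert that $q$ must lie on a proper face of $\Delta$, and you repeat this at the end as ``one cannot simultaneously have $\sum\{a_i\}=d$ and keep $q$ in the relative interior''. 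This is exactly backwards: if some $\{a_j\}=0$, the remaining $d$ fractional parts, each strictly less than $1$, would have to sum to $d$, which is impossible; hence $s=d$ forces every $\{a_i\}>0$, so $q$ has all barycentric coordinates strictly positive and lies in the relative interior of $d\Delta$. Your induction on a proper face therefore never gets started. Note also that no repair of the form ``subtract a vertex from $q$'' is available (all $\{a_i\}<1$), so producing a lattice summand of degree $d-1$ dominated coordinatewise by $(a_0,\dots,a_d)$ in this case genuinely requires a further idea, which is where the work in \cite{BGT} (and Ewald--Wessels) lies; as written, your proposal only proves the weaker bound $c\ge d$.

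The Koszul half is closer to a correct citation chain, but the patch you offer for the $\tilde R_P$ versus $R_P$ discrepancy is false: these algebras need not agree in large degrees. For an empty non-unimodular simplex (e.g.\ Reeve's tetrahedron), $\tilde R_P$ is a polynomial ring on the four vertices while $R_P$ is strictly larger in all large degrees, since the vertices generate a proper sublattice. Consequently $R_{cP}$ is the $c$-th Veronese of the Ehrhart ring $R_P$, not of the standard graded ring $\tilde R_P$, and Backelin/Eisenbud--Reeves--Totaro do not apply verbatim because $R_P$ is not generated in degree one; similarly, the bound of \cite{EisenbudReevesTotaro} involves $\operatorname{reg}(\ideal_P)$, i.e.\ the regularity of the possibly non-Cohen--Macaulay ring $\tilde R_P$, which is not what the Danilov--Stanley/local cohomology computation controls (that bounds the regularity of the normal ring $R_P$). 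To make this half work one needs a Veronese--Koszulness criterion valid for non-standard gradings, applied to $R_P$ together with degree bounds on its generators and relations (the generator bound being essentially the Hilbert-basis/normality statement from the first half); some statement of that kind is the missing ingredient in your sketch.
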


\begin{theorem}[\cite{EisenbudReevesTotaro}] \label{thm:ert}
  After a linear change of coordinates, $\ideal_{cP}$ has a quadratic
  Gr\"obner basis for $c \ge \operatorname{reg}(\ideal_P)/2$.
\end{theorem}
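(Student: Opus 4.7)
The plan is to use the machinery of generic initial ideals and Castelnuovo--Mumford regularity, independently of the toric-polyhedral viewpoint, since the statement is essentially an algebraic one about $\ideal_P$ as a homogeneous ideal in a polynomial ring. The ``linear change of coordinates'' in the statement should be read as passing to generic coordinates.

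First I would apply the Bayer--Stillman theorem: after a generic choice of coordinates on the ambient polynomial ring, the generic initial ideal $J := \operatorname{gin}_{\mathrm{revlex}}(\ideal_P)$ is Borel-fixed, and its largest minimal generator degree equals $r := \operatorname{reg}(\ideal_P)$. Next I would relate $\ideal_{cP}$ to a Veronese-type construction: the toric ring $R_{cP}$ is the $c$-th Veronese subring of $R_P$, and its defining ideal inside the polynomial ring indexed by $(cP \times \{1\})\cap \Z^{d+1}$ agrees (up to a lift of the linear change of coordinates) with the ``$c$-th Veronese of $\ideal_P$'', namely the kernel of the map from the degree-$c$ Veronese polynomial ring onto the Veronese subring of $S/\ideal_P$.

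The heart of the argument is then to prove that for $c \ge r/2$ the $c$-th Veronese of a Borel-fixed ideal generated in degrees $\le 2c$ has a quadratic Gr\"obner basis with respect to a carefully chosen monomial order on the Veronese polynomial ring. The natural choice here is Sturmfels' \emph{sorting order}, for which the Veronese subalgebra of $S$ itself has a quadratic Gr\"obner basis indexed by ``sortable'' pairs of monomials. The plan is to lift this: write a generator $m$ of $J$ of degree $d \le r \le 2c$ as a product $m = u \cdot v$ of two degree-$c$ monomials (where $c \ge r/2$ is exactly the bound that allows such a factorization into Veronese degree), producing a quadratic element $\vx_u \vx_v - \vx_{u'}\vx_{v'} \in \ideal_{cP}$, and then verify Buchberger's criterion for the union of these ``lifted'' quadrics with the sorting-order Gr\"obner basis of the pure Veronese.

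The main obstacle will be the last verification: one must control all $S$-pairs between a sortable quadric of the pure Veronese and a lifted generator coming from $J$, and show that their reductions stay quadratic. Here the Borel-fixedness of $J$ is essential, because it lets one ``move down'' an indeterminate in a reduction step and matches the combinatorics of sortability; without Borel-fixedness one loses control of the reduction process. Quantifying this precisely — in particular, showing that the bound $c \ge r/2$ is tight and that $c \ge r/2$ suffices rather than the naive $c \ge r$ — is the technical core of the argument and is the step where I expect to spend most of the work.
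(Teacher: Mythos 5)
You should first be aware that the paper contains no proof of Theorem~\ref{thm:ert}: it is quoted, with citation, from Eisenbud--Reeves--Totaro \cite{EisenbudReevesTotaro} as background for Section~\ref{sec:kmw}, so there is no in-paper argument to measure your proposal against. What you have written is, in outline, the strategy of the cited original rather than an alternative route: pass to generic coordinates, invoke Bayer--Stillman so that the reverse-lexicographic generic initial ideal is Borel-fixed and (via the Eliahou--Kervaire resolution of stable ideals) has maximal generator degree exactly $\operatorname{reg}(\ideal_P)$, identify the target ring with a $c$-th Veronese, degenerate to the Borel-fixed situation, and exhibit quadrics --- the ``pure Veronese'' relations plus quadrics obtained by splitting the (suitably padded) generators of degree $\le 2c$ into two degree-$c$ factors --- whose leading terms generate the initial ideal. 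Since the entire technical content of \cite{EisenbudReevesTotaro} is precisely the step you defer (controlling the S-pairs/reductions against an order adapted to the Borel structure, and showing the initial ideal of the Veronese ideal is generated in degree two), your text is an honest road map of the known proof, not a proof; note also that tightness of $c\ge\operatorname{reg}(\ideal_P)/2$ is not part of the statement and need not be addressed, and that a generator of degree $e<2c$ is not itself a product of two degree-$c$ monomials --- you must multiply by an auxiliary monomial of degree $2c-e$ first, and stability is what lets you choose it so the leading terms behave.

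One genuine gap in the translation step deserves a concrete warning. You identify $\ideal_{cP}$ with the defining ideal of the $c$-th Veronese of $\tilde R_P=S/\ideal_P$, but the degree-one piece of that Veronese is spanned by the lattice points of $cP$ that are sums of $c$ lattice points of $P$, whereas $\ideal_{cP}$, as defined in Section~\ref{sec:toric-groebner-bases}, is the toric ideal of \emph{all} lattice points of $cP$. These presentations coincide only when every lattice point of $cP$ decomposes as such a sum (this is one reason the companion bound $c\ge d-1$ of \cite{BGT} is usually quoted alongside); otherwise the ERT theorem speaks about the Veronese re-embedding of $X_P$ and an extra argument is needed to pass to the statement about $\ideal_{cP}$ as literally defined. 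Finally, keep in mind that after the generic linear change of coordinates the binomial structure of the toric ideal is destroyed --- this is exactly why the theorem is phrased ``after a linear change of coordinates'' --- so the quadratic Gr\"obner basis you produce will not consist of binomials, and none of the toric-combinatorial tools from the rest of the paper (non-faces, triangulations) can be expected to see it.
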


In this respect if we relax the requirement from a unimodular triangulation to a \emph{unimodular cover}
there is indeed a constant $c_d$ depending only on the dimension such that any dilation
by a factor of at least $c_d$ has a unimodular cover. This result is from Bruns and Gubeladze 
but an improvement by von Thaden shows that $c_d$ can be bounded by a polynomial in $d$.

\begin{theorem}[\cite{BGcover,vonThadenThesis}]
For each fixed dimension $d$ there is a $c_d\in O(d^6)$ such that
 $c \cdot P$ has a unimodular cover for every $c\ge c_d$.
\end{theorem}

The second half of this section is dedicated to a proof of
Theorem~\ref{thm:kmw}.
Our proof differs from the original one (and the reworking of it
in~\cite{BG}) in two ways.
First, a more careful application of the elementary reduction step
enables us to avoid using ``local lattices''~\cite{BG} or ``rational
structures''~\cite{KKMS3} and yields a cleaner proof of regularity.%
\footnote{
Bruns and Gubeladze~\cite{BG} omit regularity, and the proof of
regularity in~\cite{KKMS3} (Theorem 2.22, pp. 147--151) is quite
intricate.} 
Secondly, we set up a book-keeping method that allows us
to obtain the bound stated in Theorem~\ref{thm:KMW-effective} below.
Determining a bound based upon the original proof would be difficult
and involve a tower of exponentials whose minimal length would be the
maximum volume among the simplices in the triangulation of $P$ being
used for the construction (see Remark~\ref{rem:tower-of-exponentials}).
In our proof the bound is ``merely'' double exponential.

\begin{theorem}[Effective KMW Theorem]
\label{thm:KMW-effective}
If $P$ is a lattice polytope of dimension $d$ and (lattice) volume $\vol(P)$, then the dilation 
\[
(d+1)!^{\vol(P)! {(d+1)^{(d+1)^2\vol(P)}}} P
\]
has a regular unimodular triangulation.

More precisely, if $P$ has a triangulation $\T$ into $N$ $d$-simplices, of volumes $V_1,\dots,V_N$, then the dilation 
\[
(d+1)!^{\sum_{i=1}^N  V_i! \left( (d+1)!^{d+1}\right)^{V_i-1}} \T
\]
has a regular unimodular refinement.

\end{theorem}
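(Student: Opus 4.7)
The plan is to prove the sharper second statement by induction on the simplex volume, following the original Knudsen--Mumford--Waterman strategy but with explicit book-keeping. The first, weaker bound then follows by crude estimates: $(d+1)!(d!)^d \le d^{d^2+d+1}$, and any lattice triangulation of $P$ has at most $\vol(P)$ simplices of volume at most $\vol(P)$.

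Set $B := (d+1)!(d!)^d$. The main technical claim I aim to prove is: for every lattice simplex $T$ of volume $V$, the dilation $d!^{V!\,B^{V-1}} T$ admits a regular unimodular triangulation whose restriction to every facet is the standard hypersimplicial triangulation of $d!^{V!\,B^{V-1}} \Delta^{d-1}$. Granting this, to triangulate an arbitrary $P$ with initial triangulation $\{T_i\}$, one first triangulates each $d!^{V_i!\,B^{V_i-1}} T_i$ regularly and unimodularly, and then uses Theorem~\ref{thm:unimodular-dilations} to further dilate each by the remaining factor. Because the boundary triangulations depend only on the dilation factor (not on $T$), they agree across shared facets, and the pieces glue to a regular unimodular triangulation of $d!^{\sum_i V_i!\,B^{V_i-1}} P$.

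To prove the single-simplex claim, write $g(V) := V!\,B^{V-1}$, so that $g(1)=1$ and $g(V) = VB\cdot g(V-1)$. The base case $V=1$ is immediate: slicing $d!\Delta^d$ by the hyperplanes $\sum x_i = k$ produces hypersimplices, each of which admits a regular unimodular triangulation by Theorem~\ref{thm:rootA}; these glue together regularly and unimodularly, restricting on each facet to the standard hypersimplicial triangulation. For $V>1$, the inductive step rests on the classical elementary reduction: since $T$ is non-unimodular, its fundamental parallelepiped contains a non-zero lattice point, so $dT$ contains a non-vertex lattice point $p = \sum_i \lambda_i v_i$ with $\lambda_i \in \Z_{\ge 0}$ and $\sum_i \lambda_i \le d$. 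A stellar subdivision of $dT$ from $p$ yields at most $d+1$ lattice simplices of strictly smaller volume. Propagating this subdivision consistently to every boundary facet and matching it with the inductively prescribed hypersimplicial triangulation there requires an additional dilation by $(d+1)(d!)^{d-1}$, producing a subdivision of $B\cdot T$ into at most $VB$ lattice simplices of volume at most $V-1$. Applying the inductive hypothesis to each piece and a uniform further dilation by $d!^{g(V-1)}$ (which does not disturb the boundary agreement), one obtains a regular unimodular triangulation of $d!^{g(V)} T$ via Theorem~\ref{thm:unimodular-dilations}. Regularity is preserved throughout, since each operation --- stellar subdivision, hypersimplicial dicing, dilation, refinement --- is realized by a convex piecewise-linear lifting, and these can be added with sufficiently separated weights.

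The main obstacle is the quantitative elementary reduction step: that dilation by exactly $B = (d+1)!(d!)^d$ suffices to decompose $B\cdot T$ into at most $VB$ lattice simplices of volume at most $V-1$ whose boundary triangulation matches the prescribed hypersimplicial one. Previous proofs avoid tracking these constants (which is precisely why no explicit bound was known), and extracting them requires a careful coordinated choice of the reducing point $p$ and propagation of the resulting stellar subdivisions across all faces. The doubly-exponential character of the final bound reflects the factorial growth in the number of simplices produced as the elementary reduction cascades through the volumes $V, V-1, \dots, 1$; whether a genuinely singly-exponential procedure exists (as raised in Section~\ref{sec:questions}) is essentially a question of whether this cascading can be avoided.
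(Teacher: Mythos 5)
Your plan hinges on a single-simplex claim that cannot hold as stated: you ask for a regular unimodular triangulation of $d!^{V!B^{V-1}}T$ whose restriction to \emph{every} facet is the standard hypersimplicial triangulation of the correspondingly dilated standard simplex, i.e.\ a triangulation determined by the dilation factor alone via the lattice $L_F$ spanned by the facet's vertices. But if some facet $F$ of $T$ is non-unimodular with respect to the ambient lattice $\Lambda$ (which happens already for empty simplices in dimension $\ge 4$), then any triangulation of the dilated facet using only $L_F$-points has cells of normalized $\Lambda$-volume $[\Lambda_F:L_F]>1$, so a $\Lambda$-unimodular triangulation of the dilated simplex must use the extra $\Lambda$-points on that facet and cannot induce your prescribed boundary. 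This is exactly why the ``standard boundary'' strategy works in dimension three (facets of empty tetrahedra are unimodular, cf.\ Section~\ref{sec:kmw3d} and Theorem~\ref{thm:dilated3simplices}) but does not generalize, and why the proof in the paper never prescribes boundaries simplex by simplex: instead it refines, inside a \emph{global} triangulation, the dilated stars of carrier faces of box points, using concentric copies of the star boundary and Cayley sums of type $\rootA$ polytopes (Lemma~\ref{lemma:cayley}, Lemma~\ref{lemma:kmw:pull}), which induces the canonical triangulation only on the boundary of the star. Moreover, your key quantitative step --- that an extra dilation by $(d+1)(d!)^{d-1}$ suffices to reconcile the stellar subdivision from the reducing point $p$ with the prescribed boundary, yielding at most $VB$ simplices of volume $\le V-1$ --- is asserted without any construction; that reconciliation is the entire difficulty, and no mechanism is offered for it.

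Two further points. First, regularity does not follow from gluing per-simplex regular triangulations that agree on shared facets: cells of a regular subdivision can be refined regularly and compatibly and still yield a non-regular triangulation (this is the phenomenon exploited in Proposition~\ref{prop:lao}); the paper obtains regularity by exhibiting each step as a regular refinement of the global triangulation with explicitly controlled weights (Lemma~\ref{lemma:kmw:pull}(3), Corollary~\ref{coro:simultaneous}). Second, although your recursion $g(V)=VB\,g(V-1)$ formally reproduces the bound $V!\,B^{V-1}$, the mechanism behind that bound in the paper is different: the exponent counts $\rootA$-equivalence classes of simplices arising under iterated global dilations by $c=d!$, where Lemma~\ref{lemma:independent box points} guarantees that one box point eliminates an entire class in a single iteration, and Lemma~\ref{lemma:number_of_simplices} bounds the growth of classes per step (Corollary~\ref{coro:KMW-effective}, Theorem~\ref{thm:KMW-effective2}). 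Without an analogue of that class-at-a-time bookkeeping, a per-simplex induction of the kind you sketch either fails at the boundary-matching step or collapses back into the tower-of-exponentials growth described in Remark~\ref{rem:tower-of-exponentials}.
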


Before going into the KMW Theorem we deal with two questions that are,
in a sense, special cases of it. In Section~\ref{sec:kmw3d} we review
what is known about KMW-numbers of three-dimensional lattice
polytopes, summing up results from~\cite{KantorSarkaria}
and~\cite{SantosZiegler}.
In Section~\ref{sec:kmwhypersimplex} we  give two proofs that if a
polytope $P$ has a unimodular triangulation every dilation $cP$
($c\in\N$) of it has one, and that regularity and flagness of the
triangulation can also be preserved in the process. This is equivalent
to saying (as noted above) that the set of KMW-numbers of any polytope
$P$ is closed under taking products. 

The \emph{canonical triangulations} of
ordered simplices introduced in Section~\ref{sec:kmwhypersimplex} are also instrumental 
for the proof of Theorem~\ref{thm:KMW-effective}, which  occupies Sections~\ref{sec:kmwreduction}--\ref{sec:KMW-effective}. Section~\ref{sec:kmwreduction} deals with the case of a lattice $d$-simplex $P$ and shows that $cP$ can be triangulated into simplices of volume
\emph{strictly smaller} than $P$, for some $c\in\{1,\dots,d+1\}$. In
Section~\ref{sec:KMWfirstproof} this reduction is applied iteratively
to a triangulation of a general polytope $P$
to yield Theorem~\ref{thm:kmw}. However, the effective version stated
as Theorem~\ref{thm:KMW-effective} requires a careful analysis of how to
control the number (and type) of new simplices obtained in each
iterative step. This is done in our final Section~\ref{sec:KMW-effective}.

\subsection{KMW numbers in dimension three} \label{sec:kmw3d}
If $P$ is a segment or a lattice polygon, then $P$ has a regular
unimodular triangulation, so for the one and two-dimensional cases, $P$ has KMW-number $c(P)=1$.

For three-dimensional polytopes the following is known:
\begin{compactenum}
\item One and two are not KMW-numbers of every three-polytope. In fact, there are non-unimodular simplices whose second dilation does not have a unimodular triangulation (Ziegler 1997, unpublished).
\item It is not known whether three and five are KMW-numbers of  every three-polytope.
\item Every other integer is a KMW-number of every three-polytope (Kantor and  Sarkaria~\cite{KantorSarkaria} for the integer four, Santos and Ziegler~\cite{SantosZiegler} for every other integer).
\end{compactenum}

These results all emanate from the fact that  every full triangulation of $P$ gives a subdivision of $cP$ into dilations of empty simplices. 
This means that in order to prove that $cP$ can be unimodularly triangulated for every $P$, we can restrict our attention to the case where $P$ is an empty simplex, as long as we manage to make sure that the triangulations of the individual dilated simplices  agree on their common faces.
So, let us look at what empty simplices in dimension three look like.

According to White's Theorem~\cite{White,Scarf,MS}, every empty tetrahedron in $\R^3$ has width one with respect to some lattice direction (not necessarily with respect to a facet normal). 
That is, it fits between two adjacent lattice hyperplanes. Consequently, it is either unimodular (if it has three vertices in one of the two lattice hyperplanes) or lattice equivalent to
\[
S(p,q) := \conv \left[
  \begin{smallmatrix}
    0&0&1&p\\
    0&0&0&q\\
    0&1&0&1
  \end{smallmatrix}
  \right]
  \]
for some integers $0 \le p \le q$ with $gcd(p,q)=1$. Observe that
$S(p,q) \sim S(q-p,q)\sim S(p^{-1},q) \sim S({q-p}^{-1},q)$, where
inverses are taken modulo $q$. It can be shown that these are the only
cases that give equivalent simplices. The parameter $q$ is the volume of $S(p,q)$, while $p$ carries information of more arithmetic nature.

In the $c$-th dilation $cS(p,q)$, all the lattice simplices lie in either the bottom edge, the top edge, or the $c-1$ intermediate horizontal lattice planes. The latter can be tiled by lattice translations of the parallelogram $C(p,q):= 2S(p,q)\cap \{z=1\}= \conv\{(0,0,1), (1,0,1), (p,q,1), (p+1,q,1)\}$.  So, in order to understand the structure of lattice points in $cS(p,q)$ we only need to look at those in $C(p,q)$. These are its four vertices plus $q-1$ interior points with the following properties: (a) there is exactly one of them in each of the $c-1$ lattice lines of direction $(1,0,0)$ and intersecting  $C(p,q)$ and (b) there is exactly one in each of the $c-1$ lattice lines of direction $(0,p,q)$. This induces two distinct orderings of these interior $q-1$ points which we call the $Y$-order and the $X$-order. 

All the results about unimodular triangulations of a dilated tetrahedron $S(p,q)$ use, in one way or another, the following decomposition of $c S(p,q)$. We only sketch the descriptions, relying on Figure~\ref{fig:KMW3d} for some of the meaning (more details can be found in~\cite{SantosZiegler}).
\begin{enumerate}
\item[(a)] First slice $c S(p,q)$ into $c$ layers by the $c-1$ horizontal planes at integer heights (Figure~\ref{fig:KMW3d}.a).
\item[(b)]  Then divide each layer into a number of triangular prisms  in one of two ways: the $k$-th layer can be divided into either $2k-1$ prisms with axis in the $X$-direction or $2(c-k)-1$ prisms with axis in the $Y$-direction (Figure~\ref{fig:KMW3d}.b). These prisms have a rectangular facet in one of the horizontal lattice planes and a single edge in the other. We call this edge the \emph{tip} of the prism.  
\item[(c)]  Each such prism can be unimodularly triangulated using joins of the primitive segments in the tip and monotone lattice paths of length $q$ in the opposite facet (Figure~\ref{fig:KMW3d}.c). These paths have to be chosen compatible to one another (they can all be the same one) and apart of these joins the triangulation of the prism only contains pyramids, with vertices at the tip, over unimodular triangles in the opposite facet.
\end{enumerate}

\begin{figure}
\begin{tikzpicture}[y=-1cm,scale=.5]

\definecolor{penColor}{rgb}{0,0,0.6902}
\draw[very thick,join=round,penColor] (14.605,0.9525) -- (16.51,2.8575) -- (20.32,2.8575) -- (18.89125,4.60375) -- (18.415,2.8575) -- (16.98625,4.60375) -- (18.89125,4.60375);
\draw[very thick,join=round,penColor] (14.605,2.2225) -- (16.98625,4.60375);
\draw[very thick,join=round,penColor] (16.51,2.8575) -- (16.98625,4.60375) -- (14.605,2.2225) -- (14.605,0.9525) -- (18.415,0.9525) -- (20.32,2.8575) -- (18.415,2.8575) -- (16.51,0.9525);
\draw[semithick,join=round,penColor] (15.08125,1.42875) -- (18.89125,1.42875) -- (19.3675,1.905) -- (15.5575,1.905) -- (16.03375,2.38125) -- (19.84375,2.38125);
\draw[very thick,join=round,penColor] (11.1125,6.35) -- (14.9225,6.35) -- (18.7325,10.16) -- (14.9225,10.16) -- (11.1125,6.35) -- (11.1125,9.2075) -- (15.875,13.97) -- (18.7325,10.16);
\draw[very thick,join=round,penColor] (15.875,13.97) -- (14.9225,10.16);
\draw[semithick,join=round,penColor] (13.97,9.2075) -- (13.81125,8.41375) -- (14.9225,8.89) -- (15.5575,8.73125) -- (16.1925,8.5725) -- (16.98625,9.04875) -- (16.8275,8.255);
\draw[semithick,join=round,penColor] (11.1125,9.2075) -- (13.97,9.2075) -- (12.065,10.16);
\draw[semithick,join=round,penColor] (13.0175,11.1125) -- (13.97,9.2075) -- (13.97,12.065);
\draw[semithick,join=round,penColor] (14.9225,13.0175) -- (13.97,9.2075);
\draw[semithick,join=round,penColor] (15.875,13.97) -- (13.97,9.2075);
\draw[semithick,join=round,penColor] (13.97,9.2075) -- (12.065,10.16);
\draw[very thick,join=round,penColor] (6.35,7.9375) -- (2.54,7.9375) -- (0.635,6.0325);
\draw[very thick,join=round,penColor] (0.635,7.3025) -- (3.01625,9.68375);
\draw[very thick,join=round,penColor] (3.4925,11.43) -- (0.635,8.5725) -- (0.635,0.9525) -- (12.065,0.9525) -- (3.4925,11.43) -- (0.635,0.9525);
\draw[very thick,join=round,penColor] (2.06375,6.19125) -- (7.77875,6.19125) -- (9.2075,4.445) -- (1.5875,4.445) -- (1.11125,2.69875) -- (10.63625,2.69875);
\draw[very thick,join=round,penColor] (2.06375,6.19125) -- (0.635,4.7625) -- (0.635,3.4925) -- (1.5875,4.445) -- (1.11125,2.69875) -- (0.635,2.2225);
\draw[very thick,join=round,penColor] (3.01625,9.68375) -- (4.92125,9.68375);
\definecolor{textColor}{rgb}{0,0,0.6902}
\path (3,12.54125) node[%
anchor=base west] {%
(a)};
\path (17,5.55625) node[%
anchor=base west] {%
(b)};
\path (12,13.17625) node[%
anchor=base west] {%
(c)};

\filldraw[semithick,black] (11.1125,9.2075) circle (0.15875cm);
\filldraw[semithick,black] (12.065,7.3025) circle (0.15875cm);
\filldraw[semithick,black] (15.875,13.97) circle (0.15875cm);
\filldraw[semithick,black] (14.9225,10.16) circle (0.15875cm);
\filldraw[semithick,black] (18.7325,10.16) circle (0.15875cm);
\filldraw[semithick,black] (17.78,9.2075) circle (0.15875cm);
\filldraw[semithick,black] (16.8275,8.255) circle (0.15875cm);
\filldraw[semithick,black] (15.875,7.3025) circle (0.15875cm);
\filldraw[semithick,black] (14.9225,6.35) circle (0.15875cm);
\filldraw[semithick,black] (11.1125,6.35) circle (0.15875cm);
\filldraw[semithick,black] (13.0175,8.255) circle (0.15875cm);
\filldraw[semithick,black] (13.97,9.2075) circle (0.15875cm);
\filldraw[semithick,black] (12.065,10.16) circle (0.15875cm);
\filldraw[semithick,black] (13.0175,11.1125) circle (0.15875cm);
\filldraw[semithick,black] (13.97,12.065) circle (0.15875cm);
\filldraw[semithick,black] (14.9225,13.0175) circle (0.15875cm);
\filldraw[semithick,black] (15.08125,7.14375) circle (0.15875cm);
\filldraw[semithick,black] (11.90625,6.50875) circle (0.15875cm);
\filldraw[semithick,black] (13.6525,6.82625) circle (0.15875cm);
\filldraw[semithick,black] (13.0175,6.985) circle (0.15875cm);
\filldraw[semithick,black] (14.33407,6.71407) circle (0.15875cm);
\filldraw[semithick,black] (16.03375,8.09625) circle (0.15875cm);
\filldraw[semithick,black] (12.85875,7.46125) circle (0.15875cm);
\filldraw[semithick,black] (14.605,7.77875) circle (0.15875cm);
\filldraw[semithick,black] (13.97,7.9375) circle (0.15875cm);
\filldraw[semithick,black] (15.28657,7.66657) circle (0.15875cm);
\filldraw[semithick,black] (16.98625,9.04875) circle (0.15875cm);
\filldraw[semithick,black] (13.81125,8.41375) circle (0.15875cm);
\filldraw[semithick,black] (15.5575,8.73125) circle (0.15875cm);
\filldraw[semithick,black] (14.9225,8.89) circle (0.15875cm);
\filldraw[semithick,black] (16.23907,8.61907) circle (0.15875cm);
\filldraw[semithick,black] (17.93875,10.00125) circle (0.15875cm);
\filldraw[semithick,black] (14.76375,9.36625) circle (0.15875cm);
\filldraw[semithick,black] (16.51,9.68375) circle (0.15875cm);
\filldraw[semithick,black] (15.875,9.8425) circle (0.15875cm);
\filldraw[semithick,black] (17.19157,9.57157) circle (0.15875cm);

\end{tikzpicture}%
 \caption{The three steps in the decomposition of a dilated three-dimensional empty simplex.}
\label{fig:KMW3d}
\end{figure}
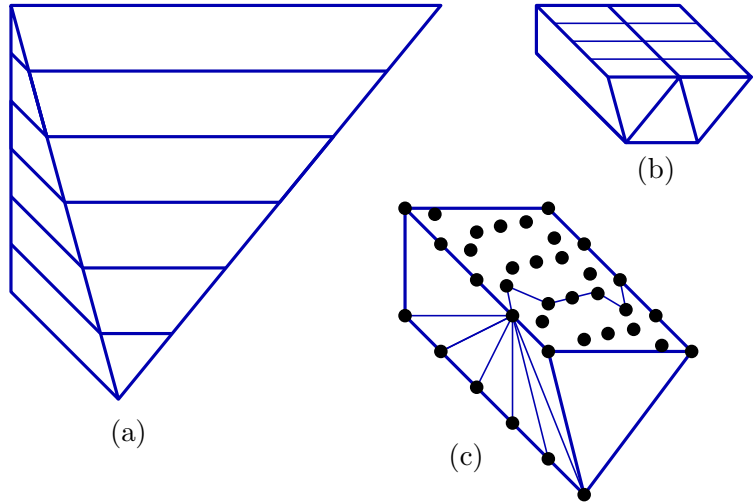

With this technique we see that:

\begin{corollary}[\protect{\cite[Cor. 4.2]{SantosZiegler}}]
For every lattice three-polytope $P$ and every dilation factor $c\ge 2$, $cP$ can be dissected into unimodular simplices with disjoint interiors.
\end{corollary}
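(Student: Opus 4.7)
The plan is to reduce to the case where $P$ is an empty lattice $3$-simplex, and then apply the three-step dissection scheme of $cS(p,q)$ sketched in the paragraphs just above the statement.

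First, I would invoke Lemma~\ref{lemma:strongPulling} to produce a full triangulation $\T$ of $P$. Since $\T$ uses every lattice point of $P$, each of its maximal cells is an \emph{empty} lattice $3$-simplex, and dilating $\T$ by $c$ gives a subdivision of $cP$ into $c$-fold dilations of empty $3$-simplices whose interiors are pairwise disjoint. So it suffices to dissect $cS$ into unimodular simplices for each empty lattice $3$-simplex $S$ and each $c\ge 2$. Crucially, the individual dissections need \emph{not} agree along shared $2$-faces, since the statement only asks for disjoint interiors; this lets the construction be completely local to each simplex $cS$.

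By White's theorem, either $S$ is unimodular, in which case $cS$ is lattice-equivalent to the dilated standard simplex $c\Delta^3$ and has a regular unimodular triangulation --- e.g.\ via Theorem~\ref{thm:hyperplanes} viewing it as a polytope of type $\rootA_3$, or via the staircase refinement of Section~\ref{sec:joins-and-products} --- or else $S\sim S(p,q)$ for coprime $0<p<q$. In the second case, I would execute the three-step scheme sketched in the excerpt: (i) slice $cS(p,q)$ by the $c-1$ horizontal lattice hyperplanes $\{z=k\}$, $1\le k\le c-1$, producing $c$ layers; (ii) decompose the $k$-th layer into either $2k-1$ triangular prisms with axis in the $X$-direction or $2(c-k)-1$ triangular prisms with axis in the $Y$-direction, each having a rectangular facet on one bounding lattice plane and a primitive \emph{tip}-edge on the other (the choice of direction can be made independently in each layer); (iii) dissect each prism by joining its tip with a unimodular triangulation of the opposite $1\times q$ rectangle.

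The only real verification is that every tetrahedron produced in step (iii) is unimodular. This follows because the tip is a primitive lattice segment, the $1\times q$ rectangle sits at lattice distance $1$ from the line through the tip, and together the primitive direction of the tip and the lattice generators of the opposite plane span $\Z^3$; so joining the tip with any unit-area triangle of a unimodular triangulation of the rectangle gives a tetrahedron of lattice volume one. The main bookkeeping subtlety I anticipate is the degenerate top and bottom layers of $cS(p,q)$, where one of the two ``rectangular facets'' collapses to a single edge; but these layers are themselves cones over a parallelogram, and each is readily decomposed into $q$ unimodular tetrahedra by coning unit segments on one side to the apex edge on the other --- the same construction as (iii), in the limiting case where the tip and the opposite rectangle exchange roles.
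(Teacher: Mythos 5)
Your proposal is correct and follows essentially the same route as the paper: take a full triangulation of $P$ (so all cells are empty $3$-simplices), dilate by $c$, and dissect each dilated empty simplex via the layer/prism/join scheme sketched before the statement, with no boundary compatibility needed since only disjoint interiors are required. The extra checks you add (unimodularity of the join tetrahedra, the degenerate top and bottom layers) merely flesh out the sketch the paper's one-line proof relies on.
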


\begin{proof}
Consider any full triangulation $\T$ of $P$, and its dilation $c\T$ into dilated empty simplices. Divide every dilated simplex in layers, then prisms, then simplices, as explained above.
\end{proof}

So, the only thing to take care of is how to make all these dissections agree on the common boundary facets of adjacent dilated simplices, adjacent layers within a dilated simplex, and adjacent prisms within a layer.
One way to try to address this issue is to triangulate each dilated simplex $c\Delta$ so that its boundary receives the \emph{standard} triangulation, the one obtained slicing the boundary by all the lattice lines parallel to edges of $c\Delta$ (see Figure~\ref{fig:quasistandard}). 
With this in mind, the following statements on dilated simplices guarantee we can find such desired triangulations of the adjacent components as described above.

\begin{theorem}
\label{thm:dilated3simplices}
\label{thm:kkms3d}
If $\Delta(p,q)$ is an empty three-simplex, then:
\begin{compactenum}
\item $2\Delta(p,q)$ has a unimodular triangulation if and only if $p=\pm 1 \pmod q$. In this case, the triangulation can be made unimodular and with standard boundary~\cite[Cor.~3.5, Prop.~3.6]{SantosZiegler}
\item For every $c\ge 4$, $c\Delta$ has a unimodular triangulation~(\cite[Cor.~4.5]{SantosZiegler}).
\item For every $c$ that can be written as a sum of composite numbers (that is, every $c\ge 4$, other than $c=5,7,11$), $c\Delta$ has a unimodular triangulation with standard boundary~(\cite{KantorSarkaria} for $c=4$, \cite[Thm.~5.1]{SantosZiegler} for the rest).
\item For every $c\ge 7,$ $c\Delta$ has a unimodular triangulation in which  the boundary is triangulated with the ``quasi-standard'' triangulation obtained from the standard one by flipping the diagonals incident to edges of $c\Delta$ and at lattice distance three from the vertices (see Figure~\ref{fig:quasistandard}~\cite{SantosZiegler}).
\end{compactenum}
\end{theorem}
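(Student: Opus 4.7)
My plan is to prove each of the four parts using the layer--prism decomposition outlined immediately before the theorem. The common framework is that $c\Delta(p,q)$ decomposes into $c$ horizontal slabs between consecutive lattice planes $\{z=k\}$ and $\{z=k+1\}$, each bounded above and below by a lattice translate of the parallelogram $C(p,q)$ (degenerating to a unimodular edge at the boundary slabs). Within a slab one may dice into triangular prisms in either the $X$- or the $Y$-orientation, and each prism is then refined by joining its tip to a monotone lattice path along the opposite rectangular face. The freedom lies in the choice of orientation and of monotone path; the constraint is that adjacent slabs, as well as adjacent dilated empty simplices in a global triangulation, induce the same triangulation on their shared parallelogram or shared boundary triangle.

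For part (1), there is a single intermediate copy of $C(p,q)$ at $z=1$, and it receives a triangulation from each of the two slabs. The bottom slab is a single prism with tip along the short bottom edge, so it forces a triangulation of $C(p,q)$ subordinate to a monotone path in the ``$X$''-direction; the top slab, dually, forces one subordinate to a path in the transverse ``$Y$''-direction. These two path-induced triangulations have to coincide. Analysing how the $q-1$ interior lattice points of $C(p,q)$ are ordered along the two transverse line families, one sees that the two triangulations agree only when the cyclic reorderings between the two orderings are trivial, which is precisely the arithmetic condition $p\equiv\pm 1\pmod q$. In the affirmative case the triangulation induced on the faces of $2\Delta$ is automatically the standard one, because no diagonal is available to be chosen otherwise at dilation $2$. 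In the negative case, no common triangulation of $C(p,q)$ exists, so no unimodular triangulation of $2\Delta(p,q)$ exists.

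For parts (2) and (3), my strategy is inductive gluing. The critical base case, which I expect to require the most work, is that $4\Delta(p,q)$ admits a unimodular triangulation with standard boundary; this is the Kantor--Sarkaria step and is possible because the four slabs provide enough room to switch between $X$- and $Y$-prism orientations at will and match any prescribed boundary. Once $4\Delta$ is available, Theorem~\ref{thm:unimodular-dilations} supplies a unimodular refinement of $c_2(4\Delta)$ for any $c_2$, so every multiple of $4$ (and similarly of any composite from which we can bootstrap) inherits a standard-boundary triangulation. If $c=c_1+c_2$ with both $c_i\Delta$ triangulable with standard boundary, stacking their slabs yields one for $c\Delta$. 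Since every integer $\ge 4$ other than $5,7,11$ is a sum of composite numbers (the composites being $4,6,8,9,\dots$), part (3) follows. For part (2) the remaining cases $c\in\{5,7,11\}$ are handled directly by a prism-gluing that abandons the standard-boundary constraint, exploiting the additional flexibility afforded by non-standard monotone paths on the faces.

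For part (4), the quasi-standard boundary relaxes the rigidity at each vertex by flipping the diagonal at lattice distance~$3$; the flipped quadrilateral fits strictly inside the boundary triangle precisely when $c\ge 7$. With this relaxed boundary condition, a uniform layer--prism construction goes through for every $c\ge 7$, independent of $p$, which is exactly why the theorem's threshold drops from the $c\ge 4$ of part (2) to a universal constructive regime. The step I expect to be the main obstacle throughout the proof is the compatibility analysis in part (1): showing that agreement of the two transverse path-triangulations on $C(p,q)$ is equivalent to $p\equiv\pm 1\pmod q$. The ``if'' direction is a concrete combinatorial construction, but the ``only if'' direction requires a delicate tracking of how the interior lattice points of $C(p,q)$ interleave under the two orderings, and it is this arithmetic rigidity that ultimately produces the exceptional behaviour at $c=2$ and propagates (in weakened form) to the exclusions $c=5,7,11$ in part (3).
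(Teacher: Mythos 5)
You should first note how the paper handles this statement: it does not prove it. Theorem~\ref{thm:kkms3d} is quoted from Kantor--Sarkaria and Santos--Ziegler, and the text before it only sketches the layer/prism/monotone-path machinery, explicitly deferring details to \cite{SantosZiegler}. Your proposal stays at the level of that sketch, and the places where you defer (``enough room to switch orientations'', ``additional flexibility'', ``a uniform layer--prism construction goes through'') are exactly the hard constructive content of the cited papers: the $c=4$ triangulation with standard boundary, the cases $c=5,7,11$ of part (2), and the quasi-standard construction for $c\ge 7$. Asserting these is not a proof of them.

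Beyond incompleteness, two steps are genuinely flawed. First, in part (1) your ``only if'' direction only excludes triangulations that are compatible with the layer--prism scheme, i.e.\ that induce triangulations of the middle parallelogram $C(p,q)$ at height $z=1$. An arbitrary unimodular triangulation of $2\Delta(p,q)$ need not be compatible with that plane: there are unimodular lattice tetrahedra with two vertices at height $0$ and two at height $2$ (e.g.\ $\conv\{(0,0,0),(1,0,0),(a,b,1)\text{-type rearrangements}\}$ replaced by height pattern $0,0,1,2$ or $0,0,2,2$ can have determinant $\pm1$), so the slice $z=1$ need not inherit a triangulation and your compatibility argument does not rule out all triangulations. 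Moreover your claim that at $c=2$ the boundary is ``automatically'' standard is false: the dilation $2F$ of a unimodular triangle has full unimodular triangulations using either diagonal of the central quadrilateral, which is precisely why the theorem says the triangulation \emph{can be made} standard on the boundary. Second, the additivity step in parts (2)--(3) is wrong as stated: for $c=c_1+c_2$, $c\Delta$ is \emph{not} a stack of $c_1\Delta$ and $c_2\Delta$. Slicing $cS(p,q)$ by a horizontal lattice plane produces a piece with a quadrilateral cross-section, and slicing by a plane parallel to a facet produces $c_1\Delta$ plus a frustum, never $c_2\Delta$; so ``stacking their slabs'' does not type-check, and the actual gluing argument (triangulating the frustum-type pieces compatibly with standard boundaries) is the substance of \cite[Thm.~5.1]{SantosZiegler} and is missing here. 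The one reduction that does essentially work --- passing from a standard-boundary triangulation of $4\Delta$ to its multiples via the canonical refinement of Theorem~\ref{thm:canonical-dilation} --- still needs the (easy but unstated) check that the canonical refinement of a standard boundary is again standard.
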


\begin{figure}[htb]
\begin{tikzpicture}[y=-1cm, scale=.9]

\draw[black] (2,8) -- (-3,8) -- (-0.5,3.5) -- cycle;
\draw[black] (1,8) -- (1.5,7.1) -- (-2.5,7.1) -- (-2,8) -- (0,4.4) -- (-1,4.4);
\draw[black] (-2,6.2) -- (-1,8) -- (0.5,5.3) -- (-1.5,5.3) -- (0,8) -- (1,6.2) -- cycle;
\draw[black] (-1,4.4) -- (1,8);
\draw[black] (6.5,2.7) -- (3,9) -- (10,9) -- cycle;
\draw[black] (3.5,8.1) -- (9.5,8.1) -- (9,9) -- (6,3.6) -- (7,3.6) -- (4,9) -- cycle;
\draw[black] (4,7.2) -- (9,7.2);
\draw[black] (8,9) -- (5.5,4.5);
\draw[black] (7.5,4.5) -- (5,9);
\draw[black] (4,7.2) -- (5,9) -- (6.5,8.1) -- (8,9) -- (9,7.2) -- (7.5,6.3) -- (7.5,4.5) -- (5.5,4.5) -- (5.5,6.3) -- cycle;
\draw[black] (7.5,6.3) -- (5.5,6.3) -- (6.5,8.1) -- cycle;
\draw[black] (5,7.2) -- (5.5,8.1);
\draw[black] (6.5,8.1) -- (7,9);
\draw[black] (8,7.2) -- (7.5,8.1);
\draw[black] (5,5.4) -- (5.5,6.3) -- (4.5,6.3);
\draw[black] (8.5,6.3) -- (7.5,6.3) -- (8,5.4);
\draw[black] (6,9) -- (6.5,8.1);
\draw[black] (6,5.4) -- (7,5.4);

\end{tikzpicture}%
 \caption{The standard triangulation of a dilated triangle (left) and
  the quasi-standard one used for the case $c=7$.}
\label{fig:quasistandard}
\end{figure}
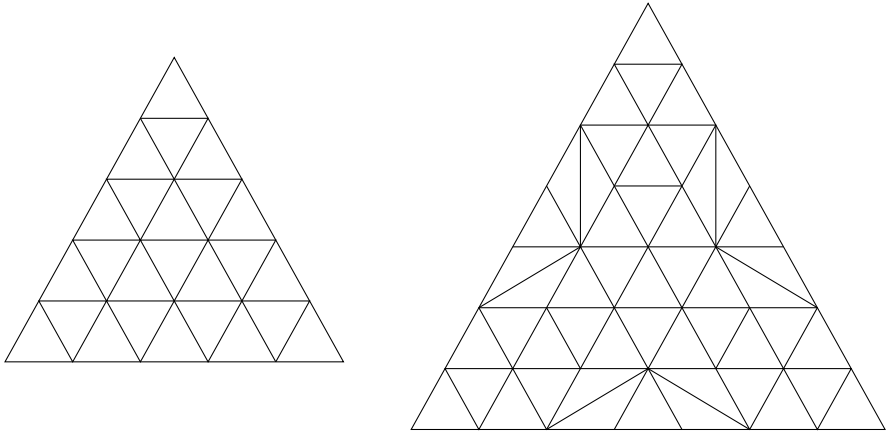

Hibi, Higashitani, and Yoshida~\cite{hibi2017existence} generalize part (1) of Theorem~\ref{thm:dilated3simplices} as follows: it was shown by Batyrev and Hofscheier in \cite{batyrev2010generalization} that empty $(2k-1)$-simplices with unimodular facets are Cayley sums of $k$ unit segments if and only if their $k$-th dilation does not have interior lattice points. Hibi, Higashitani and Yoshida prove that, among those, the $k$-th dilation has a unimodular triangulation if and only if  they are of the following form:
\[
\conv\{O,e_1,\dots,e_{2k-2},v\},
\]
with $v=e_1+\dots+ e_{k-1} -e_{k}-\dots - e_{2k-2} + q e_{2k-1}$ for some $q\in \N$.

\subsection[Unimodular refinements]{Canonical triangulation of a dilated simplex} 
\label{sec:kmwhypersimplex}

In this section we show that given  a unimodular triangulation $\T$ of
a polytope $P$ another unimodular triangulation of $cP$ can be
derived, preserving regularity
(Theorem~\ref{thm:unimodular-dilations}) and flagness
(Corollary~\ref{cor:quadratic-dilations}). The first result can by now
be considered ``folklore'' (it is sometimes attributed to Santos,
1996, unpublished). The second result is new.

The proofs of both theorems start by noting that dilating every
simplex in $\T$ yields a triangulation $c\T$ of $cP$ into dilations of
unimodular simplices, and then use the canonical subdivisions of
dilated unimodular simplices, as discussed in
section~\ref{sec:hyperplanes}. That is, the dilated standard simplex
$c\Delta^d$ is realized as $\{ \vx \in \R_{\ge 0}^{d+1} \ : \ \sum x_i
= c \}$, and sliced along the hyperplanes parallel to the facets: $x_i
= k$ for $i=1, \ldots, d+1$ and $k = 1, \ldots c-1$. The cells of this
subdivision are hypersimplices.
The canonical weights $\vomega_\vm := \sum m_i^2$ show that this
subdivision is regular, and the weights restrict to faces as well.

The naturality of this subdivision and its weights allows one to patch
simplices together. That is, if we subdivide each of the cells of
$c\T$ canonically, they agree along their boundaries, and we get a
subdivision of $P$ which is regular if $\T$ was regular.

\begin{theorem}
\label{thm:unimodular-dilations}
If $P$ has a (regular) unimodular triangulation $\T$ then, for every
positive integer $c$, its dilation $cP$ has one too.
\end{theorem}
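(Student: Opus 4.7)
My proof plan follows the setup sketched just before the theorem statement and completes it. The plan has three main parts: reduce to the dilated unimodular simplex; canonically subdivide each such simplex into hypersimplices consistently across shared faces; and refine the hypersimplices using compressedness.

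First I would dilate $\T$ to obtain $c\T := \{c\Delta : \Delta \in \T\}$, which is a triangulation of $cP$ whose maximal cells are dilations of unimodular simplices. Since every $\Delta \in \T$ is lattice equivalent to the standard simplex $\Delta^d$, each $d$-cell of $c\T$ is lattice equivalent to $c\Delta^d = \{\vx \in \R_{\ge 0}^{d+1} : \sum x_i = c\}$. Realize the canonical slicing of each such cell by intersecting with the affine hyperplanes $x_i = k$ for $1 \le i \le d+1$, $1 \le k \le c-1$. The cells of this slicing are (translates of) hypersimplices $\HS(d+1,k)$. The key compatibility observation is that the slicing is defined intrinsically in terms of the vertices of the ambient unimodular simplex; hence for two cells $c\Delta, c\Delta'$ of $c\T$ sharing a face $c\Gamma$, each induces on $c\Gamma$ exactly the canonical slicing of the unimodular simplex $\Gamma$. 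This lets the slicings glue into a globally defined polyhedral subdivision $\S$ of $cP$ refining $c\T$.

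Next I would refine $\S$ to a unimodular triangulation. Every hypersimplex is a compressed type-$\rootA$ polytope (by the discussion preceding Theorem~\ref{thm:rootA}, or directly by characterization~(3) of Theorem~\ref{thm:paco}). Hence any pulling triangulation of $\S$ (in the sense of Lemma~\ref{lemma:strongPulling}) restricts on each cell to a compressed-pulling, which is unimodular. By Lemma~\ref{lemma:strongPulling}~(1), pulling preserves regularity, so the resulting triangulation $\T_c$ of $cP$ is unimodular and full.

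The remaining issue — the main obstacle and the reason the statement is slightly subtle — is propagating regularity from $\T$ to $\T_c$. The plan is to construct an explicit weight vector on $cP \cap \Z^d$ witnessing regularity of $\S$; regularity of $\T_c$ then follows from Lemma~\ref{lemma:strongPulling}~(1). Suppose $\T$ is regular with weights $\omega \in \R^{P \cap \Z^d}$. Define a weight $\widetilde\omega$ on $cP \cap \Z^d$ as follows: on each dilated simplex $c\Delta \in c\T$ with vertices $\vv_0,\dots,\vv_d$, every lattice point $\vm \in c\Delta$ has unique barycentric coordinates $(m_0,\dots,m_d) \in \Z_{\ge 0}^{d+1}$ with $\sum m_i = c$; set
\[
\widetilde\omega_\vm \;:=\; \tfrac{1}{c}\sum_{i=0}^d m_i\, \omega_{\vv_i} \;+\; \varepsilon \sum_{i=0}^d m_i^2,
\]
for a sufficiently small $\varepsilon > 0$. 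The first summand is well-defined across faces of $c\T$ (on a shared face $c\Gamma$ the formula uses only the vertices of $\Gamma$), and the second summand is also well-defined across faces since the barycentric coordinates vanishing on $c\Gamma$ contribute zero. For small $\varepsilon$ the first term dominates and induces the subdivision $c\T$ (by regularity of $\T$), while within each simplex the second term induces the canonical slicing by Theorem~\ref{thm:hyperplanes} (the argument there using $\sum \langle \vn_i,\vx\rangle^2$ specializes to $\sum m_i^2$ for the facet-normal system of $c\Delta^d$). Thus $\widetilde\omega$ induces $\S$, so $\S$ is regular and, by pulling, so is its refinement $\T_c$. This completes the plan.
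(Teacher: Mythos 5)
Your proposal is correct and follows essentially the same route as the paper's proof: dilate $\T$, slice each dilated unimodular simplex canonically into compressed hypersimplices that agree along shared faces, refine by pulling, and certify regularity via the canonical weights $\sum m_i^2$ superimposed (as a small perturbation) on the lift of the original weights. Your explicit weight $\widetilde\omega$ just spells out the ``regular refinement'' step that the paper handles by citation, so there is nothing substantively different to report.
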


\begin{proof}
  As in Theorem \ref{thm:hyperplanes}, all cells (hypersimplices) of
  the canonical subdivision of $c\T$ are compressed.  Hence, pulling all the
  lattice points of $cP$ in any order yields a unimodular
  triangulation $\T'$. This triangulation is a
  regular refinement.
Hence, regularity of $\T$ implies regularity of $\T'$
(cf.~\cite[Lemmas Lemma 2.3.16 and  4.3.12]{deLoeraRambauSantos}, 
or see more details in the proof of Corollary~\ref{cor:quadratic-dilations}).
\end{proof}

To guarantee flagness, instead of using pulling refinements we dice
with respect to a larger set of hyperplanes, namely those based on the
root system of type $\rootA$. Remember that if $P$ is a lattice
polytope of type $\rootA$ then dicing $P$ according to all lattice
hyperplanes with normals in the root system gives a quadratic
triangulation of $P$ (Theorem~\ref{thm:rootA}). 

To apply this to an arbitrary simplex $\Delta$ we first need to show a
way to map a $d$-simplex to a simplex of type $\rootA$. This is
canonical, once an ordering of $\Delta$'s vertices  is prescribed.
Thus we introduce the concept of an ordered simplex, and the canonical
triangulation of its dilation. These concepts will be crucial in the
proof of the KMW Theorem (and were used in the previous proofs of it,
see~\cite[Section 3.A]{BG}).

\begin{definition}
\label{def:ordered_simplex}
\label{def:typeA_canonical_triangulation} 
An \emph{ordered simplex} is a simplex,
$\Delta=\conv\{\va_0,\dots,\va_d\}$ that comes with an ordering of
its vertices, $\va_0,\dots,\va_d$.
Let $\Delta$ be an ordered $d$-simplex and consider the affine isomorphism sending
$\Delta$ to the standard simplex $\Delta_\rootA^d$ of type $\rootA$
via
\[
\va_i\mapsto \sum_{j=1}^{i+1} \ve_j \in \R^{d+1}.
\]
The $\rootA$-\emph{canonical triangulation} of $c\Delta$ 
is the pull back under this map of the type $\rootA$ dicing
triangulation of $c \Delta_\rootA^d$.
\end{definition}

Observe that since the $\rootA$-dicing is unimodular, all the
simplices in the canonical triangulation of $c\Delta$ have the same
volume as $\Delta$.

The following lemma makes  what the canonical triangulation is more
explicit (as a dicing triangulation) by expressing it in barycentric
coordinates with respect to $\Delta$. Recall that the barycentric
coordinates of a point $\vx\in \Delta$ with respect to the vertices
$\va_0,\dots,\va_d$ of a simplex $\Delta$ are the unique vector
$(x_0,\dots,x_d)$ with $\sum x_i=1$ such that
\[
\vx =\sum_{i=0}^d x_i \va_i.
\]

In particular, for each subset $I\subseteq\{0,\dots,d\}$ the
hyperplanes defined by setting $\sum_{i\in I} x_i$ equal to a constant
are parallel to the following two complementary faces of $\Delta$:
\[
\conv\{\va_i: i\in I\},\qquad \text{and}\qquad \conv\{\va_i: i\not\in
I\}.
\]

\begin{lemma}
\label{lemma:Acanonical}
Let $\Delta=\conv\{\vv_0,\dots,\vv_d\}$ be an ordered simplex and $c$
a positive integer.
Then the canonical triangulation of $\Delta$ is the dicing
triangulation with respect to the families of lattice hyperplanes
defined by making any sum $x_i+\cdots+x_j$ of consecutive barycentric
coordinates constant.
\end{lemma}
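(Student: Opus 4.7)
The plan is a direct computation that translates between coordinates in the image $c\Delta_\rootA^d \subset \R^{d+1}$ and barycentric coordinates in $c\Delta$, and then observes that the $\rootA$-dicing hyperplanes in the image correspond exactly to the consecutive-sum hyperplanes in the source.

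First I would fix the convention that each $\vx \in c\Delta$ has a unique expression $\vx = \sum_{i=0}^d x_i \va_i$ with $\sum_i x_i = c$ and $x_i \ge 0$; these are its barycentric coordinates. Extending the map in Definition~\ref{def:ordered_simplex} affinely and applying it to $c\Delta$, the image of $\vx$ is $\sum_{i=0}^d x_i (\ve_1 + \cdots + \ve_{i+1})$, so its $k$-th coordinate is
\[
y_k \;=\; \sum_{i=k-1}^d x_i \qquad (k = 1, \dots, d+1).
\]
In particular $y_1 = c$ is constant, so the image lies in the affine hyperplane $\{y_1 = c\}$.

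The key step is then the identity
\[
y_k - y_\ell \;=\; x_{k-1} + x_k + \cdots + x_{\ell-2} \qquad (1 \le k < \ell \le d+1),
\]
which says precisely that each root hyperplane $\{y_k - y_\ell = n\}$ of $\rootA_d$ pulls back to the hyperplane fixing a sum of consecutive barycentric coordinates to an integer. Conversely, I would check that every consecutive sum $x_i + \cdots + x_j$ with $0 \le i \le j \le d$ and $(i,j) \ne (0,d)$ arises this way; when the sum touches the boundary (i.e.\ involves $x_0$ or $x_d$), one invokes the identity $y_1 = c$ on the image to rewrite $\{y_1 - y_\ell = n\}$ as $\{y_\ell = c - n\}$, which is still a hyperplane of the $\rootA$-dicing.

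Since the affine map is an isomorphism, the pull-back of the $\rootA$-dicing arrangement of $c\Delta_\rootA^d$ equals the arrangement on $c\Delta$ defined by the consecutive-sum hyperplanes. The lemma then follows from the definition of the canonical triangulation as the pull-back of the $\rootA$-dicing triangulation. The only step that requires a bit of care is this bookkeeping of the boundary sums; every other step is a one-line calculation.
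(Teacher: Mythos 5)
Your proposal is correct and takes essentially the same route as the paper's proof: after identifying $c\Delta$ with $c\Delta_\rootA^d$, both arguments rest on the identity $y_k=\sum_{i\ge k-1}x_i$, so that the root-hyperplane values $y_k-y_\ell$ are exactly the sums of consecutive barycentric coordinates. Your extra bookkeeping for sums involving $x_0$ or $x_d$ (via $y_1=c$, equivalently the complementarity of consecutive sums) is the only detail the paper leaves implicit.
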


\begin{proof}
Since an affine isomorphism from one simplex to another preserves
bary\-cen\-tric coordinates, there is no loss of generality in
assuming that $\Delta$ is the standard simplex
$\Delta_\rootA^d$. 
We only need to show that the hyperplanes orthogonal to the roots are
indeed defined by making a sum of consecutive barycentric coordinates
constant. For this, observe that if $(x_0,\dots,x_d)$ are the
barycentric coordinates of $\vx$ with respect to $\Delta_\rootA^d$
then

\[
\vx=\sum_{j=1}^d x_j\left(\sum_{i=1}^{j+1} \ve_i \right) =
\sum_{i=1}^{d+1} \ve_i  \left(\sum_{j=i-1}^d  x_j\right),
\]
so that
\begin{align*}
\langle \vx, \ve_j-\ve_i\rangle&=\sum_{k=i-1}^{j-2}  x_k  , &\forall
1\le i<j\le d+1.
\end{align*}
\end{proof}

With this we can conclude the following.

\begin{theorem}
\label{thm:canonical-dilation}
Let $\T$ be a lattice triangulation of a polytope $P$, let $c$ be a
positive integer, and let $c\T$ denote the dilation of $\T$, that is,
the following triangulation of $cP$: 
\[
c\T:=\{c\Delta : \Delta\in \T\}\,.
\]

Consider a total order in the vertices of $\T$, so that every
$\Delta\in \T$ is regarded as an ordered simplex. Call $\T'$ the
triangulation of $cP$ obtained refining each $c\Delta$ to its
canonical triangulation. Then: 
\begin{enumerate}
\item $\T'$ is indeed a triangulation. That is, common faces of
  different simplices in $c\T$ get the same refinement.
\item If $\T$ his unimodular, then $\T'$ is unimodular.
\item If $\T$ is regular, then $\T'$ is regular.
\item If $\T$ is flag, then $\T'$ is flag.
\end{enumerate}
\end{theorem}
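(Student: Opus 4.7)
The plan is to handle the four parts in order, with Lemma~\ref{lemma:Acanonical} as the central tool. For part (1), I will show that the canonical triangulation of $c\Delta$ restricts on every face $cF$ to the canonical triangulation of $cF$ (using the ordering induced on $F$). By Lemma~\ref{lemma:Acanonical} the canonical triangulation of $c\Delta$ is the dicing by hyperplanes $\{x_a+\cdots+x_b=k\}$ indexed by contiguous intervals $[a,b]$ of the barycentric coordinate indices in $\Delta$'s prescribed ordering; restricted to $cF$ (where $x_i$ vanishes for $\va_i\notin F$) this becomes $\sum_{a\le i_j\le b}x_{i_j}=k$, and since the indices of $F$ retain their relative order inside $\Delta$, this is exactly a sum of consecutive barycentric coordinates with respect to $F$'s induced ordering. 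The converse assignment is equally immediate, so the two dicings agree on $cF$. Because two simplices of $\T$ meeting in a common face induce the same ordering on it (both inherit from the global order on $\T$), their canonical refinements glue, yielding a well-defined triangulation $\T'$ and proving (1).

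For part (2), the affine map $\va_i\mapsto\sum_{j=1}^{i+1}\ve_j$ is a lattice isomorphism when $\Delta$ is unimodular: both $\Delta$ and $\Delta_\rootA^d\subset\{x_1=1\}\subset\R^{d+1}$ are unimodular in the ambient affine lattices $\Z^d$ and $\{1\}\times\Z^d$, so the vertex-prescribed affine map automatically preserves the lattice. Since the type-$\rootA$ dicing triangulation of $c\Delta_\rootA^d$ is unimodular (its cells are alcoves, cf.~Lemma~\ref{lemma:roothyperplanes}), its pull-back to $c\Delta$ is unimodular, proving (2). For part (3), note that $c\T$ is regular because $\T$ is, and on each cell $c\Delta$ the canonical refinement is regular by Theorem~\ref{thm:hyperplanes}, induced by the quadratic weight $\vx\mapsto\sum_{0\le a\le b\le d}(x_a+\cdots+x_b)^2$. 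Part (1) ensures these cellwise refinements agree on shared faces, and the standard principle that a regular subdivision refined cellwise by compatible regular refinements is itself regular --- as invoked in the proof of Theorem~\ref{thm:unimodular-dilations} through~\cite{deLoeraRambauSantos} --- delivers regularity of $\T'$.

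For part (4), take any minimal non-face $N$ of $\T'$ and, for each $\vp\in N$, let $G(\vp)\in\T$ denote the smallest cell containing $\vp$. If all the $G(\vp)$ lie in a single cell $\Delta\in\T$, then $N\subset c\Delta$ is a minimal non-face of the canonical triangulation of $c\Delta$, which is flag (by the type-$\rootA$ provenance of the dicing, compare Theorem~\ref{thm:rootA}), so $|N|=2$. Otherwise the vertex set $V_N:=\bigcup_{\vp\in N}\operatorname{vert}(G(\vp))$ is not contained in any cell of $\T$, and by flagness of $\T$ it contains an edge non-face $\{\vu_1,\vu_2\}$; choosing $\vp_i\in N$ with $\vu_i\in G(\vp_i)$ gives a pair $\{\vp_1,\vp_2\}\subseteq N$ that lies in no common cell of $c\T$, hence is a non-face of $\T'$, and minimality forces $N=\{\vp_1,\vp_2\}$.

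I expect the main technical step to be (1): once the canonical dicings restrict functorially to faces, the remaining parts are essentially structural, with (2) following from a lattice-isomorphism observation, (3) from the cited patching principle for regularity (thereby sidestepping the explicit construction of a single global convex PL weight, which is a secondary subtlety), and (4) from the combinatorial dichotomy above between non-faces contained in a single cell and non-faces that straddle several cells.
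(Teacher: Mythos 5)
Parts (1), (2) and (4) of your proposal are correct and essentially coincide with the paper's own proof: (1) is the observation that the canonical dicing of $c\Delta$ depends only on the induced ordering of the vertices of each face (your interval-restriction computation via Lemma~\ref{lemma:Acanonical} just makes this explicit); (2) follows because the canonical triangulation preserves volume, which is the same content as your lattice-isomorphism remark; and your minimal-non-face dichotomy in (4) is the contrapositive of the paper's carrier-simplex argument and works as stated.

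The genuine gap is in (3). The ``standard principle'' you invoke --- that a regular subdivision whose cells are refined by regular triangulations agreeing on shared faces is itself regular --- is false, and this paper itself contains a counterexample: in Proposition~\ref{prop:lao}, if $\T_1$ and $\T_2$ are regular but the locally acyclic orientation has a cycle, then $\T_1\times\T_2$ is a regular subdivision, every cell (a product of two simplices) receives a regular staircase triangulation, and these refinements agree on all common faces, yet the resulting triangulation of $P_1\times P_2$ is \emph{not} regular (that is exactly what the circuit-summation argument there shows). The refinement lemma from \cite{deLoeraRambauSantos} used in Theorem~\ref{thm:unimodular-dilations} requires a \emph{single} weight vector, defined on all lattice points of $cP$, that induces the refinement of every cell simultaneously; compatibility along shared faces is not enough. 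Your per-cell weight $\sum_{a\le b}(x_a+\cdots+x_b)^2$, written in the barycentric coordinates of each cell, does not glue to such a global function: on a common face, the number of intervals of one cell's vertex order that restrict to a given consecutive block of the face's vertices depends on how many vertices outside the face are interleaved, so the weights from two adjacent cells generally disagree there. This is precisely why the paper's proof constructs, for each interval $[i,j]$ of the \emph{global} vertex order, the globally defined function $\phi_{i,j}$ (equal to $0$ on the dilated vertices with index in $[i,j]$, equal to $1$ on the others, and linear on each cell of $c\T$), and applies the refinement lemma once for each of the $\binom{n+1}{2}$ weights $\phi_{i,j}^2$, slicing all cells by one hyperplane family at a time. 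Producing such global weights is the actual content of the regularity claim; it cannot be dismissed as a secondary subtlety.
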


\begin{proof}
The canonical triangulations  agree on  common faces of simplices of
$c\T$ since they only depend on the ordering of the vertices in those
faces. Therefore, this procedure indeed gives a lattice triangulation
$\T'$ of $cP$. Since the canonical triangulation preserves volumes, it
preserves unimodularity. We will show that regularity and flagness are
also preserved.

For regularity we use the idea of regular refinements~\cite[Lemma
2.3.16]{deLoeraRambauSantos}. Assume
that $\T_0$ is a regular subdivision of a point configuration $\configuration$, given by a weight vector $\vomega_0$. If  each cell
of $\T_0$ is refined using a common weight vector  $\vomega \in
\R^\configuration$, then the refinements of all cells agree on their
intersections, so that we get a subdivision of
$\configuration$. Moreover, this subdivision is regular, and could
have been obtained directly using the weight vector $\vomega_0 +
\epsilon \vomega$, for a sufficiently small $\epsilon>0$.

We apply this procedure $\binom{n+1}{2}$ times to the
subdivision $c\T$, where $n$ is the number of vertices of
$\T$, as follows. Let $\va_1,\dots,\va_n$ be the ordered sequence
of vertices of $\T$. For each interval $[i,j]\subseteq[1,n]$
(including the case $i=j$), let $\phi_{i,j}$ be the function that is
zero on $c\va_k$ if $k\in[i,j]$, one on $c\va_k$  if $k\notin[i,j]$,
and linear on each simplex of $c\T$. Consider the weight function
$\vomega_{i,j}\in \R^{cP\cap \Z^d}$ that restricts $\phi_{i,j}^2$ 
to all lattice points in $cP$. Clearly, the regular refinement of
$c\T$ via this weight function dices each simplex $c(\conv\{\va_i:i\in
S\})$ by the lattice hyperplanes parallel to $\conv\{\va_i:i\in
S\cap[i,j]\}$ and $\conv\{\va_i:i\in S\setminus[i,j]\}$. In
each simplex of $c\T$ this is one of the  hyperplanes we want to use
for the canonical triangulation of that simplex. Performing these
regular refinements on $c\T$ for each such weight function
$\vomega_{i,j}$, in any order, gives a dicing of each simplex by more
and more families of hyperplanes, eventually all hyperplanes in 
Lemma~\ref{lemma:Acanonical}. Hence, the resulting triangulation is
$\T'$, proving that $\T'$ is regular.

Flagness of $\T'$ is based solely on the fact that $\T$ is flag and
that the way $\T'$'s  refines each simplex of $c\T$ is also flag (by
Theorem~\ref{thm:rootA}). Indeed, let $N \subset cP \cap \Z^d$ and suppose
that every pair of points in $N$ form an edge in $\T'$. 
To each point $\vn \in N$ associate its  carrier simplex
$cF(\vn) \in c\T$, and we call $S$ the union of all vertices of all the
carriers $F(\vn)$. Observe that for $\vn_1,\vn_2 \in N$ we have that 
$cF(\vn_1)\cup cF(\vn_2)$ equals the minimal face of $c\T$ containing the edge
$\{\vn_1,\vn_2\}\in \T'$ (here we are using the fact that $\T'$ refines $c\T$). In particular, 
every two points in $S$ form an edge of $\T$ because either they lie
in the same $F(\vn)$, which is a simplex, 
or they lie in $F(\vn_1)$ and $F(\vn_2)$ for two points
of $N$. Since $\T$ is flag, $S$ is
a face of $\T$, which implies that $N$ is contained in the convex hull of the face $cS\in c\T$.
Since $\T'$ refines $cS$ in a flag manner and $N$ is a clique in $\T'$, $N$ is a face in $\T'$.
\end{proof}

\begin{corollary}
\label{cor:quadratic-dilations}
If $P$ has a quadratic triangulation $\T$, then so does its dilation
$cP$ for every positive integer $c$. 
\end{corollary}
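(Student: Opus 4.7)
The plan is to deduce the corollary directly from Theorem~\ref{thm:canonical-dilation}, which was stated and proved just above. Recall that ``quadratic'' means regular, unimodular, and flag. Theorem~\ref{thm:canonical-dilation} tells us that given any lattice triangulation $\T$ of $P$ together with a chosen total order on the vertices of $\T$, there is a canonical refinement $\T'$ of the dilated subdivision $c\T$ of $cP$, and this refinement preserves unimodularity, regularity, and flagness individually.

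So my proof would consist of essentially one sentence. First I would fix an arbitrary total order on the vertex set of $\T$, so that each simplex of $\T$ becomes an ordered simplex in the sense of Definition~\ref{def:ordered_simplex}. Then I would form $c\T$ and refine each dilated simplex $c\Delta$ by its $\rootA$-canonical triangulation, obtaining the triangulation $\T'$ of $cP$ from Theorem~\ref{thm:canonical-dilation}. Because $\T$ is unimodular, regular, and flag, parts (2), (3), and (4) of Theorem~\ref{thm:canonical-dilation} imply that $\T'$ is simultaneously unimodular, regular, and flag, i.e.\ quadratic.

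There is no genuine obstacle here, since all the work has been done in Theorem~\ref{thm:canonical-dilation}: the compatibility of the canonical refinements on shared faces (handled by the fact that the canonical triangulation of an ordered simplex depends only on the induced vertex order on each face), the preservation of regularity (handled by the regular-refinement argument using the weights $\vomega_{i,j}$), and the preservation of flagness (handled by the carrier-face argument combined with Theorem~\ref{thm:rootA}) are all already established. The corollary is therefore just the conjunction of the three preservation statements applied to a quadratic $\T$.
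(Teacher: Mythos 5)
Your proposal is correct and is exactly the argument the paper intends: the corollary is stated as an immediate consequence of Theorem~\ref{thm:canonical-dilation}, obtained by fixing a total order on the vertices of $\T$ and combining parts (2), (3), and (4) of that theorem for a quadratic $\T$. Nothing further is needed.
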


With the canonical triangulations at our disposal, we can now discuss
the example advertised at the beginning of the section. This example
is inspired by~\cite{MustataPayne} and appears
in~\cite{CoxHaaseHibiHigashitani}.

\begin{example} \label{kmw:eg:c+1}
  Let $h$ and $k$ be positive integers and set $d:=hk-1$. In
  $\R^{d+1}$ consider the vector $v:=\frac1k(1,\ldots,1)$, the
  lattice $\Lambda := \Z^{d+1} + \Z v$ and the $d$-polytope
  $\Delta:=\conv\{e_0,\ldots,e_d\}$. Then $h\Delta$ has a quadratic
  triangulation joining $v$ to the canonical triangulation of
  $h\partial\Delta$.

  We claim that certain higher multiples of $\Delta$ are not normal
  and thus do not admit unimodular triangulations. To see this,
  consider the homomorphism $\phi \colon \Lambda \to \Z/k$ given by $x
  \mapsto \overline{kx_0}$. If $a$ and $b$ are positive integers with
  $b<h$, then the $\Lambda$-points in $(ah+b)\Delta$ map to $\{\bar 0,
  \bar 1, \ldots, \bar a\}$. Therefore, we can only obtain $\{\bar 0,
  \bar 1, \ldots, \overline{2a}\}$ from the sum of two such points. On
  the other hand, if $2b \ge h$, then $2(ah+b)\Delta$ contains a
  $\Lambda$-point with image $\overline{2a+1}$, witnessing
  non-normality if $2a+1<k$. The parameters $k=4$, $h=2$ yield a
  seven-simplex so that $2\Delta$ has a quadratic
  triangulation but $3\Delta$ is not normal.
\end{example}

\begin{example}
\label{kmw:eg:c1+c2}
Concerning the IDP property, 
Laso\'n and Micha{\l}ek~\cite[Sect. 3.1--3.3]{LasonMichalek} show the following: 
let $T(k)$ be the simplex in $\R^{2k-1}$ obtained as the convex hull
of the origin, the first $2k-2$ coordinate unit vectors $e_1,\dots,
e_{2k-2}$, and the point $v:=e_1+\dots+e_{k-1}
-e_{k}-\dots-e_{2k-2}+(k+2)e_{2k-1}$ in $\R^{2k-1}$. (Equivalently,
$T(k)$ is obtained from a unimodular $(k,k)$ circuit in $\R^{2k-2}$ by
lifting one point to height $k+2$ and the rest to height $0$). Let
$P(k)$ be the Minkowski sum of $T(k)$ and the segment
$[O,e_{2k-1}]$. Then,
$c P(k)$ is IDP if and only if $c\ge k$ or $c$ does not divide $k$. 
For example, $2P(25)$ and $3P(25)$ are IDP, but $5P(25)$ is not, for the $49$-dimensional polytope $P(25)$.
\end{example}

\subsection{Reducing the volume of simplices in the dilation} \label{sec:kmwreduction}

The canonical triangulation divides $c\Delta$ into simplices of the
same volume as $\Delta$. We now want to improve this, and triangulate
$c\Delta$ into simplices of volume strictly smaller than
$\Delta$ (assuming $\Delta$ is not unimodular).
This cannot be done for every $c$, but we prove here that it can
always be done for some $c\le d$ and all its multiples.

For this we introduce the concept of box points.

We start with a seemingly unrelated lemma about the Cayley sum
of two dilated polytopes of type $\rootA$.
Remember that if $P_1$ and $P_2$ are lattice polytopes in a lattice
$\Lambda\subset\R^d$ then the \emph{Cayley sum} of $P_1$ and $P_2$ is
the  lattice polytope $\conv( P_1\times \{ 0 \} \cup P_{2}\times \{ 1
\} )$ in the lattice $\Lambda \times \Z \subset \R^{d+1}$.

As usual,  the canonical triangulation of a polytope of type $\rootA$ is the
one obtained by slicing by all the lattice hyperplanes normal to the
roots. Remember that this triangulation is regular and unimodular
(Theorem~\ref{thm:rootA}).

\begin{lemma}
\label{lemma:cayley}
If $P_1$ and $P_2$ are lattice polytopes of type $\rootA$, with
canonical triangulations $\T_1$ and $\T_{2}$, then:
\begin{compactenum}
\item There is a regular triangulation of the Cayley sum $\conv(
  P_1\times \{ 0 \} \cup P_{2}\times \{ 1 \} )$ that restricts to
  $\T_1$ and $\T_2$ on $P_1$ and $P_2$.
\item Any such triangulation (regular or not) is unimodular.
\end{compactenum}
\end{lemma}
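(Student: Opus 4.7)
The plan is to establish (1) by constructing a specific regular triangulation via suitably chosen weights, and (2) by verifying that the determinant computing the volume of each maximal simplex equals $\pm 1$ using the type-$\rootA$ structure of the simplex edges.

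For part (1), I would adopt the quadratic weight functions $f_i(\vx):=\sum_{\alpha\in\rootA}\langle\alpha,\vx\rangle^2$ that induce $\T_i$ on $P_i$ (via the proof of Theorem~\ref{thm:hyperplanes}), and combine them into a single weight function $\omega$ on the lattice points of the Cayley sum by setting $\omega(a,0):=f_1(a)$ and $\omega(b,1):=f_2(b)$. Because all lattice points of the Cayley sum lie either at height $0$ or at height $1$, the regular subdivision induced by $\omega$ restricts to $P_1\times\{0\}$ and $P_2\times\{1\}$ as the canonical subdivisions $\T_1$ and $\T_2$ respectively. If the subdivision has non-simplicial cells in the ``slanted'' region, I would add a generic infinitesimal perturbation to $\omega$ to refine it to a regular triangulation; because $\T_1$ and $\T_2$ are already triangulations, every lattice point on top and bottom is already a vertex, so the restrictions are unchanged.

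For part (2), let $\T$ be any lattice triangulation of the Cayley sum that restricts to $\T_1$ and $\T_2$. Since the only lattice points of the Cayley sum lie at heights $0$ or $1$, every maximal simplex $\sigma$ of $\T$ can be written as $\sigma=\conv(F_1\times\{0\}\cup F_2\times\{1\})$ with $F_1\in\T_1$, $F_2\in\T_2$ and $k+m=d$, where $k=\dim F_1$, $m=\dim F_2$, and $d=\dim P_1=\dim P_2$. Fix base vertices $a_0\in F_1$ and $b_0\in F_2$. Standard row and column reductions on the $(d{+}1)\times(d{+}1)$ matrix of edge vectors of $\sigma$ (subtract $(a_0,0)$ from every vertex, then subtract the column $(b_0-a_0,1)$ from each remaining top column, and expand along the last row) yield
\[
  \operatorname{nvol}(\sigma)=\bigl|\det\bigl[\,a_1-a_0,\ldots,a_k-a_0\ \bigm|\ b_1-b_0,\ldots,b_m-b_0\,\bigr]\bigr|.
\]
By Theorem~\ref{thm:rootA}, $F_1$ and $F_2$ are unimodular, so the columns coming from $F_1$ (resp.\ $F_2$) form a $\Z$-basis of the saturated sublattice $\Lambda_1:=\Lambda\cap\lin(F_1)$ (resp.\ $\Lambda_2:=\Lambda\cap\lin(F_2)$). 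Since $\sigma$ is top-dimensional, $\lin(F_1)$ and $\lin(F_2)$ span complementary subspaces of $\R^d$, so it remains to show $\Lambda=\Lambda_1\oplus\Lambda_2$.

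The main obstacle is this last lattice-decomposition step. Each $\lin(F_i)$ is cut out by root hyperplanes (the faces of the alcove carrying $F_i$ in the canonical dicing), so the orthogonal complements $\lin(F_i)^\perp$ are spanned by subsets of $\rootA$. The plan for finishing is to invoke the total unimodularity of $\rootA$ -- the same engine powering Theorem~\ref{thm:hyperplanes} -- to conclude that complementary root-hyperplane intersections decompose $\Lambda_\rootA^*$ as a direct sum. A cleaner technical route, which sidesteps mixing alcoves from different affine Weyl chambers, is first to subdivide the Cayley sum by the ``vertical'' lifted root hyperplanes $\{(\vx,t):\langle\alpha,\vx\rangle=k\}$ for $\alpha\in\rootA$ and $k\in\Z$. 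This refines $\T_1$ and $\T_2$ on the two flat facets (so any triangulation refining $\T_1,\T_2$ also refines this slicing), and it reduces the claim to the case where $P_1$ and $P_2$ are single alcoves. In that reduced situation the vertices of each alcove are partial sums $\ve_{\sigma(1)}+\cdots+\ve_{\sigma(j)}$ of some permutation $\sigma$, and one can exhibit an ordering of the combined edge set that puts the matrix above in upper-triangular form with unit diagonal, giving $|\det|=1$.
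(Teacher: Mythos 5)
Your part (1) is essentially the paper's own argument: since $P_1\times\{0\}$ and $P_2\times\{1\}$ are faces of the Cayley sum, any weights inducing $\T_1$ and $\T_2$ give a regular subdivision restricting to them, and a further refinement (pulling in the paper, generic perturbation in yours) stays regular and cannot change the restrictions because $\T_1,\T_2$ are already full triangulations. Likewise, your reduction of part (2) to showing $\det\bigl[\va_1-\va_0,\dots,\va_k-\va_0\mid \vb_1-\vb_0,\dots,\vb_m-\vb_0\bigr]=\pm1$ is exactly the paper's first step.

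Where you diverge is in evaluating that determinant, and neither of your finishes is complete as written. Your main route is salvageable and genuinely different from the paper's, but the pivotal lemma (``complementary root flats decompose the lattice'') is only announced; it does follow from total unimodularity, via a short Cramer-type argument you should spell out. Because $P_i$ is of type $\rootA$, every maximal cell of $\T_i$ is a whole alcove, so $\aff(F_i)$ is an intersection of affine root hyperplanes and its direction space $\operatorname{lin}(F_i)$ equals $\ker S_i$ for a linearly independent set $S_i$ of roots with $|S_1|=d-k$ and $|S_2|=d-m=k$. Stacking $S_1$ over $S_2$ gives a square matrix $M$ with $\ker M=\operatorname{lin}(F_1)\cap\operatorname{lin}(F_2)=\{0\}$; being a nonsingular square submatrix of the totally unimodular matrix of all roots, $M\in GL_d(\Z)$, and solving $M\vx_2=(S_1\vx,0)$ splits any lattice vector $\vx$ integrally as $\vx_1+\vx_2$ with $\vx_i\in\Lambda\cap\operatorname{lin}(F_i)$. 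Together with your (correct) observation that the $F_i$-columns are $\Z$-bases of $\Lambda\cap\operatorname{lin}(F_i)$, this yields $|\det|=1$. The paper finishes quite differently: it normalizes both faces to chains of $0/1$ vectors (alcove faces are lattice translates of faces of the order triangulation of the unit cube) and proves $\det\in\{0,\pm1\}$ by induction on $d$, projecting out a coordinate whenever two consecutive chain elements differ in a single coordinate, and otherwise forcing $k=m=d/2$ and $\va_k=\vb_m=(1,\dots,1)$, i.e.\ determinant $0$. Your lattice-theoretic finish is arguably cleaner, but as submitted it is a plan, and that plan is the crux of part (2).

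Your ``cleaner technical route'' contains two genuine errors. First, a triangulation of the Cayley sum restricting to $\T_1,\T_2$ need \emph{not} refine the vertical slices $\{\langle\alpha,\vx\rangle=k\}$: already for $P_1=P_2=[0,2]$ the triangle $\conv\{(1,0),(2,0),(0,1)\}$ occurs in such a triangulation of $[0,2]\times[0,1]$ and crosses the plane $x=1$, so the claimed reduction is unjustified (it is also unnecessary, since each $F_i$ is automatically a face of a single alcove). Second, the final triangularization claim is false: for the chains $0<(1,1,0)<(1,1,1)$ and $0<(0,1,1)$ the matrix with columns $(1,1,0),(1,1,1),(0,1,1)$ has determinant $-1$, yet no row or column has a single nonzero entry, so no reordering of rows and columns makes it upper triangular with unit diagonal. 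So that route does not prove the determinant statement; you need either the paper's induction on $0/1$-chains or the totally unimodular Cramer argument sketched above.
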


\begin{proof}
The canonical triangulation of a type-$\rootA$ polytope is regular,
so let $\vomega_1$ and $ \vomega_2$ be weight vectors producing $\T_1$
and $\T_2$ in $P_1$ and $P_2$, respectively.
Since $P_1\times \{ 0 \}$ and  $P_{2}\times \{ 1 \} $ are faces of the
Cayley sum, using the same weight vectors (that is, using $\vomega_1$
in $P_1\times \{ 0 \}$ and $ \vomega_2$ in $P_{2}\times \{ 1 \}$)
yields a regular subdivision that restricts to $\T_1$ and $\T_2$ on
$P_1$ and $P_2$.
If that subdivision is not a triangulation, we do pulling refinements,
which preserve regularity.

For part (2), observe that every cell in such a triangulation $\T$
will be a join of a face $F_1$ of $\T_1$ and a face $F_2$ of
$\T_2$. It is convenient to think of the root system $\rootA_d$ as
consisting of the standard basis vectors $\pm \ve_i$ plus the
differences $\pm (\ve_i-\ve_j)$. This is done by forgetting the
coordinate $d+1$ in the usual definition of $\rootA_{d}$ as embedded
in $\R^{d+1}$. In this description, the $\rootA_d$ dicing of $\R^d$
refines the tiling by translations of the unit cube, triangulating every
 lattice unit cube by (translations of) the order triangulation of $[0,1]^d$.

In particular, if $k$ and $l$ are the dimensions of $F_1$ and $F_2$
there is no loss of generality in assuming that 
\[
F_1=\conv\{\va_0,\va_1\dots, \va_k\}\times \{0\}, \qquad
F_2=\conv\{\vb_0,\vb_1\dots, \vb_l\}\times \{1\},
\]
where the $\va_i$'s and the $\vb_j$'s are sequences of 0/1 vectors in
$\R^d$ with increasing supports. Moreover, by subtracting $\va_0$ from
every $\va_i$ and $\vb_0$ from every $\vb_j$ we can assume
$\va_0=\vb_0=0$.  Observe that the $\va_i$ and $\vb_i$ are in
transversal spaces.  Then, the volume of the join of $F_1$ and $F_2$
equals the volume of $F:=\conv\{0=\va_0=\vb_0,\va_1\dots, \va_k,
\vb_1\dots, \vb_l\}\subset \R^d$. We are going to show that this
volume is always one (or zero). Without loss of generality we assume $F$ to be
  full-dimensional. If it is not, we extend the sequence of $\va_i$'s
  until it is. 

We use induction on the dimension $d$. In particular, we assume
  that no $\va_i$ and  $\va_{i+1}$ differ in a single
  coordinate, because otherwise we can project along that coordinate
  and get the result by inductive hypothesis. The same happens if some
  $\vb_j$ and $\vb_{j+1}$ differ by a single coordinate.
 That implies that both $k$ and $l$ are at most $d/2$, hence they
  are both equal to $d/2$ because $k+l=d$. But then we must have
  $\va_k=\vb_l=(1,\dots,1)$, which implies that the volume of $F$ is
  zero (except, of course, in the base case for the induction, which is
  $k=l=0$ and produces volume one).\qedhere
\end{proof}

It is worth mentioning that Lemma~\ref{lemma:cayley} is not true for
the Cayley sum of three or more type-$\rootA$ polytopes, even in
dimension three. Indeed, the edges parallel to the vectors $(1,1,0)$,
$(1,0,1)$ and $(0,1,1)$ are type-$\rootA$ polytopes and their Cayley
sum, which equals their join, is a non-unimodular simplex.

\medskip

Our method for refining dilations will be based on combining
Lemma~\ref{lemma:cayley} with the concept of an $\rootA$-canonical
triangulation of a dilated ordered simplex 
(Definition~\ref{def:typeA_canonical_triangulation}). Let us explain
how.

Let $\Delta=\conv(\vb_0,\dots,\vb_d)$ be a non-unimodular lattice simplex, of
volume $V$ with respect to a lattice $\Lambda$. 
Let $L_\Delta$ be the lattice spanned by the vertices of $\Delta$. We take
  $\vec L_\Delta:=L_\Delta$ to denote the linear lattice parallel to $L_\Delta$,
  so $\Lambda: \vec L_\Delta = V$.%
\footnote{%
  To simplify the exposition, in the rest of the paper we assume
  (w.~l.~o.~g.)  that $\Lambda$  contains the origin, so that all the
  dilations $c\Delta$ are lattice polytopes with respect to the same lattice $\Lambda$.
  We do not require, however, the affine lattice $L_\Delta$ to contain the origin. 
}
We use barycentric coordinates with respect to $\Delta$, and dilated
barycentric coordinates for a dilation $c\Delta$. In the latter,
points in $c\Delta$ are written as convex combinations of $c\vv_0,
\dots, c\vv_d$.

A \emph{box point} for $\Delta$ is simply an element of the quotient 
$\Lambda/\vec L_\Delta$, where $\vec L_\Delta$ is the 
linear lattice parallel to $L_\Delta$.
We normally represent a box point in the $\Delta$-barycentric
coordinates of any of its representatives, as
$\m=(m_0,\dots,m_{d})$. Quotienting by $\vec L_\Delta$ then means that
we are interested only in $(\{m_0\},\dots, \{m_d\})$, where
$\{x\}:=x-\lfloor x \rfloor$ denotes the fractional part of a real
number $x$.  The \emph{height} $h(\m)$ of a box point $\m$ is the number
$\sum_i \{m_i\}$. It is an integer between one and $V-1$
and coincides with the smallest integer $h$ for which $h\Delta$
contains a point of $\m + \Lambda$.

These concepts extend to lower dimensional faces. For a face $F$ 
 $L_F$ denotes the lattice spanned by the vertices of $F$ and 
$\Lambda_F$ is the intersection of $\Lambda$ with the linear space parallel
to $F$. A box point for $F$ is then an element of $\Lambda_F/\vec L_F$, and is
represented by an $\m$ in barycentric coordinates with respect to
$F$. Observe that if $F$ is a face of $\Delta$ then there is a natural
inclusion $\Lambda_F/\vec L_F \le \Lambda/(\vec L_\Delta)$ so that a box point for
$F$ is also a box point for $\Delta$. Coordinates for $\m$ are extended
from $F$ to $\Delta$ by putting zeroes in the coordinates of the
vertices in $\Delta\setminus F$. Conversely, a box point of $\Delta$
is a box point for $F$ if and only if those particular coordinates are zero.

We call \emph{carrier} of a box point $\m$ of $\Delta$ the (unique) minimal face 
$F$ of $\Delta$ for which $\m$ is a box point. A  box point of $\Delta$ is 
\emph{non-degenerate} if its carrier is $\Delta$ itself. That is, if none of its
$\Delta$-barycentric coordinates is integral. 

\medskip

Given a box point $\m$ of a simplex $\Delta$ and a certain positive integer $c$,
we aim to triangulate $c\Delta$ into simplices of volume strictly
smaller than that of $\Delta$, using only the points of $L_\Delta \cup
(\m + L_\Delta )$. Of course, this is impossible if $c<h(\m)$. Although it can be done
for every $c\ge h(\m)$

We offer
two versions of this procedure. The first works for every $c\ge c_0$
but is less symmetric and  complicates the proof of
regularity. The second one is somehow simpler but requires $c$ to be a
multiple of $(d+1)!$ (or, rather, a multiple of the number of non-zero
coordinates in $\m$. However, since we will later apply the procedure to
several box points at the same time, having $c$ be a multiple of $(d+1)!$
is convenient).

The structure of $c\Delta \cap (\m+L_\Delta)$ is straightforward: it
is a translation of the points of $L_\Delta$ in $(c-c_0)\Delta$ (see
Figure~\ref{fig:KMW-lattice}).

\begin{figure}
\begin{tikzpicture}[y=-1cm,scale=.5]

\draw[join=round,black] (8.89,16.34913) -- (21.59,16.34913) -- (15.24,5.23875) -- cycle;
\draw[join=round,dashed,black] (11.43,16.34913) -- (16.51,7.46125) -- (13.97,7.46125) -- (19.05,16.34913) -- (20.32,14.12663) -- (10.16,14.12663) -- (11.43,16.34913) -- (13.97,16.34913) -- (17.78,9.68375) -- (12.7,9.68375) -- (16.51,16.34913) -- (19.05,11.90413) -- (11.43,11.90413) -- (13.97,16.34913);

\path[draw=black,fill=black!50] (15.5575,8.41375) circle (0.15875cm);
\path[draw=black,fill=black!50] (14.2875,10.63625) circle (0.15875cm);
\path[draw=black,fill=black!50] (13.0175,12.85875) circle (0.15875cm);
\path[draw=black,fill=black!50] (11.7475,15.08125) circle (0.15875cm);
\path[draw=black,fill=black!50] (14.2875,15.08125) circle (0.15875cm);
\path[draw=black,fill=black!50] (16.8275,15.08125) circle (0.15875cm);
\path[draw=black,fill=black!50] (19.3675,15.08125) circle (0.15875cm);
\path[draw=black,fill=black!50] (18.0975,12.85875) circle (0.15875cm);
\path[draw=black,fill=black!50] (16.8275,10.63625) circle (0.15875cm);
\path[draw=black,fill=black!50] (15.5575,12.85875) circle (0.15875cm);

\filldraw[black] (10.16,14.12875) circle (0.15875cm);
\filldraw[black] (8.89,16.35125) circle (0.15875cm);
\filldraw[black] (11.43,16.35125) circle (0.15875cm);
\filldraw[black] (12.7,14.12875) circle (0.15875cm);
\filldraw[black] (15.24,14.12875) circle (0.15875cm);
\filldraw[black] (13.97,16.35125) circle (0.15875cm);
\filldraw[black] (16.51,16.35125) circle (0.15875cm);
\filldraw[black] (17.78,14.12875) circle (0.15875cm);
\filldraw[black] (20.32,14.12875) circle (0.15875cm);
\filldraw[black] (19.05,16.35125) circle (0.15875cm);
\filldraw[black] (21.59,16.35125) circle (0.15875cm);
\filldraw[black] (15.24,5.23875) circle (0.15875cm);
\filldraw[black] (13.97,7.46125) circle (0.15875cm);
\filldraw[black] (16.51,7.46125) circle (0.15875cm);
\filldraw[black] (12.7,9.68375) circle (0.15875cm);
\filldraw[black] (15.24,9.68375) circle (0.15875cm);
\filldraw[black] (17.78,9.68375) circle (0.15875cm);
\filldraw[black] (16.51,11.90625) circle (0.15875cm);
\filldraw[black] (13.97,11.90625) circle (0.15875cm);
\filldraw[black] (11.43,11.90625) circle (0.15875cm);
\filldraw[black] (19.05,11.90625) circle (0.15875cm);

\end{tikzpicture}%
 \caption{The points in $c\Delta \cap (\m+L_\Delta)$. Here $c=5$ and $c_0=2$}
\label{fig:KMW-lattice}
\end{figure}
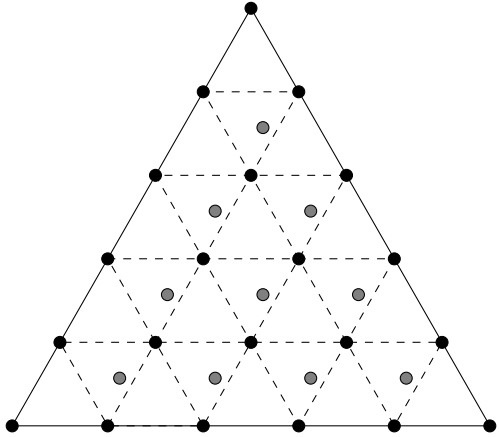

Assuming $\Delta$ is an ordered simplex allows us to speak of the
canonical triangulation of $c\Delta$, and of $cF$ for every face $F$
of $\Delta$.

Let $F$ be a face in a triangulation $\T$ and let $\m$ be a box point
of $F$. We say that $\m$ is \emph{non-degenerate} for $F$ if it is not
a box point of a proper face of $F$. Equivalently, $\m$ is
non-degenerate if all the $F$-coordinates of $\m$ are
fractional. Then, the full-dimensional simplices of $\T$ for which
$\m$ is a box point are precisely the star of $F$.  Recall that the
(closed) star of a face $F$ in a simplicial complex $\T$ is the set
$\overline\Star(F;\T)$ of all simplices of $\T$ that contain $F$,
together with all their faces. Equivalently:
\[
\overline\Star(F;\T):= \{ F'\in \T : F\cup F' \in \T\}.
\]
By $\partial \Star (F;\T)$ we mean the faces of $\overline\Star(F;\T)$
that do not contain $F$. If $\T$ is a triangulation of a polytope and
$F$ is not contained in the boundary of $P$ then $\partial \Star
(F;\T)$ coincides with the topological boundary of
$\overline\Star(F;\T)$.

As before, a global ordering is assumed on the vertices of $\T$, so
that we can speak of the canonical triangulation of each dilated face
in $c\T$.

\begin{lemma} \label{lemma:kmw:pull}
If $\T$ is a lattice triangulation on an ordered set of vertices
lying in a lattice $\Lambda$, and $F=\{\vv_0,\dots,\vv_k\}$ is a
non-unimodular face with a non-degenerate box point $\m =
(m_0,\dots,m_k) \in L_F\setminus \Lambda_F$, then for every integer
$c\in (k+1)\N$,  $c \cdot \overline{\Star} (F;\T)$ has a  refinement
$\T_\m$ such that: 
\begin{enumerate}
\item The volume of every full-dimensional simplex $\Delta'$ in
  $\T_\m$ is strictly less than the volume of simplex $\Delta$ for
  which $\Delta'\subset c\Delta$. \label{lemma:kmw:pull-i}
\item $\T_\m$ induces the canonical triangulation on the boundary $c
  \cdot \partial \Star (F;\T)$. \label{lemma:kmw:pull-ii} 
\item $\T_\m$ is a regular refinement of $\T$, so if $\T$ is regular
  then $\T_\m$ is regular. In particular, any choice of weights
  inducing the canonical triangulation on $\Lambda \cap c \cdot
  \partial \Star (F;\T)$ has an extension to $\Lambda \cap c \cdot
  \overline \Star (F;\T)$ that induces $\T_\m$ as a regular refinement
  of $\T$.
  \label{lemma:kmw:pull-iii}
\end{enumerate}
\end{lemma}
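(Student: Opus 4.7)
I would construct $\T_\m$ locally on each maximal simplex $\Delta\supset F$ in $\overline\Star(F;\T)$, in a way that depends only on $\Delta$ (as an ordered simplex) and on $\m$. This locality will automatically guarantee that the individual refinements of $c\Delta$ agree on every shared facet of $\overline\Star(F;\T)$, and will also make it possible to force agreement with the $\rootA$-canonical triangulation of Definition~\ref{def:typeA_canonical_triangulation} on $c\cdot\partial\Star(F;\T)$, since there the box point degenerates.

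\textbf{Setup on one simplex.} Fix $\Delta=\conv(\vv_0,\dots,\vv_k,\vv_{k+1},\dots,\vv_d)$ with $F=\conv(\vv_0,\dots,\vv_k)$. The non-degenerate box point $\m$ has $\Delta$-barycentric coordinates $(m_0,\dots,m_k,0,\dots,0)$ with each $m_i\in(0,1)$ and $\sum_{i=0}^k m_i=h(\m)\in\Z_{>0}$. First I would slice $c\Delta$ by the family of lattice hyperplanes $\{\sum_{i=0}^k x_i=j\}$ for $j=0,1,\dots,c$, which are the hyperplanes parallel to $F$ at integer heights. Each slab between $\sum=j$ and $\sum=j{+}1$ is a Cayley sum of two cross-sections, each being a product of dilated type-$\rootA$ simplices, one supported on $F$ and one on $G:=\conv(\vv_{k+1},\dots,\vv_d)$. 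On every slice, the points of $\m+\vec L_\Delta$ contribute a strictly finer sublattice along the $F$-direction only (a shifted copy of the $L_F$-lattice of some smaller dilate of $F$), while the $G$-direction remains unchanged.

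\textbf{Refining each slab.} Inside each slab I would triangulate by first taking the $\rootA$-canonical triangulation of each top and bottom cross-section with respect to the refined lattice $\vec L_\Delta+\Z\m$ (equivalently, with respect to the ordered vertex structure of $\Delta$), and then joining the two sides via the staircase refinement of Proposition~\ref{prop:lao} applied as in Lemma~\ref{lemma:cayley}. The hypothesis $c\in k\N$ ensures that the $\m$-translates interact with the slicing only at integer heights, so each slab really is a Cayley sum of lattice polytopes of type $\rootA$ to which Lemma~\ref{lemma:cayley} directly applies; this gives a unimodular (in the enlarged lattice), regular triangulation of every slab, coherent across slab boundaries.

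\textbf{Verifying the three properties.} For~(\ref{lemma:kmw:pull-i}): every maximal cell of $\T_\m$ inside $c\Delta$ is unimodular in $\vec L_\Delta+\Z\m$, which strictly refines $\vec L_\Delta$ because $\m\notin\vec L_\Delta$; hence its $\Lambda$-volume is strictly less than $\vol(\Delta)$. For~(\ref{lemma:kmw:pull-ii}): on any face $\Delta'\subset\Delta$ not containing $F$, at least one of $m_0,\dots,m_k$ becomes a coordinate of a vertex not in $\Delta'$ and must be set to zero, so $\m$ collapses into $\vec L_{\Delta'}$ and the refinement of $c\Delta'$ described above reduces to the ordinary $\rootA$-canonical triangulation of $c\Delta'$. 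For~(\ref{lemma:kmw:pull-iii}): following the proof scheme of Theorem~\ref{thm:canonical-dilation}, I would start from the quadratic weights $\sum_{S}(\sum_{i\in S}x_i)^2$ realizing the canonical dicings, and add a small additional weight supported on $(\m+\vec L_\Delta)\cap c\Delta$ to induce the $\m$-refinement. Any weight on $c\cdot\partial\Star(F;\T)$ producing the canonical triangulation there extends: the interior construction is a regular refinement of a regular subdivision, so the standard regular-refinement lemma from~\cite[Lemma 2.3.16]{deLoeraRambauSantos} applies.

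\textbf{Main obstacle.} The delicate part is~(\ref{lemma:kmw:pull-ii}): showing that on every boundary facet of $\overline\Star(F;\T)$ the interior refinement really restricts to the plain $\rootA$-canonical triangulation, simultaneously for every $\Delta\in\overline\Star(F;\T)$. This is exactly where the divisibility hypothesis $c\in k\N$ is used --- without it, the $\m$-translated lattice intersects boundary slices at positions that would force a non-canonical refinement on faces that are supposed to remain unchanged. The proof reduces to a coordinate-wise bookkeeping, but that bookkeeping is the technical heart of the argument.
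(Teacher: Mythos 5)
Your construction breaks down at property~(\ref{lemma:kmw:pull-i}), and the reason is structural, not bookkeeping. Your route to the volume drop is to make every cell unimodular with respect to the enlarged lattice $\vec L_\Delta+\Z\m$, by giving the cross-sections at integer $F$-mass an ``$\rootA$-canonical triangulation with respect to the refined lattice''. That step is not defined and cannot deliver what you need: the refinement of the lattice happens only in the $\vec F$-directions, the barycentric functionals of $\Delta$ take the non-integral values $\{m_i\}$ on the new generator, so a cross-section $jF\times(c-j)G$ is not a type-$\rootA$ lattice polytope for the refined lattice and, worse, $F$ itself is still non-unimodular for it unless the class of $\m$ generates all of $\Lambda_F/\vec L_F$. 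Demanding cells unimodular in $\vec L_\Delta+\Z\m$ therefore amounts to already possessing a unimodular triangulation of a dilate of a non-unimodular simplex (the $F$-factor), i.e.\ the problem the lemma is trying to make progress on; dilates of non-unimodular simplices need not admit such triangulations at all. Note also that your slab decomposition by itself gives no volume reduction: since the height $c_0=\sum_i\{m_i\}$ is an integer and vectors of $\vec L_\Delta$ change the $F$-mass by integers, every point of $(\m+\vec L_\Delta)\cap c\Delta$ lies \emph{on} your slicing hyperplanes, so both classes of points sit on the same levels, and applying Lemma~\ref{lemma:cayley} with the ordinary canonical triangulations of the cross-sections just reproduces the canonical triangulation of $c\Delta$, all of whose cells have volume exactly $\vol(\Delta)$. (Your use of $c\in k\N$ here is also misplaced: the $\m$-translates meeting integer levels is automatic; the paper needs the divisibility to center its construction at a lattice point of $cF$.) The paper's mechanism is different: it interleaves concentric dilated copies of $\partial\Star(F;\T)$, alternating between $L_\Delta$-points and $(\m+L_\Delta)$-points, so that \emph{any} refinement of the resulting Cayley pieces has every full-dimensional cell spanning two nearby shells of different classes -- the volume drop comes from that geometry, not from unimodularity in an enlarged lattice.

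Property~(\ref{lemma:kmw:pull-ii}) also fails for your decomposition. The hyperplanes ``$F$-mass $=$ integer'' cut each boundary face $c\Delta''$, $\Delta''\in\partial\Star(F;\T)$, along integer levels of the sum of the barycentric coordinates over $F\cap\Delta''$, and when that subset is not consecutive in $\Delta''$'s induced vertex order (nor complementary to a consecutive subset) these hyperplanes pass through the interior of alcoves of the canonical triangulation of $c\Delta''$. For instance, with $d=4$, $F=\conv\{\vv_0,\vv_2,\vv_4\}$, $\Delta''=\conv\{\vv_0,\vv_1,\vv_2,\vv_3\}$ and $c=2$, the points with barycentric coordinates $(0.6,0.5,0.6,0.3)$ and $(0.4,0.8,0.4,0.4)$ lie in one alcove of the $\rootA$-dicing of $2\Delta''$ but on opposite sides of $x_0+x_2=1$. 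So no refinement of your slab structure can restrict to the canonical triangulation on $c\cdot\partial\Star(F;\T)$, and since your first-stage (dicing) weights are not constant on the boundary, the ``any boundary weight extends'' clause of~(\ref{lemma:kmw:pull-iii}) is lost as well. The paper avoids both issues simultaneously: its first-stage subdivision meets the boundary of the star only in the outermost, untouched copy $c\cdot\partial\Star(F;\T)$ and assigns constant weights there, so the boundary triangulation is governed entirely by the freely chosen second weight vector.
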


\begin{proof}
  The idea is to first subdivide each $c\Delta$, $\Delta\in \T$ by
  concentric copies of smaller and smaller dilations of $\Delta$ and
  then use Lemma~\ref{lemma:cayley} to refine those subdivisions. But
  let us explain it in a way that demonstrates the regularity
  properties.

  We start by describing the structure of lattice points in $c\cdot \overline \Star (F;\T)$.
   (Note: these are not all the lattice points; we are
  not claiming our triangulation $\T_\m$ to be full.)  Observe that
  requiring $c$ to be a multiple of $k$ implies that the barycenter of
  $cF$ is in $L_F$. Around it, we have concentric copies $k\partial
  \Star (F;\T)$, $2k \partial \Star (F;\T)$, \dots, $c\cdot \partial
  \Star (F;\T)$, which contain all the lattice points of each
  $c\Delta$ that lie in $L_\Delta$, for each $\Delta\in
  \overline\Star(F;\T)$. Between each pair of consecutive copies of multiples of $k\partial
  \Star (F;\T)$,
   we have translated copies
  of $(k-c_0)\partial \Star (F;\T)$, $(2k-c_0) \partial \Star (F;\T)$,
  \dots, $(c-c_0)\partial \Star (F;\T)$, and those complexes contain
  the lattice points in the distinct $\m + L_\Delta$. The latter are
  not concentric to the barycenter, but concentric to one another and
  displaced by the vector $\m$. For each two consecutive dilated
  copies of $\partial \Star(F;\T)$, corresponding faces form a Cayley
  polytope and these Cayley polytopes form a polyhedral subdivision of
  $c\cdot \overline\Star(F;\T)$ (the innermost copy $(c-c_0)\overline\Star
  (F;\T)$ is stellarly subdivided from the origin into pyramids, which
  are nothing but degenerate cases of Cayley polytopes).

We claim that this polyhedral subdivision $\S$ is the regular
refinement of $c\cdot\overline\Star(F;\T)$ obtained by the following choice
of weights. Choose numbers $\alpha_0=0 <\alpha_1 < \alpha_2< \dots <
\alpha_{2c/k}$, each much bigger than the previous one, and give
height $\alpha_i$ to the points in the $i$-th concentric copy.

We then refine $\S$ into $\T_\m$ via any second choice of weights
$\vomega$. Any such refinement will satisfy the first claimed
property. If $\vomega$ is chosen to induce the canonical triangulation
on $c\cdot\partial \Star(F;\T)$, then $\T_\m$ also satisfies properties (2)
and (3). For (3), observe that our first choice of weights (the
$\alpha$'s) were constant on $c\cdot\partial \Star(F;\T)$. Hence, the
restriction of $\T_\m$ to $c\cdot\partial \Star(F;\T)$ is only governed by
$\vomega$.
\end{proof}

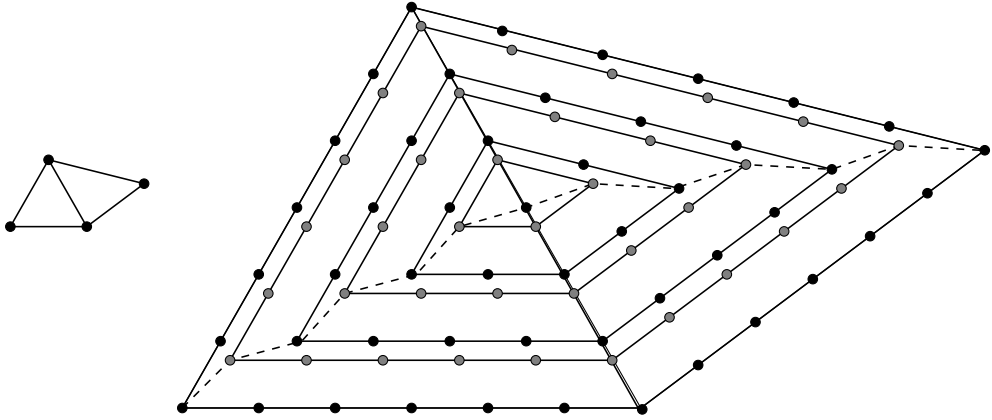
\begin{figure}
\begin{tikzpicture}[y=-1cm,scale=.4]

\draw[join=round,black] (8.89,16.34913) -- (24.13,16.35125) -- (16.51,3.01625) -- cycle;
\draw[join=round,black] (24.13,16.35125) -- (35.56,7.77875) -- (16.51,3.01625);

\draw[semithick,join=round,black] (16.8275,3.65125) -- (10.4775,14.76375) -- (23.1775,14.76375) -- (32.7025,7.62) -- cycle;
\draw[semithick,join=round,black] (16.51,3.01625) -- (8.89,16.35125) -- (24.13,16.35125) -- (35.56,7.77875) -- cycle;
\draw[semithick,join=round,black] (17.78,5.23875) -- (12.7,14.12875) -- (22.86,14.12875) -- (30.48,8.41375) -- cycle;
\draw[semithick,join=round,black] (18.0975,5.87375) -- (14.2875,12.54125) -- (21.9075,12.54125) -- (27.6225,8.255) -- cycle;
\draw[semithick,join=round,black] (19.3675,8.09625) -- (18.0975,10.31875) -- (20.6375,10.31875) -- (22.5425,8.89) -- cycle;
\draw[semithick,join=round,black] (19.05,7.46125) -- (16.51,11.90625) -- (21.59,11.90625) -- (25.4,9.04875) -- cycle;
\draw[semithick,join=round,black] (24.13,16.51) -- (16.51,3.01625);
\draw[semithick,join=round,black] (4.445,8.09625) -- (3.175,10.31875) -- (5.715,10.31875) -- (7.62,8.89) -- cycle;
\draw[semithick,join=round,black] (4.445,8.09625) -- (5.715,10.31875);
\draw[semithick,join=round,dashed,black] (8.89,16.35125) -- (10.4775,14.76375) -- (12.85875,14.12875) -- (14.2875,12.54125) -- (16.66875,11.90625) -- (18.0975,10.31875) -- (20.32,9.68375) -- (22.5425,8.89) -- (25.24125,9.04875) -- (27.6225,8.255) -- (30.48,8.41375) -- (32.7025,7.62) -- (35.40125,7.77875);

\path[draw=black,fill=black!50] (10.4775,14.76375) circle (0.15875cm);
\path[draw=black,fill=black!50] (13.0175,14.76375) circle (0.15875cm);
\path[draw=black,fill=black!50] (15.5575,14.76375) circle (0.15875cm);
\path[draw=black,fill=black!50] (18.0975,14.76375) circle (0.15875cm);
\path[draw=black,fill=black!50] (20.6375,14.76375) circle (0.15875cm);
\path[draw=black,fill=black!50] (15.5575,5.87375) circle (0.15875cm);
\path[draw=black,fill=black!50] (14.2875,8.09625) circle (0.15875cm);
\path[draw=black,fill=black!50] (16.8275,8.09625) circle (0.15875cm);
\path[draw=black,fill=black!50] (13.0175,10.31875) circle (0.15875cm);
\path[draw=black,fill=black!50] (15.5575,10.31875) circle (0.15875cm);
\path[draw=black,fill=black!50] (18.0975,10.31875) circle (0.15875cm);
\path[draw=black,fill=black!50] (16.8275,12.54125) circle (0.15875cm);
\path[draw=black,fill=black!50] (14.2875,12.54125) circle (0.15875cm);
\path[draw=black,fill=black!50] (11.7475,12.54125) circle (0.15875cm);
\path[draw=black,fill=black!50] (19.3675,12.54125) circle (0.15875cm);
\path[draw=black,fill=black!50] (23.1775,14.76375) circle (0.15875cm);
\path[draw=black,fill=black!50] (21.9075,12.54125) circle (0.15875cm);
\path[draw=black,fill=black!50] (20.6375,10.31875) circle (0.15875cm);
\path[draw=black,fill=black!50] (19.3675,8.09625) circle (0.15875cm);
\path[draw=black,fill=black!50] (18.0975,5.87375) circle (0.15875cm);
\path[draw=black,fill=black!50] (16.8275,3.65125) circle (0.15875cm);
\path[draw=black,fill=black!50] (25.0825,13.335) circle (0.15875cm);
\path[draw=black,fill=black!50] (23.8125,11.1125) circle (0.15875cm);
\path[draw=black,fill=black!50] (22.5425,8.89) circle (0.15875cm);
\path[draw=black,fill=black!50] (19.84375,4.445) circle (0.15875cm);
\path[draw=black,fill=black!50] (23.1775,5.23875) circle (0.15875cm);
\path[draw=black,fill=black!50] (24.4475,7.46125) circle (0.15875cm);
\path[draw=black,fill=black!50] (25.7175,9.68375) circle (0.15875cm);
\path[draw=black,fill=black!50] (26.9875,11.90625) circle (0.15875cm);
\path[draw=black,fill=black!50] (28.8925,10.4775) circle (0.15875cm);
\path[draw=black,fill=black!50] (27.6225,8.255) circle (0.15875cm);
\path[draw=black,fill=black!50] (26.3525,6.0325) circle (0.15875cm);
\path[draw=black,fill=black!50] (29.5275,6.82625) circle (0.15875cm);
\path[draw=black,fill=black!50] (30.7975,9.04875) circle (0.15875cm);
\path[draw=black,fill=black!50] (32.7025,7.62) circle (0.15875cm);
\path[draw=black,fill=black!50] (21.2725,6.6675) circle (0.15875cm);

\filldraw[black] (10.16,14.12875) circle (0.15875cm);
\filldraw[black] (8.89,16.35125) circle (0.15875cm);
\filldraw[black] (11.43,16.35125) circle (0.15875cm);
\filldraw[black] (12.7,14.12875) circle (0.15875cm);
\filldraw[black] (15.24,14.12875) circle (0.15875cm);
\filldraw[black] (13.97,16.35125) circle (0.15875cm);
\filldraw[black] (16.51,16.35125) circle (0.15875cm);
\filldraw[black] (17.78,14.12875) circle (0.15875cm);
\filldraw[black] (20.32,14.12875) circle (0.15875cm);
\filldraw[black] (19.05,16.35125) circle (0.15875cm);
\filldraw[black] (21.59,16.35125) circle (0.15875cm);
\filldraw[black] (15.24,5.23875) circle (0.15875cm);
\filldraw[black] (13.97,7.46125) circle (0.15875cm);
\filldraw[black] (16.51,7.46125) circle (0.15875cm);
\filldraw[black] (12.7,9.68375) circle (0.15875cm);
\filldraw[black] (15.24,9.68375) circle (0.15875cm);
\filldraw[black] (17.78,9.68375) circle (0.15875cm);
\filldraw[black] (16.51,11.90625) circle (0.15875cm);
\filldraw[black] (13.97,11.90625) circle (0.15875cm);
\filldraw[black] (11.43,11.90625) circle (0.15875cm);
\filldraw[black] (19.05,11.90625) circle (0.15875cm);
\filldraw[black] (24.17657,16.39782) circle (0.15875cm);
\filldraw[black] (22.86,14.12875) circle (0.15875cm);
\filldraw[black] (21.59,11.90625) circle (0.15875cm);
\filldraw[black] (20.32,9.68375) circle (0.15875cm);
\filldraw[black] (19.05,7.46125) circle (0.15875cm);
\filldraw[black] (17.78,5.23875) circle (0.15875cm);
\filldraw[black] (16.51,3.01625) circle (0.15875cm);
\filldraw[black] (26.035,14.9225) circle (0.15875cm);
\filldraw[black] (24.765,12.7) circle (0.15875cm);
\filldraw[black] (23.495,10.4775) circle (0.15875cm);
\filldraw[black] (22.225,8.255) circle (0.15875cm);
\filldraw[black] (19.52625,3.81) circle (0.15875cm);
\filldraw[black] (22.86,4.60375) circle (0.15875cm);
\filldraw[black] (24.13,6.82625) circle (0.15875cm);
\filldraw[black] (25.4,9.04875) circle (0.15875cm);
\filldraw[black] (26.67,11.27125) circle (0.15875cm);
\filldraw[black] (27.94,13.49375) circle (0.15875cm);
\filldraw[black] (29.845,12.065) circle (0.15875cm);
\filldraw[black] (28.575,9.8425) circle (0.15875cm);
\filldraw[black] (27.305,7.62) circle (0.15875cm);
\filldraw[black] (26.035,5.3975) circle (0.15875cm);
\filldraw[black] (29.21,6.19125) circle (0.15875cm);
\filldraw[black] (30.48,8.41375) circle (0.15875cm);
\filldraw[black] (31.75,10.63625) circle (0.15875cm);
\filldraw[black] (33.655,9.2075) circle (0.15875cm);
\filldraw[black] (32.385,6.985) circle (0.15875cm);
\filldraw[black] (35.56,7.77875) circle (0.15875cm);
\filldraw[black] (20.955,6.0325) circle (0.15875cm);
\filldraw[black] (3.175,10.31875) circle (0.15875cm);
\filldraw[black] (4.445,8.09625) circle (0.15875cm);
\filldraw[black] (5.715,10.31875) circle (0.15875cm);
\filldraw[black] (7.62,8.89) circle (0.15875cm);

\end{tikzpicture}%
 \caption{$\overline \Star (F;\T)$ (left) and the subdivision of
  $c\cdot\overline \Star (F;\T)$ into Cayley polytopes via concentric
  dilated copies of $\partial \Star (F;\T)$ (right)}
\label{fig:KMW-concentric}
\end{figure}

Let us look at how much the number of simplices
in a triangulation grows with each iteration of Lemma~\ref{lemma:kmw:pull}. One crude bound is that the number of simplices produced in the refinement of each individual dilated simplex
$c\Delta$ is at most the total number of simplices in the
refinement.

\begin{lemma}
\label{lemma:number_of_simplices}
In the triangulations constructed in Lemma~\ref{lemma:kmw:pull} and
Corollary~\ref{coro:simultaneous}, each dilated simplex $c\Delta$,
$\Delta\in \T$, is refined into at most $(d+1) c^d$ full-dimensional
simplices. 
\end{lemma}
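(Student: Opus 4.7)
My plan is to realize $\T_\m \cap c\Delta$ as the canonical type-$\rootA$ triangulation of $c\Delta$ (Definition~\ref{def:typeA_canonical_triangulation}) followed by a stellar subdivision of each of its cells around at most one lattice point from $\m+L_\Delta$. The canonical type-$\rootA$ triangulation of $c\Delta$ has exactly $c^d$ top-dimensional cells (this is immediate after pulling back the unimodular type-$\rootA$ triangulation of $c\Delta_\rootA^d$ under the affine isomorphism of Definition~\ref{def:typeA_canonical_triangulation}), and a stellar subdivision of a $d$-simplex around one interior point produces $d+1$ new $d$-simplices, so this description would directly give the bound $(d+1)c^d$.

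The first step is to observe that by the construction of Lemma~\ref{lemma:kmw:pull} the only vertices used by $\T_\m$ inside $c\Delta$ lie in $(L_\Delta\cup(\m+L_\Delta))\cap c\Delta$. The points from $L_\Delta$ are precisely the vertices of the canonical $\rootA$-triangulation. Because $\m+L_\Delta$ is a single coset, i.e. a translate of the lattice $L_\Delta$, and because each canonical cell is an $L_\Delta$-translate of $\Delta$ and hence a fundamental domain for the $L_\Delta$-action on $c\Delta$, each canonical cell meets $\m+L_\Delta$ in exactly one lattice point. When $\m$ is non-degenerate this point lies in the relative interior of the cell; otherwise it lies in the relative interior of the $k$-face of the cell parallel to the carrier $F$ of $\m$, where $k=\dim F$. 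Pulling that single point subdivides the cell into at most $k+1\le d+1$ top-dimensional simplices, one for each facet of the cell that avoids the pulled point, and pulling preserves regularity by Lemma~\ref{lemma:strongPulling}.

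It remains to verify that the weights $\vomega$ in the proof of Lemma~\ref{lemma:kmw:pull} can actually be chosen so that the resulting regular refinement coincides with this canonical-plus-stellar one. The hyperplanes bounding the concentric shells of that proof each fix a single barycentric coordinate, so they form a sub-arrangement of the $\rootA$-hyperplanes in the barycentric description of Lemma~\ref{lemma:Acanonical}; consequently the canonical $\rootA$-triangulation refines the Cayley subdivision $\S$, and a suitable $\vomega$ is obtained by combining the regularity argument of Theorem~\ref{thm:canonical-dilation} (on each $c\Delta$) with a final pulling of the $\m+L_\Delta$-points. The main obstacle is the global consistency of this choice on the common faces of distinct $\Delta\in\overline\Star(F;\T)$, and more generally across different box-point stars in Corollary~\ref{coro:simultaneous}; but this follows from the face-canonical nature of the type-$\rootA$ triangulation, which restricts to the canonical triangulation on every face of $c\Delta$, together with the observation that box-point pullings carried out in disjoint cells of the $\rootA$-triangulation are independent, so the cell-by-cell count of at most $d+1$ simplices per canonical cell is preserved globally.
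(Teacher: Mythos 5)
The approach breaks down at its central structural claim. A cell of the canonical type-$\rootA$ triangulation of $c\Delta$ is unimodular with respect to $L_\Delta$, so its Euclidean volume is $1/d!$ times that of a fundamental parallelepiped of $\vec L_\Delta$ — it is \emph{not} a fundamental domain for the $\vec L_\Delta$-action, and it does not meet a generic coset $\m+L_\Delta$ in exactly one point. Quantitatively, $c\Delta\cap(\m+L_\Delta)$ is a translate of the $L_\Delta$-points of $(c-h(\m))\Delta$, roughly $(c-h(\m))^d/d!$ points, whereas the canonical triangulation has $c^d$ cells; so all but about a $1/d!$ fraction of the canonical cells contain no coset point at all. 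This is fatal for your description of $\T_\m$ as ``canonical plus one stellar subdivision per cell'': the cells that receive no pulled point would keep volume exactly $\vol(\Delta)$, contradicting property (1) of Lemma~\ref{lemma:kmw:pull} (every cell of $\T_\m$ has volume strictly smaller than $\vol(\Delta)$), which is precisely the situation the counting lemma refers to. Your attempted repair — choosing the weights $\vomega$ so that the refinement becomes canonical-plus-stellar — cannot work either: the inner concentric copies in the proof of Lemma~\ref{lemma:kmw:pull} are displaced by $\m$, whose barycentric coordinates are fractional, so their facets lie on hyperplanes with non-integer constants in the coordinates of Lemma~\ref{lemma:Acanonical}; they are not faces of the $\rootA$-dicing, and the canonical triangulation does not refine the Cayley subdivision $\S$. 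In the actual $\T_\m$ the maximal cells are joins of canonical faces of two consecutive shells, typically with several vertices in $\m+L_\Delta$, and an edge joining two coset points crosses dicing hyperplanes, so $\T_\m$ is not a refinement of the canonical triangulation of $c\Delta$ in the first place.

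For comparison, the paper's proof avoids any structural identification and argues by volume: the combinatorial type of $\T_\m$ depends only on the number $d_0$ of nonzero fractional coordinates of $\m$ and on $c_0=h(\m)$, so one may assume all nonzero fractional coordinates equal $c_0/d_0$; in that symmetric situation every full-dimensional cell of $\T_\m$ inside $c\Delta$ has volume exactly $(c_0/d_0)\vol(\Delta)$, whence the number of cells is $d_0c^d/c_0\le (d+1)c^d$. If you want to salvage a cell-by-cell counting argument, you would have to count cells of $\T_\m$ directly in terms of the joins across consecutive shells, not via the canonical dicing of $c\Delta$.
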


\begin{proof}
Observe that the combinatorial type, hence the number of simplices, of
$\T_m$ depends only on two parameters (apart of $d$ and $c$): how many
of the $\{m_i\}$'s are not zero (call this number $d_0$), and the
value of $c_0= \sum_i \{m_i\}$. In particular, to compute the number
of simplices there is no loss of generality in assuming that all the
non-zero $\{m_i\}$'s are equal to one another, hence equal to
$c_0/d_0$. In this case, all the simplices in $\T_\m$ have volume
exactly $c_0/d_0$ times the volume of $\Delta$, so the number of them
needed to fill $c\Delta$ equals 
\[
\frac{d_0 c^d}{c_0} \le (d+1) c^d.
\]
In the last inequality we use that $d_0\le d+1$ and $c_0\ge 1$.
\end{proof}

\subsection{A proof of the KMW Theorem} \label{sec:KMWfirstproof}

Lemma~\ref{lemma:kmw:pull} can be applied simultaneously to several
faces $F_1,\dots, F_N$ each with a non-degenerate box point
$\m_1,\dots,\m_N$, as long as the stars of the $F_i$'s intersect only
on their boundaries. Equivalently, as long as there is not an
$i,j$ pair for which $\conv(F_i\cup F_j)$ is a simplex, or as long as 
the open stars of the $F_i$'s are disjoint. Here the \emph{open
  star} of a face in a simplicial complex is defined as:
\[
\Star(F;\T) = \{F'\in \T: F\subset F'\}.
\]
(Observe that open stars are not simplicial complexes.) 
 
In the following statement, we assume that $c$ is a multiple of $(d+1)!$ to
guarantee that it is a multiple of $\dim(F_i) +1$ for every $F_i$, as required in order to apply
Lemma~\ref{lemma:kmw:pull}.
  
\begin{corollary}
  \label{coro:simultaneous}
Let $\T$ be a lattice triangulation (with an ordering of its
vertices). Suppose there is a family $F_1,\dots,F_N$ of faces of $\T$
with respective non-degenerate box points $\m_1,\dots, \m_N$ such that
$\conv(F_i \cup F_j) \not\in \T$, for every pair $i,j\in [N]$.

Then, for every integer $c\in (d+1)! \N$, the dilation $c\T$ can be
refined into a triangulation $\T'$ with the following properties: 
\begin{enumerate}
\item $\T'$ is the canonical refinement of $c\T$ away from  $\cup_i
  \overline\Star(F_i;\T)$. In particular, for every full-dimensional
  simplex $\Delta'\in \T'$ not in $\cup_i \overline\Star(F_i;\T)$,
  $L_{\Delta'}=L_\Delta$, where $\Delta\in \T$ is such that
  $\Delta'\subset c\Delta$.
\item For each full-dimensional simplex $\Delta'$ of $\T'$ contained
  in $\cup_i \overline\Star(F_i;\T)$,  $\vol(\Delta') < \vol(\Delta)$
  for $\Delta\in \T$ is such that $\Delta'\subset c\Delta$.
\item $\T'$ is a regular refinement of $\T$.
\end{enumerate}
\end{corollary}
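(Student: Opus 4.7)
The plan is to apply Lemma~\ref{lemma:kmw:pull} independently inside each $c\cdot\overline\Star(F_i;\T)$, to fill the remaining part of $c\T$ with the canonical refinement of Theorem~\ref{thm:canonical-dilation}, and to verify that these pieces glue coherently and regularly. The first step is to reinterpret the hypothesis: $\conv(F_i\cup F_j)\notin\T$ for $i\ne j$ is exactly the statement that no simplex of $\T$ contains both $F_i$ and $F_j$, i.e.\ that the open stars $\Star(F_i;\T)$ are pairwise disjoint. In particular, each maximal simplex of $\T$ lies in at most one of the open stars, and any face $G\in\T$ that sits in both $\overline\Star(F_i;\T)$ and $\overline\Star(F_j;\T)$ must avoid both $F_i$ and $F_j$, hence belongs to $\partial\Star(F_i;\T)\cap\partial\Star(F_j;\T)$.

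Because $c\in d!\,\N$, the divisibility hypothesis $\dim F_i\mid c$ required by Lemma~\ref{lemma:kmw:pull} holds for every $i$, so the lemma supplies a refinement $\T_{\m_i}$ of $c\cdot\overline\Star(F_i;\T)$ having simplices of strictly smaller volume than the containing $c\Delta$, restricting to the $\rootA$-canonical triangulation on $c\cdot\partial\Star(F_i;\T)$, and regular inside its star. Define $\T'$ by taking $\T_{\m_i}$ on each $c\cdot\overline\Star(F_i;\T)$ and the $\rootA$-canonical refinement (Theorem~\ref{thm:canonical-dilation}) on each $c\Delta$ with $\Delta\notin\bigcup_i\Star(F_i;\T)$. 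The observation above shows that on every shared face — either between two closed stars or between a closed star and a simplex outside all stars — the chosen refinements from both sides coincide with the canonical triangulation of that face, so the pieces agree and assemble into a triangulation of $cP$. Properties (1) and (2) follow directly from this definition and from Lemma~\ref{lemma:kmw:pull}(1).

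For the regularity assertion (3), start from a weight vector $\vomega_0$ on $cP\cap\Lambda$ that induces the global canonical refinement of $c\T$, as described in the proof of Theorem~\ref{thm:canonical-dilation}. Its restriction to $c\cdot\partial\Star(F_i;\T)$ induces the canonical triangulation there, so by Lemma~\ref{lemma:kmw:pull}(iii) these boundary weights admit, for each $i$, an extension $\vomega_i$ to the lattice points of $c\cdot\overline\Star(F_i;\T)$ that realizes $\T_{\m_i}$. Since the open stars are pairwise disjoint, the lattice points strictly interior to the different closed stars $c\cdot\overline\Star(F_i;\T)$ form disjoint subsets of $cP\cap\Lambda$: any common lattice point would lie in $\partial\Star(F_i;\T)\cap\partial\Star(F_j;\T)$, where $\vomega_i$ and $\vomega_j$ were forced to equal $\vomega_0$. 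One can therefore define a single weight vector $\vomega$ on $cP\cap\Lambda$ that equals $\vomega_0$ outside the union of the closed stars and equals the interior extension $\vomega_i$ inside each $c\cdot\overline\Star(F_i;\T)$. This $\vomega$ induces $\T'$.

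The main obstacle is exactly this last assembly step: it requires that the local modifications coming from different $F_i$ neither conflict on shared lattice points nor spoil regularity across the inter-star boundaries. Both issues are absorbed into the strong form of Lemma~\ref{lemma:kmw:pull}(iii), which prescribes only that the boundary weights be canonical and leaves the interior extension free, so the disjointness of the open stars is precisely the geometric input that makes the gluing work.
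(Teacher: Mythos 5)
Your proposal is correct and follows essentially the same route as the paper's own proof: reinterpret the hypothesis as saying the closed stars meet only along their boundaries, apply Lemma~\ref{lemma:kmw:pull} simultaneously (gluing via the canonical boundary triangulations, with the canonical refinement outside the stars), and obtain regularity by keeping the canonical weights outside and on the boundaries of the stars and extending them inside each star using part~(3) of the lemma. Your write-up merely spells out more explicitly the divisibility check $\dim F_i \mid c$ and the patching of the weight vectors, which the paper leaves implicit.
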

 
\begin{proof}
The condition $\conv(F_i \cup F_j) \not\in \T$ is equivalent to saying
that the closed stars $\overline \Star (F_i;\T)$ and $\overline \Star
(F_j;\T)$ intersect only on their boundaries. So,
Lemma~\ref{lemma:kmw:pull} can be applied simultaneously with respect
to all box points and stars, since it gives the canonical
triangulation on the boundary.
 
We know the components match up to form a regular refinement since
refining every simplex of $c\T$ to its canonical triangulation is
clearly a regular refinement.  By part \eqref{lemma:kmw:pull-iii} of
Lemma~\ref{lemma:kmw:pull}, if we keep the weights that give the
canonical triangulations outside the stars of the $F_i$'s (including
their boundaries) we can extend that to give our desired refinement
inside the stars.
\end{proof}
 
With this we can conclude the following statement, which implies
Theorem~\ref{thm:kmw} by induction on $V$.
 
\begin{theorem}
  \label{thm:kmw-inductive}
Let $\T$ be a triangulation of $P$, and $V$ be the maximal
volume among its simplices. If $\Delta_1,\dots,\Delta_N$ is the
full-dimensional simplices of volume $V$, then applying
Corollary~\ref{coro:simultaneous} $N$ times with any $c\in (d+1)!\N$ 
yields  a regular refinement of $c^N \T$ into a triangulation with
maximal volume strictly less than $V$.
\end{theorem}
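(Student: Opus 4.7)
I would prove the theorem by induction on $i = 0, 1, \ldots, N$, maintaining at each stage the invariant that there is a regular triangulation $\T_i$ of $c^i P$ refining $c^i \T$ such that every $d$-simplex of $\T_i$ has volume at most $V$, and every $d$-simplex of volume exactly $V$ lies inside the canonical refinement of some $c^i \Delta_j$ with $j > i$. The base case $\T_0 := \T$ is immediate, and the case $i = N$ forces every $d$-simplex to have volume strictly less than $V$, since no index $j > N$ is available.

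For the inductive step from $\T_{i-1}$ to $\T_i$, I would first choose a face $F \subseteq \Delta_i$ of minimum dimension with $L_F \neq \Lambda_F$. Such an $F$ exists because $\Delta_i$ itself is non-unimodular (having volume $V > 1$), and its minimality ensures that every proper face of $F$ is unimodular; hence every nonzero class in $\Lambda_F/\vec L_F$ has all $F$-barycentric coordinates fractional, giving a non-degenerate box point $\m$ for $F$. By the inductive invariant, the volume-$V$ cells of $\T_{i-1}$ inside $c^{i-1}\Delta_i$ form the $\rootA$-canonical refinement of $c^{i-1}\Delta_i$ into lattice translates of $\Delta_i$. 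Each such translate $\Delta'$ carries a translated copy $F(\Delta') \subseteq \Delta'$ of $F$ together with a translated non-degenerate box point $\m(\Delta')$. Applying Corollary~\ref{coro:simultaneous} to the resulting family $\{(F(\Delta'), \m(\Delta'))\}$, with dilation factor $c \in d!\,\N$, produces the desired regular refinement $\T_i$ of $c\,\T_{i-1}$, hence of $c^i \T$.

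To verify the invariant for $\T_i$, I combine the two outputs of the corollary. Inside the combined star $\bigcup_{\Delta'} \overline{\Star}(F(\Delta'); \T_{i-1})$, all volumes drop strictly below $V$, by Lemma~\ref{lemma:kmw:pull}(1) (which is the strict form underlying Corollary~\ref{coro:simultaneous}(2)). Outside that star, $\T_i$ is the canonical refinement of $c\,\T_{i-1}$, so individual volumes are preserved. Consequently, any residual volume-$V$ simplex of $\T_i$ is a canonical sub-simplex of $c\,\Delta''$ for some volume-$V$ simplex $\Delta'' \in \T_{i-1}$ lying outside the targeted stars; by the invariant for $\T_{i-1}$, $\Delta''$ sits inside the canonical refinement of $c^{i-1}\Delta_j$ for some $j > i$, and its canonical subcells in $\T_i$ therefore lie inside the canonical refinement of $c^i \Delta_j$, confirming the invariant.

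The main obstacle is checking the disjoint-open-star hypothesis of Corollary~\ref{coro:simultaneous} for the chosen family $\{F(\Delta')\}$. When $F = \Delta_i$ (the favorable case) each $F(\Delta')$ is a full-dimensional simplex whose open star is a singleton, so the condition holds trivially. When $F$ is a proper face, one must show that no cell of $\T_{i-1}$ contains two distinct translates $F(\Delta')$ and $F(\Delta'')$ from the canonical refinement of $c^{i-1}\Delta_i$; this requires picking the translates consistently with the vertex ordering used in the canonical refinement, so that adjacent copies $\Delta'$ and $\Delta''$ give geometrically distinct translates of $F$ separated by a hyperplane of the canonical dissection, rather than collapsing to a common boundary face. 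In any pathological configuration where stars would still collide, the step can be split into finitely many sub-applications of the corollary, each restricted to a sub-family of faces with pairwise disjoint stars, without altering the overall dilation factor $c^N$.
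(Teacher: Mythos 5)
Your overall induction and invariant (volume-$V$ cells confined to the canonical refinements of the remaining $c^i\Delta_j$) is the same skeleton as the paper's argument, but the step you yourself flag as ``the main obstacle''---verifying the disjoint-star hypothesis of Corollary~\ref{coro:simultaneous}---is where the proposal genuinely breaks down. The cells of the canonical refinement of $c^{i-1}\Delta_i$ are $\rootA$-equivalent to $\Delta_i$ but are \emph{not} all lattice translates of it (there are up to $d!$ translation classes within one $\rootA$-class), so ``a translated copy $F(\Delta')\subseteq\Delta'$ of $F$'' need not even be a face of a given cell $\Delta'$, and in any case hand-picking faces gives no control over whether two chosen faces in adjacent cells span a common simplex of $\T_{i-1}$. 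The correct mechanism is not a clever choice of translates: one fixes a single box-point class $\m_0\in\Lambda/\vec L_{\Delta_i}$, which is automatically a box point of \emph{every} cell of the canonical refinement because all those cells span the same lattice, lets $F_1,\dots,F_M$ be the carrier faces of $\m_0$ in those cells, and then Lemma~\ref{lemma:independent box points} shows that two such carriers are either equal or not contained in a common simplex---exactly the hypothesis of Corollary~\ref{coro:simultaneous}---while their stars cover every volume-$V$ cell coming from $\Delta_i$ by the very definition of carrier. You neither prove nor invoke any statement of this kind, and the face you do construct (a minimal non-unimodular face of $\Delta_i$, transported around by hand) carries no such independence property.

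Your fallback, splitting the step into ``finitely many sub-applications of the corollary \dots without altering the overall dilation factor $c^N$,'' is not available: each application of Corollary~\ref{coro:simultaneous} costs a dilation factor of $c$, so splitting step $i$ into two or more sub-applications already yields more than $N$ applications in total and a dilation strictly larger than $c^N$, contradicting the statement being proved. Worse, after the first sub-application the untouched volume-$V$ cells from the same $\Delta_i$ are canonically refined into many new volume-$V$ cells, so without the single-box-point/covering argument you have no bound on how many sub-applications are needed; this is precisely the uncontrolled blow-up described in Remark~\ref{rem:tower-of-exponentials} that the theorem is designed to avoid. The remaining ingredients of your write-up (non-degeneracy of box points on a minimal non-unimodular face, the strict volume drop inside the stars via Lemma~\ref{lemma:kmw:pull}(1), propagation of the invariant outside the stars through canonical refinement) are sound, but the missing independence lemma is the heart of the proof, not a technicality.
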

 
\begin{proof}
Our goal is to apply apply Corollary~\ref{coro:simultaneous} once (at most) for each of
the $N$ simplices of volume $V$. The only difficulty is that with each
application the number of simplices of volume $V$ grows. 
Indeed,  for each such
simplex $\Delta$ not lying in the star of one of the $F_i$'s that we
use in a particular step, $c\Delta$ is refined into its canonical triangulation,
which consists of a (huge) number of
simplices of the same volume $V$. Our proof takes advantage of the fact that
all these simplices form a canonical triangulation, to refine (dilations of) all of
them simultaneously into simplices of volume strictly smaller than $V$.

Suppose that a triangulation $\T$ contains the canonical
triangulation $\T_{k\Delta}$ of a dilated simplex $k\Delta$, for some
$k$. Choose a box point $\m_0$ of one of the simplices $\Delta_0 \in
\T_{k\Delta}$. Since all the simplices of the canonical triangulation
define the same lattice, $\m_0$ can also be considered a box point in
every other simplex of $\T_{k\Delta}$. So, let $F_1,\dots, F_M$ ($M\le
k^d$) be the list of minimal faces of $\T_{k\Delta}$ for which $\m_0$
is a box point. 
We claim that the stars of the $F_i$'s are disjoint and that they
cover $\T_{k\Delta}$. (The second claim is obvious; the first is also
not hard, and a proof of it in a more general context is in
Lemma~\ref{lemma:independent box points} below.) Hence, we can get rid
off all the simplices of $\T_{k\Delta}$ in a single step.

So, our algorithm for refining $c^N\T$ in $N$ steps is:
  In the first step, we choose a box point $\m_1$ for $\Delta_1$,
  and apply Corollary~\ref{coro:simultaneous} to that box point alone,
  to get a triangulation $\T_1$ in which all the simplices of volume
  $V$ belong to the canonical triangulations of $c\Delta_2$,
  $c\Delta_3, \dots, c\Delta_N$.
After the $i$-th step, we have a triangulation $\T_i$ in which
  all the simplices of volume $V$ belong to the canonical
  triangulations of $c^i\Delta_{i+1}$, \dots, $c^i\Delta_N$. We choose
  a box point $\m_{i+1}$ for one simplex in $c^i\Delta_{i+1}$ and
  apply Corollary~\ref{coro:simultaneous} simultaneously to the stars
  of all the minimal faces of $c^i\Delta_{i+1}$ having $\m_{i+1}$ as a
  box point.
\end{proof} 

\begin{remark}
\label{rem:tower-of-exponentials}
As pointed out by Bruns and Gubeladze~\cite[Remark 3.20]{BG}, ``One
could try to give an effective upper bound for the number c in
Theorem~\ref{thm:kmw} by tracing its proof''. However, neither Knudsen
et al.~nor Bruns and Gubeladze do this.
Part of the reason is that the bound obtained ``by tracing the proof''
would certainly be a tower of exponentials of length at least the
initial maximal volume $V$.

This is because applying the result for a given initial maximal volume
$V$ yields a triangulation with maximal volume at most $V-1$, but we
do not have easy control on the new number $N'$ of simplices of volume
$V-1$. Using Lemma~\ref{lemma:number_of_simplices} for this would give
$N'=c^{Nd} \vol(P)$. Then, in the second step we then get that our bound
for the number of cells of volume $V-2$ is $c^{c^{Nd} \vol(P)d}\vol
(P)$, in the third step we get $c^{c^{c^{Nd} \vol(P)d}\vol
  (P)d}\vol(P)$, and so on.
\end{remark}

Observe that if all simplices of maximal volume in $\T$ have non-degenerate box points whose 
carriers have disjoint stars then we only need to apply 
Corollary~\ref{coro:simultaneous} once to get a triangulation with smaller maximal volume, leading to a bound of $c$ instead of $c^N$ in Theorem~\ref{thm:kmw-inductive}. If, moreover, the disjoint-stars property could be preserved in the iteration, we would get an effective KMW Theorem with a singly exponential factor of type $(d+1)!V$, rather than the double exponential of our Theorem~\ref{thm:KMW-effective}.

Unfortunately, the hypothesis that stars are disjoint is not always
satisfied and seems not easy to be preserved. However, it automatically holds when $V$ is a prime.

\begin{lemma}
\label{lemma:prime}
If $\Delta$ is a lattice simplex of volume $V$ with respect to a
certain lattice $\Lambda$, then:
\begin{enumerate}
\item $\Delta$ has a unique minimal face $F_0$ of volume $V$.
\item  If $V$ is prime, then every (non-zero) box point of $F_0$ is
  non-degenerate.
\end{enumerate}
\end{lemma}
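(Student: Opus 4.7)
The key observation underlying both parts is that a face $F$ of $\Delta$ has ``volume $V$''---meaning $[\Lambda_F : \vec L_F] = V$---exactly when every box point of $\Delta$ already has a representative in $\aff(F)$. Indeed, the canonical map $\Lambda_F/\vec L_F \to \Lambda/\vec L_\Delta$ is injective (because $\vec L_\Delta \cap \operatorname{span}(F) = \vec L_F$, using that the vertices of $\Delta$ form an affine basis and $\vec L_\Delta$ is their integral span), and its image has order $[\Lambda_F : \vec L_F]$; equality of the two groups therefore holds exactly when $F$ contains the carrier of every box point of $\Delta$. Setting this identification up carefully is the only slightly technical step; the rest of the argument is elementary bookkeeping with supports of fractional barycentric coordinates.

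For part (1), the plan is to define $F_0$ as the convex hull of those vertices of $\Delta$ that appear in the carrier of at least one box point. Equivalently, taking canonical barycentric representatives in $[0,1)^{d+1}$, set
\[
F_0 \;:=\; \conv\bigl\{\vv_i \ :\ m_i \neq 0 \text{ for some } \m \in \Lambda/\vec L_\Delta\bigr\}.
\]
By construction every carrier lies inside $F_0$, so every box point of $\Delta$ is already a box point of $F_0$, which forces $F_0$ to have volume $V$ by the observation above. Conversely, any face $F$ of volume $V$ must contain every carrier, hence every vertex used to build $F_0$, giving $F \supseteq F_0$. This yields existence, minimality, and uniqueness of $F_0$ in one stroke.

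For part (2), assume $V$ is prime and let $\m$ be a nonzero box point of $F_0$; via the inclusion above it is also a nonzero box point of $\Delta$. Since $\Lambda/\vec L_\Delta$ has prime order $V$, it is cyclic and generated by any nonzero element, so every box point of $\Delta$ is an integer multiple $k\m$ of $\m$. Crucially, $\supp(k\m) \subseteq \supp(\m)$, because $m_i = 0$ forces $\{k m_i\} = 0$. Consequently the set of ``active'' vertex indices defining $F_0$ is contained in $\supp(\m)$, so the carrier of $\m$ is all of $F_0$, i.e., $\m$ is non-degenerate. No serious obstacle is expected: once the injection $\Lambda_F/\vec L_F \hookrightarrow \Lambda/\vec L_\Delta$ is in place, both parts follow from cyclic-group generation and the trivial behavior of supports under scalar multiplication.
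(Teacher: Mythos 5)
Your argument is essentially correct, but it follows a genuinely different route from the paper's. For part (1) the paper never exhibits $F_0$ explicitly: it shows that two faces of volume $V$ cannot be disjoint (the volume of a join is at least the product of the volumes) and that the intersection of two volume-$V$ faces again has volume $V$, via a multiplicative identity relating the volumes of two faces, their intersection, and their join. You instead construct $F_0$ directly as the join of the carriers of all box points, and use the injection $\Lambda_F/\vec L_F \hookrightarrow \Lambda/\vec L_\Delta$ to translate ``$F$ has volume $V$'' into ``every box point of $\Delta$ has its carrier inside $F$''; this yields existence, minimality and uniqueness in one stroke and has the added benefit of describing $F_0$ concretely. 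For part (2) the paper uses only Lagrange's theorem: every face volume divides $V$, so for prime $V$ all proper faces of $F_0$ are unimodular and hence carry no nonzero box point, forcing non-degeneracy. Your cyclic-group argument is a legitimate alternative, though note that your part-(1) setup already gives the paper's two-line version of (2): a proper face of $F_0$ of volume $V$ would contradict minimality, so all proper faces of $F_0$ are unimodular.

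The one step that needs care is the assertion $\supp(k\m)\subseteq\supp(\m)$ ``because $m_i=0$ forces $\{km_i\}=0$''. This presumes that multiplication by $k$ in $\Lambda/\vec L_\Delta$ acts coordinatewise on fractional barycentric coordinates, i.e.\ that $\vp\mapsto(\{m_i(\vp)\})_i$ is a group homomorphism. Under the paper's conventions $\Lambda$ contains the origin but the affine lattice $L_\Delta$ need not, and barycentric coordinates are affine rather than linear: if $\boldsymbol\tau$ denotes the barycentric coordinate vector of the lattice origin, then the class $k\m$ is represented by the fractional parts of $k\m-(k-1)\boldsymbol\tau$, so a coordinate with $\{m_i\}=0$ need not stay zero. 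The statement you want is correct for the group structure in which the trivial box point (the class of $L_\Delta$, the one with fractional vector $\boldsymbol{0}$) is the identity; concretely, translate $\Delta$ so that one of its vertices is the lattice origin --- this changes neither volumes nor box points nor carriers --- after which the fractional-coordinate map is additive and your computation is valid. With that one normalization (or by switching to the Lagrange-style argument above), your proof is complete.
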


\begin{proof}
Let $F_1$ and $F_2$ be two faces of volume $V$ of $\Delta$. $F_1$
and $F_2$ cannot be disjoint, because the volume of a join is at
least the product of the volumes. So, let $F_3=F_1 \cap F_2$. Then
either $F_3$ also has volume $V$ or $\Delta$ has volume greater than
$V$. More precisely, let $\Lambda_i$, $i=1,2,3$, be the sublattice
of $\Lambda$ in the affine span of $F_i$. Then, the volume of
$\Delta$ in the sublattice $\Lambda_1 + \Lambda_2$ times the volume
of $F_3$ in $\Lambda_3$ equals the volume of $F_1$ in $\Lambda_1$
times that of $F_2$ in $\Lambda_2$. This proves part (1).

For part (2), observe that $V$ being prime implies that $F_0$ is  the
intersection of all the faces of $\Delta$ that are not unimodular. In
particular, all proper faces of $F_0$ are unimodular and every box
point of $F_0$ is non-degenerate. 
\end{proof}

\begin{corollary}
\label{coro:prime}
If $\T$ is a lattice simplicial complex in which no simplex has volume
larger than $V$, then the minimal faces of volume $V$ in $\T$ have
disjoint stars. Further, if $V$ is prime, these minimal faces of
volume $V$ have non-degenerate box points.
\qed
\end{corollary}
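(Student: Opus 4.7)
The plan is to deduce this corollary essentially as a direct packaging of the two parts of Lemma~\ref{lemma:prime}. Both claims have the flavor of ``uniqueness'' statements about maximal-volume subfaces, so the whole argument reduces to chasing what the minimality hypothesis forces when two such faces share a simplex of $\T$.

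For the first part, I would argue by contradiction. Suppose $F_1$ and $F_2$ are distinct minimal faces of $\T$ of volume $V$ whose open stars intersect, i.e.\ some simplex $\Delta\in \T$ contains both. Since $\vol(\Delta)\le V$ and $F_1,F_2$ are subfaces of $\Delta$ of volume $V$, Lemma~\ref{lemma:prime}(1) applies inside $\Delta$: $\Delta$ has a \emph{unique} minimal subface $F_0$ of volume $V$, and necessarily $F_0\subseteq F_1\cap F_2$. But $F_0$ is itself a face of $\T$ of volume $V$, so the minimality of $F_1$ and $F_2$ among volume-$V$ faces of $\T$ forces $F_0=F_1$ and $F_0=F_2$, contradicting $F_1\ne F_2$. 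Hence the open stars of distinct minimal volume-$V$ faces are disjoint, which is the statement we need (this is exactly the condition for applying Corollary~\ref{coro:simultaneous}).

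For the second part, once $V$ is prime, Lemma~\ref{lemma:prime}(2) says that in any simplex $\Delta$ of volume $V$ the unique minimal volume-$V$ subface $F_0$ of $\Delta$ has the property that every nonzero box point of $F_0$ is non-degenerate. If $F$ is any minimal face of volume $V$ in $\T$, pick any $\Delta\in\T$ containing $F$ (possibly $\Delta=F$ itself); then the $F_0$ of $\Delta$ coincides with $F$ by the uniqueness argument above, so $F$ inherits the non-degeneracy property. Finally, a box point of $F$ exists simply because $F$ is not unimodular in its own lattice ($\Lambda_F/\vec L_F$ is a nontrivial finite abelian group of order $V$), so we can pick any nonzero class.

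The only thing to double-check in writing this up is the compatibility of notations: ``minimal face of volume $V$ in $\T$'' must be read as a face of $\T$ of volume $V$ none of whose proper subfaces (which are automatically also in $\T$) have volume $V$, so that the Lemma~\ref{lemma:prime} uniqueness result applies verbatim. I do not expect any genuine obstacle, as the whole corollary is essentially a bookkeeping consequence of Lemma~\ref{lemma:prime}; the main conceptual content was already extracted there.
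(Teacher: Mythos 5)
Your proposal is correct and follows essentially the route the paper intends: the corollary is stated with no proof precisely because it is the direct packaging of Lemma~\ref{lemma:prime} that you carry out (uniqueness of the minimal volume-$V$ face inside any simplex containing two such faces forces them to coincide, and for prime $V$ the non-degeneracy of all nonzero box points of that minimal face transfers to the minimal faces of $\T$). Your closing remark on reading ``minimal face of volume $V$ in $\T$'' as inclusion-minimal among volume-$V$ faces, and on disjointness meaning disjoint open stars (equivalently $\conv(F_i\cup F_j)\notin\T$), matches the usage in Corollary~\ref{coro:simultaneous} and Theorem~\ref{thm:kmw:prime}.
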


\begin{corollary} \label{coro:kmw:prime}
Let $\T$ be a lattice triangulation with all its faces of volume at
most $V$, for a prime $V$.
Then,  for every $c\in (d+1)!\N$, the dilation $c\T$ has a regular
refinement in which every cell has volume less than $V$.
\end{corollary}

\begin{proof}
Let $F_1,\dots, F_N$ be the list of minimal faces of volume $V$ in
$\T$. By Corollary~\ref{coro:prime}, they have disjoint stars and
possess non-degenerate box points $\m_1,\dots,\m_N$. 
We then satisfy the conditions of Corollary~\ref{coro:simultaneous},
with the added feature that every simplex of volume $V$ is in one of
the stars of the $F_i$'s. 
\end{proof}

\subsection{An effective version of the KMW-Theorem} \label{sec:KMW-effective}

Our proof of Theorem~\ref{thm:kmw-inductive} (from which Theorem~\ref{thm:kmw} follows)
already departs from the previous proofs in that we use the ``canonical triangulation'' structure
of the pieces after each refinement in order to control the number of iterations needed
to decrease the maximum volume of simplices. This avoids us the use of ``local lattices''~\cite{BG} or ``rational
structures''~\cite{KKMS3}, and it gives a cleaner proof of regularity. 

Here we push this idea further and relate by an ``$\rootA$-structure'' simplices that do not come
from the canonical triangulation of the same dilated simplex.
As usual, we let $L_\Delta$ denote the (affine) lattice generated by the vertices
of a simplex $\Delta$, and say that two full-dimensional ordered lattice simplices
$\Delta=\conv\{\va_0,\dots,\va_d\}$ and
$\Delta'=\conv\{\vb_0,\dots,\vb_d\}$  are \emph{$\rootA$-equivalent}
if $L_\Delta=L_{\Delta'}$ and
\[
\{\va_{i} - \va_{i-1} : i \in [d]\}
=
\{\vb_{i} - \vb_{i-1} : i \in [d]\}.
\]

Lemma~\ref{lemma:rootA-equivalent} offers ties between the canonical
triangulation of $c \Delta$ and $\rootA$-equivalence.

\begin{lemma}
\label{lemma:rootA-equivalent}
\begin{enumerate}
\item All the simplices in the canonical triangulation of $c\Delta$ are $\rootA$-equivalent to $\Delta$.
\item If two simplices $\Delta$ and $\Delta'$ are $\rootA$-equivalent then
the $\rootA$-dicing defined by $\Delta$ and by $\Delta'$ are the same, modulo a translation in $\Lambda$. \qed
\end{enumerate}
\end{lemma}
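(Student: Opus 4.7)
My plan is to reduce both parts to the model case $\Delta = \Delta_\rootA^d$ and then transport through the affine isomorphism $\phi \colon \Delta \to \Delta_\rootA^d$ used in Definition~\ref{def:typeA_canonical_triangulation}. Recall that with its canonical ordering $\va_i = \sum_{j=1}^{i+1}\ve_j$ in $\R^{d+1}$, the simplex $\Delta_\rootA^d$ has consecutive differences $\ve_2, \ldots, \ve_{d+1}$, and the derivative $\vec\phi$ is a lattice isomorphism $\vec L_\Delta \to \vec L_{\Delta_\rootA^d}$ sending $\va_i - \va_{i-1}$ to $\ve_{i+1}$.

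For part~(1), it suffices to prove the claim for $\Delta_\rootA^d$ itself: show that every maximal cell of the $\rootA$-dicing restricted to $c\Delta_\rootA^d$ admits an ordering $\vp_0, \ldots, \vp_d$ of its vertices with $\vp_i - \vp_{i-1} = \ve_{\sigma(i)+1}$ for some permutation $\sigma$ of $[d]$. By Theorem~\ref{thm:rootA} and Lemma~\ref{lemma:Acanonical}, such a cell is a unimodular $d$-simplex whose vertices are lattice points in $\{x_1 = c\}$ and which is cut out by hyperplanes of the form $x_i = k$ and $x_i - x_j = k$; a short direct analysis, or induction on $d$ by restricting to facets, shows that these alcoves are exactly the ``staircase'' simplices with the required ordering. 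Pulling back through $\vec\phi^{-1}$ then produces a cell of the canonical triangulation of $c\Delta$ whose consecutive differences form the set $\{\va_i - \va_{i-1} : i \in [d]\}$, so it is $\rootA$-equivalent to $\Delta$. This staircase description is the main (mild) obstacle of the proof.

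For part~(2), let $\phi, \phi' \colon L_\Delta, L_{\Delta'} \to L_{\Delta_\rootA^d}$ be the two defining affine isomorphisms. Because the sets of consecutive differences agree, there is a permutation $\tau$ of $[d]$ with $\vb_i - \vb_{i-1} = \va_{\tau(i)} - \va_{\tau(i)-1}$, and so $\vec\phi \circ (\vec\phi')^{-1}$ sends $\ve_{i+1}$ to $\ve_{\tau(i)+1}$ --- the restriction to $\{x_1 = 0\}$ of a coordinate permutation of $\R^{d+1}$. Coordinate permutations leave the infinite $\rootA$-hyperplane arrangement $\{x_i - x_j = k : i\ne j,\ k\in\Z\}$ setwise invariant, so the pullbacks of this arrangement through $\phi$ and through $\phi'$ coincide on $\vec L_\Delta = \vec L_{\Delta'}$, and the pullback dicings of the ambient affine space coincide up to the translation by $\va_0 - \vb_0 \in \vec L_\Delta$. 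Once the staircase description of part~(1) is in hand, this second part is formal bookkeeping through $\phi$ and $\phi'$.
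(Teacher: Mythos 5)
Your overall route is sound, and there is nothing in the paper to compare it against: the authors state this lemma without any proof (it ends in a terminal $\square$), so filling it in is exactly what is needed. Your description of the alcoves of the $\rootA$-dicing of $c\Delta^d_\rootA$ as ``staircase'' simplices, with consecutive vertex differences a permutation of $\ve_2,\dots,\ve_{d+1}$, is correct (it is the same unit-cube/order-triangulation picture the paper itself invokes in the proof of Lemma~\ref{lemma:cayley}), and your part~(2) is essentially right; note that it is precisely the hypothesis $L_\Delta=L_{\Delta'}$ that places $\va_0-\vb_0$ in $\vec L_\Delta$, and since translations by elements of $\vec L_\Delta$ preserve the $\Delta$-dicing, the two dicings are in fact equal, not merely translates.

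The genuine gap is in part~(1): the definition of $\rootA$-equivalence has two conditions, and you only verify the difference-set condition before concluding ``so it is $\rootA$-equivalent to $\Delta$''; the lattice condition $L_{\Delta'}=L_\Delta$ is never addressed, and it is the delicate point. The vertices of the canonical triangulation of $c\Delta$ are the points with integral dilated barycentric coordinates, $\sum_i n_i\va_i$ with $n_i\in\Z_{\ge 0}$ and $\sum_i n_i=c$, so the affine lattice generated by the vertices of any cell is $c\va_0+\vec L_\Delta$. This is a translate of $L_\Delta=\va_0+\vec L_\Delta$, but it equals $L_\Delta$ only when $(c-1)\va_0\in\vec L_\Delta$: for $\Delta=\conv\{1,3\}\subset\R$ and $c=2$ the cells $[2,4]$ and $[4,6]$ generate the even integers while $L_\Delta$ consists of the odd integers. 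So, read literally, part~(1) cannot be proved as stated; what is true (and what every later use of the lemma needs, since box points live in $\Lambda/\vec L_\Delta$) is that all cells have the same \emph{linear} lattice $\vec L_{\Delta'}=\vec L_\Delta$ --- which is automatic once the difference sets agree --- equivalently, that the cells are $\rootA$-equivalent to $\Delta$ up to a translation. Your write-up should say explicitly which reading of the lattice condition it adopts (or record the translation discrepancy) instead of deducing equivalence from the difference sets alone.
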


The second part of Lemma ~\ref{lemma:rootA-equivalent} allows us to
understand a box point $\m$ for a simplex $\Delta$ as a box point for
any $\rootA$-equivalent simplex $\Delta'$. Specifically, let $\m'$ be
the translation of $\m$ obtained by sending the $\rootA$-dicing of
$\Delta$ to that of $\Delta'$. This translation is unique, modulo
the linear lattice $\vec L_\Delta$. 
The key property of $\rootA$-equivalence that we need is the
following:  

\begin{lemma}
\label{lemma:independent box points}
Let $\Delta$ and $\Delta'$ be two $\rootA$-equivalent full-dimensional
simplices in a triangulation $\T$, and $\m$ be a box point for
$\Delta$ and $\m'$ the corresponding box point for $\Delta'$. If $F$
and $F'$ be the carrier faces of $\m$
and $\m'$ in $\Delta$ and $\Delta'$ (that is, $F$ and $F'$ are the faces for
which $\m$ and $\m'$ are  non-degenerate box points), then either
$F=F'$ or $\conv(F\cup F')$ is not a simplex in $\T$. 
\end{lemma}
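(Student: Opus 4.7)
The plan is to argue by contradiction. Suppose $F\neq F'$ and $\Sigma:=\conv(F\cup F')$ is a simplex of $\T$; I aim to derive $F=F'$. First I will unpack Lemma~\ref{lemma:rootA-equivalent}(2): since the $\rootA$-dicings of $\Delta$ and $\Delta'$ coincide, $\m$ and $\m'$ represent the same coset of $\Lambda/\vec L_\Delta=\Lambda/\vec L_{\Delta'}$, and moreover $\rootA$-equivalence forces every vertex of $\Delta'$ to be an integer-affine combination of the vertices of $\Delta$, so the affine lattices satisfy $L_\Delta=L_{\Delta'}$. In particular all vertices of $\Sigma$ lie in $L_\Delta$, and the lattice $L_\Sigma^{\mathrm h}\subseteq\Lambda\times\Z$ generated by $\{(\vv,1):\vv\in V_\Sigma\}$ is a sublattice of the analogous lattice $L_\Delta^{\mathrm h}$ of $\Delta$.

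Next, let $\vw\in\Lambda$ be the representative of $\m$ whose fractional $\Delta$-barycentric coordinates are $m_i\in(0,1)$ on $F$ and $0$ elsewhere, of integer height $h=\sum m_i$; let $\vw',h'$ be the analogous data for $\Delta'$. Because $\Sigma$ is a simplex with $F,F'\subseteq V_\Sigma$, the lifts $(\vw,h)$ and $(\vw',h')$ both lie in the half-open fundamental parallelepiped of $L_\Sigma^{\mathrm h}$: each is a sum $\sum_{\vv\in V_\Sigma}t_\vv(\vv,1)$ (resp.\ $\sum t'_\vv(\vv,1)$) with $t_\vv\in[0,1)$, supported on $F$ (resp.\ on $F'$). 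Their difference is $\sum_\vv(t_\vv-t'_\vv)(\vv,1)$ with all coefficients in $(-1,1)$, and it already lies in $L_\Delta^{\mathrm h}$ because $\m$ and $\m'$ represent the same coset.

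The heart of the proof is the lattice identity
\[
L_\Delta^{\mathrm h}\,\cap\,\mathrm{span}_\R\{(\vv,1):\vv\in V_\Sigma\}\,=\,L_\Sigma^{\mathrm h}.
\]
Once this is in place, the difference belongs to $L_\Sigma^{\mathrm h}$; by the affine independence of $V_\Sigma$ its coefficients in the $(\vv,1)$-basis are uniquely defined and must be simultaneously integers and real numbers in $(-1,1)$, hence all zero. Consequently $(\vw,h)=(\vw',h')$, and reading off the supports gives $F=F'$, the desired contradiction. The main obstacle will be proving this lattice identity: in general $L_\Delta^{\mathrm h}\cap\mathrm{span}_\R\{(\vv,1):\vv\in V_\Sigma\}$ can strictly contain $L_\Sigma^{\mathrm h}$, but I plan to rule out extra elements by combining the simplicial-complex structure of $\T$ (so that $F$ and $F'$ arise as the full common faces $\Sigma\cap\Delta$ and $\Sigma\cap\Delta'$, controlling how $\aff(\Sigma)$ meets $\aff(\Delta)$ and $\aff(\Delta')$) with the $\rootA$-equivalence, which pins down integer relations between the vertices of $\Delta$ and $\Delta'$ through the shared system of edge-differences $\{\vd_j\}$ and thereby forces the apparent extra lattice points to collapse into $L_\Sigma^{\mathrm h}$.
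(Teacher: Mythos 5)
The first half of your argument is sound, and it runs parallel to the paper's: since $\rootA$-equivalence includes $L_\Delta=L_{\Delta'}$, and since the dicing attached to an ordered simplex depends only on the unordered set of edge differences and is invariant under translations in $\vec L_\Delta$, the two dicings coincide, $\m$ and $\m'$ are the same class modulo $\vec L_\Delta=\vec L_{\Delta'}$, and hence the difference of the two canonical box representatives $\sum_{\vv\in V_F}m_\vv(\vv,1)$ and $\sum_{\vv\in V_{F'}}m'_\vv(\vv,1)$ lies in $L^{\mathrm h}_\Delta$ and has all coefficients in $(-1,1)$ with respect to the linearly independent vectors $(\vv,1)$, $\vv\in V_\Sigma$. (Even this opening step deserves a sentence of justification: Lemma~\ref{lemma:rootA-equivalent}(2) only speaks of a translation determined modulo $\vec L_\Delta$, and the paper derives the equality of the two dicings \emph{inside} the proof rather than quoting it.)

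The decisive step, however, is missing. Everything hinges on your ``lattice identity'' $L^{\mathrm h}_\Delta\cap\operatorname{span}_\R\{(\vv,1):\vv\in V_\Sigma\}=L^{\mathrm h}_\Sigma$, and you do not prove it: you acknowledge that it fails for general configurations, and the plan you offer (``controlling how $\aff(\Sigma)$ meets $\aff(\Delta)$ \dots forces the apparent extra lattice points to collapse'') never explains how the two hypotheses that actually matter --- that $\conv(F\cup F')$ is a simplex of $\T$, and that $F,F'$ are the \emph{carriers} of the box point --- would enter. They must enter, because membership in $L^{\mathrm h}_\Delta$ together with coefficients in $(-1,1)$ is not enough by itself: expanding the vectors $(\vv,1)$ for $\vv\in V_{F'}\setminus V_\Delta$ in the basis $(\va_0,1),\dots,(\va_d,1)$ only yields integer coordinate vectors, and non-primitive such vectors produce exactly an index $[\,L^{\mathrm h}_\Delta\cap\operatorname{span}_\R\{(\vv,1):\vv\in V_\Sigma\}:L^{\mathrm h}_\Sigma\,]>1$, i.e.\ a nonzero lattice element with all coefficients in $(-1,1)$, which would defeat your final step. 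In effect you have reduced the lemma to a statement that is at least as hard as the lemma itself. The paper's proof avoids any such saturation claim: it normalizes the representatives of $\m$ and $\m'$ so that they lie on the minimal faces $F_0,F'_0$ of the two $\rootA$-dicings with $\aff(F_0)\subseteq\aff(F)$ and $\aff(F'_0)\subseteq\aff(F')$, uses the hypothesis $\conv(F\cup F')\in\T$ to force the parallel flats $\aff(F_0)$ and $\aff(F'_0)$ to coincide (so that $F\cap F'\neq\emptyset$ and the classes agree), and then concludes from the uniqueness of the face of $\conv(F\cup F')$ for which this common class is a non-degenerate box point. Unless you can actually prove your lattice identity under the stated hypotheses --- and I do not see how to do so without essentially redoing this dicing/carrier argument --- the proposal has a genuine gap at its crucial step.
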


\begin{proof}
Although we defined box points as equivalent classes, let us now 
think of $\m$ and $\m'$ as representatives for their classes. That is,
$\m$ and $\m'$ are two lattice points in the ambient lattice $\Lambda$.

Let $F_0$ be the minimal face of the $\rootA$-dicing of $\Delta$
containing $\m$. $F_0$ might not be equal to $F$, since $F_0$ may not be
a face of $\Delta$, but without loss of generality (by changing the representative for
$\m$ if needed) we can assume that $\aff(F_0)\subset
\aff(F)$. (This is because the flat spanned by $F$ contains representatives for $\m$,
and the $\rootA$-dicing induced by $\Delta$
is a hyperplane arrangement refining the flat spanned by every face of $\Delta$).
For the same reason, we assume that
the minimal face $F'_0$, of the $\rootA$-dicing of $\Delta'$
containing $\m'$, satisfies $\aff(F'_0)\subset \aff(F')$. 

Suppose now that $\conv(F\cup F')$ is a simplex in $\T$. Since
$\aff(F_0)$ and $\aff(F'_0)$ are parallel flats contained in respective 
faces of $\conv(F\cup F')$, we must have $\aff(F_0)=\aff(F_0')$, so
$F\cap F'$ is not empty. In particular, the $\rootA$-dicings of
$\Delta$ and $\Delta'$ are not only translations of one another, but
actually equal. Hence, $\m=\m'$, modulo the
(linear) lattice $\vec L_\Delta= \vec L_\Delta'$. 

By Lemma~\ref{lemma:prime}, there is a unique face of $\conv(F\cup
F')$ for which $\m=\m'$ is a non-degenerate box point. That face must be
$F=F'$. 
\end{proof}

Hence, if we choose an $\rootA$-class of simplices of a triangulation
$\T$, Corollary~\ref{coro:simultaneous} can be applied simultaneously
to faces whose stars contain all the simplices in that class. 

\begin{corollary}
\label{coro:KMW-effective}
Let $\T$ be a triangulation with a total order in its vertices. If $\Delta_1, \dots ,\Delta_N$ are
representatives for all the $\rootA$-equivalence classes of
full-dimensional simplices in $\T$, and $V_1,\dots,V_N$ are their respective
volumes, then the total number of $\rootA$-equivalence classes of simplices
used to obtain a unimodular triangulation by iterative applications of
Theorem~\ref{thm:kmw-inductive} is bounded above by: 
\[
\sum_{i=1}^N V_i! \left( (d+1)! c^d\right)^{V_i-1}
\]
\end{corollary}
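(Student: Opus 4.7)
The plan is to iterate Theorem~\ref{thm:kmw-inductive} while organizing the recursion around one $\rootA$-equivalence class of top-dimensional simplices at a time. A single application of Corollary~\ref{coro:simultaneous} will be shown to eliminate an entire $\rootA$-class, introducing only a bounded number of new classes of strictly smaller volume, and the bound in the statement will fall out of a one-variable recursion on volumes.

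First I would establish the \emph{single-class} reduction. Let $\calC$ be an $\rootA$-equivalence class of volume $V > 1$ in the current triangulation $\T$, and pick a representative $\Delta \in \calC$. Since $V > 1$, the lattice $L_\Delta$ is a proper sublattice of $\Lambda_\Delta$, so $\Delta$ admits a non-zero box point $\m$, with carrier some face $F \subseteq \Delta$ on which $\m$ is non-degenerate. Using the translation of $\rootA$-dicings provided by Lemma~\ref{lemma:rootA-equivalent}(2), I transport $\m$ to a box point $\m'$, with carrier face $F_{\Delta'}$, on every other $\Delta' \in \calC$. By Lemma~\ref{lemma:independent box points}, any two of these carriers either coincide or have a convex hull that is not a simplex of $\T$; after deduplication, the resulting list $F_1,\dots,F_M$ thus satisfies the disjoint-star hypothesis of Corollary~\ref{coro:simultaneous}. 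A single application of that corollary, with $c \in d!\,\mathbb{N}$, then refines the stars simultaneously into simplices of volume strictly less than $V$ (Lemma~\ref{lemma:kmw:pull}(1)), while leaving the canonical triangulation on everything outside the stars.

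Next I would count the new $\rootA$-classes produced by this single step. By Lemma~\ref{lemma:number_of_simplices}, each dilated simplex $c\Delta'$ with $\Delta' \in \calC$ is cut into at most $(d+1)c^d$ full-dimensional pieces. Since the refinement inside $c\Delta'$ is determined (up to obvious combinatorial choice) by the transported box point together with the combinatorial type of $\Delta'$, it can be performed uniformly across $\calC$, so that corresponding subsimplices coming from different representatives of $\calC$ are $\rootA$-equivalent to each other. Hence the reduction introduces at most $(d+1)c^d$ new $\rootA$-classes, all of volume strictly less than $V$; all other simplices are canonically refined and stay in already-counted classes by Lemma~\ref{lemma:rootA-equivalent}(1). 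Now let $f(V)$ be the maximum number of $\rootA$-classes appearing in the recursion tree rooted at a single class of volume $V$ (counting the root and each subsequent class once). Then $f(1) = 1$ and the previous observations give
\[
f(V) \;\le\; 1 + (d+1)\,c^d\, f(V-1).
\]
An induction on $V$ then yields $f(V) \le V!\bigl((d+1)!\,c^d\bigr)^{V-1}$: writing $(d+1)c^d = \tfrac{1}{d!}(d+1)!c^d$, the inductive step reduces to the elementary estimate $1 + \tfrac{(V-1)!}{d!}\bigl((d+1)!c^d\bigr)^{V-1} \le V!\bigl((d+1)!c^d\bigr)^{V-1}$, valid for all $V \ge 2$. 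Summing $f(V_i)$ over the $N$ starting classes $\Delta_1,\dots,\Delta_N$ delivers the claimed bound.

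The main obstacle, and the reason the statement can even be formulated, lies in the first two paragraphs: processing an entire $\rootA$-class in one step, rather than simplex by simplex, is what keeps the bound doubly (rather than towerly) exponential, as discussed in Remark~\ref{rem:tower-of-exponentials}. This in turn rests exactly on Lemma~\ref{lemma:independent box points}, which converts $\rootA$-equivalence into the non-overlapping-star condition required by Corollary~\ref{coro:simultaneous}; once that conversion is in hand, the volume recursion and the final induction are routine.
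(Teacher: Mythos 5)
Your overall strategy is the paper's: process one $\rootA$-equivalence class per step, use Lemma~\ref{lemma:independent box points} to verify the disjoint-star hypothesis of Corollary~\ref{coro:simultaneous}, and then run a recursion on the volume. The gap is in the counting step. You claim the refinement ``can be performed uniformly across $\calC$,'' so that one step creates at most $(d+1)c^d$ new $\rootA$-classes and hence $f(V)\le 1+(d+1)c^d f(V-1)$. That uniformity is precisely what the paper's proof points out to be false: two $\rootA$-equivalent simplices need not be translates of one another (inside one $\rootA$-class there are up to $d!$ translation classes, coming from the different orderings of the same set of edge vectors), and different members of a class may be refined with \emph{different} box points --- in particular, a member can be refined ``before its turn'' when it lies in the star of a face $F_i$ chosen for another class, with whatever box point that step uses, and there are up to $V-1$ possible box points. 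For such members the resulting pieces need not be $\rootA$-equivalent to the pieces produced from your chosen representative, so one class of volume $V$ can spawn up to $d!(V-1)(d+1)c^d$ new classes, not $(d+1)c^d$. Relatedly, your assertion that ``all other simplices are canonically refined'' is not correct: Corollary~\ref{coro:simultaneous} guarantees the canonical refinement only \emph{away from} $\cup_i\overline\Star(F_i;\T)$; full-dimensional simplices of other classes that contain some $F_i$ are refined non-canonically into strictly smaller pieces (Lemma~\ref{lemma:kmw:pull}), and charging those pieces to their own classes is exactly how a ``foreign'' box point gets used on members of a class.

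The paper's recursion is therefore of the form $f(V)\le 1+d!(V-1)(d+1)c^d\,f(V-1)$, and it is this extra factor $d!(V-1)$ that produces the $V!$ and the $(d+1)!$ in the bound $V!\left((d+1)!\,c^d\right)^{V-1}$. Your recursion, if it were valid, would prove a much smaller bound, which is a symptom that the multiplicities were dropped rather than avoided. The final arithmetic induction and the summation over the $N$ starting classes are fine; what is missing is either a proof of the uniformity claim (unlikely to hold as stated) or, as in the paper, an accounting that multiplies the per-refinement yield by the number of translation classes and box-point choices.
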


\begin{proof}
The statement follows from the claim that
the number of $\rootA$-equivalence classes that arise in the refinements
of all the dilations of simplices of a particular $\rootA$-equivalence class of volume $V$ is bounded
above by 
\[
V! \left( (d+1)! c^d\right)^{V-1}.
\]

We prove this claim via induction on $V$, starting with the case
$V=1$. In this case the number of classes is one. 
Now consider the case where $V > 1$, and take a particular $\Delta$ in
a certain $\rootA$-equivalence class. Once its box point $\m$ is chosen, 
$c\Delta$ gets refined, by Lemma~\ref{lemma:number_of_simplices},
into at most $(d+1) c^d$ simplices of volume at most $V-1$.
By inductive hypothesis each of them will produce at most
\[
(V-1)! \left( (d+1)! c^d\right)^{V-2}
\]
$\rootA$-classes when further and further refined. So, the total
number of new classes produced by refining $\Delta$  is bounded by
\begin{eqnarray*}
&& (d+1) c^d (V-1)! \left( (d+1)! c^d\right)^{V-2} \\
 &=&\frac{V! \left( (d+1)! c^d\right)^{V-1}}{Vd!}.
\end{eqnarray*}

Now, it is not true that all choices of $\Delta$ in the same
$\rootA$-class will produce the same $\rootA$-classes when refined. 
On the one hand, we must take into account that there are $d!$
translation classes within each $\rootA$-class. On the other hand,
simplices $\Delta$ and $\Delta'$ that are translations of one another
may get different refinements, because the refinement depends on the choice of the box point
$\m$. But these two (translation class and box point) are the only things that 
can make the dilations of two $\rootA$-equivalent simplices
have canonical refinements that are not $\rootA$-equivalent to one another.

Since there are $V-1$ choices for $\m$, the total number of
classes produced from an individual class of volume $V$ (including the
initial class itself) is bounded above by 
\begin{eqnarray*}
&&1 + d!(V-1)\frac{V! \left( (d+1)! c^d\right)^{V-1}}{Vd!}\\
&<&V! \left( (d+1)! c^d\right)^{V-1}.
\end{eqnarray*}
\end{proof}

This implies the following, which in turn proves
Theorem~\ref{thm:KMW-effective}: 

\begin{theorem}
\label{thm:KMW-effective2}
Let $\T$ be a triangulation of a lattice polytope $P$. In the
notation of Corollary~\ref{coro:KMW-effective},
for every $c\in (d+1)!\N$ and for every  $M\ge \sum_{i=1}^N V_i!
\left( (d+1)! c^d\right)^{V_i-1}$, the dilation $c^M \T$ has a regular
unimodular refinement.
\end{theorem}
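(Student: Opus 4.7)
The plan is to package the results of Section~\ref{sec:KMW-effective} into a terminating iterative algorithm, where each iteration eliminates one $\rootA$-equivalence class of non-unimodular simplices and contributes exactly one factor of $c$ to the overall dilation. The key reason this is sharper than the naive iteration of Theorem~\ref{thm:kmw-inductive} is that within a single $\rootA$-equivalence class, Lemma~\ref{lemma:independent box points} lets us refine all representatives simultaneously, so that an entire class is dispatched in one step.

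The main iterative step goes as follows. Suppose the current triangulation $\T^{(j)}$ (initially $\T^{(0)} = \T$) contains a non-unimodular $\rootA$-equivalence class $\calC$; pick a representative $\Delta \in \calC$ and a non-degenerate box point $\m$ for its unique minimal face of maximum volume (which exists by Lemma~\ref{lemma:prime}). By Lemma~\ref{lemma:rootA-equivalent}(2), $\m$ determines, for every $\Delta' \in \calC$ (as a subcomplex of $\T^{(j)}$), a corresponding box point $\m'$ with carrier face $F_{\Delta'}$. Lemma~\ref{lemma:independent box points} guarantees that the closed stars $\overline{\Star}(F_{\Delta'};\T^{(j)})$ either coincide or meet only along their boundaries, which is exactly the hypothesis of Corollary~\ref{coro:simultaneous}. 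Applying that corollary with dilation $c \in (d+1)!\N$ produces a regular refinement $\T^{(j+1)}$ of $c\,\T^{(j)}$ such that every full-dimensional simplex $\Delta''$ of $\T^{(j+1)}$ contained in $\bigcup_{\Delta'\in\calC}\overline{\Star}(F_{\Delta'};\T^{(j)})$ has volume strictly smaller than those in $\calC$, while simplices outside these stars are merely replaced by their canonical triangulations and thus, by Lemma~\ref{lemma:rootA-equivalent}(1), remain in their old $\rootA$-equivalence classes. So in one step, the class $\calC$ disappears and is replaced by finitely many new classes of strictly smaller volume.

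To conclude, I would iterate, always choosing the next class to be of maximum volume, so that classes with volume $1$ (unimodular) are never touched and the process genuinely progresses. The total number of iterations required to reach a unimodular triangulation equals the total number of $\rootA$-equivalence classes of volume greater than $1$ that ever appear; this is exactly what Corollary~\ref{coro:KMW-effective} bounds by $\sum_{i=1}^N V_i!\bigl((d+1)!\,c^d\bigr)^{V_i-1} \le M$. After at most $M$ iterations the triangulation is unimodular, and since each step dilated by a factor $c$ and preserved regularity (Corollary~\ref{coro:simultaneous}(3)), the outcome is a regular unimodular refinement of $c^M\,\T$, as required.

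The main technical obstacle is not really in any single step (the hard work is already absorbed into Lemma~\ref{lemma:kmw:pull}, Lemma~\ref{lemma:independent box points}, and Corollary~\ref{coro:KMW-effective}) but rather in confirming that regularity composes across iterations, and that the bookkeeping in Corollary~\ref{coro:KMW-effective} is genuinely saturated: namely that each iteration of the algorithm can be charged to a distinct $\rootA$-equivalence class present at some intermediate stage, and that no such class is ever processed twice. This is visible from the fact that after processing $\calC$ at stage $j$, no simplex of $\T^{(k)}$ for $k > j$ lies in $\calC$, because the stars have been refined into smaller-volume simplices and the complement of the stars was refined canonically and thus cannot reintroduce members of $\calC$.
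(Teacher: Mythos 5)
Your proposal is correct and follows the paper's own proof essentially verbatim: process one maximal-volume $\rootA$-equivalence class per iteration, use Lemma~\ref{lemma:independent box points} to transfer a single box point across the class so that Corollary~\ref{coro:simultaneous} applies to all its members simultaneously, and bound the number of iterations by the class count of Corollary~\ref{coro:KMW-effective}. The only quibble is your appeal to Lemma~\ref{lemma:prime} to get a non-degenerate box point of the minimal face of maximal volume --- that lemma guarantees non-degeneracy only when the volume is prime --- but this is harmless, since (as in the paper) any nonzero box point together with its carrier face, for which it is non-degenerate by definition, serves the same purpose.
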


\begin{proof}
Apply Corollary~\ref{coro:simultaneous} to each $\rootA$-class of
simplices in $\T$ (and the new ones created in the process) starting
with those of higher volume. At each iteration, consider box points
and faces whose stars completely cover one $\rootA$-class of maximal
volume, which can be done by Lemma~\ref{lemma:independent box
  points}. Since that class will not appear in future steps (because only classes of
  strictly smaller volume can appear), the total
number of iterations is bounded above by the total number of
$\rootA$-classes that can arise in the process, which is bounded by $M
$ according to Corollary~\ref{coro:KMW-effective}. 
\end{proof}

\begin{remark}
This proof does not claim that all the simplices of a particular class
that appear anywhere in the process are refined in the same step. Some may
get refined ``before their turn'' if  they are in the stars of the
faces $F_i$ used to refine other classes. But this does not invalidate
the proof. We devote one iteration to addressing whatever is left from
each particular class, so the number of iterations is at most the
number of classes.
\end{remark}

\newcommand{\arxiv}[1]{\href{http://arxiv.org/abs/#1}{\texttt{arxiv:}#1}}

\bibliographystyle{amsplain}
\def\cprime{$'$}
\providecommand{\bysame}{\leavevmode\hbox to3em{\hrulefill}\thinspace}
\providecommand{\MR}{\relax\ifhmode\unskip\space\fi MR }
\providecommand{\MRhref}[2]{%
  \href{http://www.ams.org/mathscinet-getitem?mr=#1}{#2}
}
\providecommand{\href}[2]{#2}

\bibliographystyletodo{plain}

\end{document}